\newcommand{\C}{\mathbb{C}}
\newenvironment{proof}{\noindent{\bf Proof.}\enspace}{
\hfill $\square$ \medskip}
\newtheorem{theorem}{Theorem}[section]
\newtheorem{proposition}{Proposition}[section]
\newtheorem{lemma}{Lemma}[section]
\newtheorem{remark}{Remark}[section]
\newtheorem{corollary}{Corollary}[section]
\newtheorem{definition}{Definition}[section]
\newtheorem{example}{Example}[section]
\numberwithin{equation}{section}
\begin{document}

\thispagestyle{empty}

\vskip3cm
\begin{center}
	{\bf \Large
		The construction of $\Psi^\star$-algebra by commutator method from the Bergman projection on the unit ball of $\C^{n}$
	}
\end{center}

\vskip6cm

\begin{center}
	{\bf
		by\\
		Dr. H. ISSA
	}
\end{center}
\vskip6cm

\begin{center}
	{\bf
		Department of Mathematics and Physics\\
		School of Arts and Science\\
		Lebanese International University\\
		Nov. 01, 2018
	}
\end{center}
\thispagestyle{empty}

\newpage
\text{ }
\vskip6cm
\begin{center}
	{\bf \Large Dedication }
	\vskip1.5cm
	Dieses Manuskript ist dem Betreuer meiner Doktorarbeit gewidmet. Ich m\"{o}chte mich herzlich bedanken bei Prof. Dr. Wolfram Bauer f\"{u}r seine  Unterst\"{u}tzung w\"{a}hrend des studiums.\\
	
	\bigskip
	This manuscript is dedicated to my Ph.D. supervisor Prof. Dr. Wolfram Bauer. These notes and results were partially prepared during my stay in Greifswald University in 2008 and continued during my work at LIU in 2018.
\end{center}

\chapter*{Abstract}
\addcontentsline{toc}{chapter}{Introduction}
\chaptermark{Abstract}
The  manuscript is devoted to the construction of $\Psi^\star$- algebras containing the Bergman projection on the unit ball $B_n$ of $\mathbb C^n$. Motivated by the local properties for pseudo-differential operators with symbols  in the H\"{o}rmander classes $S_{\rho,\delta}^{0}$, the notion for  $\Psi^\star$- algebras were introduced. These are Fr\'{e}chet algebras and being closed under holomorphic functional calculus. We consider the $C^\star$-algebra $\mathcal{L}(L^2(B_n))$ of bounded operator acting on the Hilbert space of square integrable functions on $B_n$ with respect to the standard probability measure. We search for  spectral invariant Fr\'{e}chet  subalgebras of $\mathcal{L}(L^2(B_n))$ containing the Bergman projection $P$  which is defined on $L^2(B_n)$ with values in the space of holomorphic functions. We use the commutator method as introduced by Gramsch in \cite{gr1}. This method generalizes the work of R. Beals  for the spectral invariance of pseudodifferential operators (cf. \cite{be,Ue}). Roughly speaking, one need not to work on the H\"{o}rmander classes \cite{hor} but can describe the algebra of pseudodifferential operators as continuous commutator between scales of Sobolev spaces. To connect the analytical properties  to the geometry of the unit ball \cite{grudsky0}, we search for linear tangent vector fields $X$ on the the unit sphere $\partial B_n$ so that the commutator $[X,P]$ has a continuous extension to $L^2(B_n)$. In comparison to the work of Bauer in \cite{B,br} for the case of the Fischer-Fock space it turns out that the results are completely different. Our first contribution states that the set of vector fields which provides the $\Psi^\star$-algebra for the Fock space fails even to provide a continuity with the Bergman projection in the case of the unit ball. Our second contribution, is the construction  of vector fields being tangent to the unit sphere  which provides a new  $\Psi^\star$-algebra containing the Bergman projection of the unit ball. Roughly speaking, we prove that every linear vector field on the  unit sphere commutes with the Bergman projection on the unit ball.

\tableofcontents
\chapter{Introduction}

Motivated by the work of W. Bauer in \cite{B,br} for constructing $\Psi^\star$- algebras starting from the Bergman projection on the Segal-Bargmann space, we consider a similar question in the more complicated situation i.e. in the unit ball $\mathbb B_n$ of $\mathbb C^n$. Following recent results which describes the commutator property between Toeplitz operators by the geometric invariance of these symbols inside the ball and not by smoothness of the symbols, it turns out that algebraic properties are being held beneath the geometry of the balls. Roughly speaking,  symbols which are invariant under the group action of maximal subgroup of the automorphism of the unit ball provides commutative Banach algebras (cf. \cite{grudsky0}). The case of the higher dimensional complex space is completely different and  automorphisms forms an infinite dimensional group. This addresses us to compare the two situation and whether we can use the geometry of the space to obtain some algebraic properties on operators. In \cite{B,br}, W. Bauer found a collection of linear vector fields so that the commutator is being uniquely extended by continuity and a $\Psi^\star$-algebra was constructed starting from the Bergman projection on the complex space.
The importance of $\Psi^\star$-algebras lies behind their topological properties as being Fr\'{e}chet -algebras continuously embedded in a $C^\star$-algebra. Moreover, they are inverse closed and so holomorphic functional calculus can be done. The idea behind introducing these algebras and relaxing the notion of $C^\star$-algebras dates back to the work of R. Beals in \cite{be} for pseudodifferential operators. Roughly speaking, for $\rho,\delta \in [0,1]$ with $\delta<\rho$, pseudodifferential operators with symbols in the H\"{o}rmander classes  $S^{0}_{\rho,\delta}(\Omega)$  defines a collection of (extended) bounded operators on $L^{2}(\Omega)$ forming a Fr\'{e}chet  embedded subalgebra and being closed under holomorphic functional calculus. The approach of Beals did not depend on considering the H\'{o}rmander symbols but rather defining a scale of bounded operators between Sobolev spaces using derivations and multiplication. In fact, he showed that the obtained collection of pseudodifferential operators coincides with the following  
\begin{equation*}
\Psi^{0}_{\rho,\delta}:=\{a\in \mathcal{L}(H^{0})\mid\ ad(M)^{\alpha}ad(\partial)^{\alpha}(a)\in \cap_{s \in \mathbb{R}}\mathcal{L}(H^{s-\rho |\alpha|+\delta|\beta|},H^{s}),\alpha,\beta\in \mathbb{N}^{n} \},
\end{equation*}\par

where $M$  denotes the multiplication by the real coordinates and $H^s$ are the Sobolev spaces. Motivated by the work of Beals , Gramsch introduced the notion of $\Psi^\star$-algebras and gave a general way for construction such algebras (cf. \cite{gr1,gr3}). Indeed, given a Hilbert space $H$  and fix $a\ \in \mathcal{L}(H)$.  Consider a finite family $\mathcal{V}$ of densely defined closed operators (derivations) on $H$ we require that all the iterated commutators
\begin{equation*}
[[a,V_{1}],V_{2},\cdots],\ \ V_{j}\in \mathcal{V}
\end{equation*}
are well defined on a suitable dense subset of $H$ and they admit an extension in $\mathcal{L}(H)$. This produces a $\Psi^\star$-algebra in $\mathcal{L}(H)$ (cf. Section 7.2 for the construction). 

In connection to the analysis Toeplitz operators on the unit ball $\mathbb B_{n} \subset \mathbb{C}^{n}$, we  consider the space $L^{2}(B_{n}):={L}^{2}(\mathbb B_{n},d\mu)$ of square-integrable complex valued functions on $\mathbb B_{n}$, where $\mu$ is the Lebesgue measure on $\mathbb{C}^{n}$ normalizing $\mathbb B_{n}$. Then the space of entire function in $L^{2}(B_{n})$ forms a closed subspace, denoted by $\mathbb{H}^{2}(\mathbb B_n)$, and is called the Bergman space on the unit ball. 

\bigskip
The thesis is devoted to answer   the following problems:

\begin{itemize}
	\item[(I)] Let $a=P$ and the Consider the set of vector fields  generated by linear combinations of $\frac{\partial}{\partial z_{k}}$ (or $\frac{\partial}{\partial\overline{z_{k}}})$. This set generates a $\Psi$-algebra containing the projection onto the Segal Bargmann space. Now let $a=P$ and consider the same set of vector fields.   By studying the extension of continuity of the commutator of these vector fields with $P$, can we obtain a $\Psi^\star$-algebra in $L^{2}(B_{n})$ containing the Bergman projection on the unit ball.

	\item[(II)] Can we construct a collection of vector fields being tangent to the unit sphere and provides a $\Psi^\star$-algebra in $L^{2}(B_{n})$ containing the Bergman projection on the unit ball? Do linear vector fields provide such an algebra?
\end{itemize}

Regarding Problem (I) we show  that  such a $\Psi^\star$-algebra can not be obtained. The detailed proof is given is Section7.3  Addressing Problem (II), we show that every linear vector field on the sphere commutes with Bergman projection.  In Section 7.5, we collect the results and compare the case of the Bergman space over the unit ball to that of the Fock-space and provide some open problems motivated by our work.

\chapter{Some types of algebras}
\hspace{1cm}
In this chapter we investigate some basic properties on several types of algebras. First section is devoted to Banach algebras $\mathcal{B}$ with standard examples. In the second section, we are interested in the invertible elements of a unital Banach algebra. We introduce the basic properties of the spectrum in a Banach algebra.\par

  \section{Algebras and Banach algebras}
 \hskip0.5cm We start by introducing the notion and the basic properties of an algebra.
 \begin{definition}
 	A vector space $\mathcal{E}$ over $\mathbb{C}$ is called an algebra if $\mathcal{E}$ is endowed with a multiplication map $$\mu:\mathcal{E}\times\mathcal{E}\longmapsto \mathcal{E} $$ such that $\mu $ is bilinear and associative.
 	The multiplication between 2 elements $x,y \in \mathcal{E}$ is usually denoted by $x.y$, i.e. $\mu (x,y)= x.y $. Thus for all $x, y, z \in \mathcal{E}$ and $\lambda \in \mathbb{C}$, the conditions of an algebra are given as follows:
 	\begin{enumerate}
 		\item $x.(\lambda y+z)= \lambda xy +xz$ and
 		\item $(x.y).z = x.z + y.z$.
 	\end{enumerate} 
 \end{definition}

From now on, an algebra will be denoted by $\mathcal{A}$.
 \begin{definition}
 	Suppose there is an element $e \in \mathcal{A}$ satisfying $$e.x =x.e =x$$ for all $x \in \mathcal{A}$. Then $\mathcal{A}$ is called a unital algebra and the element $e$ is called identity element or unit.\par
 	It is direct from the definition of the neutral element that in case of existence,  the neutral element is unique. It should be mentioned that not every algebra is a unital algebra. Indeed, the space of all real valued functions defined on $\mathbb{R}$, whose limit at infinity is 0, is an algebra under the point wise multiplication without a unit. 
 \end{definition}
 \begin{example}
 	The above mentioned algebra is denoted by $$\mathcal{A}= \{f: \mathbb{R} \longrightarrow \mathbb{R} \mid \lim_{x\to\infty} f(x)= 0\}$$ with the following operations:
 	\begin{enumerate}
 		\item $(f+g)(x) = f(x) + g(x)$
 		\item $(f.g)(x) =[f(x)].[g(x)]$
 		\item $(\lambda f)(x) = \lambda (f(x))$
 	\end{enumerate}
 	with $\lambda \in \mathbb{R}$ and $f, g \in \mathcal{A}$.\\
 	\\
 	If we consider $f(x) = \frac{1}{1+x^2} \in \mathcal{A}$ then $$\frac{e(x)}{1+x^2}= \frac{1}{1+x^2} \Leftrightarrow e(x)= 1$$ but $e(x)\notin \mathcal{A}$ since $\lim_{x\to\infty}e(x)\neq 0$, this shows that $\mathcal{A}$ has no unital element.
 \end{example}\par
 In the below definition we review the notion of a Banach algebra.
 \begin{definition}
 	Let $\mathcal{A}$ be a unital algebra. Suppose $\mathcal{A}$ is endowed with a submultiplicative norm, i.e. \begin{equation}\label{e1}
 	\|x.y\| \leq \|x\|.\|y\|, \mbox{ for all } x,y \in \mathcal{A}.
 	\end{equation} Then $(\mathcal{A},\|.\|)$ is called a normed algebra. If in addition, $(\mathcal{A},\|.\|)$ is a Banach space then $\mathcal{A}$ is called a Banach algebra.
 \end{definition}\par
 From now on, a Banach will be denoted by $\mathcal{B}$ or $(\mathcal{B},\|.\|)$.
 \begin{remark}
 	If $\mathcal{B}$ is a Banach algebra then it is a normed vector space and hence the collection $$\big\{B(x,r) ,x \in \mathcal{B}, r > 0\big\}$$ form a basis for $\mathcal{B}$. Moreover, the multiplication map defined on $\mathcal{B} \times \mathcal{B}$ by \begin{equation*}
 	\mu (x,y) = xy
 	\end{equation*} is continuous.
 \end{remark}\par
 We note that the submultiplicativity of the norm ensures that $\|e\| \geq 1$. However, the below proposition shows that we can suppose that  $\|e\|=1$.
 \begin{proposition}
 	Let $(\mathcal{B},\|.\|)$ be a Banach algebra, then there is $\|.\|^{'}$ equivalent to $\|.\|$ so that $(\mathcal{B},\|.\|^{'})$ is a Banach algebra and $\|e\|^{'} = 1$.
 \end{proposition}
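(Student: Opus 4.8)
The plan is to renorm $\mathcal{B}$ by means of its left regular representation. For each $x \in \mathcal{B}$ I would introduce the left multiplication operator $L_x : \mathcal{B} \to \mathcal{B}$ defined by $L_x(y) = x.y$, and set the candidate norm
\begin{equation*}
\|x\|' := \sup_{\|y\| \le 1} \|x.y\| = \|L_x\|_{\mathrm{op}},
\end{equation*}
the operator norm of $L_x$ regarded as a linear map on the Banach space $(\mathcal{B},\|.\|)$. The submultiplicativity \eqref{e1} immediately yields $\|L_x\|_{\mathrm{op}} \le \|x\|$, so every $L_x$ is a bounded operator and $\|.\|'$ is well defined and finite.

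First I would verify that $\|.\|'$ is a genuine norm on $\mathcal{B}$. Homogeneity and the triangle inequality are simply inherited from the operator norm. The only point that genuinely uses the algebraic structure is positive definiteness: if $\|x\|' = 0$ then $L_x = 0$, whence $x = x.e = L_x(e) = 0$; here the existence of the unit $e$ is essential, and this is really the only place where a hypothesis does any work. Submultiplicativity of $\|.\|'$ then follows from associativity of the product, which gives $L_{x.y} = L_x \circ L_y$, combined with submultiplicativity of the operator norm:
\begin{equation*}
\|x.y\|' = \|L_{x.y}\|_{\mathrm{op}} = \|L_x \circ L_y\|_{\mathrm{op}} \le \|L_x\|_{\mathrm{op}}\,\|L_y\|_{\mathrm{op}} = \|x\|'\,\|y\|'.
\end{equation*}
Evaluating at the unit gives $L_e = \mathrm{id}_{\mathcal{B}}$, so that $\|e\|' = 1$, which is precisely the desired normalization.

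It remains to establish equivalence of the two norms and completeness in the new one. One inequality is the bound $\|x\|' \le \|x\|$ already recorded. For the reverse, evaluating $L_x$ at $e$ gives $\|x\| = \|x.e\| = \|L_x(e)\| \le \|L_x\|_{\mathrm{op}}\,\|e\| = \|e\|\,\|x\|'$, so that
\begin{equation*}
\frac{1}{\|e\|}\,\|x\| \le \|x\|' \le \|x\|, \qquad x \in \mathcal{B}.
\end{equation*}
Hence $\|.\|'$ and $\|.\|$ induce the same topology, and since $(\mathcal{B},\|.\|)$ is complete by hypothesis, so is $(\mathcal{B},\|.\|')$. Being a complete submultiplicative normed algebra with $\|e\|' = 1$, the pair $(\mathcal{B},\|.\|')$ is the required Banach algebra. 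I do not anticipate a real obstacle here: the argument is entirely routine once the left regular representation is introduced, and the sole subtle point is the use of the unit to secure positive definiteness of the renormed structure.
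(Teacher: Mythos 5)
Your proof is correct and takes essentially the same approach as the paper: both renorm $\mathcal{B}$ by $\|x\|' = \sup_{\|y\|\leq 1}\|x.y\|$, i.e. the operator norm of the left regular representation. The only divergence is the final step, where you get equivalence directly from the two-sided bound $\frac{1}{\|e\|}\|x\| \leq \|x\|' \leq \|x\|$, while the paper instead verifies completeness of $(\mathcal{B},\|.\|')$ by hand and appeals to the open mapping theorem -- your route is the cleaner one, since the explicit inequalities make both the equivalence and the transfer of completeness immediate.
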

 \begin{proof}
 	
 	Given $x \in \mathcal{B}$, we define $$\|x\|^{'}= sup\{\|xy\| , \|y\| \leq 1\}$$ It is clear that $\|.\|^{'}$ is a submultiplicative norm on $\mathcal{B}$. Moreover, $$\|x\|= \|x.\frac{e}{\|e\|}\| \leq \|x\|^{'}$$ so that $\|e\|. \|x\|\leq \|x\|^{'}$. Therefore, if $(x_{n})$ Cauchy in $(\mathcal{B},\|.\|^{'})$ then $(x_{n})$ is Cauchy in $(\mathcal{B},\|.\|)$ hence convergent to some  $x$ in $\mathcal{B}$. Therefore $(x_n,y)$ converges to $xy$ and so $(x_{n})$converges to x in $\|.\|^{'}$. As $\|x\| \leq \frac{1}{\|e\|} \|x\|^{'}$, for all $x \in \mathcal{B}$, by an application of the open mapping theorem, the two norms are equivalent.
 \end{proof}\par
 For convenience, we memorize and rearrange the conditions needed for Banach algebra.
 \begin{definition}:
 	Let $(\mathcal{B},\|.\|)$ be a Banach space then it is a Banach algebra if there is a multiplication map $\mathcal{B} \times \mathcal{B} \longmapsto \mathcal{B}$ satisfying the following properties: $\forall x,y,z \in \mathcal{B}$ and $a \in \mathbb{C}$
 	\begin{enumerate}
 		\item $(xy)z=x(yz)$
 		\item $(x+y)z=xz+xy$
 		\item$x(y+z)=xy+xz$
 		\item$a(xy)=(ax)y=x(ay)$
 		\item$\|xy\| \leq \|x\| \|y\|$
 	\end{enumerate}
 	
 	Moreover, if there is a unit element e: $ex=xe=x$ $\forall x \in \mathcal{B}$ and $ \|e\| = 1$ then $\mathcal{B}$ is a unital Banach algebra.
 \end{definition}\par
 The following are some standard examples of Banach algebras in operator theory and functional spaces.
 \begin{example}\label{ex50}
 	Let $\mathcal{E}$ be a Banach space over $\mathbb{C}$ and denote by \begin{equation*}
 	\mathcal{L}(\mathcal{E}):=\{T:\mathcal{E} \longmapsto \mathcal{E} \mid T\ is\ linear\ and\ bounded\}.
 	\end{equation*} On $\mathcal{L}(\mathcal{E})$ the addition is given by point wise addition, the multiplication is a composition between two operators, i.e.\begin{enumerate}
 		\item $(T+S)(f)= Tf +Sf$
 		\item $(\lambda T)(f)= \lambda(Tf)$
 		\item $(T.S)f = T(Sf)$
 	\end{enumerate}
 	We consider the operator norm on $\mathcal{L}(\mathcal{E})$, i.e. \begin{equation*}
 	\|T\|_{op}=sup \{\frac{\|Tf\|}{\|f\|} \mid f \in \mathcal{E}\ and\ f \neq 0\}
 	\end{equation*} since $\mathcal{E}$ is Banach then $(\mathcal{L}(\mathcal{E}), \|.\|_{op})$ is a Banach space. Moreover the identity map on $\mathcal{E}$ is the unit element with $\|id\|_{op}=1$. Furthermore given $T.S$ in $\mathcal{L}(\mathcal{E})$ and $f \in \mathcal{E}\ with\ f \neq 0$, we have \begin{equation*}
 	\frac{\|T.Sf\|}{\|f\|} \leq sup_{Sf \neq 0} \frac{\|T.Sf\|}{\|Sf\|}.sup_{\|f\| \neq 0}\frac{\|Sf\|}{\|f\|}
 	\end{equation*} This shows that $\|TS\|_{op} \leq \|T\|_{op}\|S\|_{op}$ i.e. $\|.\|_{op}$ is submultiplicative. It follows that $(\mathcal{L}(\mathcal{E}),\|.\|_{op})$ is a Banach algebra. The fact that $\mathcal{L}(\mathcal{E})$ is Banach is standard since $\mathcal{E}$ is Banach 
 \end{example}\par
 The below example is a particular case of the previous one in the finite dimensional case.
 \begin{example}
 	Let $n \in \mathbb{N}$ and denote by $\mathcal{M}_{n}(\mathbb{C})$ the set of all square matrices whose entries are in $\mathbb{C}$. It is well known that $\mathcal{M}_{n}(\mathbb{C})$ coincides with the set of all endomorphisms on $\mathbb{C}^{n}$. Given a norm $\|.\|$ on $\mathbb{C}^{n}$, we consider the induced norm on $\mathcal{M}_{n}(\mathbb{C})$ given by \begin{equation*}
 	\|A\|_{ind}= sup_{x \neq 0} \frac{\|Ax\|}{\|x\|}
 	\end{equation*} In fact there is an isometric isomorphism between $(\mathcal{M}_{n}(\mathbb{C}),\|.\|_{ind})\ and\ (\mathcal{L}(\mathbb{C}^{n}),\|.\|_{op})$, hence $\mathcal{M}_{n}(\mathbb{C})$ is a unital Banach algebra with $e=I_n$ the identity matrix. In particular, $\mathbb{C}$ under the usual addition and multiplication by complex numbers (with the norm being the modulus) is a Banach algebra.
 \end{example}
 \begin{remark}
 	We consider the Frobenius norm $\|.\|_{F}$ on $\mathcal{M}_{n}(\mathbb{C})$ given by \begin{equation*}
 	\|A\|_{F}=\sqrt{\sum_{i=1}^{n}\sum_{j=1}^{n}|a_{ij}|^2},
 	\end{equation*} where $A=(a_{ij}) \in \mathcal{M}_{n}(\mathbb{C})$. The submultiplicativity of the Frobenius norm insures that $(\mathcal{M}_{n}(\mathbb{C}),\|.\|_{F})$ is a unital Banach algebra, but $\|e\|_{F} =\|In\|_{F} =\sqrt{n} >\ 1$ for $n\ >\ 1$.
 \end{remark}
 \begin{example}\label{ex4}
 	Let $\mathcal{W}$ be a non-empty set and $\mathcal{L}^{\infty}(\mathcal{W})$ be the space of all bounded complex valued functions defined on $\mathcal{W}$. Under the point wise addition and multiplication, $\mathcal{L}^{\infty}(\mathcal{W})$ is a unital Banach algebra with the supremum norm given by \begin{equation*}
 	\|f\|_{\infty}=\ sup_{x\in \mathcal{W}}|f(x)|
 	\end{equation*} Indeed, given a Cauchy sequence $\{f_{n}\}$ in $(\mathcal{L}^{\infty}(\mathcal{W}),\|.\|_{\infty})$ then for each $x \in \mathcal{W}$ we have $\{f_{n}(x)\}$ is Cauchy in $\mathbb{C}$ and hence convergent.\\
 	If we set $f(x)=\ \lim f_{n}(x)$, then following the boundedness of the Cauchy sequence $\{f_{n}\}$ we obtain the boundedness f and the convergence in the supremum norm.
 \end{example}
 \begin{definition}\label{d3}
 	Let $\mathcal{A}$ be an algebra and $\mathcal{A}_1$ be a non-empty subset of $\mathcal{A}$. $\mathcal{A}_1$ is called a subalgebra of $\mathcal{A}$ if it is an algebra in its own. In other words, $\mathcal{A}_1$ is a subalgebra if it is closed under the operations of scalar multiplication, addition and multiplication defined on $\mathcal{A}$. So that given $x,y\ \in \mathcal{A}_1$ and $\alpha \in \mathbb{C}$, we have:\begin{enumerate}
 		\item $x+y\in \mathcal{A}_1$
 		\item $\alpha x \in \mathcal{A}_1$
 		\item $xy\ \in \mathcal{A}_1$
 	\end{enumerate}
 \end{definition}
 \begin{remark}
 	It is direct that if $\mathcal{S}$ is a non-empty subset of an algebra $\mathcal{A}$, then the intersection of all subalgebras containing $\mathcal{S}$ is the minimal subalgebra containing $\mathcal{S}$.  It is often called the subalgebra generated by $\mathcal{S}$.
 \end{remark}\par
 The following remark is important for the construction of several Banach algebras.
 \begin{remark}
 	Given $(\mathcal{A},\|.\|)$  a Banach algebra and $\mathcal{A}_1$ a subalgebra of $\mathcal{A}$ which is closed in $(\mathcal{A},\|.\|)$, then $(\mathcal{A}_1,\|.\|_{\mathcal{A}_1})$ is also a Banach algebra.
 \end{remark}
 \begin{example}
 	Let $\mathcal{X}$ be a locally compact Hausdorff space, the space of all bounded and continuous functions on $\mathcal{X}$ with values in $\mathbb{C}$, denoted by $\mathcal{C}_b(\mathcal{X})$, is a closed subset of $\mathcal{L}^{\infty}(\mathcal{X})$. Hence $\mathcal{C}_b(\mathcal{X})$ is a unital algebra in its own. Note that if $\mathcal{X}$ is a compact subset of $\mathbb{R}$ $(or\ \mathbb{C})$, then $\mathcal{C}_b(\mathcal{X})$ coincides with $\mathcal{C}_0(\mathcal{X})$, the space of all continuous functions of $\mathcal{X}$.
 \end{example}\par
 Another example on the disk algebra.
 \begin{example}
 	Let $\mathcal{D}$ denote the unit open disk in $\mathbb{C}$. We write $\mathcal{H}(\mathcal{D})$ to denote the set of holomorphic functions in $\mathcal{D}$ and has continuous extension to $\overline{\mathcal{D}}$. As the uniform limit of holomorphic functions is also holomorphic, then $\mathcal{H}(\mathcal{D})$ is closed in $\mathcal{C}_b(\overline{\mathcal{D}})$ and hence a unital Banach algebra.
 \end{example}\par
 The uniform convergence doesn't ensure differentiability and thus we can not obtain a Banach algebra. This is ensured in the following example. 
 \begin{example}
 	$(\mathcal{C}^{'}([-1,1]),\|.\|_{\infty})$is not Banach. Indeed, consider the sequence of functions $\{f_{n}\} \subset \mathcal{C}^{'}([-1,1])$ given by \begin{equation*}
 	f_{n}(x)=\sqrt{\frac{1}{n^2}+x^2}
 	\end{equation*} The sequence converges uniformly to $|x|$. Hence $(\mathcal{C}^{'}([-1,1]),\|.\|_{\infty})$ is not closed in $\mathcal{L}^{\infty}([-1,1])$.
 \end{example}
 \begin{remark}\label{r567}
 	On $\mathcal{C}^{'}([-1,1])$, consider \begin{equation*}
 	\|f\|=\|f\|_{\infty}+\|f^{'}\|_{\infty}
 	\end{equation*} then $(\mathcal{C}^{'}([-1,1]),\|.\|)$ becomes a Banach algebra. We note that this norm is a submultiplicative norm since 
 	\begin{align*}
 	\|fg\|&=\|fg\|_{\infty}+\|(fg)^{'}\|_{\infty}\\
 	&\leq \|f\|_{\infty}.\|g\|_{\infty}+ \|f^{'}g+fg^{'}\|_{\infty}\\
 	&\leq \|f\|_{\infty}\|g\|_{\infty}+\|f^{'}\|_{\infty}\|g\|_{\infty}+\|f\|_{\infty}\|g^{'}\|_{\infty}\\
 	&\leq  \|f\|_{\infty}\|g\|_{\infty}+\|f^{'}\|_{\infty}\|g\|_{\infty}+\|f\|_{\infty}\|g^{'}\|_{\infty}+\|f^{'}\|_{\infty}\|g^{'}\|_{\infty}\\
 	&=(\|f\|_{\infty}+\|f^{'}\|_{\infty})(\|g\|_{\infty}+\|g^{'}\|_{\infty})\\
 	&=\|f\|.\|g\|.
 	\end{align*}
 \end{remark}
\section{The spectrum in Banach algebras}
This section is devoted to introduce the spectrum of an element in a unit Banach algebra. The following properties are to be used for the invariance under holomorphic function calculus.

 \begin{definition}
 	Let $\mathcal{A}$ be a unital algebra with unit e. Given $a \in \mathcal{A}$, we say that a is invertible if there exist $b \in \mathcal{A}$ such that $ab=ba=e$. The set of all invertible elements in $\mathcal{A}$ is denoted by $Inv(\mathcal{A})$, i.e.\begin{equation*}
 	Inv(\mathcal{A})=\{a \in \mathcal{A} \mid a\ is\ invertible\}.
 	\end{equation*}
 	 For a given $a \in Inv(\mathcal{A})$, we write $a^{-1}$ to denote the inverse of a.
 \end{definition}\par
 We recall now the definition of the spectrum of an element in a unital algebra
 \begin{definition}
 	Let $\mathcal{A}$ be a unital algebra with $e=e_{\mathcal{A}}$ the identity element. For a given $a \in \mathcal{A}$, the spectrum of a in $\mathcal{A}$ denoted by $\sigma_{\mathcal{A}}(a)$ is given by \begin{equation*}
 	\sigma_{\mathcal{A}}(a)=\{\lambda \in \mathbb{C}\mid \lambda e_{\mathcal{A}}-a \notin Inv(a)\}
 	\end{equation*}
 \end{definition}\par
 Below is a first example on the spectrum for the endomorphism on $\mathbb{C}^{n}$.
 \begin{example}
 	Given $\mathcal{A} \in \mathcal{M}_{n}(\mathbb{C})$, then \begin{align*}
 	\lambda I_n - \mathcal{A}\ is\ invertible &\Longleftrightarrow (\lambda I_n- \mathcal{A})\neq 0\\ &\Longleftrightarrow \lambda\ not\ eigenvalue\ of\ \mathcal{A}.
 	\end{align*} So that \begin{equation*}
 	\sigma_{\mathcal{M}_{n}(\mathbb{C})}(\mathcal{A})=\{\lambda\mid \lambda is\ an\ eigenvalue\ of\ \mathcal{A}\}.
 	\end{equation*}
 \end{example}
 \begin{example}
 	Let $\mathcal{X}$ be a compact subset of $\mathbb{C}$ and $f \in \mathcal{C}(\mathcal{X})$, then \begin{align*}
 	\lambda -\ f\ \in Inv(\mathcal{C}(\mathcal{X}))& \Longleftrightarrow \exists \ g \in \mathcal{C}(\mathcal{X})\ such\ that\ (\lambda-f)g=1\\ &\Longleftrightarrow g=\frac{1}{\lambda -f} \in \mathcal{C}(\mathcal{X}).
 	\end{align*}
 \end{example}\par
 Note that by Von-Neumann Lemma, we know that if $a \in \mathcal{B}$ satisfying $\|a\|<1$ then $(e-a) \in Inv(\mathcal{B})$ and $(e-a)^{-1} =\ \sum_{n=0}^{\infty}\ a^{n}$. The mentioned lemma together with the submultiplicative property of the norm on a unital Banach algebra insure that the set of invertible elements is open.
 \begin{proposition}
 	Let $(\mathcal{B},\|.\|)$ be a unital algebra, then $(Inv(\mathcal{B}),\|.\|)$ is open in $(\mathcal{B},\|.\|)$.
 	\begin{proof}
 	Given $a \in\ Inv(\mathcal{B})$, we claim that the ball $B(a,\frac{1}{\|a^{-1}\|})$ is contained in $Inv(\mathcal{B})$. In fact, if $\|b-a\|\ <\ \frac{1}{\|a^{-1}\|}$ then $\|a^{-1}\|\|b-a\|\ <\ 1$ so $\|a^{-1}b-e\|\ <\ 1$. Hence by Von-Neumann Lemma, $a^{-1}b$ is invertible and therefore $b \in Inv(\mathcal{B})$.
 	\end{proof}
 \end{proposition}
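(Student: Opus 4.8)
The plan is to exhibit, around any invertible element, an explicit open ball consisting entirely of invertible elements, using the Neumann (Von Neumann) series recalled just above the statement. So fix $a \in Inv(\mathcal{B})$; I would aim to prove that the ball $B\!\left(a,\frac{1}{\|a^{-1}\|}\right)$ is contained in $Inv(\mathcal{B})$. Since $a$ is an arbitrary invertible element, this immediately gives that every point of $Inv(\mathcal{B})$ is interior, hence that $Inv(\mathcal{B})$ is open. Note first that the radius is well defined: $a^{-1}\neq 0$ because $a a^{-1}=e\neq 0$, so $\|a^{-1}\|>0$.

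First I would take an arbitrary $b \in \mathcal{B}$ with $\|b-a\| < \frac{1}{\|a^{-1}\|}$ and factor it through $a$ by writing $b = a - (a-b) = a\big(e - a^{-1}(a-b)\big)$. The purpose of this factorization is to reduce invertibility of $b$ to invertibility of the single factor $e - a^{-1}(a-b)$, since $a$ is already invertible and a product of two invertible elements is again invertible (its inverse being the reversed product of the inverses).

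Next I would control the perturbation. Setting $c = a^{-1}(a-b)$, submultiplicativity of the norm gives $\|c\| = \|a^{-1}(a-b)\| \le \|a^{-1}\|\,\|a-b\| < 1$, where the last strict inequality is exactly the hypothesis on the radius of the ball. Because $\|c\|<1$, the Von Neumann Lemma applies and yields that $e - c$ is invertible, with $(e-c)^{-1} = \sum_{n=0}^{\infty} c^{n}$.

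Finally I would conclude that $b = a(e-c)$ is a product of invertible elements, hence $b \in Inv(\mathcal{B})$ (explicitly $b^{-1} = (e-c)^{-1}a^{-1}$). As $b$ ranged over the whole ball $B\!\left(a,\frac{1}{\|a^{-1}\|}\right)$, that ball sits inside $Inv(\mathcal{B})$, which is the desired conclusion. I expect no genuine obstacle here, as the argument is essentially an application of the already-established Neumann series; the only points deserving care are the algebraic factorization $b = a\big(e-a^{-1}(a-b)\big)$ that isolates the near-identity factor, and the completeness of $\mathcal{B}$, which is what makes the Neumann series converge and is implicit in treating $\mathcal{B}$ as a Banach algebra.
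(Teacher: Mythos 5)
Your proposal is correct and is essentially the paper's own argument: your near-identity factor $e-c = e - a^{-1}(a-b)$ is exactly the element $a^{-1}b$ that the paper shows lies within distance $1$ of $e$, and both proofs then invoke the Von-Neumann Lemma and the same ball of radius $\frac{1}{\|a^{-1}\|}$. The only difference is that you spell out the factorization $b=a(e-c)$ and the explicit inverse, which the paper leaves implicit.
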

 \begin{remark}\label{r1}
 	Following the above proof and the fact that $e \in Inv(\mathcal{B})$, we proved $B(e,1) \subset\ Inv(\mathcal{B})$.
 \end{remark}\par
 The fact that $Inv(\mathcal{B})$ is open in $\mathcal{B}$ insures that the spectrum is a closed subset of $\mathbb{C}$. Moreover, the spectrum is a compact subset of $\mathbb{C}$ as stated in the below proposition.
 \begin{proposition}\label{p101}
 	Let $\mathcal{B}$ be a unital subalgebra. Given $a \in \mathcal{B}$, the spectrum of a, $\sigma_{\mathcal{B}}(a)=\sigma(a)$, is compact subset of $\mathbb{C}$.\\
 	\begin{proof}
 			We construct an isometry between the complex plane and the Banach algebra $\mathcal{B}$. Let $\varphi:\ \mathbb{C} \longrightarrow \mathcal{B}$ be the map defined by \begin{equation*}
 			\varphi(\lambda)=\lambda e-a
 			\end{equation*} then 
 			\begin{align*}
 			\|\varphi(\lambda)-\varphi(\mu)\|&=\|(\lambda e-a)-(\mu e-a)\|\\ &=\|(\lambda - \mu)e\|\ \\ &=\ |\lambda -\mu |.
 			\end{align*}
 		Hence $\varphi$ is an isometry and $\varphi^{-1}(Inv(\mathcal{B}))=\{\lambda\mid \lambda-a\ is\ invertible\}=\mathbb{C}$. As $Inv(\mathcal{B})$ is open then $\sigma_\mathcal{B}(a)$ is closed. Moreover, given $|\lambda|\ >\ \|a\|$ then by Von-Neumann Lemma, we know that $e-\lambda^{-1}a$ is invertible. Hence $\lambda \notin \sigma_\mathcal{B}(a)$ and so $\sigma_\mathcal{B} \subset B(0,\|a\|)$ which is bounded, so that $\sigma_\mathcal{B}(a)$ is closed and bounded, hence compact.
 	\end{proof}
 
 \end{proposition}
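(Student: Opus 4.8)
The plan is to split the argument into two independent parts, closedness and boundedness, and then invoke the Heine--Borel theorem: a subset of $\mathbb{C}\cong\mathbb{R}^2$ is compact precisely when it is closed and bounded. Both parts will lean on facts already established in the excerpt, namely that $Inv(\mathcal{B})$ is open and the Von Neumann lemma.

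First I would establish closedness by realizing the resolvent set as the continuous preimage of an open set. Define $\varphi:\mathbb{C}\to\mathcal{B}$ by $\varphi(\lambda)=\lambda e-a$. Exactly as in the isometry computation used above, $\|\varphi(\lambda)-\varphi(\mu)\|=|\lambda-\mu|$, where the normalization $\|e\|=1$ is used, so $\varphi$ is continuous. By definition the resolvent set $\mathbb{C}\setminus\sigma_{\mathcal{B}}(a)$ equals $\varphi^{-1}(Inv(\mathcal{B}))$. Since $Inv(\mathcal{B})$ is open in $\mathcal{B}$ by the preceding proposition, its preimage under the continuous map $\varphi$ is open in $\mathbb{C}$, and hence $\sigma_{\mathcal{B}}(a)$ is closed.

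Next I would prove boundedness using the Von Neumann lemma. For any $\lambda$ with $|\lambda|>\|a\|$, write $\lambda e-a=\lambda\big(e-\lambda^{-1}a\big)$. Then $\|\lambda^{-1}a\|=|\lambda|^{-1}\|a\|<1$, so the Von Neumann lemma guarantees that $e-\lambda^{-1}a$ is invertible, with inverse $\sum_{n=0}^{\infty}(\lambda^{-1}a)^{n}$. Multiplying by the nonzero scalar $\lambda$ preserves invertibility, so $\lambda e-a\in Inv(\mathcal{B})$ and therefore $\lambda\notin\sigma_{\mathcal{B}}(a)$. This shows $\sigma_{\mathcal{B}}(a)\subseteq\overline{B(0,\|a\|)}$, so the spectrum is bounded.

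Combining the two steps, $\sigma_{\mathcal{B}}(a)$ is a closed and bounded subset of $\mathbb{C}$, hence compact. There is no deep obstacle here; the only points demanding care are the openness of $Inv(\mathcal{B})$, which is precisely the content of the previous proposition and rests on the Von Neumann lemma together with submultiplicativity, and the use of $\|e\|=1$ to make $\varphi$ an isometry. Everything else reduces to a routine continuity argument and a geometric-series estimate.
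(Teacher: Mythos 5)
Your proposal is correct and follows essentially the same route as the paper's own proof: closedness via the isometry $\varphi(\lambda)=\lambda e-a$ and the openness of $Inv(\mathcal{B})$, and boundedness via the Von Neumann lemma, concluding with Heine--Borel. You are in fact slightly more careful than the paper at the boundedness step, where you make explicit the factorization $\lambda e-a=\lambda(e-\lambda^{-1}a)$ and the fact that multiplying by a nonzero scalar preserves invertibility, a detail the paper leaves implicit.
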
\par

\chapter{Invariance under holomorphic calculus}
	In this chapter we study invertible elements in algebras and their relation to the invertibility in other algebras. The notion of algebras which are invariant under holomorphic calculus is motivated by the Wiener's Tauberian theorem. \par
	Through out the chapter  $\mathcal{B}$ will denote a unital Banach algebra.
 \section{Spectral invariant subalgebras}
  Given a Banach subalgebra $\mathcal{B}$ of a unital algebra $\mathcal{A}$. We are interested in the relationship of elements of $\mathcal{B}$ which are invertible in $\mathcal{A}$ and whether the inverse is in $\mathcal{B}$ or not. This is usually known as the notion of full sub algebras or closed invariant subalgebras. In the following , we aim to study the invertibility of elements in Banach spaces which has a tremendous importance in solving operator equations. Following a motivation from Wiener's theorem, it turns out that invertibility in one Banach algebra has a relationship to invertibility in another one. We start by the following observation.
 \begin{remark}
 	If $\mathcal{A}\ and\ \mathcal{B}$ are two unital algebras with $\mathcal{B} \subset \mathcal{A}$, then $e_{\mathcal{B}}$, the unit element of $\mathcal{B}$, need not to be the unit element of $\mathcal{A}$. For example, $\mathcal{B}=\{\begin{pmatrix} 
 	\lambda & 0 \\
 	0 & 0 
 	\end{pmatrix},\ \lambda \in \mathbb{C}\}$ is a sub algebra of $\mathcal{M}_{n}(\mathbb{C})$ whose unit is $\begin{pmatrix} 
 	1 & 0 \\
 	0 & 0 
 	\end{pmatrix}$.
 \end{remark}
 \begin{definition}\label{d1}
 	Let $(\mathcal{B},\|.\|_{\mathcal{B}})$ and $(\mathcal{A},\|.\|_{\mathcal{A}})$ be two Banach algebras with $\mathcal{B} \subset\mathcal{A}$ and having common unit e. We say that $\mathcal{B}$ is  spectral invariant in $\mathcal{A}$ if for any $x\ \in \mathcal{B}$ which is invertible in $\mathcal{A}$, the element x is also invertible in $\mathcal{B}$, that is \begin{equation*}
 	Inv(\mathcal{A})\cap \mathcal{B}\ =\ Inv(\mathcal{B})
 	\end{equation*} In other words, given $x \in \mathcal{B}$ such that $x^{-1}$ exists in $\mathcal{A}$, then $x^{-1} \in \mathcal{B}$.
 \end{definition}
 \begin{remark}
 	In $\mathcal{K}$ theory, the notion of spectral invariant subalgebra $\mathcal{B}$ of $\mathcal{A}$ is  replaced by the term $\mathcal{B}$ is closed inverse in $\mathcal{A}$ and the couple $(\mathcal{A},\mathcal{B})$ is called Wiener pair.
 \end{remark}
 \begin{example}
 	Let $\mathcal{B}$ denote the set of all upper triangular matrices in $\mathcal{A}=\mathcal{M}_{n}(\mathbb{C})$. As a product of two upper triangular matrices is also an upper triangular and $\mathcal{B}$ is a vector subspace of $\mathcal{A}$, then $\mathcal{B}$ is a subalgebra of $\mathcal{A}$. Moreover, $(\mathcal{A},\mathcal{B})$ is a Wiener pair. Indeed, Let $\mathcal{C}$ be an upper triangular matrix with $det(\mathcal{C}) \neq 0$. Using Gaussian elimination method, it follows that $\mathcal{C}^{-1}$ is also an upper triangular and hence $\mathcal{C}^{-1} \in \mathcal{B}$. This shows that $\mathcal{B}$ is spectral invariant in $\mathcal{A}$.
 \end{example}\par
 Below we provide an example of spectral invariant subalgebra $(\mathcal{B},\|.\|_{\mathcal{B}})$ in $(\mathcal{A},\|.\|_{\mathcal{A}})$ with $\|.\|_{\mathcal{B}}$ being induced from $\|.\|_{\mathcal{A}}$.
 \begin{example}
 	Let $\mathcal{B}=(\mathcal{C}^{'}([-1,1]),\|.\|)$ and $\mathcal{A}=(\mathcal{C}([-1,1]),\|.\|_{\infty})$ with $\|f\|=\|f\|_{\infty}+\|f^{'}\|_{\infty},\ f\ \in \mathcal{B}$ and $\|g\|_{\infty}=\ sup |g(x)|,\ x\in [-1,1]$, then $\mathcal{B}$ is spectral invariant in $\mathcal{A}$. Indeed, given $f \in \mathcal{B}$ such that $f(\epsilon) \neq 0\ \forall \epsilon \in [-1,1]$. Then $\frac{1}{f}$ is differentiable and $(\frac{1}{f})^{'}=-\frac{f^{'}}{f^2}$ is continuous, hence $\frac{1}{f} \in \mathcal{B}$. 
 \end{example}\par
 The following example provides a motivation for the study of the spectral invariant algebras and is due to Wiener.
 \begin{example}
 	Let $\mathcal{T}$ denotes the circle group, i.e. $\mathcal{T}=\mathbb{R}/2\pi \mathbb{Z}$ and consider $\mathcal{B}(\mathcal{T})$ the space of all functions whose Fourier series is absolutely convergent i.e. $f \in \mathcal{B}(\mathcal{T})$ if \begin{equation*}
 	f(t)= \sum_{k \in \mathbb{Z}}a_{k}e^{ikt}\ with\ a=(a_{k})_{k} \in l^1(\mathbb{Z})
 	\end{equation*} and $\|f\|= \|a\|_{1}=\sum|a_{k}|$.\\
 	Then $\mathcal{B}(\mathcal{T})$ is a Banach algebra under point wise multiplication. We remark the submultiplicativity follows from the convolution property:
 	\begin{align*}
 	 f(t)g(t)&=\sum_{k \in \mathbb{Z}}a_{k}e^{ikt}.\sum_{j \in \mathbb{Z}}b_{j}e^{ijt}\\
 	  &=\sum_{k,j \in \mathbb{Z}}a_{k}b_{j}e^{i(k+j)t}=\sum_{n \in \mathbb{Z}}\sum_{k \in \mathbb{Z}}a_{k}b_{n-b}e^{int} 
 	\end{align*}
 	So that \begin{align*}
 	\|fg\|&=\sum_{n \in \mathbb{Z}}|\sum_{k\in \mathbb{Z}}a_{k}b_{n-k}|\\
 	&\leq \sum_{n \in \mathbb{Z}}\sum_{k\in \mathbb{Z}}|a_{k}b_{n-k}|\\
 	&= \|fg\|.
 	\end{align*}
 	Consider $\mathcal{A}(\mathcal{T})=\mathcal{C}(\mathcal{T})$ the algebra of continuous functions on the unit circle. Wiener theorem shows that if $f \in \mathcal{B}(\mathcal{T})$ satisfying that f is nowhere zero, then $\frac{1}{f(t)}$ admits an absolutely convergent Fourier series. In other words, given $f \in \mathcal{B}(\mathcal{T})$ such that f is invertible in $\mathcal{A}(\mathcal{T})$ then f is also invertible in $\mathcal{B}(\mathcal{T})$. Thus $\mathcal{B}(\mathcal{T})$ is spectral invariant in $\mathcal{A}(\mathcal{T})$.
 \end{example}\par
 The spectral invariance of $\mathcal{B}$ in $\mathcal{A}$ is closely related to the spectrum in $\mathcal{B}$ and that in $\mathcal{A}$. This is given in the below theorem.
 \begin{theorem}\label{t3}
 	Let $(\mathcal{B},\|.\|_{\mathcal{B}})\ and\ (\mathcal{A}, \|.\|_{\mathcal{A}})$ be two Banach algebras with $\mathcal{B} \subset \mathcal{A}$ and having common unit e. Then $\mathcal{B}$ is spectral invariant in $\mathcal{A}$  if and only if $\sigma_{\mathcal{B}}(b)=\sigma_{\mathcal{A}}(b)$ for all $b \in \mathcal{B}$. i.e. given $b \in \mathcal{B}$ the spectrum of $b$ in $\mathcal{B}$ coincides with the spectrum of $b$ in $\mathcal{A}$.\\
 \begin{proof}
 		For the necessary condition, let $b \in \mathcal{B}$ be fixed. Given $\lambda \in \sigma_{\mathcal{B}}(b)$ then $(\lambda e-b) \notin Inv(\mathcal{B})$, hence $(\lambda e -b) \notin Inv(\mathcal{A})$ so $\lambda \in \sigma_{\mathcal{A}}(b)$. This shows $\sigma_{\mathcal{B}}(b) \subseteq \sigma_{\mathcal{A}}(b)$. Conversely, if $\lambda \in \sigma_{\mathcal{A}}(b)$ then $(\lambda e-b) \in Inv(\mathcal{A})$ but $(\lambda e-b) \in \mathcal{B}$ so $(\lambda e-b) \in Inv(\mathcal{B})$ and so $\sigma_{\mathcal{A}}(b) \subseteq \sigma_{\mathcal{B}}(b)$.\\
 	For the sufficient condition, given $b \in \mathcal{B}$ with $b \in Inv(\mathcal{A})$ then $0 \notin \sigma_{\mathcal{A}}(b)$, so  $0 \notin \sigma_{\mathcal{B}}(b)$ and therefore $b \in Inv(\mathcal{B})$.
 \end{proof}	
  
 \end{theorem}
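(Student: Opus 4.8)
The plan is to reduce everything to the single observation that passing between invertibility and the spectrum is effected by the affine substitution $\lambda \mapsto \lambda e - b$, and that this substitution keeps us inside $\mathcal{B}$ precisely because $e \in \mathcal{B}$. First I would record the inclusion that costs nothing: since $\mathcal{B} \subseteq \mathcal{A}$ share the unit $e$, any $x \in \mathcal{B}$ invertible in $\mathcal{B}$ has its $\mathcal{B}$-inverse lying in $\mathcal{B} \subseteq \mathcal{A}$, so that same element is also an $\mathcal{A}$-inverse; hence $Inv(\mathcal{B}) \subseteq Inv(\mathcal{A}) \cap \mathcal{B}$ with no hypothesis at all. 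Applying this to $\lambda e - b$ yields $\sigma_{\mathcal{A}}(b) \subseteq \sigma_{\mathcal{B}}(b)$ for every $b \in \mathcal{B}$. All the content of the theorem therefore lives in the reverse inclusion $\sigma_{\mathcal{B}}(b) \subseteq \sigma_{\mathcal{A}}(b)$.

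For the forward implication I would assume spectral invariance, i.e. $Inv(\mathcal{A}) \cap \mathcal{B} = Inv(\mathcal{B})$, and fix $b \in \mathcal{B}$. If $\lambda \notin \sigma_{\mathcal{A}}(b)$ then $\lambda e - b \in Inv(\mathcal{A})$; but $\lambda e - b \in \mathcal{B}$ as well, so spectral invariance places it in $Inv(\mathcal{B})$, giving $\lambda \notin \sigma_{\mathcal{B}}(b)$. Taking the contrapositive delivers the missing inclusion $\sigma_{\mathcal{B}}(b) \subseteq \sigma_{\mathcal{A}}(b)$, and together with the free inclusion above this yields the equality of spectra for all $b \in \mathcal{B}$.

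For the converse I would assume $\sigma_{\mathcal{B}}(b) = \sigma_{\mathcal{A}}(b)$ for all $b \in \mathcal{B}$ and verify the defining condition of spectral invariance directly. Taking $x \in \mathcal{B}$ with $x \in Inv(\mathcal{A})$, we have $0 \notin \sigma_{\mathcal{A}}(x)$, hence by the assumed equality $0 \notin \sigma_{\mathcal{B}}(x)$, which is exactly $x \in Inv(\mathcal{B})$. Thus $Inv(\mathcal{A}) \cap \mathcal{B} \subseteq Inv(\mathcal{B})$, and the reverse containment is the free inclusion, so the two sets coincide.

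I do not expect a genuine obstacle, since the argument is a direct unwinding of the two definitions. The only point that needs care is keeping the logical direction of the hypothesis straight and noticing that exactly one of the two spectral inclusions, namely $\sigma_{\mathcal{A}}(b) \subseteq \sigma_{\mathcal{B}}(b)$, is automatic while the other encodes the whole statement; the cleanest way to avoid confusing them is to translate every invertibility question through the substitution $\lambda e - b$, which is legitimate exactly because $e$ is a common unit of $\mathcal{B}$ and $\mathcal{A}$.
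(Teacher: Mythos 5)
Your proof is correct and follows essentially the same route as the paper: both directions are handled by translating invertibility through the substitution $\lambda \mapsto \lambda e - b$, and the converse is the same observation that $0 \notin \sigma_{\mathcal{A}}(x)$ forces $0 \notin \sigma_{\mathcal{B}}(x)$. Your presentation is in fact slightly cleaner, since by isolating the automatic inclusion $Inv(\mathcal{B}) \subseteq Inv(\mathcal{A}) \cap \mathcal{B}$ up front you avoid the garbled direction in the paper's text, where the second inclusion is stated as ``if $\lambda \in \sigma_{\mathcal{A}}(b)$ then $(\lambda e - b) \in Inv(\mathcal{A})$,'' which contradicts the definition of the spectrum and is evidently a typo for the contrapositive argument you give.
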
\par
Below is an application of the above theorem.
 \begin{example}
 	Let $\mathcal{A}$ be the algebra of meromorphic on the complex plane. Denote by $\mathcal{B}$ the sub algebra of $\mathcal{A}$ formed by all entire functions. Then the function $f(z)=z \in \mathcal{B}$ has an inverse in $\mathcal{A}$ and not invertible in $\mathcal{B}$, so that $\mathcal{B}$ is not spectral invariant . Th above fact can be proven by noting $\sigma_{\mathcal{B}}(f)= \phi\ and\ \sigma_{\mathcal{A}}(f)=\mathbb{C}$ because for any $\lambda \in \mathbb{C}$, $(\lambda - z)^{-1}$ is meromorphic but not entire.
 \end{example}\par
\section{Holomorphic Banach algebras valued functions}
 \hskip0.5cm In this section we are interested in holomorphic functions with Banach algebra valued functions. We shall apply these concepts to relate spectral invariance and the invariance under holomorphic functional calculus.
\begin{definition}
	Let $\mathcal{A}$ be a unital Banach algebra over $\mathbb{C}$ and $\mathcal{U}$ be an empty subset of $\mathbb{C}$. Given a function $f: \mathcal{U} \longrightarrow \mathcal{A}$, we say that $f$ is holomorphic on $\mathcal{U}$ if for each $z_{0} \in \mathcal{U}$ there is $r>0$ and $\{c_{n}\} \subset \mathcal{A}$ such that \begin{equation*}
	f(z)=\sum_{n=0}^{\infty}c_{n}(z-z_{0})^{n},\ where\ |z-z_{0}|<r.
	\end{equation*}
\end{definition}\par
In the below, we show that the resolvent operator $R_{a}$ is holomorphic on the resolvent  set $\rho(a):=\mathbb{C} \setminus \sigma(a)$ which is an open set by Proposition \ref{p101}.
\begin{definition}
	Let $\mathcal{A}$ be a unital algebra with identity e. Given $a \in \mathcal{A}$ we denote $\rho(a)$ the resolvent set of a, i.e. \begin{equation*}
	\rho(a)=\{\lambda \in \mathbb{C}\mid \lambda e-a \in Inv(\mathcal{A})\}
	\end{equation*} The resolvent operator is the map defined on $\rho(a)$ with values in the algebra $\mathcal{A}$ and is given by \begin{equation*}
	R_{a}(z)=(ze-a)^{-1},\ for\ z\in \rho(a)
	\end{equation*}
\end{definition}
\begin{proposition}
	The resolvent operator $R_a:\rho(a)\longrightarrow\mathcal{A}$ is a holomorphic map.\\
	\begin{proof}
		Fixed $z_{0}\in \rho(a)$, since $\sigma(a)$ is a closed set then for all $z \in \rho(a)$ with $|z-z_{0}|<\frac{1}{\|(ze-a)^{-1}\|}$ we obtain \begin{equation*}
		\|(z-z_{0})(z_{0}e-a)^{-1}\|<1
		\end{equation*} On the one hand, by Von-Neumann Lemma, we obtain $(e-(z_{0}-z)(z_{0}e-a)^{-1})$ is invertible with
		\begin{align*}
		(e-(z_{0}-z)(z_{0}e-a)^{-1})^{-1}&=\sum_{n=0}^{\infty}(z_{0}-z)^{n}[(z_{0}e-a)^{-1}]^{n}\\
		&=\sum_{n=0}^{\infty}(z_{0}-z)^{n}[R_{a}(z_{0})]^{n}
		\end{align*}
		On the other hand, we plug $z_{0}e$ in the resolvent operator as follows
		\begin{align*}
		R_{a}(z)&=(ze-a)^{-1}\\
		&=[z_{0}e-a-(z_{0}e-ze)]^{-1}\\
		&=[(z_{0}e-a)(e-(z_{0}-z)(z_{0}e-1)^{-1})]^{-1}\\
		&=[(e-(z_{0}-z)(z_{0}e-1)^{-1})]^{-1}(z_{0}e-a)^{-1}\\
		&=[\sum_{n=0}^{\infty}(z_{0}-z)^{n}(R_{a}(z_{0}))^n]R_{a}(z_{0})\\
		&=\sum_{n=0}^{\infty}(z_{0}-z)^{n}(R_{a}(z_{0}))^{n+1}
		\end{align*}
		Taking $c_{n}=(R_{a}(z_{0}))^{n+1}$ it follows that $R_{a}$ is holomorphic on $\rho(a)$.
	\end{proof}
\end{proposition}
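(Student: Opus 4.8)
The plan is to fix an arbitrary point $z_0 \in \rho(a)$ and to produce a power series expansion of $R_a$ centered at $z_0$, convergent on a small disc around $z_0$ and with coefficients in $\mathcal{A}$; this is exactly the definition of an $\mathcal{A}$-valued holomorphic map given above. The engine of the whole argument is the Von-Neumann Lemma already recorded in the excerpt: if $u \in \mathcal{A}$ with $\|u\| < 1$, then $e - u$ is invertible and $(e-u)^{-1} = \sum_{n=0}^{\infty} u^n$.

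First I would use the openness of $\rho(a)$, which is a consequence of Proposition \ref{p101}, together with the algebraic factorization
\begin{equation*}
ze - a = (z_0 e - a) - (z_0 - z)e = (z_0 e - a)\bigl[e - (z_0 - z)(z_0 e - a)^{-1}\bigr].
\end{equation*}
The purpose of writing $ze - a$ in this form is to isolate a factor of the shape $e - u$ with $u = (z_0 - z)(z_0 e - a)^{-1} = (z_0 - z)R_a(z_0)$. Restricting attention to the disc $|z - z_0| < \|R_a(z_0)\|^{-1}$, submultiplicativity of the norm gives $\|u\| \leq |z_0 - z|\,\|R_a(z_0)\| < 1$, so the Von-Neumann Lemma applies and the bracketed factor is invertible. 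Inverting the product in reverse order then yields
\begin{equation*}
R_a(z) = \bigl[e - (z_0 - z)R_a(z_0)\bigr]^{-1}(z_0 e - a)^{-1} = \Bigl[\sum_{n=0}^{\infty}(z_0 - z)^n R_a(z_0)^n\Bigr]R_a(z_0) = \sum_{n=0}^{\infty}(z_0 - z)^n R_a(z_0)^{n+1}.
\end{equation*}
Writing $(z_0 - z)^n = (-1)^n (z - z_0)^n$ exhibits this as $\sum_{n=0}^{\infty} c_n (z - z_0)^n$ with $c_n = (-1)^n R_a(z_0)^{n+1} \in \mathcal{A}$, which is precisely the required form. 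Since $z_0 \in \rho(a)$ was arbitrary, $R_a$ is holomorphic on all of $\rho(a)$.

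The individual steps are routine once the Von-Neumann Lemma is available, so there is no deep obstacle here; the only points needing care are two bookkeeping matters. The first is confirming that the Neumann series converges in the norm of $\mathcal{A}$ and not merely formally: submultiplicativity gives the estimate $\|(z_0-z)^n R_a(z_0)^{n+1}\| \leq |z_0-z|^n \|R_a(z_0)\|^{n+1}$, whose right-hand side is a convergent geometric series on the chosen disc, so the expansion converges absolutely in $\mathcal{A}$. The second is tracking the factor $(-1)^n$ so that the exponents align with the $(z-z_0)^n$ normalization in the definition of holomorphy; this is purely cosmetic and does not affect the conclusion.
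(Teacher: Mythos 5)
Your proposal is correct and follows essentially the same route as the paper: the factorization $ze-a=(z_{0}e-a)\bigl[e-(z_{0}-z)(z_{0}e-a)^{-1}\bigr]$, the Von-Neumann Lemma, and the expansion $R_{a}(z)=\sum_{n=0}^{\infty}(z_{0}-z)^{n}R_{a}(z_{0})^{n+1}$. In fact you tidy up two slips in the paper's write-up, namely stating the disc condition with $\|R_{a}(z_{0})\|^{-1}$ rather than $\|(ze-a)^{-1}\|^{-1}$ and carrying the factor $(-1)^{n}$ so that the coefficients match the $(z-z_{0})^{n}$ normalization.
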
\par
We are now able to prove that the spectrum of any element is non-empty (and compact by Proposition \ref{p101}). For this we need a $\bf{Banach\ valued\ version}$ of Liouville's theorem.
\begin{theorem}
	Let $(\mathcal{A},\|.\|)$ be a Banach algebra. Given a Banach valued function $f:\mathbb{C}\longrightarrow \mathcal{A}$. Suppose that $f$ is bounded in the following sense: there is $M>0$ such that $\|f(z)\| \leq M$ for all $z \in \mathbb{C}$. Then there is $a_{0} \in \mathcal{A}$ such that $f(z)=a_{0}$ for all $z \in \mathbb{C}$ i.e. $f$ is a constant function. (cf. Theorem 3.1.2. in )
\end{theorem}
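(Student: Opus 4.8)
The plan is to reduce this Banach-algebra--valued statement to the classical scalar Liouville theorem by testing against continuous linear functionals, a standard application of the Hahn--Banach theorem. Implicit in the hypothesis (as the name ``Liouville'' and the intended use with the resolvent $R_a$ require) is that $f$ is holomorphic on all of $\C$ in the sense of the definition given above; without this the conclusion is false, so I take $f$ to be entire.

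First I would fix an arbitrary continuous linear functional $\varphi$ on $\mathcal{A}$, with topological dual $\mathcal{A}^{*}$, and set $g_{\varphi} := \varphi \circ f : \C \longrightarrow \C$. The aim of this step is to transfer the two properties of $f$ into the corresponding properties of the scalar function $g_{\varphi}$. Boundedness is immediate: since $\norm{f(z)} \leq M$ for every $z$, continuity of $\varphi$ gives $|g_{\varphi}(z)| = |\varphi(f(z))| \leq \norm{\varphi}\, M$, so $g_{\varphi}$ is a bounded complex-valued function on $\C$.

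The main work is to check that $g_{\varphi}$ is holomorphic in the ordinary scalar sense. Here I would invoke the local power-series definition of holomorphy: near any $z_{0}$ we have $f(z) = \sum_{n=0}^{\infty} c_{n}(z-z_{0})^{n}$ with $c_{n} \in \mathcal{A}$ and norm convergence on a disk $|z-z_{0}| < r$. Applying the continuous linear map $\varphi$ and passing it through the norm-convergent series yields $g_{\varphi}(z) = \sum_{n=0}^{\infty} \varphi(c_{n})(z-z_{0})^{n}$, a complex power series convergent on the same disk. Hence $g_{\varphi}$ is locally given by a convergent power series with scalar coefficients, i.e.\ it is entire. This interchange of $\varphi$ with the infinite sum---justified by continuity of $\varphi$ together with norm convergence of the partial sums---is the step carrying all of the analytic content, and it is where I expect the most care to be needed.

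Once $g_{\varphi}$ is entire and bounded, the classical Liouville theorem forces it to be constant, so $\varphi(f(z)) = \varphi(f(0))$ for every $z \in \C$, that is $\varphi(f(z)-f(0)) = 0$. Since this holds for \emph{every} $\varphi \in \mathcal{A}^{*}$, the Hahn--Banach theorem---in the form that an element annihilated by all continuous functionals must be zero---gives $f(z)-f(0) = 0$ for all $z$. Setting $a_{0} := f(0)$ then completes the argument. Apart from the holomorphy transfer, every step is a routine translation between the vector-valued and scalar settings.
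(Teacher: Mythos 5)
Your proof is correct, but there is nothing in the paper to compare it against: the paper states this Banach-valued Liouville theorem \emph{without proof}, deferring to an external source whose citation is left incomplete (``cf. Theorem 3.1.2. in''), and then uses it as a black box in the corollary that the spectrum of any element is non-empty. Your argument supplies the missing proof, and it is the standard one: compose with an arbitrary $\varphi\in\mathcal{A}^{*}$, note that $\varphi\circ f$ is bounded by $\|\varphi\|M$, transfer holomorphy by pushing $\varphi$ through the locally norm-convergent expansion $f(z)=\sum_{n=0}^{\infty}c_{n}(z-z_{0})^{n}$ (legitimate, as you say, since $\varphi$ is continuous and the partial sums converge in norm), apply the classical scalar Liouville theorem to conclude each $\varphi\circ f$ is constant, and finish with Hahn--Banach, which separates the points of $\mathcal{A}$. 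This meshes exactly with the paper's definition of Banach-valued holomorphy, which is precisely the local power-series notion you invoke. You were also right to flag that holomorphy must be read into the hypotheses: as literally stated the theorem is false (e.g.\ $f(z)=e^{-|z|^{2}}a$ for a fixed $a\neq 0$ is bounded, continuous, and non-constant), and in the paper's application the required holomorphy of the resolvent $R_{a}$ is exactly what the preceding proposition establishes. A known alternative route avoids duality by proving vector-valued Cauchy estimates $\|c_{n}\|\leq M/r^{n}$ for all $r>0$ and concluding $c_{n}=0$ for $n\geq 1$, but that requires first developing the Cauchy integral formula for Banach-valued integrals (which the paper introduces only later, for the functional calculus), so your dual-space reduction is the more economical choice here.
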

\begin{corollary}
	Let $(\mathcal{A},\|.\|)$ be a non-zero unital Banach algebra, the spectrum of any element in $\mathcal{A}$ is a non-empty subset of $\mathbb{C}$.\\
	
	\begin{proof}
		If $a=0$ then 
		\begin{equation*}
		\sigma(0)=\{\lambda \in \mathbb{C}\mid\lambda e \notin Inv(\mathcal{A})\}=\{0\}
		\end{equation*}
		For the case $a \neq 0$, we consider the resolvent operator $R_{a}$. If $\sigma(a)$ is empty, then by the previous proposition we obtain that $R_{a}$ is holomorphic on $\mathbb{C}$ i.e. it is an entire function. We show that $R_{a}$ is also bounded.\\
		It is clear that $R_{a}$ is bounded on the compact set $B[0,\|a\|]$ as it is a continuous function. For $|z|>\|a\|$ we have $\|\frac{a}{z}\|<1$ and hence
		\begin{align*}
		R_{a}(z)&=(ze-a)^{-1}\\
		&=\frac{1}{z}(e-\frac{a}{z})^{-1}\\
		&=\frac{1}{z}\sum_{n=0}^{\infty}\frac{a^{n}}{z^{n}}\\
		&=\sum_{n=0}^{\infty}\frac{a^{n}}{z^{n+1}}
		\end{align*}
		with \begin{align*}
		\|R_{a}(z)\| &\leq \frac{1}{|z|}\sum_{n=0}^{\infty}\|\frac{a}{z}\|^{n}\\
		&=\frac{1}{|z|}\frac{1}{1-\|\frac{a}{z}\|}\\
		&=\frac{1}{|z|-\|a\|}.
		\end{align*}
		This shows that $\lim_{|z|\to\infty} \|R_{a}(z)\|=0$. Therefore $R_{a}$ is bounded and hence constant by Liouville's theorem with $R_{a}\equiv 0$, which is a contradiction.
	\end{proof}
\end{corollary}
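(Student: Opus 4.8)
The plan is to argue by contradiction, leveraging the Banach-valued Liouville theorem stated just above together with the holomorphy of the resolvent operator established in the preceding proposition. First I would dispose of the trivial case $a=0$: here $\sigma(0)=\{\lambda\in\mathbb{C}\mid \lambda e\notin Inv(\mathcal{A})\}=\{0\}$, since $\lambda e$ is invertible precisely when $\lambda\neq 0$, and this already exhibits a nonempty spectrum.

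For the generic case $a\neq 0$, I would suppose toward a contradiction that $\sigma(a)=\emptyset$, equivalently that $\rho(a)=\mathbb{C}$. Then the resolvent $R_a(z)=(ze-a)^{-1}$ is defined on all of $\mathbb{C}$, and by the preceding proposition it is holomorphic there, i.e. it is an entire $\mathcal{A}$-valued function. The next step is to establish that $R_a$ is \emph{bounded} in the sense required by the Banach-valued Liouville theorem. On the closed ball $B[0,\|a\|]$ the map $R_a$ is continuous on a compact set, hence bounded. For $|z|>\|a\|$ I would use the Neumann-series expansion, writing
\begin{equation*}
R_a(z)=(ze-a)^{-1}=\tfrac{1}{z}\Big(e-\tfrac{a}{z}\Big)^{-1}=\sum_{n=0}^{\infty}\frac{a^{n}}{z^{n+1}},
\end{equation*}
which is valid since $\|a/z\|<1$, and then estimate
\begin{equation*}
\|R_a(z)\|\leq \frac{1}{|z|}\sum_{n=0}^{\infty}\Big\|\frac{a}{z}\Big\|^{n}=\frac{1}{|z|-\|a\|}\xrightarrow[|z|\to\infty]{}0.
\end{equation*}
Combining the two regions, $R_a$ is a globally bounded entire function.

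By the Banach-valued version of Liouville's theorem, $R_a$ must therefore be constant. Since the estimate above forces $\|R_a(z)\|\to 0$ as $|z|\to\infty$, that constant can only be the zero element, so $R_a\equiv 0$. This is the contradiction I am aiming for: $R_a(z)=(ze-a)^{-1}$ is by definition an inverse and hence an invertible element of a unital algebra, so it can never be $0$. The contradiction shows $\sigma(a)\neq\emptyset$. I expect the only delicate point to be verifying the boundedness hypothesis uniformly across both the compact regime and the large-$|z|$ regime; once that is in place, the Liouville theorem and the impossibility of an invertible element being zero finish the argument immediately.
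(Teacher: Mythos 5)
Your proposal is correct and follows essentially the same route as the paper's proof: the case split at $a=0$, the holomorphy of the resolvent from the preceding proposition, the Neumann-series bound $\|R_a(z)\|\leq \frac{1}{|z|-\|a\|}$ for $|z|>\|a\|$, compactness for the bounded region, and the Banach-valued Liouville theorem forcing $R_a\equiv 0$. Your only addition is to spell out why $R_a\equiv 0$ is absurd (an inverse in a non-zero unital algebra cannot be the zero element), a point the paper leaves implicit.
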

\begin{definition}
	Let $(\mathcal{A},\|.\|)$ be a Banach algebra, the spectral of $a \in \mathcal{A}$ is defined to be 
	\begin{equation*}
	\rho(a)= \max_{\lambda \in \sigma(a)}|\lambda|
	\end{equation*}
\end{definition}\par
As $\sigma(a)$ is non-empty then the above definition makes sense. Moreover, following the proof of Proposition \ref{p101} we know that $\sigma(a) \subset B(0,\|a\|)$. Therefore $\rho(a)$ is bounded by $\|a\|$. Furthermore, $\rho(a)$ is given by the limit of the norm of powers of $a$ (cf. \cite{ka}).
\begin{theorem}\label{t1}
	Let $(\mathcal{A},\|.\|)$ be a unital Banach algebra and $a \in \mathcal{A}$ then the spectral radius of a is given by 
	\begin{equation}\label{e5}
	\rho(a)=\lim_{n\to\infty} \|a^{n}\|^{\frac{1}{n}}
	\end{equation}
\end{theorem}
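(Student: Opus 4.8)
The plan is to prove the two inequalities $\rho(a)\le L$ and $L\le\rho(a)$, where $L:=\lim_{n\to\infty}\|a^n\|^{1/n}$, but first one must check that this limit exists at all. Since the norm is submultiplicative we have $\|a^{m+n}\|\le\|a^m\|\,\|a^n\|$, so the sequence $\log\|a^n\|$ is subadditive; by Fekete's subadditive lemma the sequence $\tfrac1n\log\|a^n\|$ converges to its infimum, and hence $L=\lim_n\|a^n\|^{1/n}=\inf_n\|a^n\|^{1/n}$ exists in $[0,\infty)$.

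For the upper bound $\rho(a)\le L$, I would use the polynomial version of the spectral mapping theorem. Fix $\lambda\in\sigma(a)$ and factor
\begin{equation*}
\lambda^n e-a^n=(\lambda e-a)\big(\lambda^{n-1}e+\lambda^{n-2}a+\cdots+a^{n-1}\big).
\end{equation*}
Because the two factors commute, if $\lambda^n e-a^n$ were invertible then $\lambda e-a$ would have both a left and a right inverse, hence be invertible, contradicting $\lambda\in\sigma(a)$; thus $\lambda^n\in\sigma(a^n)$. From the proof of Proposition \ref{p101} every point of $\sigma(a^n)$ has modulus at most $\|a^n\|$, so $|\lambda|^n=|\lambda^n|\le\|a^n\|$, i.e. $|\lambda|\le\|a^n\|^{1/n}$. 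Taking the maximum over $\lambda\in\sigma(a)$ gives $\rho(a)\le\|a^n\|^{1/n}$ for every $n$, and therefore $\rho(a)\le\inf_n\|a^n\|^{1/n}=L$.

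The reverse inequality $L\le\rho(a)$ is the main obstacle, and here I would invoke the complex-analytic machinery already developed. The resolvent $R_a(z)=(ze-a)^{-1}$ is holomorphic on the resolvent set $\mathbb{C}\setminus\sigma(a)$, which contains $\{|z|>\rho(a)\}$, and for $|z|>\|a\|$ it is given by the Neumann series $R_a(z)=\sum_{n\ge0}a^n/z^{n+1}$ exactly as computed in the proof of the preceding corollary. For any bounded functional $\varphi\in\mathcal{A}^\ast$ the scalar function $z\mapsto\varphi(R_a(z))$ is holomorphic on $\{|z|>\rho(a)\}$ with Laurent expansion $\sum_{n\ge0}\varphi(a^n)/z^{n+1}$; since this expansion represents a function holomorphic in the whole exterior region, it converges there, so for each fixed $z$ with $|z|>\rho(a)$ the terms $\varphi\big(a^n/z^{n+1}\big)$ tend to $0$ and in particular form a bounded sequence. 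As this holds for every $\varphi$, the uniform boundedness principle yields a constant $M_z$ with $\|a^n/z^{n+1}\|\le M_z$ for all $n$, whence $\|a^n\|^{1/n}\le M_z^{1/n}\,|z|^{(n+1)/n}\to|z|$. Thus $\limsup_n\|a^n\|^{1/n}\le|z|$ for every $|z|>\rho(a)$, and letting $|z|\downarrow\rho(a)$ gives $L=\limsup_n\|a^n\|^{1/n}\le\rho(a)$.

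Combining the two bounds with the existence of the limit yields $L=\rho(a)$, which is precisely \eqref{e5}. The subtle point — and the step I expect to cause the most trouble — is the passage from ``the Laurent series represents a holomorphic function on $|z|>\rho(a)$'' to ``the series converges on that whole region'': this is where one genuinely uses that $R_a$ extends holomorphically past the circle $|z|=\|a\|$ all the way to the spectral radius, together with Banach--Steinhaus to transfer the scalar boundedness of $\varphi(a^n/z^{n+1})$ into norm boundedness of $a^n/z^{n+1}$.
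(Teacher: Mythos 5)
Your proof is correct, but there is nothing in the paper to compare it against: the paper states Theorem \ref{t1} without any proof, deferring to the reference \cite{ka}. What you have supplied is the classical Beurling--Gelfand argument, essentially the proof the cited literature gives, and it is complete. The upper bound is handled correctly: the commuting factorization $\lambda^{n}e-a^{n}=(\lambda e-a)\left(\lambda^{n-1}e+\cdots+a^{n-1}\right)$ does show $\lambda^{n}\in\sigma(a^{n})$ (an element possessing commuting left and right inverses is invertible), and combined with the fact, taken from the proof of Proposition \ref{p101}, that every point of $\sigma(a^{n})$ has modulus at most $\|a^{n}\|$, this gives $\rho(a)\le\inf_{n}\|a^{n}\|^{1/n}$. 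The lower bound is the genuinely analytic half, and you have isolated its crux correctly: uniqueness of Laurent coefficients forces the Neumann series $\sum_{n\ge 0}\varphi(a^{n})z^{-n-1}$, a priori valid only for $|z|>\|a\|$, to converge on the whole annulus $|z|>\rho(a)$ where $\varphi\circ R_{a}$ is holomorphic, and the isometric embedding of $\mathcal{A}$ into its bidual lets the uniform boundedness principle convert the resulting weak boundedness of $\left\{a^{n}/z^{n+1}\right\}_{n}$ into norm boundedness. Three small remarks. First, your Fekete step should account for the degenerate case $a^{n_{0}}=0$ for some $n_{0}$ (then $\log\|a^{n}\|=-\infty$ for $n\ge n_{0}$, so the lemma is being applied to a sequence that need not be real-valued); this is harmless, since then $\sigma(a)=\{0\}$ and both sides of \eqref{e5} vanish, but it deserves a sentence. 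Second, Fekete is actually dispensable: your two bounds give the sandwich
\begin{equation*}
\rho(a)\le\inf_{n}\|a^{n}\|^{1/n}\le\liminf_{n\to\infty}\|a^{n}\|^{1/n}\le\limsup_{n\to\infty}\|a^{n}\|^{1/n}\le\rho(a),
\end{equation*}
which by itself forces the limit to exist and equal $\rho(a)$. Third, your argument presupposes $\sigma(a)\neq\emptyset$, so that $\rho(a)$ is well defined and the annulus $|z|>\rho(a)$ is proper; this is exactly the corollary the paper proves just before the theorem and is worth citing explicitly.
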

\section{The holomorphic functional calculus}
\hskip0.5cm The holomorphic functional calculus is an important tool in operator theory and functional analysis. It allows us to produce a wide class of operators from a given one. We start by the following motivation.\par
Consider the complex Hilbert space $\mathcal{H}$ and $\mathcal{T} \in \mathcal{L}(\mathcal{H})$, let $P(z)=\sum_{k=0}^{n}a_{k}z^{k}$ be a polynomial in $\mathbb{C}$. We can define 
\begin{equation*}
P(\mathcal{T})= a_{0}Idn+a_{1}\mathcal{T}+\cdot+a_{n}\mathcal{T}^{n},
\end{equation*}
where $\mathcal{T}^{j}$ is the composite of $\mathcal{T}$ with itself $j$-times. As $\mathcal{L}(\mathcal{H})$ is an algebra over $\mathbb{C}$, we then have $P(\mathcal{T}) \in \mathcal{L}(\mathcal{H})$. We note here that P is an entire function and there is no restriction for the definition of $P(\mathcal{T})$.\par
Now consider another example $Q(z)=\frac{1}{z}$. It is natural to assign $Q(\mathcal{T}) \in \mathcal{L}(\mathcal{H})$ to be the inverse of $\mathcal{T}$. This shows that $\mathcal{T}$ should be invertible, i.e. $0 \notin \sigma(\mathcal{T})$. However, Q is holomorphic on $\mathbb{C} \setminus \{0\}$, i.e. Q is holomorph in a neighborhood of $\sigma(\mathcal{T})$.\\
The above discussion encourage us to define $f(\mathcal{T}) \in \mathcal{L}(\mathcal{H})$ and $f$ being holomorph in a neighborhood of $\sigma(\mathcal{T})$ in an abstract way.\par
We start by introducing the curvilinear integral of Banach valued functions.
\begin{definition}
	Let $\mathcal{U}$ be an open subset of $\mathbb{C}$, given $\mu$ a regular curve in $\mathcal{U}$ and $\gamma:[a,b] \longmapsto \mathbb{C}$ be a $\mathbb{C}^{\infty}$ parametrization of $\mu$. Suppose $f:\mathcal{U}\longmapsto\mathcal{A}$ is a continuous Banach algebra valued function, we define the integral of f along $\mu$ as follows:
	\begin{equation*}
	\int_{\mu} f(z)dz= \lim_{n\to\infty} \sum_{k=1}^{n}[\gamma(x_{k})-\gamma(x_{k-1})]f(\gamma(x_{k}))
	\end{equation*}
	where $x_{k}=\frac{k(b-a)}{n}$ is a uniform partition of $[a,b]$.\end{definition}
	Note that, the function $h:[a,b]\longmapsto \mathbb{R}$ defined by 
	\begin{equation*}
	h(x):=\|f(\gamma(x))\|
	\end{equation*}
	is a continuous map and so the limit of the Riemann sum exists, hence the above integral exists in $\mathcal{A}$.

\begin{remark}
	In case $\mu$ is a piecewise smooth curve i.e. $\mu = \gamma_{1} \cup \gamma_{2}\cup...\cup\gamma_{n}$, where $\gamma_{i}$ are regular, we define
	\begin{equation*}
	\int_{\mu}f(z) dz :=\sum_{i=1}^{n}\int_{\mu_{i}}f(z) dz
	\end{equation*}
\end{remark}\par
We move now to the collection of germs of holomorphic functions and fix some notation.\par
Given a unital Banach algebra $\mathcal{A}$, and an element $a \in \mathcal{A}$. We denote by $\mathcal{H}^{1}_{\mathcal{A}}(a)$ the collection of germs of holomorphic functions $f$ defined on an open neighborhood of the spectrum of $a$. i.e.
\begin{equation*}
\mathcal{H}^{1}_{\mathcal{A}}:=\{f:D_{f} \longrightarrow \mathcal{A}\mid D_{f}\ is\ an\ open\ subset\ of\ \mathbb{C}\ with\ \sigma_{\mathcal{A}}(a)\subset D_{f} \}.
\end{equation*}\par
The operations on $\mathcal{H}^{1}_{\mathcal{A}}(a)$ are given by:
\begin{enumerate}
	\item $(f+g)(z)=f(z)+g(z), \quad z \in D_{f}\cap D_{g}$.
	\item $(fg)(z)=f(z).g(z), \quad z \in D_{f}\cap D_{g}$.
	\item $(\lambda f)(z)=\lambda f(z), \quad z \in D_{f}\ and\ \lambda \in \mathbb{C}$.
\end{enumerate}
\begin{remark}
In order to obtain an algebra of the germs using the above operations we need to define an equivalence relation as follows: $f \sim g$ if there is $\mathcal{U}$ open containing $\sigma_{A}(a)$ with 
\begin{equation*}
f \mid_{ \mathcal{U}}=g \mid_{ \mathcal{U}}.
\end{equation*}
It follows that $\mathcal{H}_{\mathcal{A}}(a):=\mathcal{H}_{\mathcal{A}}^{1}(a)/\sim$ is an algebra and is called the algebra of holomorphic functions around $\sigma_{\mathcal{A}}(a)$.
\end{remark}\par
Before introducing the holomorphic functional calculus, we need the following result from complex analysis which can be found in (cf. \cite{ka} Lemma 3.1.2 ), we state it in a way to fit our needs.
\begin{lemma}\label{l1}
	Let $\mathcal{A}$ ba a unital Banach algebra and let $a \in \mathcal{A}$. Given D an open neighborhood of $\sigma(a)$. There are piecewise closed smooth curves $\gamma_{1},...,\gamma_{n} \subset D\setminus \sigma(a)$ satisfying the following conditions: given $f:D \longmapsto \mathbb{C}$ holomorphic, then
	\begin{equation*}
	f(z)=\frac{1}{2\pi i}\sum_{j=1}^{n}\int_{\gamma_{j}}\frac{f(\xi)}{\xi -z} d\xi 
	\end{equation*}
\end{lemma}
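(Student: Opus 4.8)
The plan is to construct the cycle $\Gamma=\gamma_{1}+\cdots+\gamma_{n}$ by a grid/square-cancellation argument and then to invoke the homological (global) form of the Cauchy integral formula. Since $\sigma(a)$ is compact by Proposition \ref{p101} and $D$ is open with $\sigma(a)\subset D$, the distance $\delta:=\mathrm{dist}(\sigma(a),\mathbb{C}\setminus D)$ is strictly positive (interpreting $\delta=+\infty$ when $D=\mathbb{C}$). First I would overlay $\mathbb{C}$ with a grid of closed squares of common side length $h<\delta/\sqrt{2}$, so that each square has diameter $<\delta$. Translating the grid by a generic vector, I can arrange that no point of $\sigma(a)$ lies on any grid line; then every point of $\sigma(a)$ sits in the open interior of exactly one square. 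Let $Q_{1},\dots,Q_{m}$ be the finitely many closed squares meeting $\sigma(a)$. Because each $Q_{i}$ has diameter $<\delta$ and meets $\sigma(a)$, it lies entirely inside $D$.

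Next I orient each boundary $\partial Q_{i}$ counterclockwise and form the formal sum $\Gamma:=\sum_{i=1}^{m}\partial Q_{i}$. Any edge shared by two selected squares is traversed once in each direction and cancels; the surviving edges are exactly those separating a selected square from an unselected one. At each grid vertex the surviving oriented edges enter and leave in equal numbers, so they concatenate head-to-tail into finitely many closed polygonal curves $\gamma_{1},\dots,\gamma_{n}$. By construction each $\gamma_{j}\subset D$, and since every surviving edge lies on a grid line (which carries no point of $\sigma(a)$), we obtain $\gamma_{j}\subset D\setminus\sigma(a)$, as required.

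The heart of the argument is the winding-number bookkeeping. For $z\in\sigma(a)$ the point $z$ lies in the interior of exactly one selected square $Q_{i_{0}}$ and outside the closures of all others, so the winding number of $\Gamma$ about $z$ is $1+0+\cdots+0=1$. For $z\notin D$ the point lies outside every $\overline{Q_{i}}\subset D$, so the winding number of $\Gamma$ about $z$ is $0$; hence $\Gamma$ is null-homologous in $D$. With these two facts the global Cauchy integral formula applied to the holomorphic function $f$ on $D$ gives, for every $z$ in the open neighborhood of $\sigma(a)$ on which the winding number of $\Gamma$ equals $1$,
\[
f(z)=\frac{1}{2\pi i}\int_{\Gamma}\frac{f(\xi)}{\xi-z}\,d\xi=\frac{1}{2\pi i}\sum_{j=1}^{n}\int_{\gamma_{j}}\frac{f(\xi)}{\xi-z}\,d\xi,
\]
which is the claimed identity.

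I expect the main obstacle to be the two combinatorial-geometric steps that are easy to state but delicate to justify: first, that the surviving edges after cancellation genuinely assemble into closed polygonal curves (the equal in- and out-degree at each vertex licenses an Eulerian-type decomposition), and second, the generic translation ensuring $\sigma(a)$ meets no grid line, which is precisely what forces the winding number about points of $\sigma(a)$ to equal exactly $1$. The analytic ingredient, namely the Cauchy formula for null-homologous cycles, may be quoted from complex analysis, in keeping with the statement of the lemma as a citation to \cite{ka}.
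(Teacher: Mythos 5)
The paper never proves this lemma; it is quoted from the literature (cf.\ \cite{ka}), so your argument must stand on its own. It follows the standard grid construction and most of it is sound, but it hinges on one step that is genuinely false: the claim that, after translating the grid by a generic vector, no point of $\sigma(a)$ lies on any grid line. The spectrum of a Banach algebra element can be an arbitrary nonempty compact subset of $\mathbb{C}$ (for instance, in $\mathcal{A}=\mathcal{C}(K)$ the function $z\mapsto z$ has spectrum $K$), so $\sigma(a)$ may well be a closed disk. If $D$ is only a slightly larger open disk, then $\delta=\mathrm{dist}(\sigma(a),\mathbb{C}\setminus D)$ is small, your side length $h<\delta/\sqrt{2}$ is smaller still, and the projection of $\sigma(a)$ onto the real axis is an interval of length $2>h$; consequently \emph{every} translate of the grid has vertical lines meeting $\sigma(a)$, and no generic position exists. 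Both of your key assertions are derived from this false step: that every surviving edge avoids $\sigma(a)$ (because it "lies on a grid line, which carries no point of $\sigma(a)$"), and that each $z\in\sigma(a)$ lies in the interior of exactly one selected square and hence has winding number $1$. As written, the proof therefore breaks down precisely at the winding-number bookkeeping you identified as the heart of the argument.

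Both assertions can be recovered by the standard bookkeeping, which needs no genericity. First, a surviving edge cannot meet $\sigma(a)$: if an edge $E$ of a selected square meets $\sigma(a)$, then the other square sharing $E$ contains a point of $E\cap\sigma(a)$, hence is also selected, and $E$ cancels; thus $\mathrm{supp}\,\Gamma\cap\sigma(a)=\emptyset$ automatically. Second, for $z\in\sigma(a)$ lying on no grid line the winding number of $\Gamma$ is $1$ exactly as you argue; for $z\in\sigma(a)$ on a grid line, use that the index is locally constant on $\mathbb{C}\setminus\mathrm{supp}\,\Gamma$: since $\mathrm{supp}\,\Gamma$ is compact and avoids $z$, all points sufficiently close to $z$ have the same index as $z$, and among them there are points interior to a selected square (any square whose closure contains $z$ is selected, as it meets $\sigma(a)$), whose index is $1$. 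With these two repairs, the remainder of your argument --- the Eulerian assembly of the surviving edges into closed polygonal curves, the vanishing of the index at points outside $D$, and the appeal to the homological Cauchy formula --- goes through and yields the lemma.
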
\par
Now let $a \in \mathcal{A}$ and $f \in \mathcal{H}_{\mathcal{A}}(a)$ we want to associate another element in $\mathcal{A}$ using $f$.
\begin{definition}
	Given a unital Banach algebra $\mathcal{A}$ with $a \in \mathcal{A}$, let D be an open subset of $\mathbb{C}$ containing $\sigma(a)$ and $f:D\longmapsto \mathbb{C}$. we define $\tilde{f}(a) \in \mathcal{A}$ as follows
	\begin{equation*}
	\tilde{f}(a)=\frac{1}{2\pi i}\sum_{j=1}^{n}\int_{\gamma_{j}}f(z)(ze-a)^{-1} dz
	\end{equation*}
	where $\{\gamma_{j} \}_{j=1,...,n}$ are any closed curves satisfying the above lemma.
\end{definition}\par
The below theorem motivates the above definition and clarifies the connection to the discussion done at the beginning of this section.
\begin{theorem}
	Let $(\mathcal{A},\|.\|)$ be a unital Banach algebra and $a \in \mathcal{A}$, given an entire function with $f(z)=\sum_{n=0}^{\infty}c_{n}z^{n}$, $z \in \mathbb{C}$, we have
	\begin{equation*}
	\tilde{f}(a)=\sum_{n=0}^{\infty}c_{n}a^{n} \in \mathcal{A}
	\end{equation*}\\
	\begin{proof}
		Following the proof of Proposition \ref{p101}, we know that $\sigma(a) \subset B(0,\|a\|)$. We choose $\gamma$ to be the circle in $\mathbb{C}$ centered at 0 and radius $r>\|a\|$. Hence $\gamma$ satisfies  Lemma \ref{l1}, and so
		\begin{align*}
		\tilde{f}(a)&=\frac{1}{2\pi i}\int_{\gamma}f(z)(ze-a)^{-1} dz\\
		&=\frac{1}{2\pi i}\int_{\gamma}\sum_{n=0}^{\infty}c_{n}z^{n}(ze-a)^{-1} dz\\
		&=\frac{\sum_{n=0}^{\infty}c_{n}}{2\pi i}\int_{\gamma}\frac{z^{n}}{z}(e-\frac{a}{z})^{-1} dz\\
		&=\sum_{n=0}^{\infty}\frac{c_{n}}{2\pi i}\sum_{k=0}^{\infty}\int_{\gamma}z^{n-1-k}a^{k} dz\\
		&=\sum_{n=0}^{\infty}\frac{c_{n}}{2\pi i}\sum_{k=0}^{\infty}a^{k}\int_{\gamma}z^{n-1-k} dz
		\end{align*}
		Using Cauchy's integral formula for entire functions $g$:
		\begin{equation*}
		g^{n}(z_{0})=\frac{n!}{2\pi i}\int_{\gamma}\frac{g(\xi)}{\xi-z_{0}} d\xi,
		\end{equation*}
		we obtain
		\begin{equation*}
		\frac{1}{2\pi i}\int_{\gamma}\frac{dz}{z^{l}}=
		\begin{cases}
		1, &\mbox{If}\ l=1\\0, &\mbox{elsewhere}
		\end{cases}
		\end{equation*}
		Hence the double sum in the above series reduces to
		\begin{equation*}
		\tilde{f}(a)=\sum_{n=0}^{\infty}\frac{c_{n}}{2\pi i}\int_{\gamma}\frac{a^{n}}{z} dz =\sum_{n=0}^{infty}c_{n}a^{n}
		\end{equation*}
	\end{proof}
\end{theorem}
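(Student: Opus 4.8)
The plan is to collapse the contour integral defining $\tilde{f}(a)$ onto a single circle enclosing $\sigma(a)$, expand both the entire function $f$ and the resolvent into power series, and then integrate term by term. First I would invoke Proposition \ref{p101} to recall that $\sigma(a) \subset B(0,\|a\|)$, so that a single circle $\gamma$ centered at the origin of radius $r > \|a\|$ already encloses the spectrum and satisfies Lemma \ref{l1} with one curve. Consequently the definition reduces to
\begin{equation*}
\tilde{f}(a) = \frac{1}{2\pi i}\int_{\gamma} f(z)\,(ze-a)^{-1}\,dz.
\end{equation*}

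Next I would expand the resolvent. For every $z$ on $\gamma$ we have $\|a/z\| = \|a\|/r < 1$, so the Von-Neumann Lemma yields the Neumann series $(ze-a)^{-1} = \sum_{k=0}^{\infty} a^{k} z^{-(k+1)}$. Since $f$ is entire, its Taylor series $\sum_{n} c_{n} z^{n}$ converges uniformly on the compact curve $\gamma$, and the Neumann series converges uniformly there as well. The integrand is then the product of two uniformly convergent series, giving the double series $\sum_{n,k} c_{n} a^{k} z^{\,n-k-1}$ as its integrand.

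The hard part will be justifying the interchange of the two infinite sums with the integral. I would establish this through uniform (indeed absolute) convergence on $\gamma$: the termwise bound $\|c_{n} a^{k} z^{\,n-k-1}\| \le |c_{n}| r^{n}\,\|a\|^{k} r^{-(k+1)}$ is summable over $(n,k)$, because $\sum_{n} |c_{n}| r^{n} < \infty$ (the radius of convergence of an entire function is infinite) and $\|a\|/r < 1$ makes the geometric factor summable in $k$. This legitimizes exchanging summation and integration, so that
\begin{equation*}
\tilde{f}(a) = \sum_{n=0}^{\infty}\sum_{k=0}^{\infty}\frac{c_{n} a^{k}}{2\pi i}\int_{\gamma} z^{\,n-k-1}\,dz.
\end{equation*}

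Finally I would evaluate each scalar integral using the standard consequence of Cauchy's formula that $\frac{1}{2\pi i}\int_{\gamma} z^{m}\,dz$ equals $1$ when $m = -1$ and vanishes otherwise. Only the diagonal terms $n = k$ survive, collapsing the double sum to $\sum_{n=0}^{\infty} c_{n} a^{n}$. To confirm that this limit genuinely lies in $\mathcal{A}$, I would note that its partial sums are Cauchy, since the tail $\big\|\sum_{n=N}^{M} c_{n} a^{n}\big\| \le \sum_{n=N}^{M} |c_{n}|\,\|a\|^{n}$ is controlled by the convergent series $\sum_{n} |c_{n}|\,\|a\|^{n}$; completeness of $\mathcal{A}$ then secures convergence, yielding $\tilde{f}(a) = \sum_{n=0}^{\infty} c_{n} a^{n}$ as claimed.
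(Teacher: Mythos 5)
Your proposal is correct and follows essentially the same route as the paper's proof: a single circle $\gamma$ of radius $r>\|a\|$ via Proposition \ref{p101} and Lemma \ref{l1}, expansion of $f$ into its Taylor series and of the resolvent into its Neumann series, term-by-term integration, and evaluation of $\frac{1}{2\pi i}\int_{\gamma}z^{m}\,dz$ to pick out the diagonal terms $n=k$. The only difference is that you explicitly justify the interchange of summation and integration by the absolute-convergence bound $|c_{n}|r^{n}\|a\|^{k}r^{-(k+1)}$ and verify completeness-based convergence of $\sum c_{n}a^{n}$, points the paper's proof passes over in silence.
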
\par
A slight modification of the above theorem is given below.
\begin{proposition}
	Let $(\mathcal{A},\|.\|)$ be a unital Banach algebra and $a \in \mathcal{A}$. Let $f \in \mathcal{H}_{\mathcal{A}}(a)$ and suppose $f: D_{f}\longrightarrow\mathbb{C}$ is holomorphic such that $D_{f}$ contains a disk $D(0,r)$ with $r<\rho(a)$, where $\rho(a)$ is the spectral radius of a. Assume that $f$ has a power series representation $f(z)=\sum_{n=0}^{\infty}c_{n}z^{n}$ with radius of convergence $r_{1}\leq r$ then $\tilde{f}(a)=\sum_{n=0}^{\infty}c_{n}a^{n} \in \mathcal{A}$.\\
	\begin{proof}
		The proof is identical to the previous one. We only need to check the normal convergence of $\sum_{n=0}^{\infty}c_{n}a^{n}$. We apply the $n^{th}$ root test together with Theorem \ref{t1}
		\begin{align*}
		\overline{\lim_{n\to\infty}}\|c_{n}a^{n}\|^{\frac{1}{n}}&=\overline{\lim_{n\to\infty}}|c_{n}|^{\frac{1}{n}}\|a^{n}\|^{\frac{1}{n}}\\
		&=\frac{1}{r_{1}}\rho(a)\\
		&\leq \frac{1}{r}\rho(a)<1
		\end{align*}
		Hence the series converges normally and the interchange of the integration and summation is then guaranteed.
	\end{proof}
\end{proposition}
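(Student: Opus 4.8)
The strategy is to piggy-back on the entire-function case proved in the previous theorem: the contour-integral manipulation there never used entirety of $f$ beyond the fact that $f$ was represented by a power series convergent on a circle enclosing $\sigma(a)$ and that the resulting operator series converged in norm. So the only genuinely new point is to secure the convergence of $\sum_{n=0}^{\infty}c_{n}a^{n}$ in $\mathcal{A}$; once that is in hand the computation is copied line for line.

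First I would establish the norm convergence of the operator series by the root test. By the spectral radius formula (Theorem \ref{t1}) the limit $\lim_{n\to\infty}\|a^{n}\|^{1/n}=\rho(a)$ exists, and by Cauchy--Hadamard $\limsup_{n\to\infty}|c_{n}|^{1/n}=1/r_{1}$, where $r_{1}$ is the radius of convergence of the power series of $f$. Since $\|c_{n}a^{n}\|^{1/n}=|c_{n}|^{1/n}\,\|a^{n}\|^{1/n}$ and the second factor is a genuine limit, I may factor it out of the $\limsup$ to obtain
\[
\limsup_{n\to\infty}\|c_{n}a^{n}\|^{1/n}=\frac{\rho(a)}{r_{1}}.
\]
The role of the hypothesis relating $r$, $r_{1}$ and $\rho(a)$ is precisely to guarantee that the spectral radius lies strictly inside the disk of convergence, i.e. $\rho(a)<r_{1}$, so that this $\limsup$ is $<1$; then $\sum_{n=0}^{\infty}c_{n}a^{n}$ converges absolutely in $\mathcal{A}$ and defines an element of $\mathcal{A}$. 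I note that the inequality in the statement must be oriented in exactly this way for the final estimate $\rho(a)/r_{1}<1$ to hold.

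The remaining step reproduces the previous theorem. I would pick a circle $\gamma=\{|z|=\varrho\}$ with $\rho(a)<\varrho<r_{1}$; this encloses $\sigma(a)\subset B(0,\rho(a))$, so Lemma \ref{l1} applies, and on $\gamma$ both expansions $f(z)=\sum_{n}c_{n}z^{n}$ and $(ze-a)^{-1}=\sum_{k}a^{k}z^{-k-1}$ converge --- the latter exactly because $\varrho>\rho(a)=\limsup_{k}\|a^{k}\|^{1/k}$, which is sharper than the cruder bound $\varrho>\|a\|$. Inserting both into $\tilde{f}(a)=\frac{1}{2\pi i}\int_{\gamma}f(z)(ze-a)^{-1}\,dz$ and using the residue values $\frac{1}{2\pi i}\int_{\gamma}z^{-l}\,dz=\delta_{l,1}$ collapses the double sum to $\sum_{n}c_{n}a^{n}$. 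The main obstacle, and the only place where care is needed, is the interchange of the two summations with the integral: on the compact circle $|z|=\varrho$ the general term is bounded by $|c_{n}|\varrho^{\,n}\,\|a^{k}\|\varrho^{-k-1}$, and since $\varrho/r_{1}<1$ and $\rho(a)/\varrho<1$ both one-variable series converge uniformly on $\gamma$, so their product double series converges uniformly there and may be integrated term by term. Everything else is identical to the entire-function argument.
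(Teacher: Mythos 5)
Your proof is correct and its first half is exactly the paper's argument: apply the spectral radius formula (Theorem \ref{t1}) and the root test to get $\overline{\lim}_{n}\norm{c_{n}a^{n}}^{1/n}=\rho(a)/r_{1}<1$, hence absolute convergence of $\sum_{n}c_{n}a^{n}$ in $\mathcal{A}$. Like the paper's own proof, you read the hypotheses with the inequalities oriented as $\rho(a)<r\leq r_{1}$ (the statement as printed has them reversed, which would make the series diverge in general); you are right that only this orientation makes the final estimate work, and your observation that one factor is a genuine limit, so it can be pulled out of the $\limsup$, is a point the paper states without justification. Where you genuinely go beyond the paper is in the second half. The paper simply declares the rest ``identical to the previous one,'' but the previous theorem integrated over a circle of radius greater than $\norm{a}$ and expanded $(ze-a)^{-1}=\sum_{k}a^{k}z^{-k-1}$ via the Von Neumann lemma, which needs $\norm{a/z}<1$; since one may well have $\rho(a)<r_{1}\leq\norm{a}$, no such circle fits inside the disk of convergence, so the earlier argument does not transfer verbatim. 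Your choice of a contour of radius $\varrho$ with $\rho(a)<\varrho<r_{1}$, justified by the sharper fact that the resolvent's Laurent series converges whenever $|z|>\rho(a)=\overline{\lim}_{k}\norm{a^{k}}^{1/k}$, is precisely the repair this step needs, and your uniform-convergence argument for exchanging the sums with the integral is sound. The only refinement I would add is to also require $\varrho<r$, so that the circle stays inside $D(0,r)\subset D_{f}$, where the power series is known to represent $f$; with that, your version closes a real gap that the paper's appeal to the earlier proof leaves open.
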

\begin{remark}
	Let $\mathcal{A}$ be a unital Banach algebra with unit e. Taking $f\equiv1$ on $\mathbb{C}$ we obtain from the above theorem that $\tilde{f}(a)=e\ for\ all\ a\in \mathcal{A}$.
\end{remark}
\begin{example}
	Let $\mathcal{A}=\mathcal{M}_{2}(\mathbb{C})$ be the algebra of $2\times2$ matrices. Let $a=\begin{pmatrix} \lambda & 0 \\ 0 & \mu \end{pmatrix} \in \mathcal{A}$ be a fixed diagonal matrix. Consider the exponential map on $\mathbb{C}$, $f(z)=e^{z}$ which is entire function. With $f(z)=\sum_{n=0}^{\infty}\frac{z^{n}}{n!}$ then $f \in \mathcal{H}_{\mathcal{A}}(a)$ with
	\begin{align*}
	\tilde{f}(a)&=\tilde{f}(\begin{pmatrix} \lambda & 0 \\ 0 & \mu \end{pmatrix})=\sum_{n=0}^{\infty}\frac{a^{n}}{n!}\\
	&=\sum_{n=0}^{\infty}\frac{\begin{pmatrix} \lambda & 0 \\ 0 & \mu \end{pmatrix}^{n}}{n!}=\sum_{n=0}^{\infty}\frac{\begin{pmatrix} \lambda^{n} & 0 \\ 0 & \mu^{n} \end{pmatrix}}{n!}\\
	&=\sum_{n=0}^{\infty}\begin{pmatrix} \frac{(\lambda)^{n}}{n!} & 0 \\ 0 & \frac{(\mu)^{n}}{n!} \end{pmatrix}=\begin{pmatrix} \sum_{n=0}^{\infty}\frac{(\lambda)^{n}}{n!} & 0 \\ 0 & \sum_{n=0}^{\infty}\frac{(\mu)^{n}}{n!} \end{pmatrix}\\
	&=\begin{pmatrix}
	e^{\lambda} & 0 \\ 0 & e^{\mu}
	\end{pmatrix}.
	\end{align*}
\end{example}\par
Below is the definition of the holomorphic calculus
\begin{definition}
	Let $(\mathcal{A},\|.\|)$ be a unital Banach algebra and $\varPhi_{\mathcal{A},a}:\mathcal{H}_{\mathcal{A}}(a)\longmapsto\mathcal{A}$ be the map defined by
	\begin{equation*}
	\varPhi_{\mathcal{A},a}(f)=\tilde{f}(a),\ f \in \mathcal{H}_{\mathcal{A}}(a)
	\end{equation*}
	We call $\varPhi_{\mathcal{A},a}$ the holomorphic functional calculus over a.
\end{definition}\par
\section{Invariance under holomorphic functional calculus}
We introduce the notion of subalgebra which are closed under holomorphic functional calculus and we prove that this notion coincides with the spectral invariance notion.
\begin{definition}
	Let $(\mathcal{A},\|.\|_{\mathcal{A}})$ and $(\mathcal{B},\|.\|_{\mathcal{B}})$ be two Banach algebras with $\mathcal{B}\subset\mathcal{A}$ and having common unit e. We say that $\mathcal{B}$ is closed under holomorphic functional calculus if for every $a \in \mathcal{B}$ the image of $\varPhi_{\mathcal{A},a}$ is entirely in $\mathcal{B}$ i.e. given $a \in \mathcal{B}$ we have $\varPhi_{\mathcal{A},a}:\mathcal{H}_{\mathcal{A}}(a) \longrightarrow \mathcal{B}$.
\end{definition}
\begin{example}
	Let $\mathcal{A}=\mathcal{M}_{2}(\mathbb{C})$ and $\mathcal{B}=\{\begin{pmatrix}
	\alpha & 0 \\ 0 & \beta
	\end{pmatrix}, \alpha,\beta \in \mathbb{C}\}$ be the algebra of diagonal matrices. We want to prove $\mathcal{B}$ is closed under holomorphic functional calculus in $\mathcal{A}$. Fix $a=\begin{pmatrix}
	\alpha & 0 \\ 0 & \beta
	\end{pmatrix} \in \mathcal{B}$, then $\sigma(a)=\sigma[\begin{pmatrix}
	\alpha & 0 \\ 0 & \beta
	\end{pmatrix}]=\{\alpha,\beta \}$. Let f be holomorph on D, where D is open and $\{\alpha,\beta \}\subset D$. Let $\gamma\subset D\setminus \{\alpha,\beta \}$ be a closed curve then
	\begin{align*}
	\frac{1}{2\pi i}\int_{\gamma}f(z)(zI_{2}-a)^{-1} dz &=\frac{1}{2\pi i}\int_{\gamma}f(z)(z\begin{pmatrix} 1 & 0 \\ 0 & 1 \end{pmatrix} - \begin{pmatrix} \alpha & 0 \\ 0 & \beta \end{pmatrix})^{-1} dz\\
	&=\frac{1}{2\pi i}\int_{\gamma}f(z)\begin{pmatrix} z-\alpha & 0 \\ 0 & z-\beta \end{pmatrix}^{-1} dz\\
	&=\frac{1}{2\pi i}\int_{\gamma}f(z)\begin{pmatrix} \frac{1}{z-\alpha} & 0 \\ 0 & \frac{1}{z-\beta} \end{pmatrix} dz\\
	&=\frac{1}{2\pi i}\int_{\gamma}\begin{pmatrix} \frac{f(z)}{z-\alpha} & 0 \\ 0 & \frac{f(z)}{z-\beta} \end{pmatrix} dz\\
	&=\frac{1}{2\pi i}\lim_{n\to\infty}\sum_{j=1}^{n}[\gamma(x_{k})-\gamma(x_{k-1})]\begin{pmatrix} \frac{f(\gamma(x_{k}))}{\gamma(x_{k})-\alpha} & 0 \\ 0 & \frac{f(\gamma(x_{k}))}{\gamma(x_{k})-\beta} \end{pmatrix}\\
	&=\frac{1}{2\pi i}\lim_{n\to\infty}\sum_{j=1}^{n}\begin{pmatrix} \frac{(\gamma(x_{k})-\gamma(x_{k-1}))}{\gamma(x_{k})-\alpha}f(\gamma(x_{k})) & 0 \\ 0 & \frac{(\gamma(x_{k})-\gamma(x_{k-1}))}{\gamma(x_{k})-\beta}f(\gamma(x_{k})) \end{pmatrix}
	\end{align*}
as	$\begin{pmatrix} \frac{(\gamma(x_{k})-\gamma(x_{k-1}))}{\gamma(x_{k})-\alpha}f(\gamma(x_{k})) & 0 \\ 0 & \frac{(\gamma(x_{k})-\gamma(x_{k-1}))}{\gamma(x_{k})-\beta}f(\gamma(x_{k})) \end{pmatrix} \in \mathcal{B}$\\

\bigskip
then $\sum_{j=1}^{n}\begin{pmatrix} \frac{(\gamma(x_{k})-\gamma(x_{k-1}))}{\gamma(x_{k})-\alpha}f(\gamma(x_{k})) & 0 \\ 0 & \frac{(\gamma(x_{k})-\gamma(x_{k-1}))}{\gamma(x_{k})-\beta}f(\gamma(x_{k})) \end{pmatrix} \in \mathcal{B}$,\\

\bigskip
 hence $\lim_{n\to\infty}\sum_{j=1}^{n}\begin{pmatrix} \frac{(\gamma(x_{k})-\gamma(x_{k-1}))}{\gamma(x_{k})-\alpha}f(\gamma(x_{k})) & 0 \\ 0 & \frac{(\gamma(x_{k})-\gamma(x_{k-1}))}{\gamma(x_{k})-\beta}f(\gamma(x_{k})) \end{pmatrix} \in \mathcal{B}$\\
 
 \bigskip
 and therefore $\frac{1}{2\pi i}\int_{\gamma}f(z)(zI_{2}-a)^{-1} dz \in \mathcal{B}.$
\end{example}\par 
The spectral invariance property, introduced in Section 3.2, which is a relation between the invertible elements of $\mathcal{B}$ and those in $\mathcal{A}$ (cf. Definition \ref{d1}) give the rise for the invariance under the holomorphic functional calculus. Moreover, the two notions are the same.
\begin{theorem}\label{t2}
	Let $(\mathcal{A},\|.\|_{\mathcal{A}})$ and $(\mathcal{B},\|.\|_{\mathcal{B}})$ be two Banach algebras with $\mathcal{B} \subset \mathcal{A}$ and having common unit e. Then $\mathcal{B}$ is spectral invariance in $\mathcal{A}$ if and only if $\mathcal{B}$ is closed under holomorphic functional calculus.\\
	\begin{proof}
		We start by proving the necessary condition. Let $a \in \mathcal{B}$ be fixed and f be a holomorphic in $\sigma_{\mathcal{A}}(a)$ i.e. $f \in \mathcal{H}_{\mathcal{A}}(a)$. As $\mathcal{B}$ is spectral invariant in $\mathcal{A}$ then by Theorem \ref{t3}, we know that $\sigma_{\mathcal{A}}(a)=\sigma_{\mathcal{B}}(a)$ therefore f is holomorph in a neighborhood of $\sigma_{\mathcal{B}}(a)$, i.e. $f \in \mathcal{H}_{\mathcal{A}}(a)$. Therefore 
		\begin{equation*}
		\varphi_{\mathcal{A},a}(f)=\tilde{f}(a)=\varphi_{\mathcal{B},a}(f) \in \mathcal{B}
		\end{equation*}
		because $z\longmapsto f(z)(ze-a)^{-1} \in \mathcal{B}$ then $\frac{1}{2\pi i}\int_{\gamma}f(z)(ze-a)^{-1} dz \in \mathcal{B}$.\\
		For the sufficient condition, we fix $a \in Inv(\mathcal{A})\cap\mathcal{B}$ and prove that $a \in Inv(\mathcal{B})$. Since a is invertible in $\mathcal{A}$ then $0 \notin \sigma_{\mathcal{A}}(a)$, let $f:\mathbb{C}\setminus\{0\} \longrightarrow \mathbb{C}$ be the inversion map i.e. $f(z)=\frac{1}{z}$, then f is holomorph in a neighborhood of $\sigma_{\mathcal{A}}(a)$. This shows that $f \in \mathcal{H}_{\mathcal{A}}(a)$. As $a \in \mathcal{B}$, by the invariance under holomorphic functional calculus, we have $\tilde{f}(a)=a^{-1} \in \mathcal{B}$ so $a \in Inv(\mathcal{B})$.
	\end{proof}
\end{theorem}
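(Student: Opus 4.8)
The plan is to establish the equivalence by proving both implications separately, using Theorem~\ref{t3} as the bridge: spectral invariance of $\mathcal{B}$ in $\mathcal{A}$ is the same as the coincidence $\sigma_{\mathcal{A}}(a)=\sigma_{\mathcal{B}}(a)$ for every $a\in\mathcal{B}$, and hence of the resolvent sets $\rho_{\mathcal{A}}(a)=\rho_{\mathcal{B}}(a)$ (both open by Proposition~\ref{p101}). This reformulation is exactly what is needed, since the element $\tilde f(a)$ is built only from the resolvent $(ze-a)^{-1}$ integrated over curves that surround the spectrum.

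For the direction \emph{spectral invariant $\Rightarrow$ closed under holomorphic functional calculus}, I would fix $a\in\mathcal{B}$ and a germ $f\in\mathcal{H}_{\mathcal{A}}(a)$, holomorphic on an open $D\supset\sigma_{\mathcal{A}}(a)$. By Theorem~\ref{t3} we have $\sigma_{\mathcal{B}}(a)=\sigma_{\mathcal{A}}(a)$, so every admissible contour $\gamma_j\subset D\setminus\sigma_{\mathcal{A}}(a)$ already lies in $\rho_{\mathcal{B}}(a)$. Consequently, for each $z$ on these curves the resolvent $(ze-a)^{-1}$ is invertible in $\mathcal{B}$, hence lies in $\mathcal{B}$, and so does the whole integrand $z\mapsto f(z)(ze-a)^{-1}$. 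It then remains to conclude that $\tilde f(a)$, defined as the contour integral of this integrand, itself lands in $\mathcal{B}$.

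For the converse, suppose $\mathcal{B}$ is closed under holomorphic functional calculus and take $a\in Inv(\mathcal{A})\cap\mathcal{B}$. Since $0\notin\sigma_{\mathcal{A}}(a)$, the inversion $f(z)=1/z$ is holomorphic on a neighbourhood of $\sigma_{\mathcal{A}}(a)$, so $f\in\mathcal{H}_{\mathcal{A}}(a)$ and, by hypothesis, $\tilde f(a)\in\mathcal{B}$. I would then identify $\tilde f(a)$ with $a^{-1}$: using the multiplicativity of the functional calculus together with the identity $f(z)\cdot z\equiv 1$ and the Remark that the constant function $1$ is sent to the unit $e$, one gets $\tilde f(a)\,a=e$, whence $a^{-1}=\tilde f(a)\in\mathcal{B}$ and $a\in Inv(\mathcal{B})$. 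Since the reverse inclusion $Inv(\mathcal{B})\subseteq Inv(\mathcal{A})\cap\mathcal{B}$ is trivial, this gives $Inv(\mathcal{A})\cap\mathcal{B}=Inv(\mathcal{B})$, i.e. spectral invariance.

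The step I expect to be the main obstacle is the passage ``the integrand is $\mathcal{B}$-valued, therefore the integral lies in $\mathcal{B}$'' in the first direction, because $\tilde f(a)$ is by definition a limit of Riemann sums in the norm of $\mathcal{A}$, not of $\mathcal{B}$. To make this rigorous I would show that $z\mapsto(ze-a)^{-1}$ is not merely $\mathcal{B}$-valued but holomorphic as a $\mathcal{B}$-valued map on $\rho_{\mathcal{B}}(a)=\rho_{\mathcal{A}}(a)$ — this is precisely the earlier Proposition on holomorphy of the resolvent operator, now applied inside $\mathcal{B}$ — so that the contour integral already converges in the $\mathcal{B}$-norm; continuity of the inclusion $\mathcal{B}\hookrightarrow\mathcal{A}$ (a standing assumption for such Banach subalgebra pairs) then forces this $\mathcal{B}$-valued integral to coincide with $\tilde f(a)$. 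For the converse the only delicate point is the identification $\tilde f(a)=a^{-1}$, which rests on the homomorphism property of $\varPhi_{\mathcal{A},a}$.
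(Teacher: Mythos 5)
Your proof is correct and follows essentially the same route as the paper's: Theorem~\ref{t3} converts spectral invariance into equality of the two spectra, the forward direction places the integrand of the functional calculus inside $\mathcal{B}$, and the converse applies the calculus to the inversion map $f(z)=1/z$. The difference is one of rigor at exactly the point you single out: the paper's proof simply asserts that a $\mathcal{B}$-valued integrand $z\mapsto f(z)(ze-a)^{-1}$ yields a $\mathcal{B}$-valued integral, whereas you note that $\tilde f(a)$ is by definition an $\mathcal{A}$-norm limit of Riemann sums and repair this by invoking $\mathcal{B}$-valued holomorphy of the resolvent on $\rho_{\mathcal{B}}(a)=\rho_{\mathcal{A}}(a)$, so that the contour integral already converges in $\|\cdot\|_{\mathcal{B}}$, and then identifying the two limits. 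One caveat about your patch: the theorem as stated assumes only $\mathcal{B}\subset\mathcal{A}$ with common unit, not that the inclusion $\mathcal{B}\hookrightarrow\mathcal{A}$ is continuous, so the ``standing assumption'' you lean on is not available from the statement alone; without it (or without $\mathcal{B}$ being closed in $\|\cdot\|_{\mathcal{A}}$) the identification of the $\mathcal{B}$-norm limit with the $\mathcal{A}$-norm limit of the same Riemann sums is precisely what still needs justification. This is a defect of the paper's formulation rather than of your argument, and in the setting where the theorem is actually used ($\Psi^{\star}$-algebras, Definition~\ref{d7}) the continuous embedding is indeed part of the hypotheses.
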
\par
 Let $(\mathcal{A},\|.\|_{\mathcal{A}})$ and $(\mathcal{B},\|.\|_{\mathcal{B}})$ be two Banach algebras with $\mathcal{B} \subset \mathcal{A}$ and having common unit $e$. We say  that $\mathcal{B}$ is locally spectral invariant in $\mathcal{A}$ if there is a positive number $r$ satisfying 
\begin{equation*}
\mathcal{B} \cap B_{\|.\|_{\mathcal{A}}}(e,r) \subset Inv(\mathcal{B})
\end{equation*}
where $B_{\|.\|_{\mathcal{A}}}(e,r)=\{f \in \mathcal{A},\ \|f-e\|<r \}$ is the open ball in $\mathcal{A}$ centered at e with radius r.
\begin{remark}
	If $\mathcal{B}$ is closed under holomorphic functional calculus of $\mathcal{A}$, then $\mathcal{B}$ is locally spectral invariant in $\mathcal{A}$. Indeed, by Remark \ref{r1} , we know that $B_{\|.\|_{\mathcal{A}}}(e,1) \subset Inv(\mathcal{A})$. hence by the previous theorem, we obtain
	\begin{equation*}
	\mathcal{B} \cap B_{\|.\|_{\mathcal{A}}}(e,1) \subset Inv(\mathcal{A}) \cap \mathcal{B}=Inv(\mathcal{B}).
	\end{equation*} 
	Finally we remark that the notion of locally spectral invariance coincides with the spectral invariance of dense subalgebra as proved in [ref[69]of paper diss]
\end{remark}

\chapter{More types of algebras}
In the following chapter we review basics on Fr\'{e}chet  algebras and on $C^*$-algebras. We then introduce the notion of $\Psi^*$-algebras and end by recalling Gramsch commutator method for the construction of $\Psi^*$-algebras.\\
\section{Fr\'{e}chet  spaces and Fr\'{e}chet  algebras}
\hskip0.5cm This section is devoted to introduce the notion of Fr\'{e}chet  algebras which are generalization of Banach algebras. We start by recalling the notion of Fr\'{e}chet  spaces.
\begin{definition}
	Let $\mathcal{E}$ be a vector space over $\mathbb{C}$ and suppose $Q=\{q_{n}\}_{n\in \mathbb{N}}$ is a family of semi-norms on $\mathcal{E}$. We define the topology induced by Q, denoted by $\tau(Q)$ as follows:\\
		Given $(x_{\alpha}) \in \mathcal{E}\ and\ x_{0} \in \mathcal{E}$, we say that $x_{\alpha}$ converges to $x_{0}$ if for any $n \in \mathbb{N}$, we have $q_{n}(x_{\alpha}-x_{0})\longmapsto 0$.
\end{definition} \par
The above topology turns $(\mathcal{E},\tau(Q))$ into a topological vector space.\par
A basis for the topology $\tau(Q)$ is given in the following theorem \cite{mor}.
\begin{theorem}
	Let $Q=\{q_{n}\}_{n \in \mathbb{N}}$ be a family of semi-norms  on $\mathcal{E}$, then a basis for $\tau(Q)$ is given by
	\begin{equation*}
	\{B_{q_{n_{1}}}(x,r_{1})\cap B_{q_{n_{2}}}(x,r_{2})\cap ...\cap B_{q_{n_{t}}}(x,r_{t}),x \in \mathcal{E}\ and\ n_{1},...,n_{t}\in \mathbb{N}, r_{1},...,r_{t}>0\}.
	\end{equation*}
	Here $B_{q_{n_{j}}}(x,r_{j})$ denotes the ball centered at x radius $r_{j}$ in the $q_{n_{j}}$-semi -norm. i.e. $$B_{q_{n_{j}}}(x,r_{j})=\{y\in \mathcal{E}\mid q_{n-{j}}(x-y)<r_{j}\}.$$
\end{theorem}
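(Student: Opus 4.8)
The plan is to verify directly the two defining properties of a basis: that each set listed in the statement is open in $\tau(Q)$, and that every $\tau(Q)$-open set is a union of such sets. By construction $\tau(Q)$ is the topology associated with the seminorms $q_n$, i.e. the coarsest topology in which each $q_n$ is continuous; equivalently it is the topology having the seminorm balls $B_{q_n}(x,r)$ as a subbasis. Writing $\mathcal{B}$ for the collection appearing in the statement, I therefore have to show that $\mathcal{B}$ consists of $\tau(Q)$-open sets and that it generates $\tau(Q)$.

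The key computation is a re-centering estimate resting on the subadditivity of each seminorm. First I would show that a single ball $B_{q_n}(x,r)$ is open: if $y\in B_{q_n}(x,r)$, set $s:=r-q_n(y-x)>0$; then for any $z$ with $q_n(z-y)<s$ the triangle inequality for $q_n$ gives $q_n(z-x)\le q_n(z-y)+q_n(y-x)<s+q_n(y-x)=r$, so that $B_{q_n}(y,s)\subseteq B_{q_n}(x,r)$. Since every element of $\mathcal{B}$ is a finite intersection of such balls, it is a finite intersection of open sets and hence open in $\tau(Q)$; this settles the first defining property.

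Next I would verify the basis axioms for $\mathcal{B}$. The covering axiom is immediate, since $x\in B_{q_n}(x,r)$ for every $n$ and every $r>0$, so the members of $\mathcal{B}$ cover $\mathcal{E}$. For the intersection axiom, take two members $U_1=\bigcap_{j=1}^{s}B_{q_{m_j}}(a,\rho_j)$ and $U_2=\bigcap_{k=1}^{t}B_{q_{n_k}}(b,\sigma_k)$ of $\mathcal{B}$ together with a point $y\in U_1\cap U_2$. Applying the re-centering estimate to each of the finitely many balls, I obtain radii $\rho_j':=\rho_j-q_{m_j}(y-a)>0$ and $\sigma_k':=\sigma_k-q_{n_k}(y-b)>0$ for which the common-centre set
\begin{equation*}
U_3:=\Big(\bigcap_{j=1}^{s}B_{q_{m_j}}(y,\rho_j')\Big)\cap\Big(\bigcap_{k=1}^{t}B_{q_{n_k}}(y,\sigma_k')\Big)
\end{equation*}
belongs to $\mathcal{B}$, contains $y$, and satisfies $U_3\subseteq U_1\cap U_2$. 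This confirms the basis axioms.

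Finally I would identify the topology generated by $\mathcal{B}$ with $\tau(Q)$. Since each seminorm ball $B_{q_n}(x,r)$ is itself a member of $\mathcal{B}$ (the case $t=1$), the subbasis of $\tau(Q)$ is contained in $\mathcal{B}$; conversely every member of $\mathcal{B}$ is $\tau(Q)$-open by the openness established above. Hence $\mathcal{B}$ generates exactly $\tau(Q)$ and is a basis for it, and the convergence description is recovered because a net lies eventually in every common-centre basic neighbourhood of $x_0$ precisely when $q_n(x_\alpha-x_0)\to 0$ for all $n$. The only genuine subtlety is reconciling $\tau(Q)$, specified in the excerpt through net convergence, with the topology having the seminorm balls as a subbasis; once that identification is granted everything reduces to the triangle inequality, so I expect that reconciliation, rather than any of the set-theoretic verifications, to be the main conceptual point.
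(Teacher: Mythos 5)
The paper itself offers no proof of this theorem: it is stated as a quoted result with a citation to the literature (\cite{mor}), so there is no argument of the paper's to compare yours against. Your proof is correct and is the standard one. The re-centering estimate $B_{q_n}\bigl(y,\,r-q_n(y-x)\bigr)\subseteq B_{q_n}(x,r)$ via the triangle inequality is exactly what is needed both to show each ball is open and to verify the intersection axiom, and your trick of passing from two finite intersections with different centres $a,b$ to a single common-centre member of the collection around $y$ is the key step that makes the stated family (balls sharing one centre) a genuine basis rather than merely a subbasis. You are also right to isolate the one real subtlety: the paper defines $\tau(Q)$ only through net convergence ($x_\alpha\to x_0$ iff $q_n(x_\alpha-x_0)\to 0$ for all $n$), which is an informal specification, and your closing observation — that in the ball-generated topology the common-centre sets form a neighbourhood base at $x_0$, so its convergent nets are precisely those prescribed, and net convergence determines a topology uniquely — is the correct way to reconcile the two descriptions. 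One small imprecision: the ball-generated topology is not the coarsest topology making each $q_n:\mathcal{E}\to\mathbb{R}$ continuous (that weaker topology has only the sets $q_n^{-1}(U)$ as a subbasis and need not contain translated balls); it is the coarsest \emph{translation-invariant} such topology. Since you immediately switch to the subbasis-of-balls description and never use the "coarsest" phrasing again, this does not affect the argument.
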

\begin{remark}
	We can replace the collection $Q=\{q_{n}\}_{n\in \mathbb{N}}$ by the equivalent semi-norms $P=\{p_{n}\}_{n\in \mathbb{N}}$ where $p_{n}=\max\{q_{1},...q_{n}\}$. This allows us to obtain the same topology on $\mathcal{E}$ i.e. $\tau(Q) \equiv \tau(P)$ and to gain the advantage of having an increasing family of semi-norms on $\mathcal{E}$. For this reason we will always assume that the family $Q=\{q_{n}\}_{n \in \mathbb{N}}$ of semi-norms satisfies $q_{1} \leq q_{2} \leq\cdots$
\end{remark}\par
The below lemma shows that $(\mathcal{E},\tau(Q))$ is metrizable whenever it is locally convex Hausdorff (cf. \cite{hor1}).
\begin{lemma}
	Let $\{q_{n}\}_{n\in \mathbb{N}}$ be an increasing family of semi-norms on a vector space $\mathcal{E}$ and assume $\mathcal{E}$ is a locally convex Hausdorff space. Then the map $d:\mathcal{E} \times \mathcal{E} \longmapsto \mathbb{R}_{+}$ given by 
	\begin{equation*}
	d(x,y)=\sum_{n=0}^{\infty}\frac{1}{2^{n}}\frac{q_{n}(x-y)}{1+q_{n}(x-y)}
	\end{equation*}
	defines a metric on $\mathcal{E}$ satisfying $d(x+z,y+z)=d(x,y)$ for all $x,y\ and\ z \in \mathcal{E}$ with $\tau_{d} \equiv \tau(Q)$.
\end{lemma}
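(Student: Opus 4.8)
The plan is to reduce everything to the scalar function $\varphi(t)=\tfrac{t}{1+t}$ on $[0,\infty)$, whose properties carry all the weight. First I would record that $\varphi$ is nonnegative, satisfies $\varphi(0)=0$, is strictly increasing, is bounded above by $1$, and---crucially---is subadditive, i.e. $\varphi(s+t)\le\varphi(s)+\varphi(t)$. The subadditivity follows at once from the pointwise estimate $\tfrac{s+t}{1+s+t}=\tfrac{s}{1+s+t}+\tfrac{t}{1+s+t}\le\tfrac{s}{1+s}+\tfrac{t}{1+t}$. Since each summand in the definition of $d$ is bounded by $2^{-n}$, the series converges and $0\le d(x,y)\le 2$ for all $x,y$, so $d$ is well defined with values in $\mathbb{R}_{+}$.

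Next I would verify the metric axioms. Symmetry is immediate from $q_{n}(x-y)=q_{n}(y-x)$, which holds because a seminorm satisfies $q_{n}(-v)=q_{n}(v)$. For the separation axiom, if $x=y$ then every $q_{n}(x-y)=q_{n}(0)=0$ and $d(x,y)=0$; conversely, if $d(x,y)=0$ then, all terms being nonnegative, $q_{n}(x-y)=0$ for every $n$, and since $\mathcal{E}$ is locally convex Hausdorff the family $\{q_{n}\}$ separates points, forcing $x=y$. The triangle inequality is where $\varphi$ enters: from $q_{n}(x-z)\le q_{n}(x-y)+q_{n}(y-z)$, the monotonicity of $\varphi$ gives $\varphi(q_{n}(x-z))\le\varphi(q_{n}(x-y)+q_{n}(y-z))$, and subadditivity then bounds this by $\varphi(q_{n}(x-y))+\varphi(q_{n}(y-z))$; multiplying by $2^{-n}$ and summing yields $d(x,z)\le d(x,y)+d(y,z)$. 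Translation invariance is free, since $(x+z)-(y+z)=x-y$ makes each term unchanged.

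The substantive part is the identity $\tau_{d}\equiv\tau(Q)$, which I would prove by comparing neighborhood bases of the origin (translation invariance of $d$ and of $\tau(Q)$ then transports the comparison to every point). For $\tau_{d}\subseteq\tau(Q)$, given $\varepsilon>0$ I would split the sum: choose $N$ with $\sum_{n>N}2^{-n}<\varepsilon/2$, and using $\varphi(t)\le t$ observe that forcing $q_{n}(y)<\delta$ for $n=0,\dots,N$ makes the head of the series at most $2\delta$; choosing $\delta<\varepsilon/4$ gives $d(0,y)<\varepsilon$, so the basic $\tau(Q)$-neighborhood $\{y:q_{n}(y)<\delta,\ n\le N\}$ lies inside the $d$-ball $B_{d}(0,\varepsilon)$. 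For the reverse inclusion $\tau(Q)\subseteq\tau_{d}$, it suffices to fit a $d$-ball inside a single seminorm ball $\{y:q_{n}(y)<r\}$: setting $\varepsilon=2^{-n}\varphi(r)$, the inequality $d(0,y)<\varepsilon$ forces $2^{-n}\varphi(q_{n}(y))<\varepsilon=2^{-n}\varphi(r)$, hence $\varphi(q_{n}(y))<\varphi(r)$, and strict monotonicity of $\varphi$ gives $q_{n}(y)<r$; finite intersections are handled by taking the minimum of finitely many such $\varepsilon$.

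The main obstacle is the topological equivalence rather than the algebraic verification of the metric axioms: one must exhibit, in both directions, explicit neighborhoods, and the delicate point is inverting $\varphi$ to recover a bound on a single seminorm $q_{n}(y)$ from the smallness of the whole sum $d(0,y)$---this is exactly what strict monotonicity and the scalar implication $\varphi(q_n(y))<\varphi(r)\Rightarrow q_n(y)<r$ provide. The only structural hypothesis used beyond the seminorm inequalities is the Hausdorff assumption, which is needed precisely once, to guarantee that vanishing of all $q_{n}(x-y)$ implies $x=y$.
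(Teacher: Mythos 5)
Your proof is correct, but there is nothing in the paper to compare it against: the manuscript states this lemma as a known metrization result and simply cites Horvath (\cite{hor1}) for it, giving no proof of its own. Your argument is the standard one and is complete. The key points all check out: the scalar function $\varphi(t)=t/(1+t)$ is monotone and subadditive (your inequality $\tfrac{s+t}{1+s+t}\le\tfrac{s}{1+s}+\tfrac{t}{1+t}$ is exactly what makes the triangle inequality go through term by term); boundedness $\varphi\le 1$ gives convergence of the series and well-definedness; the Hausdorff hypothesis is invoked exactly once and exactly where it must be, to pass from $q_n(x-y)=0$ for all $n$ to $x=y$ (the paper's own remark immediately after the lemma records this equivalence, so your use of it is consistent with the text); and the two-sided comparison of neighborhood bases at the origin is the substantive part, handled correctly in both directions --- truncating the tail of the series to get $\tau_d\subseteq\tau(Q)$, and using the single-term bound $2^{-n}\varphi(q_n(y))\le d(0,y)$ together with monotonicity of $\varphi$ to recover $q_n(y)<r$ from $d(0,y)<2^{-n}\varphi(r)$ for the reverse inclusion, with translation invariance of both topologies transporting the comparison to arbitrary points. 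In effect, your proposal supplies a self-contained proof of a statement the paper outsources to the literature, which is a strictly stronger contribution than what the manuscript contains.
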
\par
Due to the above lemma we motivate the following definition for  Fr\'{e}chet  space.
\begin{definition}\label{d6}
	Let $Q=\{q_{n}\}_{n \in \mathbb{N}}$ be a family of semi-norms on $\mathcal{E}$. We say that $\mathcal{E}$ is a Fr\'{e}chet  space if $(\mathcal{E},\tau(Q))$ is complete and Hausdorff.
\end{definition}
\begin{remark}
	The Hausdorff property is equivalent to: given $x \in \mathcal{E}$ with $p_{n}(x)=0$ for all $n \in \mathbb{N}$ then $x=0$ i.e.
	\begin{equation*}
	\cap_{n \in \mathbb{N}}\{x \in \mathcal{E};p_{n}(x)=0\}=0_{\mathcal{E}}
	\end{equation*}
\end{remark}
\begin{remark}
	By a Cauchy sequence $(x_{n})_{n\in \mathbb{N}}$ we mean, given any neighborhood $\mathcal{U}\ of\ 0_{\mathcal{E}}$, there is $n_{0} \in \mathbb{N}$ (depending on $\mathcal{U}$) such that $x_{n}-x_{m} \in \mathcal{U}$ for all $n,m >n_{0}$ (cf. \cite{sch} for the sequentially completeness). Following the previous lemma, the definition of the Cauchy criteria coincides with $(x_{n})_{n \in \mathbb{N}}$ being Cauchy in $(\mathcal{E},\tau_{d})$ i.e. given $\epsilon > 0$ there is $n_{0}=n_{0}(\epsilon) \in \mathbb{N}$ such that $d(x_{n},x_{m})<\epsilon$ for all $n,m \geq n_{0}$.
\end{remark}
\begin{example}\label{ex1}
	Every Banach space is a Fr\'{e}chet  space.
\end{example}
\begin{example}\label{ex2}
	Consider the Schwartz space over $\mathbb{R}^{N}$ i.e.
	\begin{equation*}
	S(\mathbb{R}^{N})=\{f\in C^{\infty}(\mathbb{R}^{N})\mid for\ each\ \alpha,\beta \in \mathbb{N}^{N}\ we\ have\ \sup_{x \in \mathbb{R}^N} |x^{\alpha}D^\beta f(x)|<\infty\}.
	\end{equation*}
	Consider the collection $Q=\{q_{n}\}_{n\in \mathbb{N}}$ of semi-norms on $S(\mathbb{R}^{N})$ given by 
	\begin{equation*}
	q_{n}(f)=\sup_{|\alpha|\leq n}\sup_{x \in \mathbb{R}^{N}}|(1+|x|^{2})^{n}D^{\alpha}f(x)|,\ f \in S(\mathbb{R}^{N})
	\end{equation*}
	then $(S(\mathbb{R}^{N}),\tau(Q)$ is a Fr\'{e}chet  space (cf. \cite{bon,kin}).
\end{example}\par
We have the following theorem which can be found in Zelazko.
\begin{theorem}\label{t4}
	Let $\mathcal{E}$ (respectively $\mathcal{F}$) be two Fr\'{e}chet  spaces whose topology is given by an increasing family of semi-norms $\{p_{n}\}_{n \in \mathbb{N}}$ (respectively $\{q_{n}\}_{n \in \mathbb{N}}$). Given $T:\mathcal{E} \longrightarrow \mathcal{F}$  a linear map, then $T$ is continuous if and only if for each semi norm $q_{n_{0}}$ there is a semi-norm $p_{n_{1}}$ and a positive number $c$ such that 
	\begin{equation*}
	q_{n_{0}}(T(f)) \leq c\ p_{n_{1}}(f),\ for\ all\ f \in \mathcal{E}.
	\end{equation*}
\end{theorem}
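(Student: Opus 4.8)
The plan is to reduce everything to continuity at the origin and to exploit the fact that, because the family $\{q_n\}$ is increasing, the single--seminorm balls $B_{q_{n_0}}(0,\varepsilon)=\{y\in\mathcal F : q_{n_0}(y)<\varepsilon\}$ already form a neighborhood basis of $0$ in $\mathcal F$ (and similarly for $\mathcal E$). Indeed, any finite intersection appearing in the basis described earlier contains such a ball, since $q_{N}\ge q_{n_i}$ whenever $N\ge n_i$, so the minimum radius times the largest index does the job. I would record this observation first, as both implications rest on it.

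For sufficiency ($\Leftarrow$), assume the estimate $q_{n_0}(Tf)\le c\,p_{n_1}(f)$ holds for every index $n_0$. Since $T$ is linear it suffices to prove continuity at $0$. Given a basic neighborhood $B_{q_{n_0}}(0,\varepsilon)$ of $T0=0$, the estimate shows that $p_{n_1}(f)<\varepsilon/c$ forces $q_{n_0}(Tf)<\varepsilon$; hence $T\bigl(B_{p_{n_1}}(0,\varepsilon/c)\bigr)\subset B_{q_{n_0}}(0,\varepsilon)$, which is exactly continuity at $0$ and therefore, by translation, everywhere.

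For necessity ($\Rightarrow$), assume $T$ is continuous and fix $n_0$. The set $B_{q_{n_0}}(0,1)$ is an open neighborhood of $0$, so by continuity at $0$ its preimage contains a basic neighborhood of $0$ in $\mathcal E$; by the opening observation there are an index $n_1$ and a $\delta>0$ with $p_{n_1}(f)<\delta \Rightarrow q_{n_0}(Tf)<1$. The remaining step is to upgrade this implication to the homogeneous estimate. For $f$ with $p_{n_1}(f)>0$ I would rescale: setting $g=\frac{\delta}{2p_{n_1}(f)}\,f$ gives $p_{n_1}(g)=\delta/2<\delta$, whence $q_{n_0}(Tg)<1$, and homogeneity of the seminorm together with linearity of $T$ yields $q_{n_0}(Tf)<\frac{2}{\delta}\,p_{n_1}(f)$. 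For $f$ with $p_{n_1}(f)=0$ one applies the implication to $\lambda f$ for every $\lambda>0$, obtaining $\lambda\,q_{n_0}(Tf)<1$ for all $\lambda$, which forces $q_{n_0}(Tf)=0$; thus the estimate holds with $c=2/\delta$ in every case.

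The main obstacle is precisely this last rescaling argument, and in particular the case $p_{n_1}(f)=0$: because we work with seminorms rather than norms, one cannot simply normalize an arbitrary vector, and must argue separately that $q_{n_0}(Tf)$ vanishes on the kernel of $p_{n_1}$. Everything else is a direct transcription of the neighborhood-basis description of $\tau(Q)$ recorded earlier, so I would spend the bulk of the write-up making the scaling and the kernel case precise.
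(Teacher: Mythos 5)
Your proof is correct and complete. Note that the paper itself gives no proof of this theorem at all (it is stated as a known result attributed to Zelazko), so there is nothing to compare against; your argument supplies exactly the missing details. The three ingredients you isolate are the right ones: the observation that, because the family of seminorms is increasing, the single-seminorm balls $B_{q_{n_0}}(0,\varepsilon)$ already form a neighborhood basis of $0$; the translation/linearity reduction to continuity at the origin for the sufficiency direction; and, for necessity, the rescaling argument together with the separate treatment of the kernel case $p_{n_1}(f)=0$, where applying the implication to $\lambda f$ for all $\lambda>0$ forces $q_{n_0}(Tf)=0$. That last case is indeed the one point where a careless write-up (normalizing by $p_{n_1}(f)$ as if it were a norm) would fail, and you handle it correctly.
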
\par
\begin{definition}
	Let $\mathcal{A}$ be a unital algebra  and $Q=\{q_{n}\}_{n \in \mathbb{N}}$ be an increasing family of semi-norms for which $(\mathcal{A},\tau(Q))$ is a Fr\'{e}chet  space and suppose that the multiplication is (separately or adjointly) continuous i.e. given $n \in \mathbb{N}$, there is $c_{n}>0\ with\ n_{0} \geq n$ satisfying
	\begin{equation*}
	q_{n}(fg) \leq c_{n}\ q_{n_{0}}(f)\ q_{n_{0}}(g),\ for\ all\ f,g \in \mathcal{A}.
	\end{equation*}
\end{definition}
\begin{definition}\label{5151}
	A Fr\'{e}chet  algebra $(\mathcal{A},\tau(Q))$ is said to be submultiplicative if $Q=\{q_{n}\}_{n \in \mathbb{N}}$ is a family of submultiplicative norms, that is 
	\begin{equation*}
	q_{n}(fg) \leq q_{n}(f)\ q_{n}(g),\ for\ all\ f,g \in \mathcal{A}.
	\end{equation*}
\end{definition}\par
It follows from the above definition together with Example \ref{ex1} and the equation
\begin{equation*}\label{e2}
\|xy\| \leq \|x\|.\|y\|
\end{equation*}  that every Banach algebra is submultiplicative Fr\'{e}chet  algebra.\par
Below we give an example on a unital Fr\'{e}chet  algebra which is not a Banach algebra.
\begin{example}
	Consider $C^{\infty}([0,1])$ with operations of pointwise multiplication and addition (cf. \ref{ex4}). Consider the collection of semi-norms 
	$Q=\{q_{n}\}_{n \in \mathbb{N}}$ given by 
	\begin{equation*}
	q_n(f)= 2^{n}\ \sup_{0\leq k \leq n}\sup_{x \in [0,1]}|f^{(k)}(x)|
	\end{equation*} It is clear that $\{q_{n}\}_{n \in \mathbb{N}}$ is an increasing family of semi-norms. The completeness follows by a similar approach to that in Remark \ref{r567} (for a detailed proof that $\mathcal{A}$ is Fr\'{e}chet  space we refer the reader (chapter 2 section 9 example 3))\\
	Note that \begin{equation*}
	\sup_{x \in [0,1]}|f^{(k)}(x)| \leq \frac{q_{n}(f)}{2^{n}},\ for\ all\ k\leq n
	\end{equation*}
	so that
	\begin{equation*}
	q_{n}(f)\ q_{n}(g)\geq 2^{n}\sup_{x \in [0,1]}|f^{(k)}(x)|.\ 2^{n}\sup_{x \in [0,1]}|g^{(k)}(x)|
	\end{equation*}
Using Leibniz Formula we have for each $k \in \{0,...,n\}$,
	\begin{align*}
	|(fg)^{(k)}(x)|&\leq \sum_{l=0}^{k}C_{l}^{k}\ |f^{(l)}(x)|\ |g^{(k-l)}(x)|\\
	&\leq \sum_{l=0}^{k}C_{l}^{k}\ \sup_{l\leq k}|f^{(l)}(x)|\ \sup_{l\leq k}|g^{(k-l)}(x)|\\
	\end{align*}
	Hence
	\begin{align*}
	\sup_{0\leq k \leq n}|(fg)^{(k)}(x)|&\leq \sup_{0\leq k \leq n}\sum_{l=0}^{k}C_{l}^{k}\ \sup_{l\leq k}|f^{(l)}(x)|\ \sup_{l\leq k}|g^{(k-l)}(x)|\\
	&\leq 2^{n}\frac{q_{n}(f)}{2^{n}}.2^{n}\frac{q_{n}(g)}{2^{n}}\\
	&=\frac{q_{n}(f)\ q_{n}(g)}{2^{n}},\ for\ all\ x \in [0,1].\\
	\end{align*}
	Therefore
	\begin{align*}
	q_{n}(fg)&=2^{n}\sup_{0\leq k \leq n}\sup_{x \in [0,1]}|(fg)^{(k)}(x)|\\
	&\leq 2^{n}\frac{q_{n}(f)\ q_{n}(g)}{2^{n}}\\
	&=q_{n}(f).q_{n}(g).
	\end{align*}
	This shows that $\mathcal{A}$ is a submultiplicative Fr\'{e}chet  algebra (where $c_{n}=1\ and\ n_{0}=n$ in the above definition).
\end{example}
\begin{example}
	The Schwartz space $S(\mathbb{R}^{N})$ defined in Example \ref{ex2} is also a Fr\'{e}chet  algebra under pointwise multiplication and addition. This follows also using a similar argument as done in the previous example with the aid of Leibniz formula for $C^{\infty}$ functions on $\mathbb{R}^{N}$
	\begin{equation*}
	\partial^{\alpha}(fg)=\sum_{\mu \in \mathbb{N},\mu \leq \alpha}C_{\mu}^{\alpha} \partial^{\mu} f(x)\partial^{\alpha-\mu}g(x).
	\end{equation*}
	It is clear that this Fr\'{e}chet  algebra is not a unital algebra.
\end{example}\par
The $\sigma$-compactness of $\mathbb{R}^{N}$ allows us to make the collection of smooth functions a Fr\'{e}chet  algebra. 
\begin{example}
	Let $\mathcal{A}=C^{\infty}(\mathbb{R}^{N})$ be the space of all smooth functions on $\mathbb{R}^{N}$, we consider the collection of $\{k_{n}=B[0,n+1]\}_{n \in \mathbb{N}} \subset \mathbb{R}^{N}$ of the closed balls centered at zero and radius $n+1$. For each $n \in \mathbb{N}$
	\begin{equation*}
	p_{n}(f)=\sup\{|D^{\alpha}f(x)|,\ x \in k_{n}\ and\ |\alpha|\leq n \}.
	\end{equation*}
	Then $\{p_{n}\}_{n\in \mathbb{N}}$ is an increasing family of semi-norms giving the topology of uniform convergence on the compact sets. The completeness follows from the fact that $C^{\infty}(B[0,n])$ is complete.  Using Leibniz formula it follows that the multiplication is continuous and hence $C^{\infty}(\mathbb{R}^{N})$ is a Fr\'{e}chet  algebra.
\end{example}\par
The notion of embedding a Fr\'{e}chet  algebra in another is given in the below definition.
\begin{definition}\label{d4}
	Let $(\mathcal{A},\tau_{\mathcal{A}})\ and\ (\mathcal{B},\tau_{\mathcal{B}})$ be two Fr\'{e}chet  algebras with $\mathcal{B} \subset \mathcal{A}$, we say that $\mathcal{B}$ is continuously embedded in $\mathcal{A}$ and we write $\mathcal{B}\hookrightarrow \mathcal{A}$ if the injection map $id_{\mathcal{B}}:(\mathcal{B},\tau_{\mathcal{B}}) \longrightarrow (\mathcal{A},\tau_{\mathcal{A}})$ is continuous. 
\end{definition}
\begin{remark}
	Let $\{q_n\}_{n \in \mathbb{N}}\ and\ \{p_{n}\}_{n \in \mathbb{N}}$ be the semi-norms defined on $\mathcal{B}\ and\ \mathcal{A}$, respectively. Using Theorem \ref{t4} together with the fact that the identity map is linear, it follows then $\mathcal{B}\hookrightarrow\mathcal{A}$ if and only if for each $n \in \mathbb{N}$, there is $m \in \mathbb{N}$ and $c_{n}>0$ such that
	\begin{equation*}
	q_{n}(b) \leq c_{n}\ p_{m}(b),\ for\ all\ b\in \mathcal{B}.
	\end{equation*}
\end{remark}
\begin{example}
	Every closed subalgebra $\mathcal{B}$ of a Fr\'{e}chet  algebra $\mathcal{A}$ is continuously embedded in $\mathcal{A}$. In particular, every closed Banach subalgebra is continuously embedded in the algebra.
\end{example}
\begin{example}
	The Fr\'{e}chet  algebra of rapidly decreasing functions $\mathcal{S}(\mathbb{R}^{N})$ is continuously embedded in the algebra of all smooth functions on $\mathbb{R}^{N}$. Indeed, for any $f \in \mathcal{S}(\mathbb{R}^{N})$ we have
	\begin{align*}
	p_{n}(f)&=\sup_{|\alpha|\leq n;\ x \in B[0,n+1]} |D^{\alpha}f(x)|\\
	&\leq \sup_{|\alpha|\leq n;\ x \in \mathbb{R}^{N}}  |D^{\alpha}f(x)|\\
	&\leq \sup_{|\alpha|\leq n;\ x \in \mathbb{R}^{N}}  |(1+|x|^{2})^{n}D^{\alpha}f(x)|\\
	&=q_{n}(f).
	\end{align*}
	This shows that $\mathcal{S}(\mathbb{R}^{N}) \hookrightarrow C^{\infty}(\mathbb{R}^{N})$.
\end{example}
\section{$C^{*}$-algebras}
\hskip0.5cm This section is devoted to study particular types of Banach algebras, known as $C^{*}$-algebras. The term ${C}^{*}$-algebra refers back to Segal for the study of closed sub-algebras of $\mathcal{L}(\mathcal{H})$ where $\mathcal{H}$ is a Hilbert space. We present these algebras in an abstract way. The terminology of the notion follows from Example \ref{ex0990} below.
 \subsection{Involutions and $\star$-algebras}
\hskip0.5cm We start by the notion of an involution and $\star$-algebra.
 \begin{definition}\label{d2}
 	Let $\mathcal{A}$ be an algebra over $\mathbb{C}$, we say that a map $\star: \mathcal{A} \longmapsto \mathcal{A}$ is an involution if it satisfies the following conditions:
 	\begin{enumerate}
 		\item $(\alpha a+b)^{\star}=\ \overline{\alpha}a^{\star}+b^{\star}, \alpha \in \mathbb{C},\ a,b \in \mathcal{A}$.
 		\item $(a^{\star})^{\star}=a,\ for\ all\ a\in \mathcal{A}\ and$ 
 		\item $(ab)^{\star}=b^{\star}a^{\star},\ for\ all\ a,b\in \mathcal{A}.$
 	\end{enumerate}
 In this case, we say that $(\mathcal{A},\star)$ is a $\star-algebra$.
 \end{definition}
\begin{example}\label{ex3}
	Following the notation in Example \ref{ex4}), consider $f \in l^{\infty}(\mathcal{W})$   and define $f^{\star}$ to be the complex conjugate of f, i.e. $f^{\star}(x)=\overline{f(x)}$ for all x. It is clear that this map is an involution and $f^{\star} \in l^{\infty}(\mathcal{W})$. This shows that $(l^{\infty}(\mathcal{W}),\star)$ is a $\star -algebra$.
\end{example}\par
The transpose conjugate is an involution on the set of square matrices $\mathcal{M}_{n}(\mathbb{C})$. This is proved in the below example.
\begin{example}
	Let $\mathcal{A}=\mathcal{M}_{n}(\mathbb{C})$ and define $\star :\mathcal{M}_{n}(\mathbb{C})  \longrightarrow\mathcal{M}_{n}(\mathbb{C})\ by\ {A}^{\star}=\overline{{A}}^{T}$ i.e.
	\begin{equation*}
	a_{i,j}^{\star}=\overline{a_{j,i}}\ \quad for\ i,j \in \{1,...,n\}.
	\end{equation*}
	Then $(\mathcal{M}_{n}(\mathbb{C}),\star)$ is a $\star -algebra$. In fact, given $\alpha \in \mathbb{C}\ and\ {A},{B}\ in\ \mathcal{M}_{n}(\mathbb{C}) $ we have:
	\begin{enumerate}
		\item $(\alpha{A}+{B})^{\star}=(\overline{\alpha{A}+{B}})^{T}=(\overline{\alpha}\overline{{A}}+\overline{{B}})^{T}=\overline{\alpha}\overline{{A}}^{T}+\overline{{B}}^{T}=\overline{\alpha}{A}^{\star}+{B}^{\star}$
		\item $({A}^{\star})^{\star}=\overline{(\overline{{A}}^{T})}^{T}=((\overline{\overline{{A}}})^{T})^{T}={A}$
		\item $({A}{B})^{\star}=(\overline{{A}{B}})^{T}=(\overline{{A}}\overline{{B}})^{T} =(\overline{{B}})^{T}(\overline{{A}})^{T}={B}^{\star}{A}^{\star}$
  \end{enumerate}
\end{example}\par
  A more general example is given by the adjoint operator on any Hilbert space.
\begin{example} \label{ex47}
	Let $\mathcal{H}$ be a Hilbert space and $\mathcal{L}(\mathcal{H})$ be the space of all bounded operator with $\|.\|_{op}$ defined in Example \ref{ex50}. Let $\star:\mathcal{L}(\mathcal{H}) \longmapsto \mathcal{L}(\mathcal{H})$ be given as follows: for ${T} \in \mathcal{L}(\mathcal{H})$, let ${T}^{\star}:\mathcal{H} \longmapsto \mathcal{H}$ be the map defined by:
	\begin{equation*}
	<{T}f,g>=<f,{T}^{\star}g>,\ for\ all\ f,g \in \mathcal{H}.
	\end{equation*}
	Then $(\mathcal{L}(\mathcal{H}),\star)$ is a $\star-algebra$.\\
	First we prove that ${T}^{\star} \in \mathcal{L}(\mathcal{H})$. For the linearity, given $\alpha \in \mathbb{C}\  with \ f,g\ and\ h \in \mathcal{H}$ we have
	\begin{align*}
	<f,  {T}^{\star}(\alpha g+h)>&=<{T}f, \alpha g+h>\\
	&=<{T}f,\alpha g>+<{T}f,h>\\
	&=\overline{\alpha}<{T}f,g>+<{T}f,h>\\
	&=\overline{\alpha}<f,{T}^{\star}g>+<f,{T}^{\star}h>\\
	&=<f,\alpha{T}^{\star}g>+<f,{T}^{\star}h>\\
	&=<f,\alpha{T}^{\star}g+{T}^{\star}h>
	\end{align*}
	Hence ${T}^{\star}(\alpha g+h)=\alpha{T}^{\star}g+{T}^{\star}h$ i.e.  ${T}^{\star}$ is linear. For the boundedness, given any $g \in \mathcal{H}$  the map
	\begin{equation*}
	f \longmapsto <{T}f,g>
	\end{equation*}
	is bounded since
	\begin{equation*}
	|<{T}f,g>| \leq \|{T}f\|_{\mathcal{H}}\|g\|_{\mathcal{H}} \leq \|{T}\|_{\mathcal{L}(\mathcal{H})}\|g\|\|f\|.
	\end{equation*}
	Hence by Reisz Representation theorem, $\sup_{\|f\|=1}|<{T}f,g>|=\|{T}^{\star}g\|$. Hence
	\begin{equation*}
	\|{T}^{\star}g\| \leq \|{T}f\| \|g\|\ for\ all\ f\ with\ \|f\|=1.
	\end{equation*}
	i.e.
	\begin{equation*}
	\|{T}^{\star}g\| \leq \|{T}\| \|g\|.
	\end{equation*}
	This shows that ${T}^{\star}$ is bounded with
	\begin{equation}\label{e30}
	\|{T}^{\star}\|_{op} \leq \|{T}\|_{op}.
	\end{equation}
	For the condition of $\star-algebra$. Consider $\alpha \in \mathbb{C},\ and\ {T},{S} \in \mathcal{L}(\mathcal{H})$ then for any $f,g \in \mathcal{H}$ we have:
	\begin{enumerate}
		\item 
		$<f,(\alpha T)^{\star}g>=<(\alpha T)f,g>=\alpha<Tf,g>=\alpha<f,T^{\star}g>=<f,\overline{\alpha}T^{\star}g>.$
		\item $<f,(T+S)^{\star}g>=<(T+S)f,g>=<Tf+Sf,g>=<Tf,g>+<Sf,g>\\ =<f,T^{\star}g>+<f,S^{\star}g>=<f,T^{\star}g+S^{\star}g>=<f,(T^{\star}+S^{\star})g>.$
		\item $<f,(T^{\star})^{\star}g>=<T^{\star}f,g>=\overline{<g,T^{\star}f>}=\overline{<Tg,f>}=<f,Tg>.$
		\item $<f,(TS)^{\star}g>=<TSf,g>=<Sf,T^{\star}g>=<f,S^{\star}T^{\star}g>.$
	\end{enumerate}
\end{example}
\begin{proposition}
	Let $\mathcal{A}$ be a unital $\star-algebra$ with identity e, then the following holds true:
	\begin{enumerate}
		\item $e^{\star}=e.$
		\item If $a \in \mathcal{A}$ is invertible then $a^{\star} \in Inv(\mathcal{A})$ with $(a^{\star})^{-1}=(a^{-1})^{\star}$.
		\item $\sigma(a^{\star})=\{\overline{\lambda} \mid \lambda \in \sigma(a) \}$.
	\end{enumerate}
\begin{proof}
	\begin{enumerate}
		\item As $e$ is the identity element of $\mathcal{A}$ then following the second and third condition in Definition \ref{d2}, we have 
		$$e^{\star}=e^{\star}e=e^{\star}(e^{\star})^{\star}=(e^{\star}e)^{\star}=(e^{\star})^{\star}=e$$
		\item Let $a \in Inv(\mathcal{A})$ then $a^{-1}a=e$. Hence by 1),  $a^{\star}(a^{-1})^{\star}=e$ and  $(a^{-1})^{\star}a^{\star}=e$ so that $a^{\star} \in Inv(\mathcal{A})\ and\ (a^{\star})^{-1}=(a^{-1})^{\star}$.
		\item We know that $\lambda \notin \sigma(a)$ if and only if $\lambda e-a \in Inv(\mathcal{A})$. Using 2), this is equivalent to say $(\lambda e-a)^{\star} \in Inv(\mathcal{A})$ i.e. $\overline{\lambda}e-a^{\star}$ is invertible in $\mathcal{A}$, which means $\overline{\lambda} \notin \sigma(a^{\star})$.
	\end{enumerate}
\end{proof}
\end{proposition}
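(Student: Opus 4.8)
The plan is to establish the three claims in sequence, each reducing to a direct manipulation of the involution axioms of Definition \ref{d2} together with the defining property of the unit $e$. Nothing here requires completeness or the norm; it is purely algebraic, so I would work entirely from conditions (1)--(3) of the involution.

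For the first claim, I would begin with $e^{\star}$ and insert the unit on the right, $e^{\star}=e^{\star}e$, since $e$ is the identity. The key move is then to rewrite the second factor using the involutivity axiom, $e=(e^{\star})^{\star}$, and fold the resulting product back into a single $\star$ via the anti-multiplicative axiom $(ab)^{\star}=b^{\star}a^{\star}$. Concretely this produces the chain $e^{\star}=e^{\star}e=e^{\star}(e^{\star})^{\star}=(e^{\star}e)^{\star}=(e^{\star})^{\star}=e$, where the order reversal is taken with $a=e^{\star}$, $b=e$, and the final step is again involutivity. There is no real obstacle beyond keeping track of which axiom justifies each equality.

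For the second claim, I would apply $\star$ to both defining relations $a^{-1}a=e$ and $aa^{-1}=e$. Using the anti-multiplicative axiom and the first claim $e^{\star}=e$, these become $a^{\star}(a^{-1})^{\star}=e$ and $(a^{-1})^{\star}a^{\star}=e$, which exhibit $(a^{-1})^{\star}$ as a genuine two-sided inverse of $a^{\star}$. Hence $a^{\star}\in Inv(\mathcal{A})$ and $(a^{\star})^{-1}=(a^{-1})^{\star}$, as required.

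For the third claim, I would transport membership in the spectrum through the invertibility criterion: by definition $\lambda\notin\sigma(a)$ iff $\lambda e-a\in Inv(\mathcal{A})$, and by the second claim this is equivalent to $(\lambda e-a)^{\star}\in Inv(\mathcal{A})$. The one step that genuinely demands care is simplifying $(\lambda e-a)^{\star}$: the conjugate-linearity of axiom (1) together with $e^{\star}=e$ gives $(\lambda e-a)^{\star}=\overline{\lambda}e-a^{\star}$, so the conjugate must be carried onto the scalar. Then $\lambda\notin\sigma(a)$ iff $\overline{\lambda}e-a^{\star}\in Inv(\mathcal{A})$ iff $\overline{\lambda}\notin\sigma(a^{\star})$, and passing to complements yields $\sigma(a^{\star})=\{\overline{\lambda}\mid\lambda\in\sigma(a)\}$. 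The main subtlety throughout is purely notational, namely keeping the complex conjugate on scalars in axiom (1) and the factor reversal in axiom (3); once those are tracked correctly every step is routine substitution.
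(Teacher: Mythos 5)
Your proof is correct and follows essentially the same route as the paper's: the identical chain $e^{\star}=e^{\star}e=e^{\star}(e^{\star})^{\star}=(e^{\star}e)^{\star}=(e^{\star})^{\star}=e$ for the first claim, applying $\star$ to both relations $a^{-1}a=e$ and $aa^{-1}=e$ for the second, and transporting invertibility of $\lambda e-a$ through the involution to get $\overline{\lambda}e-a^{\star}$ for the third. The only difference is that you spell out the conjugate-linearity step $(\lambda e-a)^{\star}=\overline{\lambda}e-a^{\star}$ slightly more explicitly, which is a matter of exposition rather than substance.
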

\subsection{$C^\star$-algebras}
\hskip0.5cm We start by the definition of ${C}^{\star}-algebras$ which are special type of Banach algebras.
\begin{definition}
	Let $(\mathcal{A},\|.\|)$ be a Banach algebra with an involution $\star$, we say that $\mathcal{A}$ (or $(\mathcal{A},\|.\|)$) is a ${C}^{\star}-algebra$ if the following condition is satisfied: 
	\begin{equation}\label{e4}
	\|aa^{\star}\|=\|a\|^{2}\ for\ all\ a \in \mathcal{A}
	\end{equation}
\end{definition}
\begin{remark}
	Following the definition of a ${C}^{\star}-algebra$, it is easy to see that the norm being preserved under involution i.e. $\|a^{\star}\|=\|a\|$, indeed by multiplication of semi-norm, we have
	\begin{equation*}
	\|a\|^{2} \leq \|a\|.\|a^{\star}\|
	\end{equation*}
	Replacing a by $a^{\star}$ we get the required equality. 
\end{remark}\par
Below we give some standard examples of ${C}^{\star}$-algebras.
\begin{example}
	Combining the result of Example \ref{ex4} and \ref{ex3}, we know that $l^{\infty}(\mathcal{W})$ is a Banach algebra and also a $\star-algebra$. Moreover,
	\begin{equation*}
	\|ff^{\star}\|=\sup_{x \in \mathcal{W}} |\overline{f(x)}f(x)|=(\sup_{x \in \mathcal{W}}|f(x)|)^{2}=\|f\|^{2}
	\end{equation*}
	So that $l^{\infty}(\mathcal{W})$ is a ${C}^{\star}-algebra$.
\end{example}
\begin{example}\label{ex0990}
	Let $\mathcal{H}$ be a Hilbert space then $(\mathcal{L}(\mathcal{H}),\|.\|_{op})$ is a ${C}^{\star}-algebra$. The fact that it is a $\star-algebra$ and a Banach algebra follows from Example  \ref{ex50} and \ref{ex47}. The condition (\ref{e4}) is also verified. In fact, given $l{T} \in \mathcal{L}(\mathcal{H})$, then by (\ref{e30}), we have $\|{T}^{\star}\| \leq \|{T}\|$, replacing ${T}\ by\ {T}^{\star}$ in the above inequality, we obtain that
	\begin{equation*}
	\|{T}^{\star}\|=\|{T}\|.
	\end{equation*}
	Thus by submultiplicativity of norms we get
	\begin{equation}\label{e6}
	\|{T}^{\star}{T}\| \leq \|{T}^{\star}\| \|{T}\|=\|{T}\|^{2}
	\end{equation}
	Moreover, for $f \in \mathcal{H}\ and\ f \neq 0$, we have 
	\begin{align*}
	\|{T}f\|^{2}&=<{T}f,{T}f>=<{T}^{\star}{T}f,f>\\
	&\leq \|{T}^{\star}{T}\| \|f\| \|f\|
	\end{align*}
	Hence,
	\begin{equation*}
	\frac{\|{T}f\|^{2}}{\|f\|^{2}} \leq \|{T}^{\star}{T}\|
	\end{equation*}
	i.e
	\begin{equation*}
	(\frac{\|{T}f\|}{\|f\|})^{2} \leq \|{T}^{\star}{T}\|
	\end{equation*}
	This shows that $\|{T}\|^{2} \leq \|{T}^{\star}\mathcal{T}\|$. The previous inequality with (\ref{e6}) shows that $\mathcal{L}(\mathcal{H})$ is a ${C}^{\star}$-algebra.
\end{example}
\begin{definition}\label{d5}
	Let $(\mathcal{A},\star)$ be a $\star$-algebra and let $\mathcal{B}$ be a subset of $\mathcal{A}$. We say that $\mathcal{B}$ is symmetric ($\mathcal{B}^{\star}=\mathcal{B}$) if it is closed under the involution $\star$ i.e. given $b \in \mathcal{B}$, then $b^{\star} \in \mathcal{B}$.
\end{definition}\par
We now provide the precise definition of a $C^{\star}$-subalgebra.
\begin{definition}
	Let $\mathcal{A}$ be a $C^{\star}-algebra$ and $\mathcal{B}$ be a subalgebra of $\mathcal{A}$ (cf. Definition \ref{d3}). If $\mathcal{B}$ is symmetric and norm-closed in $\mathcal{A}$ then we say that $\mathcal{B}$ is a $C^{\star}-subalgebra$ of $\mathcal{A}$.
\end{definition}
\begin{example}
	Let $\mathcal{A}=\mathcal{M}_{n}(\mathbb{C})$ be the  $C^{\star}$-algebra of square matrices with entries in $\mathbb C$. Consider  $\mathcal{B}$ be the subalgebra of diagonal matrices in $\mathcal{A}$. It is easy to see that $\mathcal{B}$ is symmetric and closed in $(\mathcal{A},\|.\|_{op})$. Hence $\mathcal{B}$ is a $C^{\star}$-subalgebra of $\mathcal{A}$.
\end{example}\par
To make the topic self contained we recall an important representation of $C^{\star}-algebras$. Let us note that if $\mathcal{B} \subset \mathcal{L}(\mathcal{H})$ is closed $\star-subalgebra$ then it is a $C^{\star}-algebra$, the converse is also true, but it is beyond the scope of the thesis. This is known as Gelfand Naimark theorem (cf.\cite{john}  for a detailed proof and construction).
\begin{theorem}
	Let $\mathcal{A}$ be a $C^{\star}-algebra$, then there is a Hilbert space $\mathcal{H}$ and a $C^{\star}$-subalgebra $\mathcal{B}$ of $\mathcal{L}(\mathcal{H})$ such that $\mathcal{A}$ is $\star$-isomorphic to $\mathcal{B}$.
\end{theorem}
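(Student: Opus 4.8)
The plan is to realize $\mathcal{A}$ concretely through the Gelfand--Naimark--Segal (GNS) construction: from each positive linear functional one builds a Hilbert space representation, and then one assembles all of these into a single faithful isometric $\star$-representation. First I would reduce to the unital case, since the theorem as stated already assumes a unit is convenient to have. If $\mathcal{A}$ were non-unital one would adjoin a unit by passing to $\tilde{\mathcal{A}} = \mathcal{A} \oplus \mathbb{C}e$ with the natural product and involution, extending the norm so that $\tilde{\mathcal{A}}$ remains a $C^{\star}$-algebra; a faithful representation of $\tilde{\mathcal{A}}$ then restricts to $\mathcal{A}$. So I assume $e \in \mathcal{A}$.

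The analytic backbone is the identity $\rho(a) = \|a\|$ for self-adjoint $a$. Indeed, the $C^{\star}$-condition (\ref{e4}) gives $\|a^{2}\| = \|a\|^{2}$ when $a = a^{\star}$, and iterating yields $\|a^{2^{n}}\| = \|a\|^{2^{n}}$; together with the spectral radius formula of Theorem \ref{t1} this forces $\rho(a) = \|a\|$. Consequently $\|a\|^{2} = \|a^{\star}a\| = \rho(a^{\star}a)$ for every $a$, so the norm is entirely encoded by the spectrum. Next I would construct \emph{states}, that is, positive linear functionals $\phi$ with $\phi(e) = 1$. Fixing $a \in \mathcal{A}$, on the commutative $C^{\star}$-subalgebra generated by $a^{\star}a$ and $e$ one produces a positive functional of norm one taking the value $\|a^{\star}a\| = \|a\|^{2}$ on $a^{\star}a$; a Hahn--Banach extension, shown to remain positive, then yields a state $\phi_{a}$ on $\mathcal{A}$ with $\phi_{a}(a^{\star}a) = \|a\|^{2}$.

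Given a state $\phi$, the GNS step defines the sesquilinear form $\langle b, c \rangle_{\phi} = \phi(c^{\star}b)$, which is positive semi-definite; its null space $N_{\phi} = \{b : \phi(b^{\star}b) = 0\}$ is a closed left ideal, so the quotient $\mathcal{A}/N_{\phi}$ carries an inner product, and I complete it to a Hilbert space $H_{\phi}$. Left multiplication $\pi_{\phi}(a)(b + N_{\phi}) = ab + N_{\phi}$ is well defined, and the estimate $\phi\big((ab)^{\star}(ab)\big) \le \|a\|^{2}\,\phi(b^{\star}b)$, coming from the order inequality $b^{\star}a^{\star}ab \le \|a\|^{2}\,b^{\star}b$, shows $\pi_{\phi}(a)$ is bounded with $\|\pi_{\phi}(a)\| \le \|a\|$, hence extends to $H_{\phi}$; one checks that $\pi_{\phi}$ is a unital $\star$-homomorphism. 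Taking the direct sum $\pi = \bigoplus_{\phi} \pi_{\phi}$ over all states, acting on $H = \bigoplus_{\phi} H_{\phi}$, gives a $\star$-representation. Using the state $\phi_{a}$ from the previous step, the cyclic vector $\xi = e + N_{\phi_{a}}$ satisfies $\|\xi\|^{2} = \phi_{a}(e) = 1$ and $\|\pi_{\phi_{a}}(a)\xi\|^{2} = \phi_{a}(a^{\star}a) = \|a\|^{2}$, so $\|\pi(a)\| \ge \|\pi_{\phi_{a}}(a)\| \ge \|a\|$; with the reverse bound this makes $\pi$ isometric, hence injective. Finally, an isometric $\star$-homomorphism has norm-closed range, so $\mathcal{B} := \pi(\mathcal{A})$ is a symmetric, norm-closed subalgebra of $\mathcal{L}(H)$, i.e. a $C^{\star}$-subalgebra, and $\pi : \mathcal{A} \to \mathcal{B}$ is the desired $\star$-isomorphism.

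The main obstacle is the construction of sufficiently many states --- concretely, securing, for each $a$, a state $\phi_{a}$ with $\phi_{a}(a^{\star}a) = \|a\|^{2}$. This is precisely where the $C^{\star}$-axiom is indispensable: it is what makes $\|a\|^{2} = \rho(a^{\star}a)$ and what guarantees that the Hahn--Banach extension of a positive functional stays positive, equivalently that a norm-one functional with $\phi(e) = 1$ is automatically a state. Establishing the underlying order machinery --- that $a^{\star}a \ge 0$, that positive elements admit positive square roots via the holomorphic functional calculus developed earlier, and the resulting operator inequalities --- is the technically heaviest part, as it underlies both the boundedness of $\pi_{\phi}(a)$ and the faithfulness of $\pi$.
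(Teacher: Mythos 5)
The paper never proves this statement: it records the result as the Gelfand--Naimark theorem, explicitly says the proof ``is beyond the scope of the thesis,'' and refers to \cite{john} for the construction. So there is no in-paper argument to compare against, and your proposal must be judged on its own merits. What you outline is precisely the standard GNS route found in the cited literature: reduce to the unital case, use the $C^{\star}$-identity together with the spectral radius formula (Theorem \ref{t1}) to get $\|a\|^{2}=\rho(a^{\star}a)$, manufacture for each $a$ a state $\phi_{a}$ with $\phi_{a}(a^{\star}a)=\|a\|^{2}$ via Gelfand theory on the commutative subalgebra generated by $a^{\star}a$ and $e$ plus a Hahn--Banach extension, run the GNS construction for each state, take the direct sum representation, and conclude by noting that an isometric $\star$-homomorphism has complete, hence norm-closed, symmetric range. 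The skeleton is correct, and your faithfulness argument through the cyclic vector $\xi=e+N_{\phi_{a}}$ is exactly right. Be aware, however, that the ingredients you defer are where essentially all of the work lives: that $a^{\star}a\geq 0$ for every $a$ (historically the hardest point of the abstract theory), that this positivity yields the operator inequality $b^{\star}a^{\star}ab\leq\|a\|^{2}\,b^{\star}b$ and hence $\|\pi_{\phi}(a)\|\leq\|a\|$, and that a norm-one Hahn--Banach extension with $\phi(e)=1$ is automatically positive --- all of this requires the continuous functional calculus and order theory that neither you nor the paper develops. As a proof plan it is the correct and standard one, honestly flagged; as a complete proof it still rests on those unproven lemmas.
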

\begin{proposition}
	Let $\mathcal{A}$ be a $\star-algebra$. Assume $(\mathcal{A},\|.\|_{1})$ is a $C^{\star}-algebra$ and let $\|.\|_{2}$ be another norm on $\mathcal{A}$. If $(\mathcal{A},\|.\|_{2})$ is also a $C^{\star}-algebra$ then the two norms are equal.
	\begin{proof}
		Let $b \in \mathcal{A}$ be a self-adjoint element i.e. $b^{\star}=b$. Hence, for $j=1\ or\ 2$ we have
		\begin{equation*}
		\|b^{2}\|_{j}=\|b\|_{j}^{2}
		\end{equation*}
		and by induction we get
		\begin{equation*}
		\|b^{2^{n}}\|_{j}=\|b\|^{2^{n}}_{j},\ for\ all\ n \in \mathbb{N}.
		\end{equation*}
		Then by (\ref{e5}) for the spectral radius, we have
		\begin{align*}
		\rho(b)&=\lim_{n\to\infty}\|b^{n}\|_{j}^{\frac{1}{n}}\\
		&=\lim_{n\to\infty}\|b^{2^{n}}\|_{j}^{\frac{1}{2^{n}}}\\
		&=\lim_{n\to\infty}(\|b\|_{j})^{2^{n}\times\frac{1}{2^{n}}}\\
		&=\|b\|_{j}
		\end{align*}
		Now let $a \in \mathcal{A}$, then $a^{\star}a$ is self-adjoint so $\rho(a^{\star}a)=\|a^{\star}a\|_{j} =\|a\|_{j}^{2}$ i.e. $\|a\|_{1}^{2}=\|a\|_{2}^{2}$ so $\|a\|_{1}=\|a\|_{2}$ then the two norms are equal on $\mathcal{A}$.
	\end{proof}
\end{proposition}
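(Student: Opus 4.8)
The plan is to show that any $C^\star$-norm on $\mathcal{A}$ is completely determined by the algebraic structure, by forcing it to agree with the spectral radius first on self-adjoint elements and then on arbitrary ones. The crucial structural observation is that the spectral radius is an \emph{algebraic} invariant: the spectrum $\sigma(a)$ is defined purely through invertibility in the algebra $\mathcal{A}$, and invertibility does not refer to any norm. Hence $\sigma(a)$ is literally the same set whether we regard $\mathcal{A}$ as $(\mathcal{A},\|.\|_1)$ or $(\mathcal{A},\|.\|_2)$, and therefore the number $\rho(a)=\max_{\lambda\in\sigma(a)}|\lambda|$ is one and the same in both normed structures. This is the fact that ties the two norms together.

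First I would fix a self-adjoint element $b=b^\star$ and, for $j\in\{1,2\}$, apply the $C^\star$-identity (\ref{e4}) with $a=b$ to get $\|b^2\|_j=\|bb^\star\|_j=\|b\|_j^2$. Since each power $b^{2^k}$ is again self-adjoint, iterating this relation gives, by induction on $k$, the equality $\|b^{2^k}\|_j=\|b\|_j^{2^k}$ for all $k\in\mathbb{N}$. I would then invoke the spectral radius formula (\ref{e5}) of Theorem \ref{t1}: because the limit $\lim_{n\to\infty}\|b^n\|_j^{1/n}$ exists and equals $\rho(b)$, it may be evaluated along the subsequence $n=2^k$, yielding $\rho(b)=\lim_{k\to\infty}\|b^{2^k}\|_j^{1/2^k}=\|b\|_j$. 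Combining this with the norm-independence of $\rho$ from the first paragraph gives $\|b\|_1=\rho(b)=\|b\|_2$ for every self-adjoint $b$.

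Finally, for an arbitrary $a\in\mathcal{A}$ the element $a^\star a$ is self-adjoint, so the previous step applies to it and gives $\|a^\star a\|_1=\|a^\star a\|_2$. Applying the $C^\star$-identity once more in each norm, $\|a\|_j^2=\|a^\star a\|_j$, and substituting the equality just obtained yields $\|a\|_1^2=\|a\|_2^2$, hence $\|a\|_1=\|a\|_2$. Thus the two norms coincide on all of $\mathcal{A}$.

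I expect the only genuinely delicate point to be the justification that $\rho$ is the same in both normed structures; everything else is a mechanical application of the $C^\star$-identity and the spectral radius formula. This hinges on keeping clearly in mind that the spectrum, and therefore its radius, is intrinsic to the algebra and insensitive to the choice of norm, so that the two spectral radius formulas genuinely compute the same quantity rather than two a priori different limits.
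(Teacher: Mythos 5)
Your proof is correct and follows essentially the same route as the paper's: the $C^\star$-identity on self-adjoint elements, iteration to $\|b^{2^{k}}\|_j=\|b\|_j^{2^{k}}$, the spectral radius formula (\ref{e5}) along the subsequence $n=2^{k}$, and then the passage from $b=a^\star a$ to general $a$. Your explicit justification that $\sigma(a)$, and hence $\rho(a)$, is norm-independent is a welcome addition -- the paper uses this fact silently when it equates $\rho(a^\star a)$ computed in the two norms.
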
\par
We recall the following result on the spectral invariance of $C^{\star}-algebras$ which can be found in( \cite{ger}, Theorem 2.1.11).
\begin{theorem}
	Let $\mathcal{A}$ be a $C^{\star}$-algebra and let $\mathcal{B}$ be a $C^{\star}$-subalgebra of $\mathcal{A}$ with a common unit. Then $\mathcal{B}$ is spectral invariant (or closed under holomorphic functional calculus) in $\mathcal{A}$.
\end{theorem}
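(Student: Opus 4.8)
The plan is to prove the genuinely nontrivial inclusion $\mathrm{Inv}(\mathcal{A}) \cap \mathcal{B} \subseteq \mathrm{Inv}(\mathcal{B})$ (the reverse inclusion is immediate, since an inverse computed in $\mathcal{B}$ also inverts in $\mathcal{A}$), and then read off the holomorphic-functional-calculus assertion from Theorem \ref{t2}. The first reduction is to self-adjoint elements. Given $b \in \mathcal{B}$ with $b \in \mathrm{Inv}(\mathcal{A})$, the earlier proposition computing $\sigma(b^{\star}) = \{\overline{\lambda} : \lambda \in \sigma(b)\}$ shows $b^{\star} \in \mathrm{Inv}(\mathcal{A})$, and since $\mathcal{B}$ is symmetric we have $b^{\star} \in \mathcal{B}$. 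Hence $a := b^{\star}b$ lies in $\mathcal{B}$, is self-adjoint, and is invertible in $\mathcal{A}$. If I can show $a^{-1} \in \mathcal{B}$, then $(b^{\star}b)^{-1}b^{\star}$ is a left inverse of $b$ lying in $\mathcal{B}$; as $b$ is already two-sided invertible in $\mathcal{A}$, this left inverse must coincide with the genuine inverse, so $b^{-1} \in \mathcal{B}$. Thus everything reduces to inverting self-adjoint elements inside $\mathcal{B}$.

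The core step treats a self-adjoint $a \in \mathcal{B}$ that is invertible in $\mathcal{A}$. First I record the standard fact that self-adjoint elements of a $C^{\star}$-algebra have real spectrum: if $\alpha + i\beta \in \sigma_{\mathcal{A}}(a)$ with $\beta \neq 0$ and $\alpha,\beta \in \mathbb{R}$, then $\alpha + i(\beta + t) \in \sigma_{\mathcal{A}}(a + ite)$ for every real $t$, so using $|\mu| \leq \|x\|$ for $\mu \in \sigma(x)$ (from the proof of Proposition \ref{p101}) together with the $C^{\star}$-identity one gets $\alpha^{2} + (\beta + t)^{2} \leq \|a + ite\|^{2} = \|(a+ite)(a-ite)\| = \|a^{2} + t^{2}e\| \leq \|a\|^{2} + t^{2}$, which forces $\alpha^{2} + \beta^{2} + 2\beta t \leq \|a\|^{2}$ for all $t \in \mathbb{R}$; letting $t \to \pm\infty$ yields $\beta = 0$. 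Consequently $\sigma_{\mathcal{A}}(a) \subseteq \mathbb{R} \setminus \{0\}$ is compact, and a short factoring argument ($a^{2} + te = (a + i\sqrt{t}\,e)(a - i\sqrt{t}\,e)$ is invertible for $t > 0$ because $\pm i\sqrt{t} \notin \sigma_{\mathcal{A}}(a)$), combined with the self-adjointness of $a^{2}$, shows $\sigma_{\mathcal{A}}(a^{2}) \subseteq [\delta^{2}, r^{2}]$ for some $\delta > 0$, where $r = \|a\|$.

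Now set $c := e - a^{2}/\|a^{2}\|$, which is self-adjoint and lies in $\mathcal{B}$. Since $\|a^{2}\| = \rho_{\mathcal{A}}(a^{2})$ (the norm-equals-spectral-radius identity for self-adjoint elements, established in the uniqueness-of-norm proposition via $\|a^{2^{n}}\| = \|a\|^{2^{n}}$ and Theorem \ref{t1}), the preceding estimate gives $\sigma_{\mathcal{A}}(c) \subseteq [0,\, 1 - \delta^{2}/\|a^{2}\|]$, hence $\rho_{\mathcal{A}}(c) < 1$ and therefore $\|c\| = \rho_{\mathcal{A}}(c) < 1$. Because $\mathcal{B}$ is a $C^{\star}$-subalgebra it carries the restricted norm, so $\|c\|$ is the same computed in $\mathcal{A}$ or in $\mathcal{B}$; applying the Von Neumann Lemma inside $\mathcal{B}$ shows that $e - c = a^{2}/\|a^{2}\|$ is invertible in $\mathcal{B}$ with inverse $\sum_{n \geq 0} c^{n} \in \mathcal{B}$. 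Thus $(a^{2})^{-1} \in \mathcal{B}$, and since $(a^{2})^{-1}$ is a limit of polynomials in $a$ it commutes with $a$, giving $a^{-1} = a\,(a^{2})^{-1} \in \mathcal{B}$. This completes the reduction and proves spectral invariance; Theorem \ref{t2} then converts spectral invariance into the statement that $\mathcal{B}$ is closed under holomorphic functional calculus, which is the parenthetical claim.

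I expect the main obstacle to be the passage from the qualitative hypothesis \emph{invertible in $\mathcal{A}$} to the quantitative bound $\rho_{\mathcal{A}}(c) < 1$ needed for the Neumann series: the real-spectrum lemma and the positivity $\sigma_{\mathcal{A}}(a^{2}) \subseteq (0,\infty)$ are precisely what convert mere invertibility into a spectrum bounded away from $0$, and they are the only points where the full strength of the $C^{\star}$-identity (rather than general Banach-algebra facts from the earlier chapters) is genuinely used. The reduction to self-adjoint elements and the concluding Neumann-series inversion are then routine.
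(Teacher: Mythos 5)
Your proof is correct, but there is nothing in the paper to measure it against: the paper states this theorem without any proof, simply citing Murphy (\cite{ger}, Theorem 2.1.11). What your argument accomplishes is a self-contained proof built entirely from results the paper has already established: the reduction $b \mapsto b^{\star}b$ with the left-inverse trick, reality of the spectrum of self-adjoint elements via the $C^{\star}$-identity, positivity of $\sigma_{\mathcal{A}}(a^{2})$ by factoring $a^{2}+te = (a+i\sqrt{t}\,e)(a-i\sqrt{t}\,e)$, the identity $\|c\| = \rho_{\mathcal{A}}(c)$ for self-adjoint $c$ (extracted from the paper's uniqueness-of-norm proposition, which rests on Theorem \ref{t1}), the Von Neumann lemma applied inside $\mathcal{B}$, and Theorem \ref{t2} to convert spectral invariance into closure under holomorphic functional calculus. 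This differs from the standard proof behind Murphy's Theorem 2.1.11, which handles the self-adjoint case topologically: one invokes the lemma $\partial\sigma_{\mathcal{B}}(x) \subseteq \sigma_{\mathcal{A}}(x)$ together with the fact that a compact subset of $\mathbb{R}$ has empty interior, so $\sigma_{\mathcal{B}}(b^{\star}b) = \partial\sigma_{\mathcal{B}}(b^{\star}b) \subseteq \sigma_{\mathcal{A}}(b^{\star}b)$. Your Neumann-series route trades that boundary lemma, which the paper never proves, for the quantitative bound $\rho_{\mathcal{A}}(c) < 1$ with $c = e - a^{2}/\|a^{2}\|$; this is precisely what makes your proof compatible with the paper's toolkit, at the modest price of the explicit spectral estimates. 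If you write this up, make two implicit steps explicit: the passage from $\sigma_{\mathcal{A}}(a^{2}) \subseteq (0,\infty)$ to $\sigma_{\mathcal{A}}(a^{2}) \subseteq [\delta^{2}, r^{2}]$ uses compactness of the spectrum (Proposition \ref{p101}), and the convergence of $\sum_{n\geq 0} c^{n}$ \emph{in} $\mathcal{B}$ uses that $\mathcal{B}$ is norm-closed in the complete algebra $\mathcal{A}$ — this is exactly where the hypothesis that $\mathcal{B}$ is a $C^{\star}$-subalgebra, rather than a merely symmetric subalgebra, enters.
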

\section{$\Psi^{\star}$-algebras}
The term $\Psi^\star$-algebra dates back to the work of Gramsch in \cite{gr1}. These algebras are  Fr\'{e}chet -algebras continuously embedded in a $C^\star$-algebra and being closed under holomorphic functional calculus. The idea of introducing this notion was to generalize  the work of R. Beals  for the case of pseudodifferential operators.

Let us first introduce the notion of a $\Psi^{\star}-algebra$.
\begin{definition}\label{d7}
	Let $(\mathcal{A},\|.\|)$ be a $C^{\star}-algebra$ and let $(\mathcal{B},\tau_{\mathcal{B}})$ be a Fr\'{e}chet  subalgebra of $\mathcal{A}$. Suppose that $\mathcal{B}$ satisfies the following three conditions:
	\begin{enumerate}
		\item $\mathcal{B}$ is inverse closed in $\mathcal{A}$ i.e. $Inv(\mathcal{B})=Inv(\mathcal{A})\cap \mathcal{B}$ (cf. Definition \ref{d1}).
		\item $(\mathcal{B},\tau_{\mathcal{B}})$ is continuously embedded in $(\mathcal{A},\|.\|)$ (cf. Definition \ref{d4}).
		\item $\mathcal{B}$ is symmetric i.e. closed under involution (cf. Definition \ref{d5}).
			\end{enumerate}
Then $\mathcal{B}$ is called a $\Psi^{\star}-algebra\ in\ \mathcal{A}$.
\end{definition}
\begin{remark}
	If the Fr\'{e}chet  algebra $\mathcal{B}$ is a submultiplicative algebra (cf Definition \ref{5151}) and is a $\Psi^{\star}-algebra$ in $\mathcal{A}$. We say that $\mathcal{B}$ is a submultiplicative $\Psi^{\star}-algebra$ in $\mathcal{A}$.
\end{remark}
\begin{example}
	Let $\mathcal{A}$ be a $C^{\star}$-algebra and $\mathcal{B}$ be a $C^{\star}$-subalgebra of $\mathcal{A}$, then $\mathcal{B}$ is a $\Psi^{\star}$-algebra in $\mathcal{A}$.
\end{example}\par
Before we proceed more, we need the following property about the continuity of the inverse map in the case of a Fr\'{e}chet  algebra (which is more general in the case of Banach algebra (cf. Proposition \ref{r1})) and which can be found in \cite{wael}.
\begin{lemma}
	Let $\mathcal{B}$ be a Fr\'{e}chet  algebra then $Inv(\mathcal{B})$ is a $G_{\delta}$ set if and only if the map $x \longmapsto x^{-1}$ is continuous on $Inv(\mathcal{B})$.
\end{lemma}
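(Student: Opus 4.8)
The plan is to treat $G := Inv(\mathcal{B})$ as a group carrying the subspace topology it inherits from $\mathcal{B}$, and to exploit two facts about a Fr\'{e}chet algebra: that $\mathcal{B}$ is a completely metrizable (hence Baire) space, and that its multiplication is jointly continuous. (Even if one only posits separate continuity, joint continuity of the bilinear multiplication follows from the Baire property of $\mathcal{B}$.) The central external input will be the Alexandrov--Mazurkiewicz theorem: a subspace $Y$ of a completely metrizable space $X$ is completely metrizable in its subspace topology if and only if $Y$ is a $G_{\delta}$ subset of $X$. Thus both implications reduce to deciding when $G$ is completely metrizable.

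For the implication that continuity of inversion forces the $G_{\delta}$ property, I would argue through the graph. Assuming $j\colon x\mapsto x^{-1}$ is continuous, the map $\Phi\colon G\to\mathcal{B}\times\mathcal{B}$, $\Phi(x)=(x,x^{-1})$, is a continuous injection whose inverse is the (continuous) first projection, hence a homeomorphism of $G$ onto its image $\Gamma=\{(x,y)\in\mathcal{B}\times\mathcal{B}\mid xy=e \text{ and } yx=e\}$. Since multiplication is continuous and $\{e\}$ is closed in the Hausdorff space $\mathcal{B}$, the set $\Gamma$ is closed in the completely metrizable space $\mathcal{B}\times\mathcal{B}$, hence itself completely metrizable. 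Therefore $G$, being homeomorphic to $\Gamma$, is completely metrizable in its subspace topology, and Alexandrov--Mazurkiewicz yields that $G$ is $G_{\delta}$ in $\mathcal{B}$.

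For the converse, I would start from the hypothesis that $G$ is $G_{\delta}$ and invoke the same theorem to conclude that $G$ is completely metrizable, hence a Baire and indeed \v{C}ech-complete space. Now $G$ is a group whose multiplication is jointly continuous, that is, a paratopological group carrying a \v{C}ech-complete topology. The continuity of $j$ is then an automatic-continuity phenomenon: it suffices to establish continuity of $j$ at the identity $e$, because for $x\to x_{0}$ one has $x_{0}^{-1}x\to e$, whence $x^{-1}x_{0}\to e$ by continuity at $e$, and finally $x^{-1}\to x_{0}^{-1}$ by continuity of the translations. Continuity at $e$ is obtained through a Baire category argument on the completely metrizable group, which is exactly the classical theorem that every \v{C}ech-complete (in particular, every completely metrizable) paratopological group is a topological group.

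The main obstacle is precisely this last step. Joint continuity of multiplication does \emph{not} by itself entail continuity of inversion, and closing the gap demands genuine use of completeness through a category argument rather than any purely algebraic manipulation; this is where the $G_{\delta}$ hypothesis is consumed, via complete metrizability, and where I would lean on the automatic-continuity machinery of the cited work. By contrast, the forward implication is elementary once the graph $\Gamma$ is recognized as a closed subset of $\mathcal{B}\times\mathcal{B}$.
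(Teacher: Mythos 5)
The paper offers no proof of this lemma at all: it is stated as a known result with a bare citation to Waelbroeck's book, so there is nothing to compare your argument with line by line; it must be judged on its own terms. On those terms your forward implication is complete and correct: continuity of inversion makes $x \longmapsto (x,x^{-1})$ a homeomorphism of $Inv(\mathcal{B})$ onto $\Gamma=\{(x,y)\mid xy=yx=e\}$, which is closed in $\mathcal{B}\times\mathcal{B}$ because multiplication is jointly continuous and $\mathcal{B}$ is Hausdorff, hence completely metrizable, and Mazurkiewicz's theorem then yields the $G_{\delta}$ property. Your parenthetical remark that separate continuity of the bilinear multiplication upgrades to joint continuity on a Fr\'{e}chet space is not a throwaway: the paper's definition of a Fr\'{e}chet algebra only requires separately continuous multiplication, so this observation is actually needed for $\Gamma$ to be closed.

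The converse is where your proof stops being self-contained. The reduction is sound: Alexandrov's theorem makes $Inv(\mathcal{B})$ completely metrizable, hence \v{C}ech-complete, and the translation trick correctly reduces continuity of inversion to continuity at $e$. But the step you then invoke --- that a \v{C}ech-complete (in particular completely metrizable) paratopological group is automatically a topological group --- is precisely the hard content of the lemma, and you assert rather than prove it. It is a genuine theorem (Brand, Arch.\ Math.\ 39 (1982), later extended by Bouziad to semitopological groups), so your proof-by-reduction is valid, and it is no less rigorous than the paper itself, which outsources the entire equivalence to Waelbroeck. You should, however, cite that theorem precisely rather than gesture at ``automatic-continuity machinery,'' since without it the converse has no proof. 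What your route buys over an algebra-internal treatment is conceptual: it exposes the lemma as a purely topological statement about completely metrizable paratopological groups, with the algebra $\mathcal{B}$ entering only through joint continuity of multiplication and the Hausdorff property; and it avoids any unnecessary separability hypothesis by working with \v{C}ech-completeness rather than Polishness.
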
\par
In  connection with the case of $\Psi^{\star}-algebra$, we have the following result.
\begin{proposition}
	Let $\mathcal{B}$ be a $\Psi^{\star}-algebra$ in $\mathcal{A}$, then the map $b \longmapsto b^{\star}$ is continuous on the group of $Inv(\mathcal{B})$.\\
	\begin{proof}
		From condition one of Definition \ref{d7} we know that $id_{\mathcal{B}}:\mathcal{B} \longmapsto \mathcal{A}$ is continuous. As $\mathcal{A}$ is $C^{\star}-algebra$ then $Inv(\mathcal{A})$ is open in $\mathcal{A}$ (cf Proposition \ref{r1}). Therefore the continuity of $id_{\mathcal{B}}$ shows that $id_{\mathcal{B}}^{-1}(Inv(\mathcal{A}))$ is open in $\mathcal{B}$.But
		\begin{equation*}
		id_{\mathcal{B}}^{-1}(Inv(\mathcal{A}))=\mathcal{B} \cap Inv(\mathcal{A})=Inv(\mathcal{B})
		\end{equation*}
		by condition three in definition \ref{d7}. This shows that $Inv(\mathcal{B})$ is open in the Fr\'{e}chet space $\mathcal{B}$. However every open set is a $G_{\delta}$ set. The continuity of the inverse map then follows by the previous lemma.
	\end{proof}
\end{proposition}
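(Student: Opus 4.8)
The plan is to reduce the claimed continuity to the preceding lemma, which asserts that on a Fr\'{e}chet algebra the inversion map is continuous on $Inv(\mathcal{B})$ precisely when $Inv(\mathcal{B})$ is a $G_{\delta}$ set. Accordingly, it suffices to verify that $Inv(\mathcal{B})$ is a $G_{\delta}$ subset of $\mathcal{B}$; in fact I would establish the stronger fact that $Inv(\mathcal{B})$ is open in the Fr\'{e}chet topology of $\mathcal{B}$, since every open set is trivially $G_{\delta}$.

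First I would record that the inclusion $id_{\mathcal{B}}:\mathcal{B}\longrightarrow\mathcal{A}$ is continuous. This is exactly the continuous-embedding requirement of Definition \ref{d7}, which via Theorem \ref{t4} amounts to the usual semi-norm estimate controlling the norm of $\mathcal{A}$ by one of the semi-norms of $\mathcal{B}$. Next, since $\mathcal{A}$ is a $C^{\star}$-algebra, it is in particular a unital Banach algebra, so its group of invertibles $Inv(\mathcal{A})$ is open in $\mathcal{A}$; this is the openness statement already obtained for Banach algebras through the Von Neumann series argument (cf. Remark \ref{r1}).

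The decisive step is to combine these two facts with inverse closedness. Because $id_{\mathcal{B}}$ is continuous and $Inv(\mathcal{A})$ is open, the preimage $id_{\mathcal{B}}^{-1}(Inv(\mathcal{A}))$ is open in $\mathcal{B}$. But this preimage is nothing other than $\mathcal{B}\cap Inv(\mathcal{A})$, and the inverse-closedness condition of Definition \ref{d7}, namely $Inv(\mathcal{B})=Inv(\mathcal{A})\cap\mathcal{B}$, identifies it with $Inv(\mathcal{B})$. Hence $Inv(\mathcal{B})$ is open, therefore $G_{\delta}$, and the previous lemma delivers the continuity of the inversion map on $Inv(\mathcal{B})$.

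I expect the only genuine subtlety to be the bookkeeping about which defining property supplies which ingredient: continuity of the inclusion comes from the continuous embedding, whereas the set identity $id_{\mathcal{B}}^{-1}(Inv(\mathcal{A}))=Inv(\mathcal{B})$ is precisely the inverse-closedness hypothesis. Everything else is soft topology (preimages of open sets under continuous maps are open, and open sets are $G_{\delta}$), so no hard analysis is required once the lemma is in hand. Should the statement instead be read literally for the involution $b\mapsto b^{\star}$, one would additionally invoke the symmetry condition of Definition \ref{d7} together with a separate continuity property of $\star$; but the machinery of the preceding lemma is tailored to the inversion map, which is the content I would actually prove.
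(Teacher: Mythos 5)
Your proposal is correct and follows essentially the same route as the paper's own proof: continuity of the embedding $id_{\mathcal{B}}$ plus openness of $Inv(\mathcal{A})$ in the $C^{\star}$-algebra $\mathcal{A}$, the identification $id_{\mathcal{B}}^{-1}(Inv(\mathcal{A}))=\mathcal{B}\cap Inv(\mathcal{A})=Inv(\mathcal{B})$ via inverse closedness, hence $Inv(\mathcal{B})$ is open, therefore $G_{\delta}$, and the preceding lemma yields continuity of inversion. You even attribute the ingredients to the correct clauses of Definition \ref{d7} (the paper's proof cites the wrong condition numbers) and rightly flag that the proposition's wording $b\mapsto b^{\star}$ does not match what the lemma-based argument actually establishes, namely continuity of $b\mapsto b^{-1}$.
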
\par
Based on the above observation, it is then natural to ask the following question:
Given a subalgebra $\mathcal{B}$ of $\mathcal{A}$, can we find a Fr\'{e}chet  topology $\tau_{\mathcal{B}}$ on $\mathcal{B}$ making $\mathcal{B}$ a $\Psi^{\star}-algebra$ in $\mathcal{A}$? In this situation we obtain that the invertible group in $\mathcal{B}$ is being open and $\mathcal{B}$ is also closed under holomorphic functional of $\mathcal{A}$. The above question has no positive answer in general.
\chapter{Pseudodifferential operators}
 \hspace{1cm}In this chapter, we introduce pseudodifferential operators whose symbols are in the H\"{o}rmander classes $S^{m}_{\rho,\delta}$. These operators generalize differential operators as we shall see in Example \ref{exp1515}. The collection of pseudodifferential operators with symbols in $S^{m}_{\rho,\delta}$ is denoted by $\Psi^{m}_{\rho,\delta}$. We show that each class of operators $\Psi^{m}_{\rho,\delta}$ is a Fr\'{e}chet  space. Moreover, for the case $m=0$, we obtain a Fr\'{e}chet t algebra $\Psi^{0}_{\rho,\delta}$. In case $0 \leq \delta < \rho \leq 1$, we recall the Calderon Vaillancourt theorem for the boundedness of the operators on $L^{2}$. We give a proof on the symbols having a compact support. Furthermore, we show that $\Psi^{0}_{\rho,\delta}$ is a submultiplicative algebra and indicate the fact $\Psi^{0}_{\rho,\delta}$ is a $\Psi^{\star}-algebra$ in $\mathcal{L}(L^{2}(\Omega))$.
\section{Oscillatory integrals}
\hskip0.5cm By using Fourier transform, pseudodifferential operators converts the concept of linear partial differential operators with smooth coefficient to the algebraic theory. We start by considering the Fourier transform which is given by:
\begin{equation}
\hat{f}(\xi)=(\mathcal{F}f){\xi}=\int_{\mathbb{R}^{n}}e^{-ix\xi}f(x)dx\label{e7}
\end{equation}
where $dx=dx_{1}.dx_{2}\cdots dx_{n}$ is a Lebesgue measure.\\
In case $f$ is integrable, the Fourier transform is well defined. Moreover, by considering $\mathcal{F}$ on the Schwartz space we obtain a linear continuous map 
\begin{equation*}
\mathcal{F}:\mathcal{S}(\mathbb{R}^{n}) \longmapsto \mathcal{S}(\mathbb{R}^{n})
\end{equation*}
having an inverse operator
\begin{equation*}
\mathcal{F}^{-1}\hat{f}(x)=\int_{\mathbb{R}^{n}}e^{ix\xi}\hat{f}(\xi)d\xi,
\end{equation*}
where $d\xi=(2\pi)^{-n}d\xi_{1}\cdots d\xi_{n}$.
\begin{proposition}
	The adjoint operator $\mathcal{F}^{\star}$ of the Fourier transform $\mathcal{F}$ is the inverse Fourier transform multiplied by the coefficient $(2\pi)^{n}$.\\
	\begin{proof}
		Let $f,g \in \mathcal{S}(\mathbb{R}^{n})$, then
		\begin{align*}
	<\mathcal{F}f,g>&=(2\pi)^{n}\int_{\mathbb{R}^{n}}\mathcal{F}f(\xi)\overline{g(\xi)} d\xi\\
	&=(2\pi)^{n}\int_{\mathbb{R}^{n}}\{\int_{\mathbb{R}^{n}}e^{-ix\xi}f(x)dx \}\overline{g(\xi)}d\xi\\
	&=(2\pi)^{n}\int_{\mathbb{R}^{n}}\int_{\mathbb{R}^{n}}e^{-ix\xi}\overline{g(\xi)}d\xi f(x)dx\\
	&=(2\pi)^{n}\int_{\mathbb{R}^{n}}\overline{\int_{\mathbb{R}^{n}}e^{ix\xi}g(\xi)d\xi} f(x)dx\\
	&=(2\pi)^{n}<f,\mathcal{F}^{-1}g>
	\end{align*}
		So that $\mathcal{F}^{\star}=(2\pi)^{n}\mathcal{F}^{-1}$.
	\end{proof}
\end{proposition}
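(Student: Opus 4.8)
The plan is to verify directly the defining relation of the adjoint, namely $\langle \mathcal{F}f, g\rangle = \langle f, \mathcal{F}^{\star} g\rangle$ for all $f,g$ in the Schwartz space $\mathcal{S}(\mathbb{R}^{n})$, where $\langle \cdot,\cdot\rangle$ denotes the $L^{2}(\mathbb{R}^{n})$ inner product with respect to the standard Lebesgue measure. Since $\mathcal{S}(\mathbb{R}^{n})$ is dense in $L^{2}$ and both $\mathcal{F}$ and $\mathcal{F}^{-1}$ act continuously on it (cf. Example \ref{ex2}), it suffices to identify $\mathcal{F}^{\star} g$ for Schwartz $g$ and then read off the operator. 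First I would unfold the left-hand side: writing the integral against Lebesgue measure in the frequency variable as $(2\pi)^{n}$ times the normalized measure $d\xi = (2\pi)^{-n}\,d\xi_{1}\cdots d\xi_{n}$ appearing in the inversion formula, one obtains $\langle \mathcal{F}f, g\rangle = (2\pi)^{n}\int_{\mathbb{R}^{n}} (\mathcal{F}f)(\xi)\,\overline{g(\xi)}\, d\xi$, which is the first line of the computation.

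Next I would substitute the definition $(\mathcal{F}f)(\xi)=\int_{\mathbb{R}^{n}} e^{-ix\xi} f(x)\, dx$ from \eqref{e7} to produce a double integral over $\mathbb{R}^{n}\times\mathbb{R}^{n}$, and then interchange the order of integration by Fubini's theorem. This interchange is the only genuinely analytic point, and it is harmless here: because $f$ and $g$ are Schwartz, the integrand $e^{-ix\xi} f(x)\,\overline{g(\xi)}$ is absolutely integrable on the product space, so Fubini applies without difficulty. After swapping, the inner integral in $\xi$ is $\int_{\mathbb{R}^{n}} e^{-ix\xi}\,\overline{g(\xi)}\, d\xi$; since $x,\xi$ are real and the measure $d\xi$ is real, this equals $\overline{\int_{\mathbb{R}^{n}} e^{ix\xi} g(\xi)\, d\xi}=\overline{(\mathcal{F}^{-1}g)(x)}$. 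Hence $\langle \mathcal{F}f, g\rangle = (2\pi)^{n}\int_{\mathbb{R}^{n}} f(x)\,\overline{(\mathcal{F}^{-1}g)(x)}\, dx = \langle f, (2\pi)^{n}\mathcal{F}^{-1}g\rangle$, and comparing with the defining relation gives $\mathcal{F}^{\star}=(2\pi)^{n}\mathcal{F}^{-1}$.

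The step I expect to require the most care is not any single estimate but the bookkeeping of the two different normalizations of Lebesgue measure in play: the physical measure $dx=dx_{1}\cdots dx_{n}$ used in the definition of $\mathcal{F}$ and the frequency measure $d\xi=(2\pi)^{-n}\,d\xi_{1}\cdots d\xi_{n}$ used in the inversion formula. The entire content of the factor $(2\pi)^{n}$ in the statement comes precisely from the discrepancy between these two conventions, so I would fix at the outset which measure the inner product uses and track the constant through each line; a lapse here is the only way the computation could yield a wrong power of $2\pi$.
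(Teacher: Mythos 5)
Your proposal is correct and follows essentially the same route as the paper's proof: unfold $\langle\mathcal{F}f,g\rangle$ with the factor $(2\pi)^{n}$ coming from the normalized frequency measure, substitute the definition of $\mathcal{F}f$, interchange the order of integration, and recognize the inner integral as $\overline{(\mathcal{F}^{-1}g)(x)}$. The only difference is that you make explicit the Fubini justification and the measure bookkeeping that the paper leaves implicit, which is a welcome but minor refinement.
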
\par
We want to extend the definition of (\ref{e7}) to more general functions. For this reason we fix a positive number $l>0$ and consider the following space:
\begin{equation*}
S^{l}(\mathbb{R}^{n})=\{f:\mathbb{R}^{n}\longrightarrow\mathbb{C} \mid f \mbox{ is continuous and there is } c>0 \mbox{ with } |f(x)|<c<x>^{l} \},
\end{equation*}
where $<x>$ is the Japanese bracket given by
\begin{equation*}
<x>^{l}=(1+|x|^{2})^{\frac{l}{2}}=(1+x_{1}^{2}+\cdots+x_{n}^{2})^{\frac{l}{2}}.
\end{equation*}
Note that a function $f$ is rapidly decreasing $(f \in \mathcal{S}(\mathbb{R}^{n}))$ if and only if it is a smooth function and for all $\alpha \in \mathbb{N}^{n}$ and $N>0$, there is $c_{\alpha,N}$ such that
\begin{equation*}
|\partial_{x}^{\alpha}f(x)| \leq c_{\alpha,N}<x>^{-N},\ for\ all\ x \in \mathbb{R}^{n}.
\end{equation*}
This shows that the Schwartz space $\mathcal{S}(\mathbb{R}^{n})$ is contained in $S^{l}(\mathbb{R}^{n})$. It is also clear that the equation (\ref{e7}) is not defined for functions in $S^{l}(\mathbb{R}^{n})$. Our aim is to extend the definition in (\ref{e7}) to functions in  $S^{l}(\mathbb{R}^{n})$. However, we will obtain only distributions and not usual functions.\\

\bigskip

Let us first set up some notation and recall a  multiplicative property for the Fourier transform on $\mathcal{S}(\mathbb{R}^{n})$ (cf. \cite{dijk}). For $k=1,\cdots,n$ and $\alpha \in \mathbb{N}^{n}$ put $D_{k}=-i\partial_{k}$ and $D=(D_{1},\cdots,D_{n})$.\\
Then
\begin{equation}\label{e8}
D^{\alpha}f(x)=\int_{\mathbb{R}^{n}}e^{ix\xi} \xi^{\alpha}\hat{f}(\xi)d\xi,\ for\ f \in \mathcal{S}(\mathbb{R}^{n}).
\end{equation}
Let $q \in \mathbb{N}$ and $p=2q$ (an even number), we write
\begin{equation*}
<D>^{p}=(1+D_{1}^{2}+\cdots+D_{n}^{2})^{q}.
\end{equation*}
It is easy to verify that
\begin{equation}\label{e9}
e^{-ix\xi}=<x>^{-p}<D_{\xi}>^{p}e^{-ix\xi}.
\end{equation}
If we consider the Laplace operator $\Delta_x=\sum_{i=1}^{n}\frac{\partial^{2}}{\partial x_{i}^{2}}$ then we can check that
	\begin{equation*}
	e^{i\xi(x-y)}=<\xi>^{-2m}(1-\Delta_y)^{m} e^{i\xi(x-y)}=<x-y>^{-2m}(1-\Delta_\xi)^{m}e^{i\xi(x-y)}
	\end{equation*}
	where $m=0,1,2,\cdots$ and $x,y,\xi \in \mathbb{R}^{n}$.\par
	We will regularly use the following version of Peetre's inequality.
	\begin{proposition}\label{p10}
		Let $l \in \mathbb{R}$ then
		\begin{equation*}
		<w+\mu>^{l} \leq 2^{|l|}<w>^{|l|}<\mu>^{l}\ for\ all\ w,\mu \in \mathbb{R}^{n}.
		\end{equation*}\\
		\begin{proof}
			Note that for any $x \in \mathbb{R}^{n}$ we have
			\begin{equation*}
			<x>^{2}=(1+|x|^{2})\leq(1+|x|)^{2} \leq (1+|x|)^{2}+(1-|x|)^{2}=2(1+|x|^{2}).
			\end{equation*}
			Given $l>0$ we have:
			\begin{align*}
			<w+\mu>^{l}&=(1+|w+\mu|^{2})^{\frac{l}{2}}\\
			&\leq (1+|w+\mu|)^{l}\\
			&\leq (1+|w|+|\mu|)^{l}\\
			&\leq (1+|w|)^{l}(1+|\mu|)^{l}\\
			&\leq 2^{l}<w>^{l}<\mu>^{l}
			\end{align*}
			If $l<0$, we replace $w+\mu$ by $\mu$ and w by $-w$ and we obtain:
			\begin{equation*}
			<\mu>^{-l} \leq 2^{-l}<(-w)>^{-l}<\mu+w>^{-l}.
			\end{equation*}
			This shows that
			\begin{equation*}
			<w+\mu>^{l}\leq 2^{|l|}<w>^{|l|}<\mu>^{l}\ for\ all\ w,\mu \in \mathbb{R}^{n}.
			\end{equation*}
		\end{proof}
	\end{proposition}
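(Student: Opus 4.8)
The plan is to prove the estimate separately for $l \geq 0$ and for $l < 0$, establishing the positive case directly and then reducing the negative case to it by a change of variables. The only elementary fact I need is the two-sided comparison between the Japanese bracket $<x>$ and $1+|x|$: since $2|x| \leq 1+|x|^{2}$ one has $(1+|x|)^{2} \leq 2(1+|x|^{2}) = 2<x>^{2}$, and trivially $<x> \leq 1+|x|$. Thus $<x> \leq 1+|x| \leq \sqrt{2}\,<x>$ for every $x \in \mathbb{R}^{n}$.

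First I would treat the case $l \geq 0$. Starting from $<w+\mu> \leq 1+|w+\mu|$ together with the triangle inequality $|w+\mu| \leq |w|+|\mu|$, I obtain $<w+\mu> \leq 1+|w|+|\mu| \leq (1+|w|)(1+|\mu|)$. Raising to the power $l \geq 0$ (using monotonicity of $t \mapsto t^{l}$ on $[0,\infty)$, which is where $l \geq 0$ is needed) and applying the comparison above to each factor gives $<w+\mu>^{l} \leq (1+|w|)^{l}(1+|\mu|)^{l} \leq 2^{l/2}<w>^{l}\cdot 2^{l/2}<\mu>^{l} = 2^{l}<w>^{l}<\mu>^{l}$, which is the claim since $|l|=l$ here.

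For $l < 0$ I would not argue directly but instead apply the already-proved positive case (with exponent $-l > 0$) to the shifted pair $(w+\mu,\,-w)$. Since $(w+\mu)+(-w)=\mu$ and $<-w>=<w>$, this yields $<\mu>^{-l} \leq 2^{-l}<w+\mu>^{-l}<w>^{-l}$. All three brackets are strictly positive, so I can rearrange to isolate $<w+\mu>^{l}$, arriving at $<w+\mu>^{l} \leq 2^{-l}<w>^{-l}<\mu>^{l} = 2^{|l|}<w>^{|l|}<\mu>^{l}$, as required.

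The main subtlety to watch is the asymmetry of the exponents in the conclusion: $<w>$ carries $|l|$ while $<\mu>$ carries $l$. This asymmetry is exactly what makes the substitution trick succeed in the negative case, since one shifts by $-w$ precisely so that the bracket in $w$ always appears with a positive exponent after applying the positive case. The positive case itself is routine once the comparison $1+|x| \leq \sqrt{2}\,<x>$ is in hand, so I expect no genuine obstacle beyond carefully tracking the sign of $l$ and the correct direction of monotonicity and rearrangement in each of the two cases.
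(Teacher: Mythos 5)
Your proof is correct and follows essentially the same route as the paper: the same comparison $(1+|x|)^{2}\leq 2(1+|x|^{2})$, the same chain of inequalities via the triangle inequality and $1+|w|+|\mu|\leq(1+|w|)(1+|\mu|)$ for $l\geq 0$, and the same substitution (replacing the pair by $(w+\mu,-w)$ and rearranging) to handle $l<0$. The only difference is that you spell out the rearrangement step in the negative case, which the paper leaves implicit.
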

 In order to regularize (\ref{e7}) for functions in $S^{l}(\mathbb{R}^{n})$, we observe $\mathcal{S}(\mathbb{R}^{n})$ as a subset of its dual space $\mathcal{S}^{'}(\mathbb{R}^{n})$. Recall the Schwartz space on $\mathbb{R}^{n}$ is given by:
\begin{equation*}
	\mathcal{S}(\mathbb{R}^{N})=\{f\in C^{\infty}(\mathbb{R}^{N})\mbox{ for each } \alpha,\beta \in \mathbb{N}^{N}\ \mbox{ we have } \sup_{x \in \mathbb{R}^{N}}|x^{\alpha}D^{\beta}f(x)|<\infty \}
\end{equation*}
with the collection of semi-norms 
\begin{equation*}
	q_{n}(f)=\sup_{|\alpha|\leq n}\sup_{x \in \mathbb{R}^{N}}|(1+|x|^{2})^{n}D^{\alpha}f(x)|,\ f \in \mathcal{S}(\mathbb{R}^{N})
\end{equation*}
The Schwartz space $\mathcal{S}(\mathbb{R}^{n})$ is regarded in $\mathcal{S}^{'}(\mathbb{R}^{n})$ in the following sense: given $u \in \mathcal{S}(\mathbb{R}^{n})$, we have $\hat{u} \in \mathcal{S}(\mathbb{R}^{n})$ and we define:
\begin{equation}\label{e10}
<\hat{u},v>=\int_{\mathbb{R}^{n}}\hat{u}(\xi)v(\xi)d\xi,\ v \in \mathcal{S}(\mathbb{R}^{n}).
\end{equation}
Note that
\begin{align*}
|<\hat{u},v>|&=|\int_{\mathbb{R}^{n}}\hat{u}(\xi)(1+|\xi|^{2})^{n+1}v(\xi)\frac{1}{(1+|\xi|^{2})^{n+1}}d\xi|\\
	&\leq \int_{\mathbb{R}^{n}}|\hat{u}(\xi)| |(1+|\xi|^{2})^{n+1}v(\xi)|\frac{1}{(1+|\xi|^{2})^{n+1}}d\xi\\
	&\leq \int_{\mathbb{R}^{n}}q_{0}(\hat{u})q_{n+1}(v)\frac{1}{(1+|\xi|^{2})^{n+1}}d\xi\\
	&=c\|\hat{u}\|_{\infty}q_{n+1}(v).\\
	\end{align*}
Where $c=\int_{\mathbb{R}^{n}}\frac{1}{(1+|\xi|^{2})^{n+1}}d\xi$. This shows that rapidly decreasing functions defines temperate distribution.\\
Plugging (\ref{e7}) in (\ref{e10}), we obtain
\begin{equation}\label{e11}
<\hat{u},v>=\int_{\mathbb{R}^{n}}(\int_{\mathbb{R}^{n}}e^{-ix\xi}u(x)dx)v(\xi)d\xi.
\end{equation}
By using Fubini's theorem, we can write (\ref{e11}) in the following way
\begin{equation}\label{e12}
\int_{\mathbb{R}^{n}}\int_{\mathbb{R}^{n}}e^{-ix\xi}u(x)v(\xi)dxd\xi,\ u,v \in \mathcal{S}(\mathbb{R}^{n}).
\end{equation}\par
We now extend the definition given in equation (\ref{e12}) to functions $f=u$ in $S^{l}(\mathbb{R}^{n})$.
\begin{theorem}
	Let $l>0$ and $f \in S^{l}(\mathbb{R}^{n})$. Fix $p>l+n$, then
	\begin{equation}\label{e13}
	<\hat{f},g>=\int_{\mathbb{R}^{n}}\int_{\mathbb{R}^{n}}e^{-ix\xi}f(x)<x>^{-p}<D_{\xi}>^{p}g(\xi)dxd\xi,\ g \in \mathcal{S}(\mathbb{R}^{n})
	\end{equation}
	extends the definition of (\ref{e12}) for rapidly decreasing functions. Moreover, equation (\ref{e13}) defines a linear continuous distribution $\hat{f} \in \mathcal{S}^{'}(\mathbb{R}^{n})$.\\
	\begin{proof}
		If $f=u \in \mathcal{S}(\mathbb{R}^{n})$ then by equation (\ref{e12}) we have
		\begin{equation*}
		<\hat{f},g>=\int_{\mathbb{R}^{n}}\int_{\mathbb{R}^{n}}e^{-ix\xi}f(x)g(\xi)dxd\xi
		\end{equation*}
		plugging (\ref{e9}) in the above equation, we get
		\begin{equation*}
		<\hat{f},g>=\int_{\mathbb{R}^{n}}\int_{\mathbb{R}^{n}}<x>^{-p}(<D_{\xi}>^{p}e^{-ix\xi})f(x)g(\xi)dxd\xi
		\end{equation*}
		Using integration by parts, we obtain
		\begin{equation*}
			<\hat{f},g>=\int_{\mathbb{R}^{n}}\int_{\mathbb{R}^{n}}e^{-ix\xi}f(x)<x>^{-p}(<D_{\xi}>^{p}g(\xi))dxd\xi.
		\end{equation*}
		Now for $f \in S^{l}(\mathbb{R}^{n})$ and $p>l+n$, we have
		\begin{align*}
		|<\hat{f},g>|&=|\int_{\mathbb{R}^{n}}\int_{\mathbb{R}^{n}}e^{-ix\xi}f(x)<x>^{-p}(<D_{\xi}>^{p}g(\xi))dxd\xi|\\
		&\leq \int_{\mathbb{R}^{n}}\int_{\mathbb{R}^{n}}|f(x)<x>^{-p}||<D_{\xi}>^{p}g(\xi)|dxd\xi\\
		&\leq c\int_{\mathbb{R}^{n}}<x>^{l-p}dx\int_{\mathbb{R}^{n}}|<D_{\xi}>^{p}g(\xi)|d\xi\\
		&=c\mu\int_{\mathbb{R}^{n}}|<\xi>^{n+1}<D_{\xi}>^{p}g(\xi)|\frac{1}{<\xi>^{n+1}}d\xi\\
		&\leq c\mu\int_{\mathbb{R}^{n}}p_{m}(g)\frac{1}{<\xi>^{n+1}}\\
		&=c\mu c_{1}p_{m}(g)\\
		&=c_{2}p_{m}(g),
		\end{align*}
		where $\mu=\int_{\mathbb{R}^{n}}\frac{dx}{(1+|x|^{2})^{p-l}}$, $m=max\{n+1,p\}>n$ and $c_{1}=\int_{\mathbb{R}^{n}}\frac{1}{<\xi>^{n+1}}d\xi$.
	\end{proof}
\end{theorem}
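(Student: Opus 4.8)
The statement splits into two independent claims, and the plan is to treat them in turn: that the right-hand side of (\ref{e13}) recovers the classical pairing (\ref{e12}) when $f$ is itself rapidly decreasing, and that for an arbitrary $f \in S^{l}(\mathbb{R}^{n})$ the right-hand side is a continuous linear functional of $g$, hence a tempered distribution.

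For the extension property I would begin with the classical identity (\ref{e12}), which is valid when $f=u \in \mathcal{S}(\mathbb{R}^{n})$, and substitute the regularizing formula (\ref{e9}), namely $e^{-ix\xi}=<x>^{-p}<D_{\xi}>^{p}e^{-ix\xi}$, into the integrand. Because both $u$ and $g$ are Schwartz, the resulting double integral converges absolutely, so Fubini's theorem applies and the only task is to transfer the constant-coefficient operator $<D_{\xi}>^{p}=(1-\Delta_{\xi})^{q}$ from the exponential onto $g$. This operator is formally self-adjoint, and the rapid decay of $u$ and $g$ makes every boundary term vanish, so integrating by parts in $\xi$ carries $<D_{\xi}>^{p}$ over to $g$ and produces precisely the integrand of (\ref{e13}); the two definitions therefore agree on $\mathcal{S}(\mathbb{R}^{n})$.

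For the continuity, linearity in $g$ is plain from the formula, so by Theorem \ref{t4} (applied with target space $\mathbb{C}$ carrying its modulus as sole seminorm) it suffices to bound $|<\hat{f},g>|$ by a single Schwartz seminorm. I would estimate under the integral using $|e^{-ix\xi}|=1$ together with the growth bound $|f(x)| \leq c<x>^{l}$ from the definition of $S^{l}(\mathbb{R}^{n})$; this factors the double integral into an $x$-integral and a $\xi$-integral. The $x$-integral $\int_{\mathbb{R}^{n}}<x>^{l-p}\,dx$ is finite exactly because the hypothesis $p>l+n$ gives $l-p<-n$. In the $\xi$-integral I would insert the factor $<\xi>^{n+1}<\xi>^{-(n+1)}$ and bound $<\xi>^{n+1}|<D_{\xi}>^{p}g(\xi)|$ by $c\, q_{m}(g)$ with $m=\max\{n+1,p\}$: the polynomial weight is absorbed since $2m \geq n+1$, and the derivatives of order up to $p$ arising from $(1-\Delta_{\xi})^{q}$ are controlled since $m \geq p$. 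The remaining factor $\int_{\mathbb{R}^{n}}<\xi>^{-(n+1)}\,d\xi$ converges, and collecting all constants gives an estimate $|<\hat{f},g>| \leq C\, q_{m}(g)$, so that $\hat{f} \in \mathcal{S}^{'}(\mathbb{R}^{n})$.

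The step demanding the most care is the bookkeeping behind this single-seminorm bound: one must pick $m$ large enough to absorb simultaneously the auxiliary weight $<\xi>^{n+1}$ that renders the $\xi$-integral convergent and the derivatives up to order $p$ generated by $(1-\Delta_{\xi})^{q}$. The balance among the three constraints --- $p>l+n$ for convergence of the $x$-integral, $2m \geq n+1$ for the weight, and $m \geq p$ for the derivatives --- is the genuine point, whereas the integration by parts in the first part is routine once Schwartz decay is used to discard the boundary contributions.
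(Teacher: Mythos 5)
Your proposal is correct and follows essentially the same route as the paper's own proof: the extension property is obtained by inserting the identity $e^{-ix\xi}=<x>^{-p}<D_{\xi}>^{p}e^{-ix\xi}$ into (\ref{e12}) and integrating by parts in $\xi$, and continuity is proved by the same factorization into an $x$-integral (convergent since $p>l+n$) and a $\xi$-integral handled by inserting $<\xi>^{n+1}<\xi>^{-(n+1)}$ and bounding by the seminorm $p_{m}(g)$ with $m=\max\{n+1,p\}$. Your explicit attention to Fubini, the self-adjointness of $<D_{\xi}>^{p}$, and the vanishing boundary terms makes the argument slightly more careful than the paper's, but it is the same proof.
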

\section{Pseudodifferential operators}
\hskip0.5cm We introduce H\"{o}rmander classes which are function spaces whose symbols define pseudodifferential operators.
\begin{definition}
	Let $\Omega$ be an open subset of $\mathbb{R}^{n}$ and consider $m \in \mathbb{R}$ and $\rho,\delta \in [0,1]$. The space $S_{\rho,\delta}^{m}(\Omega\times\mathbb{R}^{n})$ is defined as follows:
	\begin{multline*}
		S_{\rho,\delta}^{m}(\Omega\times \mathbb{R}^{n})=\Big\{a:\Omega\times\mathbb{R}^{n} \longrightarrow\mathbb{C}\mid a\in C^{\infty}(\Omega\times\mathbb{R}^{n}) \mbox{  and for } \alpha,\beta \in \mathbb{N}^{n}\\
		\mbox{ there is } c_{\alpha,\beta} \mbox{ such that } |\partial^{\alpha}_{\xi}\delta_{x}^{\beta}a(x,\xi)|\leq c_{\alpha,\beta}<\xi>^{m-\rho|\alpha|+\delta|\beta|},\ x \in \Omega\Big \}.
		\end{multline*}
\end{definition}\par
The above classes generalize symbols of differential operators with smooth coefficients. We will be more interested in the case where $m=0$, which is denoted by $S_{\rho,\delta}(\Omega\times\mathbb{R}^{n})$ so that 
\begin{multline*}
S_{\rho,\delta}(\Omega\times\mathbb{R}^{n})=\Big\{a:\Omega\times\mathbb{R}^{n} \longrightarrow\mathbb{C}\mid a\in C^{\infty}(\Omega\times\mathbb{R}^{n}) \mbox{  and for }  \alpha,\beta \in \mathbb{N}^{n},\\
\mbox{ there is }\ c_{\alpha,\beta} \mbox{ such that } |\partial^{\alpha}_{\xi}\delta_{x}^{\beta}a(x,\xi)|\leq c_{\alpha,\beta}<\xi>^{-\rho|\alpha|+\delta|\beta|},\ x \in \Omega\Big \}.
\end{multline*}
Below we introduce pseudodifferential operators associated to symbols in the H\"{o}rmander classes.
\begin{definition}
	Let $\Omega$ be an open subset of $\mathbb{R}^{n}$. For $m \in \mathbb{R}\ and\ \rho,\delta \in [0,1]$. Given $a \in S_{\rho,\delta}^{m}(\Omega\times\mathbb{R}^{n})$, the pseudodifferential operator $\mathcal{A}$ associated to the Kohn-Neirenberg symbol $a$ is defined on the Schwartz space $\mathcal{S}(\mathbb{R}^{n})$ as follows:
	\begin{equation*}
	\mathcal{A}u(x)=\int_{\mathbb{R}^{n}}\int_{\mathbb{R}^{n}}e^{i(x-y)\xi}a(x,\xi)u(y)dyd\xi,\ u \in \mathcal{S}(\mathbb{R}^{n})\ and\ x\in \Omega
	\end{equation*}
\end{definition}
\begin{remark}
	For each $x \in \Omega$, we note that
	\begin{equation}\label{e14}
	\mathcal{A}u(x)=\int_{\mathbb{R}^{n}}e^{ix\xi}a(x,\xi)\hat{u}(\xi)d\xi.
	\end{equation}
	since
	\begin{equation*}
	a(x,.)\hat{u} \in \mathcal{S}(\mathbb{R}^{n}),
	\end{equation*}
	then
	\begin{equation*}
	\mathcal{A}u(x)=\mathcal{F}^{-1}(a(x,.)\hat{u})(x)
	\end{equation*}
	is well defined.
\end{remark}\par
The following example motivates the above definition for pseudodifferential operators.
\begin{example}\label{exp1515}
	Let $p(x,D)$ be a differential operator of order $m \in \mathbb{N}$ with smooth coefficient whose derivatives are bounded i.e. 
	\begin{equation*}
	p(x,D)=\sum_{|\alpha|\leq m}f_{\alpha}(x)D^{\alpha}_{x},\ \partial_{x}^{\beta}f_{\alpha}(x) \in C^{\infty}(\Omega)\cap L^{\infty}(\Omega).
	\end{equation*}
	Let
	\begin{equation*}
	a(x,\xi)=\sum_{|\alpha|\leq m}f_{\alpha}(x)\xi^{\alpha}.
	\end{equation*}
	Given  $\alpha,\ \beta \in \mathbb{N}^{n}$, we have
	\begin{align*}
	|\partial_{\xi}^{\gamma}\partial_{x}^{\beta}a(x,\xi)|&\leq\sum_{|\alpha|\leq m}|\partial_{x}^{\beta}f_{\alpha}(x)|\ |\partial_{\xi}^{\gamma}\xi^{\alpha}|\\
	&\leq c_{\gamma,\beta}<\xi>^{m}.
	\end{align*}
	Hence $a \in S^{m}_{0,0}(\Omega\times\mathbb{R}^{n})$ and by (\ref{e8})
	\begin{align*}
	\mathcal{A}u(x)&=\int_{\mathbb{R}^{n}}e^{ix\xi}a(x,\xi)\hat{u}(\xi)d\xi\\
	&=\int_{\mathbb{R}^{n}}e^{ix\xi}(\sum_{|\alpha|\leq m}f_{\alpha}(x)\xi^{\alpha})\hat{u}(\xi)d\xi\\
	&=\sum_{|\alpha|\leq m}f_{\alpha}(x)\int_{\mathbb{R}^{n}}e^{ix\xi}\xi^{\alpha}\hat{u}(\xi)d\xi\\
	&=\sum_{|\alpha|\leq m}f_{\alpha}(x)D^{\alpha}u(x)\\
	&=p(x,D)u.
	\end{align*}
\end{example}
\begin{remark}
	The above example show that if $a=1$ then $\mathcal{A}u(x)=u(x)$. Moreover, if $a(x,\xi)=f(x)$ i.e. $a$ is independent $\xi$ then $\mathcal{A}$ is a multiplication operator. Note that if $a(x,\xi)=a(\xi)$ is independent of x then following equation (\ref{e14}) we have
	\begin{equation*}
	\mathcal{A}u(x)=\int_{\mathbb{R}^{n}}e^{ix\xi}a(\xi)\hat{u}(\xi)d\xi
	\end{equation*}
	is the convolution by $a$.
\end{remark}
\section{The algebra of Pseudodifferential operators}
\hskip0.5cm Fix $m \in \mathbb{R}\ and\ \rho,\delta \in [0,1]$, and let $\Omega$ be an open subset of $\mathbb{R}^{n}$. We write $\Psi^{m}_{\rho,\delta}$ for the class of pseudodifferential operators with symbols in the H\"{o}rmander class $S^{m}_{\rho,\delta}(\Omega\times\mathbb{R}^{n})$.\\
On $S^{m}_{\rho,\delta}(\Omega\times\mathbb{R}^{n})$ we consider the collection of semi norms given by 
\begin{equation*}
Q_{k}(a)=\sup\{|<\xi>^{-m+\rho|\alpha|-\delta|\beta|}\partial_{\xi}^{\alpha}\partial_{x}^{\beta}a(x,\xi)|,\ x\in \Omega,\ \xi \in \mathbb{R}^{n},\ |\alpha|\leq k,\ |\beta|\leq k  \}.
\end{equation*}\par
The following theorem shows that Pseudodifferential operators are bounded on the Schwartz space.
\begin{theorem}
	Let $a \in S^{m}_{\rho,\delta}(\Omega\times\mathbb{R}^{n})$ and $\mathcal{A}$ be the associated pseudodifferential operator. Giving $u\in \mathcal{S}(\mathbb{R}^{n})$ we have $\mathcal{A}u \in \mathcal{S}(\mathbb{R}^{n})$ and the map $\mathcal{A}:\mathcal{S} \longrightarrow \mathcal{S}$ is continuous.\\
	\begin{proof}
		Let $u \in \mathcal{S}(\mathbb{R}^{n})$ be fixed. We first show that $\mathcal{A}$u is a bounded function
		\begin{align*}
		|\mathcal{A}u(x)|&=|\int e^{ix\xi}a(x,\xi)\hat{u}(\xi)d\xi|\\
		&\leq \int |a(x,\xi)||\hat{u}(\xi)|d\xi\\
		&=\int |<\xi>^{-m}a(x,\xi)||<\xi>^{m+n+1}\hat{u}(\xi)|\frac{d\xi}{<\xi>^{n+1}}\\
		&=cq_{|m|+n+1}(u)+Q_{0}(a).
		\end{align*}
		It is easy to check that the derivation under integral sign can be done and we obtain:
		\begin{align*}
		\partial_{x_{j}}\mathcal{A}u(x)&=\int e^{ix\xi}i\xi_{j}a(x,\xi)\hat{u}(\xi)d\xi +\int e^{ix\xi} \partial_{x_{j}}a(x,\xi)\hat{u}(\xi)d\xi\\
		&=i\mathcal{A}(\xi_{j},\hat{u})+\mathcal{B}u(x)
		\end{align*}
		where $b(x,\xi)=\partial_{x_{j}}a(x,\xi)\in S^{m+\delta|\beta|}_{\rho,\delta}(\Omega\times\mathbb{R}^{n})$, so that
		\begin{align*}
		\partial_{x_{j}}\mathcal{A}u(x)&\leq cq_{|m|+n+1}(\xi_{j},u)Q_{0}(a)+cq_{|m|+n+1}(u)Q_{0}(\partial_{x_{j}}a)\\
		&\leq cq_{|m|+n+2}(u)Q_{1}(a).
		\end{align*}
		Moreover, using integration by parts, we obtain
		\begin{align*}
		|x_{j}\mathcal{A}u(x)|&=|\int e^{ix\xi}\partial_{\xi_{j}}a(x,\xi)\hat{u}(\xi)d\xi+\int e^{ix\xi} a(x,\xi)\partial_{\xi_{j}}\hat{u}(\xi)d\xi|\\
		&\leq cq_{|m|+n+1}(u)Q_{0}(\partial_{\xi_{j}}a)+cq_{|m|+n+1}(x_{j}u)Q_{0}(a)\\
		&\leq cq_{|m|+n+2}(u)Q_{1}(a).
		\end{align*}
		Continuing in this way we obtain $\mathcal{A}u \in \mathcal{S}(\mathbb{R}^{n})$ and $\mathcal{A}$ is continuous on $\mathcal{S}(\mathbb{R}^{n})$.
	\end{proof}
\end{theorem}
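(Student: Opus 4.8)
The plan is to work with the representation $\mathcal{A}u(x)=\int_{\mathbb{R}^{n}}e^{ix\xi}a(x,\xi)\hat{u}(\xi)\,d\xi$ from (\ref{e14}) and to estimate directly the Schwartz seminorms $\sup_{x}|x^{\gamma}\partial_{x}^{\beta}\mathcal{A}u(x)|$ for arbitrary multi-indices $\gamma,\beta\in\mathbb{N}^{n}$. The guiding principle is that every $x$-derivative falling on the kernel either produces a factor $\xi_{j}$ (via $\partial_{x_{j}}e^{ix\xi}=i\xi_{j}e^{ix\xi}$), which is harmless because $\hat{u}$ decays faster than any polynomial, or falls on $a$, raising its order by at most $\delta$; and every multiplication by $x_{j}$ can be converted into a $\xi$-derivative through the identity $x_{j}e^{ix\xi}=-i\partial_{\xi_{j}}e^{ix\xi}$ followed by integration by parts, after which the derivative lands either on $a$ (lowering its order by $\rho$) or on $\hat{u}$ (preserving rapid decay).

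First I would establish boundedness of $\mathcal{A}u$, which is the model case: writing $a(x,\xi)=<\xi>^{m}\big(<\xi>^{-m}a(x,\xi)\big)$ and using the symbol bound $|<\xi>^{-m}a(x,\xi)|\le Q_{0}(a)$ together with $|<\xi>^{m+n+1}\hat{u}(\xi)|\le c\,q_{|m|+n+1}(u)$, the integral $\int <\xi>^{-(n+1)}\,d\xi$ converges and yields $|\mathcal{A}u(x)|\le c\,Q_{0}(a)\,q_{|m|+n+1}(u)$. Next I would differentiate under the integral sign (justified by dominated convergence, the dominating function coming from the same decay estimate) and apply the Leibniz rule to $\partial_{x}^{\beta}\big(e^{ix\xi}a(x,\xi)\big)$; the resulting finite sum contains factors $\xi^{\gamma'}$ with $|\gamma'|\le|\beta|$ and $x$-derivatives $\partial_{x}^{\beta'}a$ of order $|\beta'|\le|\beta|$, each bounded by a constant times $Q_{|\beta|}(a)<\xi>^{m+\delta|\beta'|}$. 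Then I would multiply by $x^{\gamma}$, replace $x^{\gamma}e^{ix\xi}$ by $(-i)^{|\gamma|}\partial_{\xi}^{\gamma}e^{ix\xi}$, and integrate by parts $|\gamma|$ times; by the Leibniz rule the $\xi$-derivatives distribute over $a$, the monomials $\xi^{\gamma'}$, and $\hat{u}$, lowering the order of the $a$-factor and leaving $\hat{u}$ (or its derivatives) rapidly decreasing.

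Collecting the terms, I would obtain a bound of the form $|x^{\gamma}\partial_{x}^{\beta}\mathcal{A}u(x)|\le C\,Q_{|\gamma|+|\beta|}(a)\int_{\mathbb{R}^{n}}<\xi>^{m+\delta|\beta|+|\beta|}\big|\partial_{\xi}^{\gamma'}\hat{u}(\xi)\big|\,d\xi$, where Peetre's inequality (Proposition \ref{p10}) is used to separate the mixed brackets that arise when combining the polynomial weights. Inserting a factor $<\xi>^{-(n+1)}$ and compensating it by the decay of $\hat{u}$, I would bound the integral by $c\,q_{N}(u)$ for $N$ chosen large enough relative to $m$, $|\beta|$, $\delta$ and $n$. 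This produces, for each seminorm $p_{n_{0}}$ of the target copy of $\mathcal{S}(\mathbb{R}^{n})$, an estimate $p_{n_{0}}(\mathcal{A}u)\le C\,q_{n_{1}}(u)$ with $n_{1}$ depending only on $n_{0}$, which simultaneously shows $\mathcal{A}u\in\mathcal{S}(\mathbb{R}^{n})$ and, by the continuity criterion of Theorem \ref{t4}, that $\mathcal{A}:\mathcal{S}(\mathbb{R}^{n})\to\mathcal{S}(\mathbb{R}^{n})$ is continuous.

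The main obstacle I anticipate is the bookkeeping in the third step: keeping track of how the orders accumulate (each $x$-derivative contributing $+\delta$ and a possible $\xi$-power, each $\xi$-derivative contributing $-\rho$) and choosing the Schwartz index $N$ large enough to guarantee absolute convergence of the final integral uniformly in $x$. The analytic content is light, since everything reduces to the symbol estimate, Peetre's inequality, and the rapid decay of $\hat{u}$, but care is needed to justify the differentiation under the integral sign and the integration by parts (no boundary terms appear because $a\hat{u}$ and its $\xi$-derivatives decay at infinity), and to verify that the constants depend only on finitely many seminorms of $a$ and of $u$.
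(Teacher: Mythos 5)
Your proposal is correct and follows essentially the same route as the paper's proof: the same representation $\mathcal{A}u(x)=\int e^{ix\xi}a(x,\xi)\hat{u}(\xi)\,d\xi$, the same splitting $a=<\xi>^{m}\bigl(<\xi>^{-m}a\bigr)$ against $<\xi>^{m+n+1}\hat{u}$ with the convergent weight $<\xi>^{-(n+1)}$, derivatives handled by Leibniz and factors $i\xi_{j}$, and multiplication by $x_{j}$ converted into $\xi$-integration by parts. The only difference is that you carry out the full multi-index bookkeeping explicitly where the paper treats the first-order cases and concludes with ``continuing in this way,'' so your write-up is a more complete version of the identical argument.
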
\par
We aim to show that the space $\Psi_{\rho,\delta}^{0}$ forms an operator algebra. For this reason we start by investigating some properties of the H\"{o}rmander classes $S_{\rho,\delta}^{m-\rho|\alpha|+\delta|\beta|}(\Omega\times \mathbb{R}^{n})$.
\begin{proposition}
	Let $a\in S_{\rho,\delta}^{m}$ then $\partial_{\xi}^{\alpha}\partial_{x}^{\beta}a(x,\xi) \in S_{\rho,\delta}^{m-\rho|\alpha|+\delta|\beta|}(\Omega\times \mathbb{R}^{n})$.\\
	\begin{proof}
		Given $\gamma,\mu \in \mathbb{N}^{n}$ we have
		\begin{align*}
		|\partial_{\xi}^{\gamma}\partial_{x}^{\mu}(\partial_{\xi}^{\alpha}\partial_{x}^{\beta}a(x,\xi))|&=|\partial_{\xi}^{\gamma+\alpha}\partial_{x}^{\mu+\beta}a(x,\xi)|\\
		&\leq c<\xi>^{m-\rho|\gamma+\alpha|+\delta|\mu+\beta|}\\
		&=c<\xi>^{(m-\rho|\alpha|+\delta|\beta|)-\rho|\gamma|+\delta|\mu|}.
		\end{align*}
	\end{proof}
\end{proposition}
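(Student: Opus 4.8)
The plan is to reduce the claim to the defining estimates of the symbol class together with two elementary bookkeeping facts: that the mixed partials of a smooth function commute, and that the length of a multi-index is additive under addition of multi-indices. Set $b(x,\xi):=\partial_{\xi}^{\alpha}\partial_{x}^{\beta}a(x,\xi)$, where $a\in S_{\rho,\delta}^{m}(\Omega\times\mathbb{R}^{n})$. Since $a\in C^{\infty}(\Omega\times\mathbb{R}^{n})$, the function $b$ is again smooth, so the smoothness requirement in the definition of the symbol class is immediate.

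Next I would verify the decay estimates defining $S_{\rho,\delta}^{m'}$ with $m'=m-\rho|\alpha|+\delta|\beta|$. Fix arbitrary multi-indices $\gamma,\mu\in\mathbb{N}^{n}$. Because the partial derivatives of a smooth function commute, one has
\[
\partial_{\xi}^{\gamma}\partial_{x}^{\mu}b(x,\xi)=\partial_{\xi}^{\gamma+\alpha}\partial_{x}^{\mu+\beta}a(x,\xi).
\]
Applying the defining inequality for $a$ with the multi-indices $\gamma+\alpha$ and $\mu+\beta$ yields a constant $c_{\gamma+\alpha,\mu+\beta}$ with
\[
|\partial_{\xi}^{\gamma}\partial_{x}^{\mu}b(x,\xi)|\leq c_{\gamma+\alpha,\mu+\beta}<\xi>^{m-\rho|\gamma+\alpha|+\delta|\mu+\beta|}.
\]

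The final step is to simplify the exponent using $|\gamma+\alpha|=|\gamma|+|\alpha|$ and $|\mu+\beta|=|\mu|+|\beta|$, which gives
\[
m-\rho|\gamma+\alpha|+\delta|\mu+\beta|=(m-\rho|\alpha|+\delta|\beta|)-\rho|\gamma|+\delta|\mu|=m'-\rho|\gamma|+\delta|\mu|.
\]
Setting $c'_{\gamma,\mu}:=c_{\gamma+\alpha,\mu+\beta}$ then shows $|\partial_{\xi}^{\gamma}\partial_{x}^{\mu}b|\leq c'_{\gamma,\mu}<\xi>^{m'-\rho|\gamma|+\delta|\mu|}$ for all $\gamma,\mu\in\mathbb{N}^{n}$, which is precisely the statement that $b\in S_{\rho,\delta}^{m'}(\Omega\times\mathbb{R}^{n})$. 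There is no genuine obstacle here: the argument is a direct relabeling of the constants appearing in the definition, and the only point requiring care is tracking the indices so that the shift of the order from $m$ to $m-\rho|\alpha|+\delta|\beta|$ comes out exactly, in particular keeping the signs of $\rho$ and $\delta$ straight.
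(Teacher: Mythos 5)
Your proof is correct and follows exactly the same route as the paper's: commute the derivatives, apply the defining estimate of $S_{\rho,\delta}^{m}$ to the multi-indices $\gamma+\alpha$ and $\mu+\beta$, and use additivity of multi-index lengths to recast the exponent as $(m-\rho|\alpha|+\delta|\beta|)-\rho|\gamma|+\delta|\mu|$. The only difference is that you spell out the smoothness of $b$ and the relabeling of constants, which the paper leaves implicit.
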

\begin{proposition}
	Let $a\in S_{\rho_{1},\delta_{1}}^{m_{1}}$ and $b\in S_{\rho_{2},\delta_{2}}^{m_{2}}$ then $ab\in S_{\rho,\delta}^{m_{1}+m_{2}}$ where $\rho=\min\{\rho_{1},\rho_{2} \}$ and $\delta=\max \{\delta_{1},\delta_{2} \}$.\\
	\begin{proof}
		Let $\alpha,\beta \in \mathbb{N}^{n}$, then using Leibniz formula we have:
		\begin{equation*}
		\partial_{x}^{\beta}a(x.\xi)b(x,\xi)=\sum_{\gamma \leq \beta}C_{\beta}^{\gamma}\partial_{x}^{\gamma}a(x,\xi)\partial_{x}^{\beta-\gamma}b(x,\xi)
		\end{equation*}
		so that
		\begin{align}\label{a1}
		|\partial_{\xi}^{\alpha}\partial_{x}^{\beta}a(x,\xi)b(x,\xi)|&=|\sum_{\gamma \leq \beta}C_{\beta}^{\gamma}\partial_{\xi}^{\alpha} \partial_{x}^{\gamma}a(x,\xi)\partial_{x}^{\beta-\gamma}b(x,\xi)|\notag\\
		&=|\sum_{\gamma \leq \beta}\sum_{\mu \leq \alpha}C_{\beta}^{\gamma}C_{\alpha}^{\mu}\partial_{\xi}^{\mu}\partial_{x}^{\gamma}a(x,\xi)\partial_{\xi}^{\alpha-\mu}\partial_{x}^{\beta-\gamma}b(x,\xi)|\notag\\
		&\leq \sum_{\gamma \leq \beta}\sum_{\mu \leq \alpha}C_{\beta}^{\gamma}C_{\alpha}^{\mu}|\partial_{\xi}^{\mu}\partial_{x}^{\gamma}a(x,\xi)\partial_{\xi}^{\alpha-\mu}\partial_{x}^{\beta-\gamma}b(x,\xi)|\notag\\
		&\leq \sum_{\gamma \leq \beta}\sum_{\mu \leq \alpha}C_{\beta}^{\gamma}C_{\alpha}^{\mu}c_{\mu,\gamma}c_{\alpha-\mu,\beta-\gamma}<\xi>^{m_{1}-\rho_{1}|\mu|+\delta_{1}|\gamma|}<\xi>^{m_{2}-\rho_{2}|\alpha-\mu|+\delta_{2}|\beta-\gamma|}.
		\end{align}
		We note that
		
		$$m_{1}-\rho_{1}|\mu|+\delta_{1}|\gamma|+m_{2}-\rho_{2}|\alpha-\mu|+\delta_{2}|\beta-\gamma|$$$$=m_{1}+m_{2}-(\rho-{1}|\mu|+\rho_{2}|\alpha|-\rho_{2}|\mu|)+(\delta_{1}|\gamma|+\delta_{2}|\beta|-\delta_{2}|\gamma|).$$
	We also have
		\begin{equation*}
		\delta_{1}-\delta_{2} \leq \delta-\delta_{2}\ and\ |\gamma|\leq |\beta|
		\end{equation*}
		so that
		\begin{equation*}
		(\delta_{1}-\delta_{2})|\gamma| \leq (\delta-\delta_{2})|\beta|
		\end{equation*}
		hence
		\begin{equation}\label{a2}
		\delta_{1}|\gamma|-\delta_{2}|\gamma|+\delta_{2}|\beta| \leq \delta|\beta|.
		\end{equation}
		Similarly
		\begin{equation*}
		\rho_{2}-\rho_{1} \leq \rho_{2}-\rho\ and\ |\mu|\leq |\alpha|
		\end{equation*}
		so that 
		\begin{equation*}
		(\rho_{2}-\rho_{1})|\mu| \leq (\rho_{2}-\rho)|\alpha|
		\end{equation*}
		hence
		\begin{equation}\label{a3}
		\rho_{2}|\mu|-\rho_{1}|\mu|-\rho_{2}|\alpha|\leq -\rho|\alpha|
		\end{equation}
		Using (\ref{a2}) and (\ref{a3}) in the estimate (\ref{a1}), we obtain
		\begin{equation*}
		|\partial_{\xi}^{\alpha}\partial_{x}^{\beta}a(x,\xi)b(x,\xi)|\leq c<\xi>^{m_{1}+m_{2}-\rho|\alpha|+\delta|\beta|}
		\end{equation*} 
	\end{proof}
\end{proposition}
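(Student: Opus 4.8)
The plan is to reduce everything to the defining symbol estimates via the Leibniz rule. First I would fix arbitrary multi-indices $\alpha,\beta \in \mathbb{N}^n$ and differentiate the product. Applying the Leibniz formula in the $x$ and $\xi$ variables expands $\partial_\xi^\alpha \partial_x^\beta(ab)$ into a finite sum, over $\gamma \le \beta$ and $\mu \le \alpha$, of terms $C_\beta^\gamma C_\alpha^\mu (\partial_\xi^\mu \partial_x^\gamma a)(\partial_\xi^{\alpha-\mu}\partial_x^{\beta-\gamma} b)$. Since $a$ and $b$ are smooth, so is $ab$, so the only thing left to establish is the quantitative decay estimate defining the class.

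Next I would insert the hypotheses $a \in S^{m_1}_{\rho_1,\delta_1}$ and $b \in S^{m_2}_{\rho_2,\delta_2}$ into each summand. Each factor is controlled by a power of the Japanese bracket, so a generic term is bounded by a constant times $<\xi>^{m_1 - \rho_1|\mu| + \delta_1|\gamma|}<\xi>^{m_2 - \rho_2|\alpha-\mu| + \delta_2|\beta-\gamma|}$. Using the additivity of total degree under subtraction of multi-indices, namely $|\alpha-\mu| = |\alpha| - |\mu|$ and $|\beta-\gamma| = |\beta| - |\gamma|$, I would add the two exponents and regroup, so that the combined power of $<\xi>$ becomes $m_1 + m_2 - (\rho_1|\mu| + \rho_2|\alpha| - \rho_2|\mu|) + (\delta_1|\gamma| + \delta_2|\beta| - \delta_2|\gamma|)$.

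The crux of the argument, and the step requiring the most care, is to dominate this exponent by the target $m_1 + m_2 - \rho|\alpha| + \delta|\beta|$ uniformly in the summation indices. Here I would exploit $<\xi> \ge 1$, which reduces the task to comparing exponents rather than brackets. For the $\delta$-part I would use $\delta := \max\{\delta_1,\delta_2\} \ge \delta_1$ together with $|\gamma| \le |\beta|$ to deduce $(\delta_1 - \delta_2)|\gamma| \le (\delta - \delta_2)|\beta|$, that is $\delta_1|\gamma| - \delta_2|\gamma| + \delta_2|\beta| \le \delta|\beta|$; symmetrically, for the $\rho$-part I would use $\rho := \min\{\rho_1,\rho_2\} \le \rho_1$ with $|\mu| \le |\alpha|$ to obtain $\rho_2|\mu| - \rho_1|\mu| - \rho_2|\alpha| \le -\rho|\alpha|$. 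I would note in passing that these monotonicity steps rely on the signs of $\delta - \delta_2$ and $\rho_2 - \rho$ being nonnegative, so the multiplied inequalities are legitimate. Substituting the two estimates turns every summand into a constant multiple of $<\xi>^{m_1 + m_2 - \rho|\alpha| + \delta|\beta|}$. Finally I would sum the finitely many terms, absorbing the binomial and symbol constants into a single $c_{\alpha,\beta}$, which yields exactly the estimate defining $S^{m_1+m_2}_{\rho,\delta}$ and completes the proof. The whole argument is routine bookkeeping; the only genuine subtlety is getting the min/max roles of $\rho$ and $\delta$ matched correctly to the signs in the exponent.
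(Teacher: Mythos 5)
Your proof is correct and is essentially identical to the paper's argument: the same double Leibniz expansion over $\gamma\le\beta$, $\mu\le\alpha$, the same regrouping of the exponent using $|\alpha-\mu|=|\alpha|-|\mu|$ and $|\beta-\gamma|=|\beta|-|\gamma|$, and the same two comparison inequalities $\delta_{1}|\gamma|-\delta_{2}|\gamma|+\delta_{2}|\beta|\le\delta|\beta|$ and $\rho_{2}|\mu|-\rho_{1}|\mu|-\rho_{2}|\alpha|\le-\rho|\alpha|$ before summing the finitely many terms. Your explicit remarks that $\left<\xi\right>\ge 1$ justifies comparing exponents, and that the sign conditions $\delta-\delta_{2}\ge 0$ and $\rho_{2}-\rho\ge 0$ make the multiplied inequalities legitimate, are points the paper leaves implicit, so nothing is missing.
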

\begin{corollary}
	The collection of all H\"{o}rmander symbol classes $S_{\rho,\delta}^{-\infty}=\cup_{m\in \mathbb{R}}S_{\rho,\delta}^{m}$ is an algebra over the pointwise multiplication and addition.
\end{corollary}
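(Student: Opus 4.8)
The plan is to reduce everything to the two preceding propositions together with one elementary nesting observation. First I would record that the classes are nested: if $m_{1}\leq m_{2}$ then $S_{\rho,\delta}^{m_{1}}\subseteq S_{\rho,\delta}^{m_{2}}$. This is immediate because $<\xi>\ \geq 1$, so for any multi-indices $\alpha,\beta$ one has $<\xi>^{m_{1}-\rho|\alpha|+\delta|\beta|}\leq\ <\xi>^{m_{2}-\rho|\alpha|+\delta|\beta|}$, and hence every derivative bound valid in $S_{\rho,\delta}^{m_{1}}$ is also valid in $S_{\rho,\delta}^{m_{2}}$ with the same constants $c_{\alpha,\beta}$.

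Next I would check that $S_{\rho,\delta}^{-\infty}=\cup_{m\in\mathbb{R}}S_{\rho,\delta}^{m}$ is a vector space. Scalar multiplication is clear, since multiplying the defining estimate by $|\lambda|$ keeps $\lambda a$ inside the same class $S_{\rho,\delta}^{m}$. For addition, given $a\in S_{\rho,\delta}^{m_{1}}$ and $b\in S_{\rho,\delta}^{m_{2}}$, I would set $m=\max\{m_{1},m_{2}\}$; by the nesting property just established, both $a$ and $b$ lie in $S_{\rho,\delta}^{m}$, and summing the two derivative estimates (adding the corresponding constants) shows $a+b\in S_{\rho,\delta}^{m}\subseteq S_{\rho,\delta}^{-\infty}$.

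For the product, I would simply invoke the preceding proposition in the special case $\rho_{1}=\rho_{2}=\rho$ and $\delta_{1}=\delta_{2}=\delta$: then $\min\{\rho_{1},\rho_{2}\}=\rho$ and $\max\{\delta_{1},\delta_{2}\}=\delta$, so $a\in S_{\rho,\delta}^{m_{1}}$ and $b\in S_{\rho,\delta}^{m_{2}}$ give $ab\in S_{\rho,\delta}^{m_{1}+m_{2}}\subseteq S_{\rho,\delta}^{-\infty}$. The associativity and bilinearity of the multiplication are inherited verbatim from the pointwise operations on functions $\Omega\times\mathbb{R}^{n}\longrightarrow\mathbb{C}$, so no separate verification is needed. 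I do not expect any genuine obstacle here; the only point requiring care is the use of the nesting property to place two symbols of different orders into one common class before adding them, which is precisely where the bound $<\xi>\ \geq 1$ enters.
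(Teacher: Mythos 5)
Your proof is correct and follows exactly the route the paper intends: the paper states this corollary without proof as an immediate consequence of the preceding multiplication proposition, and your argument supplies precisely the missing details (the nesting $S_{\rho,\delta}^{m_{1}}\subseteq S_{\rho,\delta}^{m_{2}}$ for $m_{1}\leq m_{2}$ via $<\xi>\ \geq 1$ to handle sums, and the product proposition specialized to $\rho_{1}=\rho_{2}$, $\delta_{1}=\delta_{2}$ to handle products). Nothing further is needed.
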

\begin{corollary}
	The symbol class $S_{\rho,\delta}^{0}$ is a Fr\'{e}chet  algebra under pointwise multiplication and addition. The topology is given by the collection of semi-norms $\{Q_{k}\}_{k\in\mathbb{N}}$ defined by
	\begin{equation*}
	Q_{k}(a)=\sup\{|<\xi>^{-m+\rho|\alpha|-\delta|\beta|}\partial_{\xi}^{\alpha}\partial_{x}^{\beta}a(x,\xi)|,\ x\in \Omega,\ \xi \in \mathbb{R}^{n},\ |\alpha|\leq k\ and\ |\beta|\leq k \}.
	\end{equation*}
\end{corollary}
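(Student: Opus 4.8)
The plan is to verify, in turn, the three ingredients required by the definition of a Fréchet algebra: that $(S^{0}_{\rho,\delta},\tau(\{Q_{k}\}))$ is a Fréchet space, that $S^{0}_{\rho,\delta}$ is closed under pointwise multiplication, and that this multiplication satisfies the continuity estimate of the Fréchet-algebra definition. Throughout I take $m=0$, so the weight appearing in $Q_{k}$ is $<\xi>^{\rho|\alpha|-\delta|\beta|}$.

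First I would settle the topological structure. Since each $Q_{k}$ controls all derivatives $\partial_{\xi}^{\alpha}\partial_{x}^{\beta}a$ with $|\alpha|,|\beta|\le k$ and the admissible index region grows with $k$, the family $\{Q_{k}\}_{k\in\mathbb{N}}$ is increasing. It is separating, because $Q_{0}(a)=\sup_{x,\xi}|a(x,\xi)|=0$ already forces $a\equiv 0$; hence $\tau(\{Q_{k}\})$ is Hausdorff. The substantial point is completeness. Given a Cauchy sequence $(a_{j})_{j}$, for each fixed pair $(\alpha,\beta)$ the weighted functions $<\xi>^{\rho|\alpha|-\delta|\beta|}\partial_{\xi}^{\alpha}\partial_{x}^{\beta}a_{j}$ form a Cauchy sequence in the supremum norm over $\Omega\times\mathbb{R}^{n}$, hence converge uniformly; since on every compact subset of $\Omega\times\mathbb{R}^{n}$ the weight $<\xi>^{\rho|\alpha|-\delta|\beta|}$ is bounded above and below, the bare derivatives $\partial_{\xi}^{\alpha}\partial_{x}^{\beta}a_{j}$ converge uniformly on compacta. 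I would then invoke the standard theorem that a locally uniform limit of $C^{\infty}$ functions all of whose partial derivatives converge locally uniformly is again $C^{\infty}$, with $\partial_{\xi}^{\alpha}\partial_{x}^{\beta}a=\lim_{j}\partial_{\xi}^{\alpha}\partial_{x}^{\beta}a_{j}$ for the limit $a$. Passing the uniform weighted bounds to the limit shows $a\in S^{0}_{\rho,\delta}$ and $Q_{k}(a_{j}-a)\to 0$ for every $k$, which is completeness.

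Closure under multiplication is then immediate from the preceding Proposition: taking $m_{1}=m_{2}=0$, $\rho_{1}=\rho_{2}=\rho$ and $\delta_{1}=\delta_{2}=\delta$ gives $\min\{\rho_{1},\rho_{2}\}=\rho$ and $\max\{\delta_{1},\delta_{2}\}=\delta$, hence $ab\in S^{0}_{\rho,\delta}$ whenever $a,b\in S^{0}_{\rho,\delta}$; bilinearity and associativity are inherited from pointwise multiplication, and the constant symbol $1$ is the unit. For the continuity of multiplication I would reuse the Leibniz estimate from that same proof. For $|\alpha|,|\beta|\le k$, splitting the weight exactly across the two factors yields
\begin{equation*}
<\xi>^{\rho|\alpha|-\delta|\beta|}\bigl|\partial_{\xi}^{\alpha}\partial_{x}^{\beta}(ab)\bigr|
\le \sum_{\gamma\le\beta}\sum_{\mu\le\alpha} C_{\beta}^{\gamma} C_{\alpha}^{\mu}\,
<\xi>^{\rho|\mu|-\delta|\gamma|}\bigl|\partial_{\xi}^{\mu}\partial_{x}^{\gamma}a\bigr|\,
<\xi>^{\rho|\alpha-\mu|-\delta|\beta-\gamma|}\bigl|\partial_{\xi}^{\alpha-\mu}\partial_{x}^{\beta-\gamma}b\bigr|,
\end{equation*}
where the identities $|\mu|+|\alpha-\mu|=|\alpha|$ and $|\gamma|+|\beta-\gamma|=|\beta|$ guarantee the split is valid. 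Since $\mu\le\alpha$ and $\gamma\le\beta$ imply $|\mu|,|\alpha-\mu|,|\gamma|,|\beta-\gamma|\le k$, each factor on the right is bounded by $Q_{k}(a)$ or $Q_{k}(b)$, so summing the finitely many binomial terms gives $Q_{k}(ab)\le c_{k}\,Q_{k}(a)\,Q_{k}(b)$ with $c_{k}$ depending only on $k$ and $n$. This is exactly the continuity condition with $n_{0}=n=k$.

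The hard part will be the completeness argument: one must carefully transfer uniform convergence in the weighted supremum norm into locally uniform convergence of the unweighted derivatives before the differentiation-under-limits theorem applies, and then verify that the resulting smooth limit again obeys the \emph{global} symbol estimates on all of $\Omega\times\mathbb{R}^{n}$, not merely being $C^{\infty}$. The algebra and continuity parts, by contrast, are essentially bookkeeping built directly on the Leibniz estimate already established.
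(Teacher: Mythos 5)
Your proposal is correct, but it goes well beyond what the paper actually does: the paper attaches no proof to this corollary at all, treating it as an immediate consequence of the preceding proposition on products of symbols (which, with $m_{1}=m_{2}=0$, $\rho_{1}=\rho_{2}=\rho$, $\delta_{1}=\delta_{2}=\delta$, gives exactly the closure under multiplication you invoke), while the Fr\'{e}chet space structure is simply asserted. You supply the two ingredients the paper leaves implicit. First, the quantitative estimate $Q_{k}(ab)\le c_{k}\,Q_{k}(a)\,Q_{k}(b)$: this is essentially a re-run of the paper's Leibniz computation from the product proposition, but sharpened by splitting the weight exactly across the two factors via $|\mu|+|\alpha-\mu|=|\alpha|$ and $|\gamma|+|\beta-\gamma|=|\beta|$, and by tracking that every index occurring has order at most $k$; this upgrades the qualitative statement ``$ab\in S^{0}_{\rho,\delta}$'' to the continuity-of-multiplication condition required by the paper's own definition of a Fr\'{e}chet algebra. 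Second, the completeness and Hausdorff arguments (weighted sup-norm Cauchy implies locally uniform convergence of the unweighted derivatives, hence a smooth limit whose derivatives are the limits of the derivatives, hence global symbol estimates and $Q_{k}$-convergence for the limit; separation via $Q_{0}$), which appear nowhere in the paper even though they are the genuinely nontrivial part of the claim. Both added arguments are sound, so your proof is complete where the paper's is only an appeal to the preceding proposition; the only cosmetic caveat is that your constant $c_{k}$ means the family $\{Q_{k}\}$ as given is not literally submultiplicative in the sense of the paper's Definition of submultiplicative Fr\'{e}chet algebras, though the corollary only claims Fr\'{e}chet algebra, so nothing is lost.
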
\par
We are interested in the continuity of Pseudodifferential operator on the Lebesgue space $L^{2}(\Omega)$. Of course this is equivalent to the fact of existence of a positive number $c$ satisfying
\begin{equation*}
\|\mathcal{A}u\|_{L^{2}}\leq c\|u\|_{L^{2}},\ for\ all\ u\in \mathcal{S}(\mathbb{R}^{n}).
\end{equation*}
The continuity is then understood as the (unique) extension from $\mathcal{S}(\mathbb{R}^{n})$ to $L^{2}(\mathbb{R}^{n})$.\\
The below theorem can be found in \cite{ege}. However the original proof for the case $\delta = 0\ and\ \rho > 0$ is due to H\"{o}rmander \cite{hor}.
\begin{theorem}
	Let $\rho,\delta \in [0,1]$ with $\delta<\rho$ and $a=a(x,\xi) \in S_{\rho,\delta}(\Omega\times\mathbb{R}^{n})$ with the $supp\ a \subset\{x \in \Omega,\ |x|\leq c \}$. Then $\mathcal{A}:L^{2}(\mathbb{R}^{n})\longmapsto L^{2}(\mathbb{R}^{n})$ is continuous.\\
	\begin{proof}
		We give the proof for the case $\delta=0$. We consider the Fourier transform for a in the x-variable, we have:
		\begin{equation*}
		[\mathcal{F}a(.,\xi)](\mu)=\int_{\mathbb{R}^{n}}a(x,\xi)e^{-ix\mu}dx=\int_{K}a(x,\xi)e^{-ix\mu}dx,\ where\ supp\ a \subset K\ and\ K\ is\ compact.
		\end{equation*}
		Hence, for $\alpha \in \mathbb{N}^{n}$ we have
		\begin{align}\label{e15}
		|\mu^{\alpha}[\mathcal{F}a(.,\xi)](\mu)|&=|\mathcal{F}[\partial^{\alpha}a(.,\xi)](\mu)|\notag\\
		&=|\int_{K}\partial_{x}^{\alpha}a(x,\xi)e^{-ix\mu}dx|\notag\\
		&\leq c<\xi>^{-\rho|\alpha|}.
		\end{align}
		As $\rho>0$ we choose $\alpha \in \mathbb{N}^{n}$ such that $\rho|\alpha|>n$. By the inequality (\ref{e15}) we have
		\begin{equation*}
		|\mathcal{F}a(.,\xi)(\mu)|\leq c<\xi>^{-\rho|\alpha|}<\mu>^{-|\alpha|}
		\end{equation*}
		But
		\begin{equation*}
		a(x,\xi)=\int_{\mathbb{R}^{n}}[\mathcal{F}a(.,\xi)](\mu)e^{ix\mu}d\mu.
		\end{equation*}
		So that
		\begin{align*}
		|a(x,\xi)|&\leq \int_{\mathbb{R}^{n}}|\mathcal{F}a(.,\xi)|(\mu)d\mu\\
		&\leq c\int_{\mathbb{R}^{n}}<\xi>^{-\rho|\alpha|}<\mu>^{-|\alpha|}d\mu\\
		&\leq c<\xi>^{-\rho|\alpha|}\int_{\mathbb{R}^{n}}<\mu>^{-|\alpha|}d\mu.
		\end{align*}
		Hence
		\begin{align*}
		|\mathcal{A}u(x)|^{2}&=|\int_{\mathbb{R}^{n}}e^{ix\xi}a(x,\xi)\hat{u}(\xi)d\xi|^{2}\\
		&=(\int_{\mathbb{R}^{n}}|a(x,\xi)||\hat{u}(\xi)d\xi)^{2}\\
		&\leq \int_{\mathbb{R}^{n}}|a(x,\xi)|^{2}d\xi.\int_{\mathbb{R}^{n}}|\hat{u}(\xi)|^{2}d\xi\\
		&=\|\hat{u}\|^{2}_{L^{2}}\int_{\mathbb{R}^{n}}|a(x,\xi)|^{2}d\xi\\
		&\leq \|\hat{u}\|^{2}_{L^{2}}\int_{\mathbb{R}^{n}}(c<\xi>^{-\rho|\alpha|}\int_{\mathbb{R}^{n}}<\mu>^{-|\alpha|}d\mu)^{2}d\xi\\
		&=c^{2}\|\hat{u}\|^{2}_{L^{2}}\int_{\mathbb{R}^{n}}<\xi>^{-2\rho|\alpha|}(\int_{\mathbb{R}^{n}}<\mu>^{-|\alpha|}d\mu)^{2}d\xi\\
		&=C\|\hat{u}\|_{L^{2}}^{2}
		\end{align*}
		Where $C=c^{2}\int_{\mathbb{R}^{n}}<\xi>^{-2\rho|\alpha|}(\int_{\mathbb{R}^{n}}<\mu>^{-|\alpha|}d\mu)^{2}d\xi<\infty$ because $n<\rho|\alpha|\leq |\alpha|$. Using Plancheral theorem we obtain
		\begin{equation*}
		\|\mathcal{A}u\|_{L^{2}}^{2}=\int_{K}|\mathcal{A}u(x)|^{2}dx\leq C\|\hat{u}\|^{2}_{L^{2}}=C\|u\|_{L^{2}}^{2}.
		\end{equation*}
	\end{proof}
\end{theorem}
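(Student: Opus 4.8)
The plan is to remove the $x$-dependence obstruction by conjugating $\mathcal{A}$ with the Fourier transform and reducing the claim to the $L^2$-boundedness of an explicit integral operator in the frequency variable. Writing $\widehat{a}(\zeta,\xi)=\int_{\mathbb{R}^n}e^{-ix\zeta}a(x,\xi)\,dx$ for the partial Fourier transform of the symbol in $x$ (well defined, and entire in $\zeta$, because $a(\cdot,\xi)$ is supported in the fixed compact set $K=\{|x|\le c\}$), a direct computation with Fubini gives, for $u\in\mathcal{S}(\mathbb{R}^n)$,
\[
\widehat{\mathcal{A}u}(\eta)=\int_{\mathbb{R}^n}\widehat{a}(\eta-\xi,\xi)\,\widehat{u}(\xi)\,d\xi .
\]
By Plancherel's theorem it therefore suffices to prove that the operator $T$ with kernel $K(\eta,\xi)=\widehat{a}(\eta-\xi,\xi)$ is bounded on $L^2(\mathbb{R}^n)$: one has $\mathcal{A}u=c_n\,\mathcal{F}^{-1}(T\widehat{u})$, so $\mathcal{A}$ and $T$ coincide up to the fixed normalization constants of the Fourier transform.

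First I would establish the decisive pointwise estimate on the kernel. Integrating by parts $|\beta|$ times in $x$ and using the compactness of $K$ together with the symbol bound $|\partial_x^\beta a(x,\xi)|\le c_\beta<\xi>^{\delta|\beta|}$ (the $\alpha=0$ instance of membership in $S_{\rho,\delta}$), one obtains
\[
|\zeta^\beta\,\widehat{a}(\zeta,\xi)|\le\int_K|\partial_x^\beta a(x,\xi)|\,dx\le C_\beta<\xi>^{\delta|\beta|},
\]
and hence, summing over $|\beta|\le N$,
\[
|\widehat{a}(\zeta,\xi)|\le C_N\,<\xi>^{\delta N}<\zeta>^{-N}\qquad(N\in\mathbb{N}).
\]
This is the only place where the hypotheses on $a$ enter, and the rapid decay in $\zeta$ is produced entirely by the compact support in $x$.

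In the case $\delta=0$ treated in the proof the estimate becomes $|\widehat{a}(\zeta,\xi)|\le C_N<\zeta>^{-N}$ \emph{uniformly} in $\xi$, and the argument closes at once by the Schur test. Choosing $N>n$ and substituting $\zeta=\eta-\xi$, both
\[
\sup_{\eta}\int_{\mathbb{R}^n}|K(\eta,\xi)|\,d\xi\le C_N\int_{\mathbb{R}^n}<\zeta>^{-N}\,d\zeta<\infty,\qquad \sup_{\xi}\int_{\mathbb{R}^n}|K(\eta,\xi)|\,d\eta\le C_N\int_{\mathbb{R}^n}<\zeta>^{-N}\,d\zeta<\infty,
\]
so $T$ is bounded and the theorem follows. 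I would note that for $\delta=0$ the number $\rho$ plays no role whatsoever: only the boundedness of the $x$-derivatives of $a$ and the compact $x$-support are used, which is why no Cauchy--Schwarz argument requiring decay of $a$ in $\xi$ is necessary (indeed, such decay is false for order-zero symbols).

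For the full range $\delta<\rho$ the factor $<\xi>^{\delta N}$ destroys the uniformity in $\xi$, both Schur integrals diverge, and this is the main obstacle. Here I would introduce a dyadic partition of unity $1=\sum_{j\ge0}\varphi_j(\xi)$ with $\varphi_j$ supported in $\{<\xi>\sim2^j\}$, set $a_j=a\varphi_j$ and $\mathcal{A}_j=\mathrm{Op}(a_j)$, and control $\mathcal{A}=\sum_j\mathcal{A}_j$ through the Cotlar--Stein lemma. Each $\mathcal{A}_j$ is uniformly bounded, since its symbol is now compactly supported in $\xi$ as well, so the real work is the almost-orthogonality estimate $\|\mathcal{A}_j^\star\mathcal{A}_k\|+\|\mathcal{A}_j\mathcal{A}_k^\star\|\le C\,2^{-\epsilon|j-k|}$. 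These are obtained by writing the kernels of the compositions as oscillatory integrals and integrating by parts in both $x$ and $\xi$; it is precisely at this step that $\delta<\rho$ is indispensable, because each $\xi$-derivative gains a factor $<\xi>^{-\rho}$ while each $x$-derivative costs only $<\xi>^{\delta}$, leaving the net negative power that yields the geometric decay in $|j-k|$ and the summability demanded by Cotlar--Stein.
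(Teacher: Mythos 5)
Your proof of the case $\delta=0$ is correct, and it takes a genuinely different route from the paper's. Both arguments begin the same way, by taking the partial Fourier transform of $a$ in the $x$-variable and integrating by parts over the compact support $K$; but the paper then asserts the bound $|\int_K\partial_x^\alpha a(x,\xi)e^{-ix\mu}\,dx|\le c\langle\xi\rangle^{-\rho|\alpha|}$, attributing to $x$-derivatives the gain $\langle\xi\rangle^{-\rho|\alpha|}$ that the H\"{o}rmander estimates grant only to $\xi$-derivatives, and from this it deduces the pointwise decay $|a(x,\xi)|\lesssim\langle\xi\rangle^{-\rho|\alpha|}$ with $\rho|\alpha|>n$, applies Cauchy--Schwarz in $\xi$, and integrates $|\mathcal{A}u(x)|^{2}$ over $K$. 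As you yourself remark, such decay of $a$ in $\xi$ is false for order-zero symbols (take $a(x,\xi)=\chi(x)$ with $\chi\in C_c^\infty$), so the paper's intermediate estimate is a genuine flaw rather than merely a different method. Your route avoids it: the kernel bound $|\zeta^\beta\widehat{a}(\zeta,\xi)|\le C_\beta\langle\xi\rangle^{\delta|\beta|}$ uses only the legitimate $\alpha=0$ instances of the symbol estimates, for $\delta=0$ it is uniform in $\xi$, and conjugating $\mathcal{A}$ by the Fourier transform reduces the claim, via Plancherel, to the Schur test for the kernel $\widehat{a}(\eta-\xi,\xi)$, which closes the proof rigorously. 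What each approach buys: the paper's scheme, were its estimate true, would give a pointwise bound on $\mathcal{A}u$ with almost no machinery; yours is the standard robust argument and, unlike the paper's, it is sound. Your concluding Cotlar--Stein sketch for $0<\delta<\rho$ goes beyond anything the paper proves --- the paper stops at $\delta=0$ and defers the general case to Calder\'on--Vaillancourt --- though as written it remains a sketch: the uniform boundedness of the dyadic pieces $\mathcal{A}_j$ and the almost-orthogonality estimates would still need to be carried out in detail before that part could count as a proof.
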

\begin{remark}
	The above theorem is still true even when removing the boundedness of support of $a$ in the x-variable. But the condition $\delta<\rho$ is essential. However, the proof is more technical and is due to Calderon-Vaillancourt \cite{cal}.
\end{remark}
\begin{theorem}\label{t5}
	Let $\mathcal{A} \in \Psi^{0}_{\rho,\delta}$ with $0\leq \delta<\rho\leq 1$, then $\mathcal{A}$ is bounded on $L^{2}(\Omega)$.
\end{theorem}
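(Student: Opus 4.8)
The plan is to reduce the general statement to the compactly-supported case already established in the previous theorem, and to recombine the localized pieces by an almost-orthogonality argument (the Cotlar--Stein lemma): if $\{T_k\}_{k\in\mathbb{Z}^n}$ are bounded operators on a Hilbert space with $\|T_j^\star T_k\|\le \gamma(j-k)^2$ and $\|T_jT_k^\star\|\le\gamma(j-k)^2$ where $\sum_{m}\gamma(m)<\infty$, then $\sum_k T_k$ converges strongly to an operator of norm at most $\sum_m\gamma(m)$. Since it suffices to bound $\mathcal{A}$ on the dense subspace $\mathcal{S}(\mathbb{R}^n)$ and then extend by continuity, the whole argument may be carried out on Schwartz functions.

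First I would fix $\chi\in C_c^\infty(\mathbb{R}^n)$, supported in a ball of radius $R$, with $\sum_{k\in\mathbb{Z}^n}\chi(x-k)=1$, and set $a_k(x,\xi)=\chi(x-k)a(x,\xi)$ with associated operator $\mathcal{A}_k$. Each $a_k$ lies in $S_{\rho,\delta}(\Omega\times\mathbb{R}^n)$ and is compactly supported in $x$, so the previous theorem applies; moreover the H\"ormander seminorms $Q_\ell(a_k)$ are bounded uniformly in $k$ because the construction is translation invariant, whence $\sup_k\|\mathcal{A}_k\|=:C_0<\infty$. Formally $\mathcal{A}=\sum_k\mathcal{A}_k$ on $\mathcal{S}(\mathbb{R}^n)$, so the claim reduces to verifying the Cotlar--Stein hypotheses for $T_k=\mathcal{A}_k$.

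The two products behave differently. Writing $K_k(x,y)=\int e^{i(x-y)\xi}a_k(x,\xi)\,d\xi$ for the Schwartz kernel, the symbol $a_k$ vanishes unless $x\in B(k,R)$, so the kernel of $\mathcal{A}_j^\star\mathcal{A}_k$, namely $\int\overline{K_j(z,x)}K_k(z,y)\,dz$, is identically zero once $|j-k|>2R$: this factor is \emph{exactly} orthogonal. For $\mathcal{A}_j\mathcal{A}_k^\star$ I would use pseudolocality. Since $\rho>\delta\ge0$ forces $\rho>0$, repeated integration by parts in $\xi$ against the operator $L=|x-y|^{-2}\,(x-y)\cdot D_\xi$ (which reproduces $e^{i(x-y)\xi}$) gains a factor $\langle\xi\rangle^{-\rho}$ at each step, yielding the off-diagonal decay $|K_k(x,y)|\le c_N\langle x-y\rangle^{-N}$ for every $N$ away from the diagonal. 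Exploiting the spatial separation of the supports of $a_j$ and $a_k$—most cleanly by inserting a cutoff equal to $1$ near $B(j,R)$ and $0$ near $B(k,R)$ and absorbing the resulting smoothing remainders—this produces the operator-norm bound $\|\mathcal{A}_j\mathcal{A}_k^\star\|\le c_N\langle j-k\rangle^{-N}$.

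Setting $\gamma(m)^2=C_0^2$ for bounded $|m|$ and $\gamma(m)^2=c_N\langle m\rangle^{-N}$ otherwise, with $N>2n$, both Cotlar--Stein hypotheses hold and $\sum_m\gamma(m)<\infty$, so the lemma gives $\|\mathcal{A}\|\le\sum_m\gamma(m)<\infty$ on $\mathcal{S}(\mathbb{R}^n)$; density then finishes the proof. The main obstacle is the second almost-orthogonality estimate: the composed kernel carries a borderline diagonal singularity of size $|x-y|^{-n}$, so a naive Schur test fails and one must convert the pointwise pseudolocal decay into an operator-norm estimate by keeping the separation argument at the level of $L^2$ bounds. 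The hypothesis $\delta<\rho$ is precisely what makes the symbol calculus underlying the compactly-supported case (and hence the uniform bound $C_0$) available, while $\rho>0$ drives the integrations by parts.
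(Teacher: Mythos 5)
Your proposal is correct in outline, but it is worth saying up front that the paper itself contains no proof of this theorem: Theorem \ref{t5} is stated immediately after a remark that explicitly defers the case of unbounded $x$-support to Calderon--Vaillancourt \cite{cal} (``the proof is more technical''). So you are not paralleling the paper's argument --- you are supplying a proof where the paper gives only a citation. Your route is the standard almost-orthogonality argument: decompose $a=\sum_{k}\chi(\cdot-k)a$ by a partition of unity, apply the paper's compactly-supported theorem to each piece $\mathcal{A}_k$, observe the exact vanishing of $\mathcal{A}_j^{\star}\mathcal{A}_k$ for $|j-k|>2R$ from the disjointness of the $x$-supports, and obtain rapid decay of $\|\mathcal{A}_j\mathcal{A}_k^{\star}\|$ from pseudolocality; you correctly identify that the non-integrable diagonal singularity of the kernels blocks a naive Schur test and that inserting a cutoff separating the two singular regions fixes this, since each resulting factor is either a uniformly bounded operator or has a smooth, off-diagonally supported, Schur-summable kernel of size $O(\langle j-k\rangle^{-N})$; Cotlar--Stein then sums the pieces. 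What your approach buys is self-containedness relative to the paper: it reuses the local theorem the paper actually proves and imports only the Cotlar--Stein lemma, whereas the paper imports the entire global result. Three points you should tighten: (i) the uniform bound $\sup_k\|\mathcal{A}_k\|=C_0$ requires the compactly-supported theorem in \emph{quantitative} form --- operator norm controlled by finitely many seminorms $Q_\ell$ and the volume of the $x$-support --- which is implicit in the paper's proof of that theorem but not in its statement; (ii) that proof is carried out in the paper only for $\delta=0$, so for $0<\delta<\rho$ your reduction rests on a statement whose proof the paper itself omits; (iii) the Cotlar--Stein lemma is an external ingredient that must be cited or proved. None of these breaks the mechanism; they are bookkeeping about what is assumed from where.
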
\par
In order to investigate wether the composite of two pseudo-differential operators is also a pseudo-differential operator we need to go through pseudo-differential operator with double symbols.
\begin{theorem}
	Let $a,b \in S^{0}_{\rho,\delta}(\Omega \times \mathbb{R}^{n})$ then
	\begin{equation}
	c(x,\mu):=\int\int e^{izw} a(x,w+\mu)b(z+x,\mu)dzdw\ \in\ S^{0}_{\rho,\delta}(\Omega \times \mathbb{R}^{n})
	\end{equation}\\
	\begin{proof}
		Using Leibniz rule together with Peetre's inequality (\ref{p10}), for each $\alpha,\beta \in \mathbb{N}^{n}$
		\begin{equation*}
		\partial_{x}^{\beta}a(x,w+\mu)b(z+x,\mu)=\sum_{\gamma \leq \beta}C_{\beta}^{\gamma}\partial_{x}^{\gamma}a(x,w+\mu)\partial_{x}^{\beta-\gamma}b(z+x,\mu).
		\end{equation*}
		then
		\begin{align*}
		|\partial_{\mu}^{\alpha}\partial_{x}^{\beta}a(x,w+\mu)b(z+z,\mu)|
		\leq \sum_{\gamma \leq \beta}\sum_{r \leq \alpha} C_{\beta}^{\gamma}C_{\alpha}^{r}|\partial_{\mu}^{r}\partial_{x}^{\gamma}a(x,w+\mu)|\ |\partial_{\mu}^{\alpha-r}\partial_{x}^{\beta-\gamma}b(z+x,\mu)|\\
		\leq \sum_{\gamma \leq \beta}\sum_{r \leq \alpha} C_{\beta}^{\gamma}C_{\alpha}^{r}c_{r,\gamma}c_{\alpha-r,\beta-\gamma}<w+\mu>^{-\rho|r|+\delta|\gamma|}<\mu>^{-\rho|\alpha-r|+\delta|\beta-\gamma|}\\
		\leq \sum_{\gamma \leq \beta}\sum_{r \leq \alpha} C_{\beta}^{\gamma}C_{\alpha}^{r}c_{r,\gamma}c_{\alpha-r,\beta-\gamma}2^{|-\rho|r|+\delta|\gamma||}<w>^{|-\rho|r|+\delta|\gamma||}<\mu>^{-\rho|r|+\delta|\gamma|}<\mu>^{-\rho|\alpha-r|+\delta|\beta-\gamma|}\\
		\leq c<w>^{|-\rho|r|+\delta|\gamma||}<\mu>^{-\rho|\alpha|+\delta|\beta|}.
		\end{align*}
		this shows that
		\begin{equation*}
		|\partial_{\mu}^{\alpha}\partial_{x}^{\beta}c(x,\mu)|\leq C<\mu>^{-\rho|\alpha|+\delta|\beta|}.
		\end{equation*}
	\end{proof}
\end{theorem}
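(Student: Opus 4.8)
The goal is to verify the symbol estimate $|\partial_\mu^\alpha\partial_x^\beta c(x,\mu)|\le C<\mu>^{-\rho|\alpha|+\delta|\beta|}$ for every $\alpha,\beta\in\mathbb{N}^n$, uniformly in $x\in\Omega$. The essential subtlety, which the naive Leibniz estimate above does not address, is that the integrand $e^{izw}a(x,w+\mu)b(z+x,\mu)$ is bounded but not absolutely integrable in $(z,w)$, so $c$ is only an oscillatory integral. The plan is first to give it rigorous meaning by inserting the regularizing identities
\[
e^{izw}=<w>^{-2N}(1-\Delta_z)^N e^{izw},\qquad e^{izw}=<z>^{-2M}(1-\Delta_w)^M e^{izw},
\]
which hold because $(1-\Delta_z)e^{izw}=<w>^2 e^{izw}$ and $(1-\Delta_w)e^{izw}=<z>^2 e^{izw}$. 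After introducing a cutoff $\chi(\epsilon z,\epsilon w)$ to legitimize the integrations by parts and then letting $\epsilon\to0$, these move $(1-\Delta_z)^N$ onto $b(z+x,\mu)$ (a combination of $x$-derivatives of order $\le 2N$, hence $O(<\mu>^{2\delta N})$ by the symbol estimate for $b$) and $(1-\Delta_w)^M$ onto $a(x,w+\mu)$ (a combination of $\xi$-derivatives of order $\le 2M$, hence $O(<w+\mu>^{-2\rho M})$). The regularized integrand is then bounded by a constant times $<z>^{-2M}<w>^{-2N}<w+\mu>^{-2\rho M}<\mu>^{2\delta N}$, which is absolutely integrable once $M$ and $N$ are large.

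On this absolutely convergent representation I would differentiate under the integral sign. Since $x$ enters through both $a(x,w+\mu)$ and $b(z+x,\mu)$, and $\mu$ likewise, Leibniz' rule distributes $\partial_x^\beta$ and $\partial_\mu^\alpha$ over the two factors exactly as in the computation preceding this theorem, the $\mu$-derivatives of $a$ reappearing as $\xi$-derivatives at $w+\mu$. Applying the $S^0_{\rho,\delta}$ bounds for $a$ and $b$ and then Peetre's inequality (Proposition \ref{p10}) to replace each power $<w+\mu>^{l}$ by $2^{|l|}<w>^{|l|}<\mu>^{l}$, I would reach a pointwise bound of the form
\[
|\partial_\mu^\alpha\partial_x^\beta(\text{integrand})|\le C\,<z>^{-2M}<w>^{-2N+2\rho M+p}<\mu>^{-\rho|\alpha|+\delta|\beta|+2\delta N-2\rho M},
\]
where the fixed number $p=p(\alpha,\beta)\le\rho|\alpha|+\delta|\beta|$ collects the lower-order $w$-exponents produced by Peetre and is independent of $M,N$.

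It then remains to choose $M$ and $N$. Taking $M>n/2$ secures convergence of the $z$-integral, and taking $N$ with $N>\rho M+n/2+p/2$ secures convergence of the $w$-integral; integrating the displayed bound in $(z,w)$ leaves $|\partial_\mu^\alpha\partial_x^\beta c(x,\mu)|\le C<\mu>^{-\rho|\alpha|+\delta|\beta|+2\delta N-2\rho M}$. The point is that the surplus $\mu$-exponent $2\delta N-2\rho M$ must be forced $\le 0$ simultaneously with these convergence requirements, i.e. one needs $\delta N\le\rho M$ together with $N>\rho M+n/2+p/2$; such $M,N$ exist precisely because the hypothesis gives $\delta<\rho$ (hence $\rho>0$ and $\delta<1$, so $\rho M(1-\delta)/\delta\to\infty$ as $M\to\infty$), and this is the sole place where the relation between the two indices is genuinely used. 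With that choice the bound becomes $<\mu>^{-\rho|\alpha|+\delta|\beta|}$, uniformly in $x$, giving $c\in S^0_{\rho,\delta}(\Omega\times\mathbb{R}^n)$. I expect the balancing in this final step to be the main obstacle: the number of integrations by parts must be large enough to tame the oscillatory integral yet constrained enough to preserve the sharp order, and keeping the two requirements compatible for all $\alpha,\beta$ is where the care lies, rather than in the purely algebraic Leibniz and Peetre manipulations that merely control the integrand pointwise.
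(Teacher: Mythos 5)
You are right that the computation the paper records cannot by itself prove the theorem: the Leibniz--Peetre bound on the integrand is constant in $z$ and even grows in $w$, so it is not integrable, and $c$ must be handled as an oscillatory integral. Your regularization set-up is the standard one. But your argument has a genuine gap at the one step on which the whole balancing rests: the claim that moving $(1-\Delta_w)^M$ onto $a(x,w+\mu)$ produces a factor $O(<w+\mu>^{-2\rho M})$. This is false. Since $(1-\Delta_w)^M=\sum_{k=0}^{M}\binom{M}{k}(-\Delta_w)^k$ contains the identity, $(1-\Delta_w)^M a(x,w+\mu)$ contains the term $a(x,w+\mu)$ itself; the symbol estimates $|\partial_\xi^\sigma a|\le c_\sigma <w+\mu>^{-\rho|\sigma|}$ \emph{decay} in $|\sigma|$, so the worst term is the one with $\sigma=0$ and the only uniform bound is $O(1)$. (Already $a\equiv 1$ is a counterexample: $(1-\Delta_w)^M 1=1$.) Note the asymmetry with your treatment of $(1-\Delta_z)^N b$, where derivatives \emph{cost} $<\mu>^{\delta}$ each and you correctly took the term with the most derivatives as the worst; for $a$ the worst term is the one with the fewest. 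The $w$-integration by parts therefore buys only the prefactor $<z>^{-2M}$, i.e.\ decay in $z$, and no decay in $w+\mu$. Once the spurious factor is removed, your integrand bound carries the $\mu$-exponent $-\rho|\alpha|+\delta|\beta|+2\delta N$ with nothing to offset $2\delta N$; increasing $M$ buys no $\mu$-decay, the constraint ``$\delta N\le \rho M$'' becomes irrelevant, and the method yields only $|\partial_\mu^\alpha\partial_x^\beta c|\le C<\mu>^{-\rho|\alpha|+\delta|\beta|+2\delta N}$, which is vacuous for $\delta>0$. As written, your proof is complete only in the case $\delta=0$.

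Closing the gap needs two further ideas, and they are where $\delta<\rho$ actually enters. First split the $w$-integration with a cutoff $\chi(w/<\mu>)$. On the outer region $|w|\ge \frac14 <\mu>$ argue as you do, but use $<w>\gtrsim <\mu>$ to trade part of the factor $<w>^{-2N}$ for $<\mu>$-decay, which absorbs the cost $<\mu>^{2\delta N}$; this requires only $\delta<1$. On the inner region, where $<w+\mu>\sim<\mu>$, perform the integrations by parts at the anisotropic scale $z=<\mu>^{-t}z'$, $w=<\mu>^{t}w'$ (phase and measure are invariant) with $\delta\le t\le\rho$: then every $z'$-derivative falling on $b$ costs $<\mu>^{\delta-t}\le 1$, and every $w'$-derivative falling on $a$ costs $<\mu>^{t-\rho}\le 1$, because on this region derivatives of $a$ genuinely decay like $<\mu>^{-\rho}$. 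The existence of such $t$ is precisely the hypothesis $\delta\le\rho$ (with $\delta<1$ for the outer region), not the bookkeeping inequality you identified. Equivalently, one can Taylor-expand $a(x,w+\mu)$ in $w$ about $\mu$ and estimate the remainder --- the asymptotic-expansion proof. For comparison, the paper's own proof stops at the pointwise Leibniz--Peetre estimate and never addresses integrability at all, so your attempt diagnoses the real difficulty correctly; but neither argument, as it stands, proves the theorem for $\delta>0$.
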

\begin{theorem}
	Let $\mathcal{A},\mathcal{B}\in \Psi_{\rho,\delta}^{0}$ with $0\leq \delta<\rho\leq 1$, then $\mathcal{A}\mathcal{B} \in \Psi_{rho,\delta}^{0}$.\\
	\begin{proof}
		Let $a,b\in S_{\rho,\delta}^{0}(\Omega\times\mathbb{R}^{n})$ be the Kohn-Neirenberg symbol associated to $\mathcal{A}$ and  $\mathcal{B}$, respectively. Given  $u\in \mathcal{S}(\mathbb{R}^{n})$ then
		\begin{align*}
		\mathcal{A}\mathcal{B}u(x)&=\mathcal{A}(\mathcal{B}u)(x)\\
		&=\mathcal{A}[\int e^{ix\mu}b(y,\mu)\hat{u}(\mu)d\mu](x)\\
		&=\int e^{ix\xi}a(x,\xi)\mathcal{F}(\int e^{iy\mu}b(y,\mu)\hat{u}(\mu)d\mu)(\xi)d\xi\\
		&=\int e^{ix\xi}a(x,\xi)\int e^{-iy\xi}\int e^{iy\mu}b(y,\mu)\hat{u}(\mu)d\mu dyd\xi\\
		&=\int e^{ix\xi}a(x,\xi)\int e^{-iy\xi}\int e^{iy\mu}b(y,\mu)\int e^{-i\mu \gamma}u(\gamma)d\gamma d\mu dyd\xi\\
		&=\int\int e^{i\xi(x-y)}a(x,\xi)\int\int e^{i\mu(y-\gamma)}b(y,\mu)u(\gamma)d\gamma d\mu dyd\xi\\
		&=\int\int e^{i\xi(x-y)}a(x,\xi)\int\int e^{i\mu\theta}b(y,\mu)u(\theta+y)d\theta d\mu dyd\xi\\
		&=\int\int\int\int e^{-i\xi z-i\mu \theta}a(x,\xi)b(z+x,\mu)u(z+\theta+x)d\theta d\mu dzd\xi\\
		\end{align*}
where we use the change of variables $\gamma-y=\theta$ and $y-x=z$. Introduce the new change of variable $z+\theta=v$ and $\xi-\mu=w$ we obtain $z\xi+\theta\mu=zw+v\mu$ hence
		\begin{align*}
		\mathcal{A}\mathcal{B}u(x)&=\int\int\int\int e^{-izw-iv\mu}a(x,w+\mu)b(z+x,\mu)u(v+x)dvd\mu dzdw\\
		&=\int\int e^{-iv\mu}\int\int e^{-izw}a(x,w+\mu)b(z+x,\mu)dzdw\ u(v+x)dvd\mu\\
		&=\int\int e^{-iv\mu}c(x,\mu)\ u(v+x)dvd\mu,
		\end{align*}
where $c(x,\mu)$ is the symbol given by the above theorem. Using the change of variable $t=v+x$ we obtain:
		\begin{align*}
		\mathcal{A}\mathcal{B}u(x)&=\int\int e^{-i(t-x)\mu}c(x,\mu)u(t)dtd\mu\\
		&=\mathcal{C}\ u(x).
		\end{align*}
		where $\mathcal{C}$ is the Pseudo-differential operator with Kohn-Neirenberg symbol $c\in S_{\rho,\delta}^{0}(\Omega\times\mathbb{R}^{n})$ i.e. $\mathcal{C}=\mathcal{A}\mathcal{B}\in \Psi_{\rho,\delta}^{0}$.
	\end{proof}
\end{theorem}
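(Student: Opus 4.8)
The plan is to reduce the composition $\mathcal{A}\mathcal{B}$ to a single pseudodifferential operator whose Kohn--Nirenberg symbol is exactly the double symbol $c(x,\mu)$ furnished by the preceding theorem, and then to invoke that theorem to conclude $c\in S^{0}_{\rho,\delta}(\Omega\times\mathbb{R}^{n})$. Once the composition is written in standard Kohn--Nirenberg form with a symbol in the H\"{o}rmander class, membership in $\Psi^{0}_{\rho,\delta}$ follows by definition.

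First I would fix the symbols $a,b\in S^{0}_{\rho,\delta}(\Omega\times\mathbb{R}^{n})$ attached to $\mathcal{A}$ and $\mathcal{B}$, and compute $\mathcal{A}\mathcal{B}u=\mathcal{A}(\mathcal{B}u)$ on a test function $u\in\mathcal{S}(\mathbb{R}^{n})$. Representing $\mathcal{B}u$ through its Fourier expansion and then applying $\mathcal{A}$ via equation (\ref{e14}), I would expand every occurring Fourier transform and, after the change of variables $\theta=\gamma-y$ and $z=y-x$, arrive at the four-fold integral
\begin{equation*}
\mathcal{A}\mathcal{B}u(x)=\int\int\int\int e^{-i\xi z-i\mu\theta}\,a(x,\xi)\,b(z+x,\mu)\,u(z+\theta+x)\,d\theta\,d\mu\,dz\,d\xi .
\end{equation*}

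The central step is a second change of variables $v=z+\theta$ and $w=\xi-\mu$, under which the total phase collapses via the identity $z\xi+\theta\mu=zw+v\mu$. Performing the $z$- and $w$-integrations first then isolates precisely the double symbol
\begin{equation*}
c(x,\mu)=\int\int e^{-izw}\,a(x,w+\mu)\,b(z+x,\mu)\,dz\,dw ,
\end{equation*}
so that $\mathcal{A}\mathcal{B}u(x)=\int\int e^{-iv\mu}\,c(x,\mu)\,u(v+x)\,dv\,d\mu$. A final substitution $t=v+x$ rewrites this as $\mathcal{A}\mathcal{B}u(x)=\int\int e^{-i(t-x)\mu}\,c(x,\mu)\,u(t)\,dt\,d\mu=\mathcal{C}u(x)$, where $\mathcal{C}$ is the pseudodifferential operator with Kohn--Nirenberg symbol $c$. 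Since the preceding theorem guarantees $c\in S^{0}_{\rho,\delta}(\Omega\times\mathbb{R}^{n})$, I conclude $\mathcal{C}=\mathcal{A}\mathcal{B}\in\Psi^{0}_{\rho,\delta}$.

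The main obstacle will be justifying the formal manipulations: the iterated integrals above are oscillatory and not absolutely convergent, so the interchange of the order of integration and the changes of variables cannot be carried out by a direct appeal to Fubini's theorem. I would make each step rigorous using the oscillatory-integral regularization developed earlier in the chapter, inserting convergence factors of the type $<x>^{-p}<D>^{p}$ and passing to the limit on $\mathcal{S}(\mathbb{R}^{n})$; the $L^{2}$-boundedness supplied by Theorem \ref{t5} then ensures that $\mathcal{C}$ extends continuously to $L^{2}(\Omega)$, so the identity $\mathcal{C}=\mathcal{A}\mathcal{B}$ holds as an equality of bounded operators and not merely on the Schwartz space.
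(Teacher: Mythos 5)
Your proposal is correct and follows essentially the same route as the paper's own proof: the same Fourier expansion of $\mathcal{A}(\mathcal{B}u)$, the same two changes of variables ($\theta=\gamma-y$, $z=y-x$, then $v=z+\theta$, $w=\xi-\mu$ with the phase identity $z\xi+\theta\mu=zw+v\mu$), identification of the double symbol $c(x,\mu)$ from the preceding theorem, and the final substitution $t=v+x$ putting $\mathcal{A}\mathcal{B}$ in Kohn--Nirenberg form. Your closing remark on regularizing the oscillatory integrals is a welcome point of extra rigor that the paper's proof passes over silently, but it does not change the argument's structure.
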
\par
Combining theorem (\ref{t5}) and the above theorem, we obtain:
\begin{corollary}
	Given $\rho,\delta\in [0,1]$ with $\delta<\rho$ then the collection of all Pseudo-differential operators of order zero $\Psi_{\rho,\delta}^{0}(\Omega)$ is an operator subalgebra in $\mathcal{L}(L^{2}(\Omega))$.
\end{corollary}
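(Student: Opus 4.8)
The plan is to assemble the Corollary directly from the two theorems that immediately precede it, checking the three conditions of Definition \ref{d3} together with the ambient boundedness. First I would observe that by Theorem \ref{t5}, every operator $\mathcal{A}\in \Psi^0_{\rho,\delta}(\Omega)$ with $0\leq\delta<\rho\leq 1$ extends uniquely by continuity from $\mathcal{S}(\mathbb{R}^n)$ to a bounded operator on $L^2(\Omega)$; hence $\Psi^0_{\rho,\delta}(\Omega)\subseteq \mathcal{L}(L^2(\Omega))$ as a set, and it remains only to verify that this subset is closed under the three algebra operations.

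For closure under addition and scalar multiplication I would use that the assignment sending a Kohn--Nirenberg symbol $a$ to its operator $\mathcal{A}u(x)=\int e^{ix\xi}a(x,\xi)\hat u(\xi)\,d\xi$ is linear in $a$. Since $S^0_{\rho,\delta}(\Omega\times\mathbb{R}^n)$ is a vector space (indeed a Fr\'{e}chet algebra, by the corollary above), for $a,b\in S^0_{\rho,\delta}$ and $\lambda\in\mathbb{C}$ the combination $\lambda a+b$ again lies in $S^0_{\rho,\delta}$, and its associated operator is exactly $\lambda\mathcal{A}+\mathcal{B}$. Thus $\Psi^0_{\rho,\delta}(\Omega)$ is a linear subspace of $\mathcal{L}(L^2(\Omega))$.

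Closure under multiplication is precisely the content of the composition theorem stated just before the Corollary: for $\mathcal{A},\mathcal{B}\in\Psi^0_{\rho,\delta}$ the composite $\mathcal{A}\mathcal{B}$ is again a pseudodifferential operator of order zero, the double-symbol reduction producing a Kohn--Nirenberg symbol $c\in S^0_{\rho,\delta}(\Omega\times\mathbb{R}^n)$. One small point I would make explicit is that this composition identity is first established on the dense subspace $\mathcal{S}(\mathbb{R}^n)$; since all three operators $\mathcal{A}$, $\mathcal{B}$, and $\mathcal{A}\mathcal{B}$ are bounded on $L^2(\Omega)$ by Theorem \ref{t5}, the equality of their bounded extensions then follows from density of $\mathcal{S}$ in $L^2$ and continuity, so $\mathcal{A}\mathcal{B}\in\Psi^0_{\rho,\delta}(\Omega)$ as operators on $L^2(\Omega)$.

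Combining these three observations with Definition \ref{d3} yields that $\Psi^0_{\rho,\delta}(\Omega)$ is an operator subalgebra of $\mathcal{L}(L^2(\Omega))$. The genuinely substantial work has already been carried out upstream: the $L^2$-boundedness of Theorem \ref{t5} (the Calderon--Vaillancourt estimate) and the symbol-composition theorem resting on Leibniz' rule and Peetre's inequality (Proposition \ref{p10}). Given these ingredients, the Corollary is essentially bookkeeping, and I expect no obstacle beyond the density-and-continuity argument noted above for identifying the composite of the bounded extensions with the operator attached to the symbol $c$.
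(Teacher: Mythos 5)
Your proposal is correct and follows essentially the same route as the paper, which obtains the corollary precisely by combining Theorem \ref{t5} (the $L^{2}$-boundedness) with the composition theorem stated just before it; your additional remarks on linearity of the symbol-to-operator map and on identifying the bounded extensions via density of $\mathcal{S}(\mathbb{R}^{n})$ merely make explicit the bookkeeping the paper leaves implicit.
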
\par
We now prove that $\Psi_{\rho,\delta}^{0}(\Omega)$ is a subalgebra of $\mathcal{L}(L^{2}(\Omega))$.
\begin{theorem}
	Let $\mathcal{A}\in \Psi_{\rho,\delta}^{0}(\Omega)$, then the adjoint operator $\mathcal{A}^{\star}\in \Psi_{\rho,\delta}^{0}(\Omega) $\\
	\begin{proof}
		Let $u,v\in \mathcal{S}(\mathbb{R}^{n})$ then
		\begin{align*}
		<\mathcal{A}u,v>_{L^{2}}&=\int\mathcal{A}u(x)\overline{v(x)}dx\\
		&=\int\int e^{ix\xi}a(x,\xi)\hat{u}(\xi)d\xi \overline{v(x)}dx\\
		&=\int\mathcal{F}(u)(\xi)\overline{\int e^{-ix\xi}\overline{a(x,\xi)}v(x)dx}d\xi\\
		&=\int\int e^{-iy\xi}u(y)dy\overline{\int e^{-ix\xi}\overline{a(x,\xi)}v(x)dx}d\xi\\
		&=\int u(y)\overline{\int\int e^{-i(y-x)\xi}\overline{a(x,\xi)}v(x)dxd\xi}dy
		\end{align*}
		so that
		\begin{align*}
		\mathcal{A}^{\star}v(y)&=\int\int e^{i(y-x)\xi}\overline{a(x,\xi)}v(x)dxd\xi
		\end{align*}
	\end{proof}
\end{theorem}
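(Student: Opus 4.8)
The plan is to continue from the amplitude representation just obtained and recast $\mathcal{A}^{\star}$ as a genuine Kohn--Neirenberg pseudodifferential operator whose symbol lies in $S^{0}_{\rho,\delta}(\Omega\times\mathbb{R}^{n})$; once such a symbol is produced, the membership $\mathcal{A}^{\star}\in\Psi^{0}_{\rho,\delta}(\Omega)$ holds by definition. First I would insert the Fourier inversion $v(x)=\int e^{ix\mu}\hat{v}(\mu)\,d\mu$ into the identity
\[
\mathcal{A}^{\star}v(y)=\int\int e^{i(y-x)\xi}\,\overline{a(x,\xi)}\,v(x)\,dx\,d\xi ,
\]
use the phase factorization $e^{i(y-x)\xi+ix\mu}=e^{iy\mu}\,e^{i(y-x)(\xi-\mu)}$, and then perform the substitutions $z=x-y$ and $\eta=\xi-\mu$. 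Since $(y-x)(\xi-\mu)=-z\eta$ and $\overline{a(x,\xi)}=\overline{a(y+z,\mu+\eta)}$, this brings the operator to the form
\[
\mathcal{A}^{\star}v(y)=\int e^{iy\mu}\,a^{\star}(y,\mu)\,\hat{v}(\mu)\,d\mu,\qquad a^{\star}(y,\mu):=\int\int e^{-iz\eta}\,\overline{a(y+z,\mu+\eta)}\,dz\,d\eta .
\]
Comparing with (\ref{e14}), this exhibits $\mathcal{A}^{\star}$ as the pseudodifferential operator associated with the Kohn--Neirenberg symbol $a^{\star}$, so the whole statement reduces to proving that $a^{\star}\in S^{0}_{\rho,\delta}(\Omega\times\mathbb{R}^{n})$.

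For the symbol estimate I would differentiate under the integral sign, the oscillatory integral being first regularized by the integration-by-parts identities $e^{-iz\eta}=\langle\eta\rangle^{-2N}(1-\Delta_{z})^{N}e^{-iz\eta}$ and $e^{-iz\eta}=\langle z\rangle^{-2N}(1-\Delta_{\eta})^{N}e^{-iz\eta}$, exactly as in the regularization of oscillatory integrals recalled earlier, so as to make the double integral absolutely convergent. For $\alpha,\beta\in\mathbb{N}^{n}$ one has
\[
\partial_{\mu}^{\alpha}\partial_{y}^{\beta}a^{\star}(y,\mu)=\int\int e^{-iz\eta}\,\overline{(\partial_{\xi}^{\alpha}\partial_{x}^{\beta}a)(y+z,\mu+\eta)}\,dz\,d\eta ,
\]
and the hypothesis $a\in S^{0}_{\rho,\delta}$ gives $|(\partial_{\xi}^{\alpha}\partial_{x}^{\beta}a)(y+z,\mu+\eta)|\le c_{\alpha\beta}\langle\mu+\eta\rangle^{-\rho|\alpha|+\delta|\beta|}$. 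Applying Peetre's inequality (Proposition \ref{p10}) with $l=-\rho|\alpha|+\delta|\beta|$ separates the $\mu$-dependence,
\[
\langle\mu+\eta\rangle^{-\rho|\alpha|+\delta|\beta|}\le 2^{|l|}\,\langle\eta\rangle^{|l|}\,\langle\mu\rangle^{-\rho|\alpha|+\delta|\beta|},
\]
and combining this with the decay in $z$ and $\eta$ supplied by the integration by parts yields $|\partial_{\mu}^{\alpha}\partial_{y}^{\beta}a^{\star}(y,\mu)|\le C_{\alpha\beta}\langle\mu\rangle^{-\rho|\alpha|+\delta|\beta|}$, precisely the defining estimate of $S^{0}_{\rho,\delta}$. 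This is the same mechanism already used in the composition (double-symbol) theorem proved above, with $\overline{a}$ playing the role of the amplitude.

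The hard part will be the bookkeeping in this last step. The operators $(1-\Delta_{z})^{N}$ falling on $\overline{a}$ create $x$-derivatives and hence $\delta$-growth in $\langle\mu+\eta\rangle$, while $(1-\Delta_{\eta})^{N}$ creates $\xi$-derivatives and $\rho$-decay; one must take $N$ large and exploit $\delta<\rho$ so that, after Peetre's inequality, the growth $\langle\eta\rangle^{|l|}$ is dominated by the decay $\langle\eta\rangle^{-2N}$ and the $z$-integral is controlled by $\langle z\rangle^{-2N}$, all uniformly in $(y,\mu)$, while the clean power $\langle\mu\rangle^{-\rho|\alpha|+\delta|\beta|}$ survives. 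This balancing is exactly the technical core already carried out for the composition theorem, so it transfers essentially verbatim. With $a^{\star}\in S^{0}_{\rho,\delta}$ established, $\mathcal{A}^{\star}$ is the pseudodifferential operator with Kohn--Neirenberg symbol $a^{\star}$ and therefore belongs to $\Psi^{0}_{\rho,\delta}(\Omega)$, which completes the proof.
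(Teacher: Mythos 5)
Your reduction of $\mathcal{A}^{\star}$ to Kohn--Neirenberg form is correct and already goes beyond the paper: the paper's own proof stops at the amplitude representation $\mathcal{A}^{\star}v(y)=\iint e^{i(y-x)\xi}\overline{a(x,\xi)}\,v(x)\,dx\,d\xi$ and never addresses membership in the symbol class at all. Your phase factorization, the substitution $z=x-y$, $\eta=\xi-\mu$, the resulting formula $a^{\star}(y,\mu)=\iint e^{-iz\eta}\,\overline{a(y+z,\mu+\eta)}\,dz\,d\eta$ identified against (\ref{e14}), and the differentiation formula $\partial_{\mu}^{\alpha}\partial_{y}^{\beta}a^{\star}(y,\mu)=\iint e^{-iz\eta}\,\overline{(\partial_{\xi}^{\alpha}\partial_{x}^{\beta}a)(y+z,\mu+\eta)}\,dz\,d\eta$ are all sound.

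The gap is in the final estimate, and it is not mere bookkeeping. After regularizing with $e^{-iz\eta}=\langle\eta\rangle^{-2N}(1-\Delta_{z})^{N}e^{-iz\eta}$, the operator $(1-\Delta_{z})^{N}$ produces $x$-derivatives of $a$ of order up to $2N$, so the integrand is bounded by $\langle z\rangle^{-2M}\langle\eta\rangle^{-2N}\langle\mu+\eta\rangle^{l+2N\delta}$ with $l=-\rho|\alpha|+\delta|\beta|$, not by $\langle z\rangle^{-2M}\langle\eta\rangle^{-2N}\langle\mu+\eta\rangle^{l}$. Peetre's inequality (Proposition \ref{p10}) then yields $\langle\eta\rangle^{-2N(1-\delta)+|l|}\langle\mu\rangle^{l+2N\delta}$: the $\eta$-growth is indeed absorbed whenever $\delta<1$, but the factor $\langle\mu\rangle^{2N\delta}$ survives and in fact worsens as $N$ increases, so for $\delta>0$ the ``clean power $\langle\mu\rangle^{l}$'' does not survive; your scheme proves the symbol estimate only for $\delta=0$. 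The standard repair is genuinely different from what you describe: one Taylor-expands $\overline{a(y+z,\mu+\eta)}$ in the frequency variable around $\mu$ up to high order $K$, so the expansion terms become $\frac{1}{\gamma!}\,\partial_{\xi}^{\gamma}D_{x}^{\gamma}\overline{a}(y,\mu)\in S^{l-(\rho-\delta)|\gamma|}_{\rho,\delta}$ (this is where $\delta<\rho$ enters), and then estimates the remainder by splitting the $\eta$-integration: on $|\eta|\le\frac{1}{2}\langle\mu\rangle$ one uses $\langle\mu+\eta\rangle\sim\langle\mu\rangle$ together with the factor $\langle\mu+\eta\rangle^{l-\rho K}$ coming from the $K$-th order remainder to beat the $\delta$-losses, while on $|\eta|\ge\frac{1}{2}\langle\mu\rangle$ repeated $z$-integration by parts produces $\langle\eta\rangle^{-2N(1-\delta)}$, which dominates any fixed power of $\langle\mu\rangle$ there. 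Finally, appealing to the paper's composition (double-symbol) theorem does not close the gap: its proof has exactly the same defect, bounding the integrand by $c\langle w\rangle^{|-\rho|r|+\delta|\gamma||}\langle\mu\rangle^{-\rho|\alpha|+\delta|\beta|}$ (which is not even absolutely integrable) and then asserting the conclusion, so there is no completed ``balancing'' there to transfer verbatim.
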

Combining the last corollary and theorem we obtain
\begin{corollary}
	Let $\rho,\delta \in [0,1]$ with $\delta<\rho$ then $\Psi^{0}_{\rho,\delta}$ is a $\Psi^{\star}-algebra$ in the $C^{\star}-algebra\ \mathcal{L}(L^{2}(\Omega))$.
\end{corollary}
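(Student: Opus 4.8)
The plan is to verify, one by one, the three defining properties of a $\Psi^{\star}$-algebra listed in Definition \ref{d7} for the pair $\mathcal{B}:=\Psi^{0}_{\rho,\delta}(\Omega)$ sitting inside the $C^{\star}$-algebra $\mathcal{A}:=\mathcal{L}(L^{2}(\Omega))$. Two structural facts are already in hand: $\mathcal{B}$ is a Fr\'{e}chet algebra (the symbol class $S^{0}_{\rho,\delta}$ is Fr\'{e}chet for the semi-norms $Q_{k}$, and this topology is transported to the operators via the bijection symbol $\mapsto$ operator), and, by Theorem \ref{t5} together with the composition theorem and the preceding corollary, $\mathcal{B}$ is an operator subalgebra of $\mathcal{A}$. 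It therefore remains to check symmetry, continuous embedding, and inverse closedness.

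Symmetry (condition three) is immediate: the adjoint theorem just proved shows that $T\in\Psi^{0}_{\rho,\delta}$ implies $T^{\star}\in\Psi^{0}_{\rho,\delta}$, so $\mathcal{B}$ is closed under the involution of $\mathcal{A}$ in the sense of Definition \ref{d5}. For the continuous embedding (condition two), I would extract from the proof of the $L^{2}$-boundedness (Theorem \ref{t5}, i.e. Calder\'{o}n--Vaillancourt) the quantitative bound $\|T\|_{op}\le c\,Q_{k}(a)$, where the order $k$ and the constant $c$ depend only on $n,\rho,\delta$ and not on the symbol $a$. Since $\|.\|_{op}$ is the single defining semi-norm of $\mathcal{A}$, this estimate is exactly the hypothesis of the Fr\'{e}chet continuity criterion (Theorem \ref{t4}) applied to the inclusion map, and hence $\mathcal{B}\hookrightarrow\mathcal{A}$ in the sense of Definition \ref{d4}.

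The genuine obstacle is inverse closedness (condition one), namely $Inv(\mathcal{B})=Inv(\mathcal{A})\cap\mathcal{B}$ of Definition \ref{d1}. The inclusion $Inv(\mathcal{B})\subset Inv(\mathcal{A})\cap\mathcal{B}$ is trivial; the content is the reverse: if $T\in\Psi^{0}_{\rho,\delta}$ is invertible in $\mathcal{L}(L^{2})$, its inverse must again be a pseudodifferential operator of order zero in the same H\"{o}rmander class. This does not follow formally from the subalgebra and adjoint statements; it is precisely Beals' theorem. My strategy would be to pass to the commutator description recalled in the introduction,
\begin{equation*}
\Psi^{0}_{\rho,\delta}=\{a\in\mathcal{L}(H^{0})\mid ad(M)^{\alpha}ad(D)^{\beta}(a)\in\cap_{s\in\mathbb{R}}\mathcal{L}(H^{s-\rho|\alpha|+\delta|\beta|},H^{s})\},
\end{equation*}
and to show that membership in the right-hand side is preserved under inversion. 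The key algebraic identity is $ad(V)(T^{-1})=-T^{-1}\,ad(V)(T)\,T^{-1}$ for any derivation $V\in\{M_{x_{j}},D_{x_{k}}\}$; iterating it and controlling, by induction on the length of the multi-commutator, the gain and loss on the Sobolev scale $\{H^{s}\}$ yields that all iterated commutators of $T^{-1}$ lie in the required spaces $\mathcal{L}(H^{s-\rho|\alpha|+\delta|\beta|},H^{s})$. By the characterization this forces $T^{-1}\in\Psi^{0}_{\rho,\delta}$.

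Once symmetry, continuous embedding and inverse closedness are established, all three conditions of Definition \ref{d7} hold, and $\Psi^{0}_{\rho,\delta}$ is a $\Psi^{\star}$-algebra in $\mathcal{L}(L^{2}(\Omega))$. I expect the commutator bookkeeping in the last step---tracking the Sobolev orders through the expansion of $ad(V)(T^{-1})$ and establishing the boundedness of every iterated commutator---to be where essentially all the difficulty resides; the other two conditions are bookkeeping consequences of results already proved.
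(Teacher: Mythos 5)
Your overall structure --- verifying the three conditions of Definition \ref{d7} --- matches the paper, and your handling of the subalgebra property, of symmetry (via the adjoint theorem), and of the continuous embedding (a quantitative Calder\'{o}n--Vaillancourt bound $\|T\|_{op}\le c\,Q_{k}(a)$ fed into the continuity criterion of Theorem \ref{t4}) is correct; on the embedding you are in fact more explicit than the paper, which leaves that condition implicit. The divergence is in the inverse-closedness: the paper does not prove it at all, but defers it to the Wiener lemma for pseudodifferential operators with a citation to \cite{fors,hor,sjo}, whereas you attempt to sketch Beals' commutator argument. That sketch has a genuine gap.

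The gap is the base case of your induction. Membership of $T^{-1}$ in the commutator class already at the level $\alpha=\beta=0$ requires $T^{-1}\in\bigcap_{s\in\mathbb{R}}\mathcal{L}(H^{s})$, i.e.\ the inverse must be bounded on the entire Sobolev scale, while the hypothesis only provides $T^{-1}\in\mathcal{L}(H^{0})$. The identity $ad(V)(T^{-1})=-T^{-1}\,ad(V)(T)\,T^{-1}$ cannot supply this: to estimate its right-hand side as a map $H^{s-\rho|\alpha|+\delta|\beta|}\to H^{s}$ you must already know that $T^{-1}$ acts boundedly on the Sobolev spaces appearing there, so the iteration never gets started. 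Proving that invertibility on $H^{0}$ propagates to every $H^{s}$ (for instance by conjugating with order reductions and showing the conjugated operator is again invertible, or by a regularity bootstrap) is precisely the substantive content of the Wiener-lemma references the paper cites. Note also that your argument takes Beals' characterization of $\Psi^{0}_{\rho,\delta}$ as given; in the paper that characterization appears only as motivation in the introduction and is never proved, so even after repairing the base case your proof would rest on an external theorem of at least the same depth as the one the paper invokes. As written, the core assertion of the corollary remains unproved in your proposal.
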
\par
For the spectral invariance proof, we refer the reader to Weiner's lemma for pseudodifferential operators which can be found in \cite{fors,hor,sjo}.
\chapter{Bergman spaces}
\section{RKHS and Bergman spaces}
Reproducing Kernel Hilbert Spaces (RKHS) have arise in various areas in functional analysis, representation theory, statistics and approximation theory. Through this chapter you will study reproducing kernels spaces with its properties and Bergman spaces as a particular example of it.\\
\begin{definition}
	A reproducing kernel Hilbert space $H$ is a Hilbert Space of complex valued functions on an open subset $\Omega$ of $\mathbb{C}^n$ such that the Dirac evaluation function at  every point is continuous.
\end{definition}
\begin{remark}
	Let	$H$ be a RKHS then  by Reisz Representation theory there is a unique $K_z \in H $ such that 
	$$\delta_z(f)=f(z)=<f,K_z> ;\forall f \in H.$$
\end{remark}
\begin{definition}
	The reproducing kernel of $H$ is a 2-variable function over $\Omega\times\Omega$ defined by $K(w,z)=K_z(w)$.
\end{definition}
\begin{proposition}
	Let $H$ be a RKHS of holomorphic  functions. Then the reproducing of $H$ if given by $K(w,z) = \overline{\rm K(z,w)}$ i.e. $K(z,w)$ is holomorphic in $z$ and anti-holomorphic in $w$. 
\end{proposition}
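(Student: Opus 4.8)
The plan is to derive both assertions from three ingredients already available: the reproducing identity $f(z)=\langle f,K_z\rangle$ from the Remark, the conjugate symmetry $\langle u,v\rangle=\overline{\langle v,u\rangle}$ of the inner product (established in the $\star$-algebra discussion, where the product is linear in the first slot and conjugate-linear in the second), and the defining fact that every reproducing element $K_z$ belongs to $H$ and is therefore a holomorphic function on $\Omega$.

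First I would establish the Hermitian symmetry. Since $K_z\in H$, the reproducing property may be applied to the function $f=K_z$ evaluated at the point $w$, which gives
\begin{equation*}
K(w,z)=K_z(w)=\langle K_z,K_w\rangle .
\end{equation*}
Interchanging the roles of $z$ and $w$ yields $K(z,w)=K_w(z)=\langle K_w,K_z\rangle$. Invoking the conjugate symmetry of the inner product, $\langle K_z,K_w\rangle=\overline{\langle K_w,K_z\rangle}$, I then obtain
\begin{equation*}
K(w,z)=\langle K_z,K_w\rangle=\overline{\langle K_w,K_z\rangle}=\overline{K(z,w)},
\end{equation*}
which is exactly the claimed identity.

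Next I would read off the holomorphy statements. By definition $K(z,w)=K_w(z)$, and $K_w\in H$ is holomorphic; hence for each fixed $w$ the map $z\mapsto K(z,w)$ is holomorphic, so $K$ is holomorphic in its first variable. For the second variable I would use the symmetry just proved: $K(z,w)=\overline{K(w,z)}$. Since $w\mapsto K(w,z)=K_z(w)$ is holomorphic (again because $K_z\in H$), its complex conjugate $w\mapsto\overline{K(w,z)}=K(z,w)$ is anti-holomorphic. This establishes both holomorphy claims. The argument is essentially formal, and the only point demanding care is the bookkeeping of which slot carries the conjugation, together with fixing the convention that the inner product is conjugate-linear in the second argument so that $f\mapsto\langle f,K_z\rangle$ is genuinely linear; no analytic difficulty arises beyond this.
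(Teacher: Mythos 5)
Your proof is correct. Note that the paper itself states this proposition without any proof, so there is nothing to compare against; your argument is the canonical one and it fills that gap. Each step is consistent with the paper's own setup: the reproducing identity $f(z)=\langle f,K_z\rangle$ from the Remark, the definition $K(w,z)=K_z(w)$, and the fact that every element of $H$ (in particular each $K_w$) is holomorphic. The chain $K(w,z)=\langle K_z,K_w\rangle=\overline{\langle K_w,K_z\rangle}=\overline{K(z,w)}$ is exactly right, and deducing anti-holomorphy in the second slot from the Hermitian symmetry plus holomorphy in the first slot is the standard and correct way to finish. Your closing remark about fixing the convention (inner product conjugate-linear in the second argument, so that $f\mapsto\langle f,K_z\rangle$ is linear) is the one genuine point of care, and you handle it properly.
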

\begin{definition}
	Let $U$ be an open subset of $\mathbb C^n$ and $\alpha$ be a weight function such that $\alpha(z)dv(z)$ is a probability measure on $U$. The Bergman space over $U$, denoted by $HL^2(U,\alpha)$,   is the space  of holomorphic functions on $U$ which are square integrable with the respect $\alpha(z)dv(z)$ i.e. 
	$$HL^2(U,\alpha)=\{f:U\longrightarrow\mathbb{C}\mid \int_U |F(z)|^2 \alpha(z)dz<\infty \}$$ 
\end{definition}
\begin{lemma}
	The Bergman space  $ HL^2(U,\alpha)$ is a reproducing kernel Hilbert space of holomorphic functions.
\end{lemma}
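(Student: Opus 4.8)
The plan is to realize $HL^2(U,\alpha)$ as a closed subspace of the Hilbert space $L^2(U,\alpha\,dv)$ and to verify the two defining ingredients of a RKHS: that the space is itself a Hilbert space of genuine holomorphic functions, and that evaluation at each point is a continuous functional. Both will follow from a single pointwise estimate, so I would establish that estimate first. I would begin by recording that two holomorphic functions agreeing almost everywhere coincide everywhere, so each class in $L^2(U,\alpha\,dv)$ that contains a holomorphic function has a unique holomorphic representative; thus $HL^2(U,\alpha)$ is legitimately a space of functions (on which $\delta_z(f)=f(z)$ makes sense) rather than of equivalence classes, and it inherits the inner product $\langle f,g\rangle=\int_U f\bar g\,\alpha\,dv$.

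The central step is a local bound on the evaluation functional. Fix $z_0\in U$ and choose $r>0$ with $\overline{B(z_0,r)}\subset U$. Since a holomorphic function is harmonic on $\overline{B(z_0,r)}\subset\R^{2n}$, the mean value property gives
$$f(z_0)=\frac{1}{V_r}\int_{B(z_0,r)}f(w)\,dv(w),\qquad V_r:=v(B(z_0,r)).$$
Applying the Cauchy--Schwarz inequality with respect to Lebesgue measure yields
$$\abs{f(z_0)}\le V_r^{-1/2}\left(\int_{B(z_0,r)}\abs{f(w)}^2\,dv(w)\right)^{1/2}.$$
Because $\overline{B(z_0,r)}$ is compact in $U$ and $\alpha$ is continuous and strictly positive, $m:=\min_{\overline{B(z_0,r)}}\alpha>0$, whence $\int_{B(z_0,r)}\abs{f}^2\,dv\le m^{-1}\norm{f}^2$, where $\norm{\cdot}$ is the $HL^2(U,\alpha)$-norm. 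Combining the two displays produces a constant $C_{z_0}:=(m V_r)^{-1/2}$ with $\abs{f(z_0)}\le C_{z_0}\norm{f}$. I would then note that the same computation gives a bound $C_K$ that is uniform over any compact $K\subset U$, by fixing one radius $r$ with $\{w:\operatorname{dist}(w,K)\le r\}\subset\subset U$ and taking $m$ to be the minimum of $\alpha$ over that compact neighbourhood.

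From this estimate the two required properties follow quickly. Continuity of $\delta_{z_0}$ is immediate from $\abs{f(z_0)}\le C_{z_0}\norm{f}$, so $HL^2(U,\alpha)$ satisfies the defining RKHS property as soon as it is known to be a Hilbert space. For completeness I would take a Cauchy sequence $(f_n)$ in $HL^2(U,\alpha)$; it converges in $L^2(U,\alpha\,dv)$ to some $f$, and the uniform bound gives $\sup_K\abs{f_n-f_m}\le C_K\norm{f_n-f_m}$, so $(f_n)$ is uniformly Cauchy on every compact subset and converges locally uniformly to a limit $g$ that is holomorphic by the Weierstrass convergence theorem. Since $f_n\to g$ locally uniformly and $f_n\to f$ in $L^2$, one has $g=f$ almost everywhere, so $f$ has a holomorphic representative lying in $HL^2(U,\alpha)$; hence the subspace is closed and therefore a Hilbert space. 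Applying the Riesz representation theorem to $\delta_{z_0}$ then furnishes the reproducing kernel, exactly as in the Remark preceding the statement.

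The main obstacle is the pointwise estimate of the second paragraph: the whole argument hinges on converting $L^2$-control into pointwise control via the mean value property and on handling the weight through its positivity on compact subsets. The one hypothesis I would make explicit is the regularity of $\alpha$ (continuity and strict positivity, ensuring $m>0$ on compacta); once that is fixed, the remaining verifications are routine.
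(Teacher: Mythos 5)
Your proposal is correct, and its central tool coincides with the paper's: the mean value property of holomorphic functions is what converts $L^2$-control into pointwise control. The difference is one of completeness. The paper's proof works only in dimension $n=1$, establishes the mean value identity $F(z)=\frac{1}{\pi r^2}\int_{B(z,r)}F(v)\,dv$ by expanding $F$ in a Taylor series and integrating the angular factors $e^{in\theta}$ term by term, and then simply \emph{asserts} that pointwise evaluation is continuous; it never performs the Cauchy--Schwarz step, never accounts for the weight $\alpha$ (the estimate must be taken with respect to $\alpha\,dv$, not $dv$), and never addresses completeness of $HL^2(U,\alpha)$, which is needed for the word ``Hilbert'' in the statement. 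Your proposal supplies exactly these missing pieces: the estimate $\abs{f(z_0)}\le (mV_r)^{-1/2}\norm{f}$ with the weight handled through its positive minimum on a compact neighbourhood, the uniformity of the constant over compact sets, and the closedness argument via locally uniform convergence and the Weierstrass theorem, which identifies the $L^2$-limit with a holomorphic representative. You also obtain the result in arbitrary dimension by invoking harmonicity of holomorphic functions on balls in $\R^{2n}$, rather than the one-variable Taylor computation. Your explicit flagging of the regularity hypothesis on $\alpha$ (continuity and strict positivity on compacta) is apt, since the paper's definition of a weight does not state it, yet the argument genuinely requires a positive lower bound on compact sets.
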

\begin{proof} We provide the proof for the case $n=1$. For a fixed $z \in U$. Let $B(z,r)$ be the open disk of center $z$ and radius $r$  so that $\overline{B(z,r)}\subset U$, then we can write $$F(z) = \frac{1}{2\pi}\int_{B(z,r)}F(v)dv$$ by Mean Value Theorem for holomorphic functions.
	Using Taylor series at $v=z$, $F(v)=F(z)+\sum_{n=1}^{\infty}a_n(v-z)^n$. As $\overline{B(z,r)}\subset U$ is a compact set then this series converges uniformly on $\overline{B(z,r)}$ to $F$.$$\frac{1}{\pi r^2}\int_{B(z,r)} F(v) dv=\frac{1}{\pi r^2}\int_{B(z,r)} F(z)+\sum_{n=1}^{\infty}a_n(v-z)^n dv$$
	$$=\frac{1}{\pi r^2}\int_{B(z,r)} F(z)dv + \sum_{n=1}^{\infty}a_n\int_{B(z,r)} (v-z)^n dv$$ $$= \frac{1}{\pi r^2}\pi r^2 F(z) + \sum_{n=1}^{\infty}a_n\int_{0}^{2\pi}\int_{0}^{1} r^n e^{in\theta}rdrd\theta$$ $$=F(z) + \sum_{n=1}^{\infty}a_n\int_{0}^{2\pi}\int_{0}^{1} r^{n+1} e^{in\theta} drd\theta= F(z)+\sum_{n=1}^{\infty}a_n\int_{0}^{2\pi}e^{in\theta} d\theta$$ $$=F(z) + 0 = F(z).$$
	This shows that the pointwise evaluation map is  continuous linear function on $HL^2(U,\alpha)$.
\end{proof}
\begin{proposition}
	If $F\in L^2(U,\alpha)$, the orthogonal projection of F into the closed subspace $HL^2(U,\alpha)$ denoted by $PF$ is given by $$PF(z) = \int_U K(z,w) F(w)\alpha(w) dw.$$
	\begin{proof}
		Given 	$F\in L^2$  write $F = f + g$ where $f \in HL^2$ and $g \in (HL^2)^\perp.$
		As $f \in HL^2(U,\alpha)$ then $$Pf(z)=f(z)=\int_U K(z,w) f(w)\alpha(w) dw.$$  However, as $k_z\in HL^2$  $$\int_U K(z,w) g(w) \alpha(w)dw = <g,k_z>_{L^2(U,\alpha)} = 0.$$
	\end{proof}
\end{proposition}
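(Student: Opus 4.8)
The plan is to reduce the pointwise identity to the reproducing property of the kernel via the orthogonal decomposition $L^2(U,\alpha)=HL^2(U,\alpha)\oplus HL^2(U,\alpha)^{\perp}$, which is available since $HL^2(U,\alpha)$ is a closed subspace (the previous lemma). Writing $F=f+g$ with $f\in HL^2(U,\alpha)$ and $g\perp HL^2(U,\alpha)$, the orthogonal projection is $PF=f$ by definition, so it suffices to show that the integral on the right-hand side reproduces $f(z)$ and annihilates the orthogonal component $g$. The first step I would record is that, for each fixed $z\in U$, the Riesz representative $K_z=K(\cdot,z)$ of the evaluation functional at $z$ lies in $HL^2(U,\alpha)$, so in particular $K_z\in L^2(U,\alpha)$ and pairing against it in the $L^2(U,\alpha)$ inner product is legitimate.

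Next I would treat the two pieces separately. For the holomorphic part, the reproducing property gives $f(z)=\langle f,K_z\rangle_{L^2(U,\alpha)}=\int_U f(w)\,\overline{K_z(w)}\,\alpha(w)\,dw$; invoking the Hermitian symmetry $K(w,z)=\overline{K(z,w)}$ established in the earlier proposition, together with $K_z(w)=K(w,z)$, turns the conjugated kernel into $\overline{K_z(w)}=\overline{K(w,z)}=K(z,w)$, so that $f(z)=\int_U K(z,w)f(w)\,\alpha(w)\,dw$. For the orthogonal part, the same manipulation identifies $\int_U K(z,w)g(w)\,\alpha(w)\,dw$ with $\langle g,K_z\rangle_{L^2(U,\alpha)}$, which vanishes because $g\perp HL^2(U,\alpha)$ and $K_z\in HL^2(U,\alpha)$. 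Adding the two contributions yields $\int_U K(z,w)F(w)\,\alpha(w)\,dw=f(z)+0=PF(z)$, as desired.

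The main obstacle, and the only point requiring genuine care, is the well-definedness of the integral for an arbitrary $F\in L^2(U,\alpha)$ and its honest identification with the inner product $\langle F,K_z\rangle$. This I would settle by a Cauchy--Schwarz estimate: since $|K(z,w)|=|K_z(w)|$, one has $\bigl|\int_U K(z,w)F(w)\,\alpha(w)\,dw\bigr|\le \|K_z\|_{L^2(U,\alpha)}\,\|F\|_{L^2(U,\alpha)}<\infty$, so the integral converges absolutely and depends continuously on $F$. A secondary subtlety is that $PF=f$ holds as $L^2$-classes, whereas the claimed formula is a genuine pointwise statement; this is harmless because the holomorphic representative $f$ has well-defined values and the reproducing identity is precisely a pointwise statement about that representative, so no ambiguity in the choice of representative arises.
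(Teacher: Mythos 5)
Your proof is correct and follows essentially the same route as the paper's: the orthogonal decomposition $F=f+g$, the reproducing property applied to the holomorphic part, and the vanishing of $\langle g,K_z\rangle$ since $K_z\in HL^2(U,\alpha)$. The additional points you record --- the Cauchy--Schwarz bound giving absolute convergence of the integral for arbitrary $F\in L^2(U,\alpha)$, and the explicit use of the Hermitian symmetry $K(w,z)=\overline{K(z,w)}$ to match the kernel's arguments --- are details the paper leaves implicit, and they only strengthen the argument.
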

Below, we collect some basic properties of the kernel $K(w,z)$
\begin{itemize}
	\item[1.] $\|K_z \| = \sqrt{K(z,z)}$
	\item[2.] $|K(w,z)| \leq \sqrt{K(z,z)}\sqrt{K(w,w)}$
	\item[3.] $|F(z)|^2 \leq K(z,z) \|F\|^2$
	\item[4.] $\int_U K(z,w) K(w,u) \alpha(w) dw= K(z,u)$
\end{itemize}
The reproducing kernel of a separable can be calculated as the following proposition ensures.
\begin{proposition}
	For any complete orthogonal system $\{e_n\}_{n\in \mathbb{N}}$ of a separable RKHS  we have $$K(w,z) = \sum_{n \in \mathbb{N}} e_n(w)\overline{e_n(z)}$$ which converges absolutely on $U\times U$.\\
	\begin{proof}
		For any $z \in U$,  we have $K_z = \sum_{n \in \mathbb{N}} <K_z, e_n> e_n$ and hence
		$$K_z(w) = <K_z,K_w> = \sum_{n \in \mathbb{N}} <K_z,e_n><e_n,K_w> = \sum_{n \in \mathbb{N}} \overline{e_n(z)} e_n(w).$$
	\end{proof}
\end{proposition}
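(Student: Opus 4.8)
The plan is to use the reproducing property together with the continuity of point evaluations, which is precisely the defining feature of a RKHS. First I would fix $z \in U$ and expand the kernel function $K_z \in H$ with respect to the system $\{e_n\}_{n\in\mathbb{N}}$, understood as orthonormal (after normalization, if ``orthogonal'' is meant in the unnormalized sense). This gives $K_z = \sum_n \langle K_z, e_n\rangle e_n$ with convergence in the norm of $H$. The reproducing property yields $\langle K_z, e_n\rangle = \overline{\langle e_n, K_z\rangle} = \overline{e_n(z)}$, so that $K_z = \sum_n \overline{e_n(z)}\, e_n$ in $H$.

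Next I would evaluate this identity at a point $w \in U$. Since $K(w,z) = K_z(w) = \langle K_z, K_w\rangle$, and point evaluation $\delta_w = \langle\,\cdot\,, K_w\rangle$ is a continuous linear functional on $H$, I may interchange it with the norm-convergent series to obtain
$$K(w,z) = \delta_w(K_z) = \sum_n \overline{e_n(z)}\, \delta_w(e_n) = \sum_n e_n(w)\,\overline{e_n(z)},$$
which is the asserted formula. The one genuinely delicate point is this interchange: it is valid only because $\delta_w$ is bounded, so that norm convergence of the partial sums forces their pointwise values to converge to $K_z(w)$. This is exactly where the RKHS hypothesis enters and cannot be dropped, since norm convergence alone would not control pointwise values.

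Finally I would establish absolute convergence on $U \times U$ by a Cauchy--Schwarz estimate combined with Parseval's identity. For fixed $z,w$,
$$\sum_n \bigl| e_n(w)\,\overline{e_n(z)} \bigr| \le \Bigl(\sum_n |e_n(w)|^2\Bigr)^{1/2}\Bigl(\sum_n |e_n(z)|^2\Bigr)^{1/2}.$$
By Parseval applied to $K_z$ and property~1 above, $\sum_n |e_n(z)|^2 = \sum_n |\langle K_z, e_n\rangle|^2 = \|K_z\|^2 = K(z,z)$, and similarly for $w$. Hence the right-hand side equals $\sqrt{K(w,w)}\sqrt{K(z,z)} < \infty$, giving absolute convergence at every $(w,z)$; the same bound yields locally uniform convergence wherever $z \mapsto K(z,z)$ is locally bounded. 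I expect the main obstacle to be purely expository, namely making the evaluation--sum interchange rigorous via the continuity of $\delta_w$, rather than anything computational.
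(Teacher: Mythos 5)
Your proof is correct and follows essentially the same route as the paper: expand $K_z$ in the orthonormal system, identify the coefficients $\langle K_z,e_n\rangle=\overline{e_n(z)}$ by the reproducing property, and recover $K(w,z)=\langle K_z,K_w\rangle=\sum_n e_n(w)\overline{e_n(z)}$ (the paper performs the interchange via the bilinear Parseval identity for $\langle K_z,K_w\rangle$, you via continuity of $\delta_w$ applied to the norm-convergent series; these are the same mechanism). In fact you go slightly further than the paper's proof, which asserts absolute convergence in the statement but never proves it, whereas your Cauchy--Schwarz plus Parseval estimate $\sum_n|e_n(w)\overline{e_n(z)}|\le\sqrt{K(w,w)}\sqrt{K(z,z)}$ supplies exactly the missing argument.
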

\begin{remark}
	Consider the Bergman space $HL^2(U,\alpha)$ and let $K(w,z)$ denotes the reproducing kernel. Then the following holds true
	\begin{itemize}
		\item[1.] $K(w,z)$ is analytic in w and anti-analytic in z.
		\item[2.] H is separable.
		\item[3.] $K(w,z)$ converges uniformly on compact sets of $U\times U$.
	\end{itemize}
\end{remark}
\section{Bergman space over the unit ball}
In this section we consider the case of the Bergman space over the unit ball of $\mathbb C^n$. We are interested in calculating the reproducing kernel starting from the fact that the space of holomorphic polynomials are dense in the Bergman space. Our calculations are done for the unit disc (with standard measure) and by a slight generalization we obtain the kernel in the higher dimensional cases. The results of this section are well known in literature (cf. \cite{zhu}). 
\begin{theorem}
	Let $D$ be the unit disk, i.e. $D = \{z \in \mathbb{C} ; |z| < 1\}$ and $HL^2 (D, \frac{1}{\pi}(1-|z|^2)^a), a > -1$ is the weighted Bergman space. Then the reproducing kernel  is given by \begin{equation*}
	K\left( z,w\right) =\frac{1}{\left( 1-w\overline{z}\right) ^{2+a }}
	\end{equation*}\end{theorem}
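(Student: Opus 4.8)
The plan is to build an explicit orthonormal basis of $HL^2(D,\tfrac{1}{\pi}(1-|z|^2)^a)$ out of the monomials and then evaluate the series for the kernel furnished by the earlier proposition, namely $K(w,z)=\sum_{n}e_n(w)\overline{e_n(z)}$ for any orthonormal basis $\{e_n\}$.

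First I would verify that the monomials $\{z^n\}_{n\ge0}$ form a complete orthogonal system. Orthogonality is forced by the rotational invariance of the weight: in polar coordinates $z=re^{i\theta}$ the angular factor $\int_0^{2\pi}e^{i(m-n)\theta}\,d\theta$ vanishes whenever $m\ne n$. Completeness rests on the fact that every $F\in HL^2$ is holomorphic on $D$, hence is the uniform-on-compacta limit of the partial sums of its Taylor series; since the weight has finite total mass, these polynomial partial sums also approximate $F$ in the $L^2$-norm, so the monomials span a dense subspace.

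Next I would compute the norms by reducing to a Beta integral. Passing to polar coordinates and substituting $t=r^2$ gives
\[
\|z^n\|^2=\frac{1}{\pi}\int_0^{2\pi}\!\!\int_0^1 r^{2n}(1-r^2)^a\,r\,dr\,d\theta=\int_0^1 t^n(1-t)^a\,dt=B(n+1,a+1)=\frac{n!\,\Gamma(a+1)}{\Gamma(n+a+2)},
\]
which is finite exactly because $a>-1$. Taking $e_n=z^n/\|z^n\|$ and inserting into the proposition yields
\[
K(w,z)=\sum_{n=0}^{\infty}\frac{(w\overline{z})^{n}}{\|z^n\|^2}=\frac{1}{\Gamma(a+1)}\sum_{n=0}^{\infty}\frac{\Gamma(n+a+2)}{n!}\,(w\overline{z})^{n}.
\]
I would then recognize this via the generalized binomial series $(1-x)^{-s}=\sum_{n\ge0}\frac{\Gamma(n+s)}{n!\,\Gamma(s)}x^n$ with $s=a+2$ and $x=w\overline{z}$, which is legitimate since $|w\overline{z}|<1$ for $w,z\in D$ and the coefficients grow only polynomially in $n$.

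The genuine points to watch are the completeness step above and, above all, the normalization of the measure. With the stated weight the series sums to $(a+1)(1-w\overline{z})^{-(2+a)}$; the factor $a+1=\Gamma(a+2)/\Gamma(a+1)$ is exactly the total mass $\int_D\frac{1}{\pi}(1-|z|^2)^a\,dv$, so under the paper's standing convention that $\alpha(z)\,dv(z)$ be a \emph{probability} measure (i.e. the weight carries the normalizing constant $a+1$) this prefactor is absorbed and one lands precisely on the asserted closed form $K(z,w)=(1-w\overline{z})^{-(2+a)}$. I expect this bookkeeping of the normalizing constant, rather than any hard analysis, to be the main place where care is needed.
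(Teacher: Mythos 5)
Your proof follows essentially the same route as the paper: monomials as a complete orthogonal system, norms via the Beta integral $\|z^n\|^2=B(n+1,a+1)=\frac{n!\,\Gamma(a+1)}{\Gamma(n+a+2)}$, and summation of the kernel series by the generalized binomial expansion. One point in your favor: your bookkeeping of the factor $a+1$ is not merely care, it is a correction. The paper's final display asserts
\begin{equation*}
\frac{1}{\Gamma(a+1)}\sum_{n=0}^{\infty}\frac{\Gamma(a+n+2)}{\Gamma(n+1)}\,(z\overline{w})^{n}
=\frac{1}{\left(1-w\overline{z}\right)^{2+a}},
\end{equation*}
whereas the left-hand side actually equals $\frac{\Gamma(a+2)}{\Gamma(a+1)}\left(1-z\overline{w}\right)^{-(2+a)}=(a+1)\left(1-z\overline{w}\right)^{-(2+a)}$; the stated closed form holds only for the weight normalized to a probability measure, exactly as you observe. (Small slip in your phrasing: the total mass of $\frac{1}{\pi}(1-|z|^2)^a\,dv$ is $\frac{1}{a+1}$, so $a+1$ is the \emph{normalizing constant}, i.e.\ the reciprocal of the mass; your conclusion is unaffected.)

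The one genuine weak point is your completeness step. Uniform convergence of the Taylor partial sums on compact subsets of $D$, together with finiteness of the total mass, does \emph{not} by itself yield convergence in $L^2$: the tails of $F-S_NF$ near $\partial D$ are not controlled by either hypothesis. Two standard repairs: (i) argue as the paper does --- take $F$ orthogonal to every $z^n$, pair against $z^m$ over $|z|\le x$, interchange sum and integral using uniform convergence there, let $x\to1$, and conclude that every Taylor coefficient $c_m$ vanishes, so $F=0$; or (ii) use orthogonality of the monomials on each disk $|z|\le r$ plus monotone convergence to get $\|F\|^2=\sum_n|c_n|^2\|z^n\|^2$, and apply the same identity to $F-S_NF$ to obtain $\|F-S_NF\|^2=\sum_{n>N}|c_n|^2\|z^n\|^2\to0$. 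With either repair your argument is complete.
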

	\begin{proof}
		The prove of this needs few steps. We start by proving $\{z_n\}^\infty_{n=0}$ is an orthogonal basis for $HL^2 (D, (1-|z|^2)^a)$.$$<z^n, z^m> = \int_D \overline{z^n} z^m \alpha (z)dz = \int_{0}^{2 \pi} \int_{0}^{1} r^n e^{-in\theta} r^m e^{im\theta} (1-r^2)^a r dr d\theta$$ $$= \int_{0}^{1} r^{n+m+1} (1-r^2)^a \int_{0}^{2 \pi} e^{i(m-n)\theta}d\theta dr$$
		For $m\neq n$ we know that $\int_{0}^{2 \pi} e^{i(m-n)\theta} d\theta = [\frac{1}{m-n} e^{i(m-n)\theta}]^{2 \pi}_0 = 0$. Now let $F \in HL^2 (D, (1-|z|^2)^a)$ with $<z^n, F> = 0$ then $F(z) = \sum_{n=0}^{\infty} c_n z^n$. As $F(z) = \sum_{n=0}^{\infty} c_n z^n$ converges uniformly on a compact subset of $D$ we obtain 
		\begin{align*}
		<z^m, F> &= \int_{0}^{1} \int_{0}^{2 \pi} r^m e^{-im\theta} F(re^{i\theta}) (1-r^2)^a r dr d\theta\\
		&=\lim_{x\to 1} \int_{0}^{x} \int_{0}^{2 \pi} r^m e^{-im\theta} F(re^{i\theta}) (1-r^2)^a r dr d\theta\\
		&=\lim_{x\to 1} \int_{0}^{x} \int_{0}^{2 \pi} r^m e^{-im\theta} \sum_{n=0}^{\infty}(c_n r^n e^{in\theta}) (1-r^2)^a r dr d\theta\\
		&=\lim_{x\to 1} \sum_{n=0}^{\infty} \int_{0}^{x} \int_{0}^{2 \pi} r^m e^{-im\theta} c_n r^n e^{in\theta} (1-r^2)^a r dr d\theta\\
		&=\lim_{x \to 1} \sum_{n=0}^{\infty}\int_{0}^{x} r^{m+n+1} (1-r^2)^a c_n \int_{0}^{2 \pi} e^{i(m-n)\theta}d\theta dr\\
		&=\lim_{x \to 1} \sum_{n=0}^{\infty}\int_{0}^{x} r^{m+n+1} (1-r^2)^a c_n \int_{0}^{2 \pi} e^{i(m-n)\theta}d\theta dr\\
		& = \int_{0}^{1} r^{m+n+1} (1-r^2)^a c_m (2\pi) dr\\
		&=2 \pi c_m \int_{0}^{1} r^{2m+1}(1-r^2)^a dr.
		\end{align*}
		This shows that $c_m = 0$ for all $m$ and therefore $F=0$. Hence, $\{z^m\}$ is an orthogonal basis for $HL^2 (D, (1-|z|^2)^a)$.\\
		Our next step is to normalize the basis.\\
		$$\|z^n\| =\frac{1}{\pi} \int_{0}^{1} \int_{0}^{2 \pi} r^n e^{-in\theta}r^n e^{in\theta}(1-r^2)^a rd\theta dr$$ $$=\int_{0}^{1} 2\ r^{2n+1} (1-r^2)^a dr = 2 \frac{\Gamma(a+1)\Gamma(n+1)}{2 \Gamma(a+n+2)}.$$  So that $$\{z^n\sqrt{\frac{\Gamma(a+n+2)}{\Gamma(a+1)\Gamma(n+1)}}\}$$ is an orthonormal basis for $HL^2(D, (1-|z|^2)^a)$.\\
		Our final step is computing the reproducing kernel
		\begin{align*}
		K(z, w)&= \sum_{n=0}^{\infty} z^n \sqrt{\frac{\Gamma(a+n+2)}{\Gamma(a+1)\Gamma(n+1)}}{\overline{w}}^n\sqrt{\frac{\Gamma(a+n+2)}{\Gamma(a+1)\Gamma(n+1)}}\\
		&=\frac{1}{\Gamma(a+1)}\sum_{n=0}^{\infty} \frac{\Gamma(a+n+2)}{\Gamma(n+1)}(z\overline{w})^n\\
		&=\frac{1}{\left( 1-w\overline{z}\right) ^{2+a }}.
		\end{align*}
	\end{proof}
The generalization for higher dimensional case is given in the below definition and remark.
\begin{definition}
	Let $B_{n} \subset \mathbb{C}^{n}$ denotes the unit open ball of $\mathbb{C}^{n}$, and consider the space $\mathbf{L}^{2}(B_{n}):=\mathbf{L}^{2}(B_{n},d\mu)$ of square-integrable complex valued functions on $B_{n}$, where $\mu$ is the Lebesgue measure on $\mathbb{C}^{n}$ normalizing $B_{n}$ i.e. $\mu(B_{n})=1$. Then the space of entire function in $\mathbf{L}^{2}(B_{n})$ is called the Bergman space over the unit ball of $\mathbb{C}^{n}$ and is denoted by $\mathbb{H}^{2}(B_n)$.
\end{definition}	
\begin{remark}
	The Bergman space $\mathbb{H}^{2}(B_n)$ has the reproduction kernel $k(z,w)=\dfrac{1}{(1-w.\overline{z})^{n+1}}$ and the Bergman projection \begin{equation*}
	P:\mathbf{L}^{2}(B_{n}) \longrightarrow \mathbb{H}^{2}(B_n)
	\end{equation*}
	is given by
	\begin{equation}\label{eq67}
	(Pf)(z)f= <f,k_{z}>= \int_{B^{n}} \dfrac{f(w)}{(1-z.\overline{w})^{(n+1)}} d\mu(w)
	\end{equation}
\end{remark}
 \section{The Segal Bargmann space}
 This section is devoted to introduce the Segal Bargmann space which is a Bergman space over an unbounded domain. The aim of introducing this space in the thesis is to compare the results of $\Psi^\star$-algebra constructed by Bauer to the construction we shall do in the case of the unit ball.
\begin{definition}
The Segal Bargmann space is defined to be the space	of entire functions on $\mathbb C^n$ which are   square-integrable with respect to the Gaussian measure 
$$\alpha_t(z) = \frac{1}{(\pi t)^n}e^{-\frac{|z|^2}{t}},$$ 
where $t$ is a positive parameter. This space is denoted by $HL^2(\mathbb{C},\alpha_t(z))$.
\end{definition}
\begin{theorem} \label{th56}
	The reproducing kernel of the Segal Bargmann space  $HL^2(\mathbb{C},\alpha_t(z)) $ is given by $$K(z,w) = e^\frac{z\overline{w}}{t}.$$\end{theorem}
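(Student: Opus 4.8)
The plan is to mirror the computation carried out above for the weighted Bergman space over the unit disk, replacing the compact domain by $\mathbb{C}^n$ and the weight $(1-|z|^2)^a$ by the Gaussian $\alpha_t$. The backbone of the argument is the series representation of the reproducing kernel in terms of any complete orthonormal system, namely $K(z,w)=\sum_{\alpha} e_\alpha(z)\overline{e_\alpha(w)}$, established earlier together with its absolute convergence on $U\times U$. Since the Segal--Bargmann space is a Bergman space (point evaluations are bounded by the standard local estimate, so it is a separable RKHS), it suffices to exhibit an explicit orthonormal basis of $HL^2(\mathbb{C}^n,\alpha_t)$ and to sum the resulting series. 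The natural candidate is the family of monomials $z^\alpha$, $\alpha\in\mathbb{N}^n$; first I would verify they are orthogonal, then normalize them, and finally prove completeness.

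First I would compute the norms. Writing each coordinate in polar form $z_j=r_je^{i\theta_j}$, the angular integrals $\int_0^{2\pi}e^{i(\alpha_j-\beta_j)\theta_j}\,d\theta_j$ vanish unless $\alpha_j=\beta_j$, which immediately gives $\langle z^\alpha,z^\beta\rangle=0$ for $\alpha\neq\beta$. For the diagonal term, the substitution $u=|z|^2/t$ reduces the radial integral to a product of Gamma integrals, yielding $\|z^\alpha\|^2=t^{|\alpha|}\alpha!$. (In one variable this reads $\|z^n\|^2=t^n n!$.) Consequently $e_\alpha(z)=z^\alpha/\sqrt{t^{|\alpha|}\alpha!}$ is an orthonormal system, and in particular every monomial lies in the space.

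The technical heart of the argument, and the step I expect to be the main obstacle, is completeness of $\{e_\alpha\}$. On the unit disk this followed cheaply from uniform convergence of the Taylor series on compact sets; over the unbounded domain $\mathbb{C}^n$ convergence on compacta no longer transfers directly to convergence in $L^2(\alpha_t)$, so one must argue more carefully. I would take $F\in HL^2(\mathbb{C}^n,\alpha_t)$ with $\langle F,z^\alpha\rangle=0$ for all $\alpha$, expand $F$ in its locally uniformly convergent Taylor series, and integrate against $\overline{z^\alpha}$ over the ball $\{|z|\le R\}$. On this bounded region the interchange of sum and integral is legitimate and the angular integrals isolate a single coefficient, giving $\int_{|z|\le R}F(z)\overline{z^\alpha}\,\alpha_t(z)\,dz=c_\alpha\int_{|z|\le R}|z^\alpha|^2\,\alpha_t(z)\,dz$. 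Since $F,z^\alpha\in L^2(\alpha_t)$, letting $R\to\infty$ and invoking dominated convergence sends the left side to $\langle F,z^\alpha\rangle=0$ and the right side to $c_\alpha\,t^{|\alpha|}\alpha!$; hence every $c_\alpha=0$ and $F\equiv 0$. Thus the orthogonal complement of the monomials is trivial and $\{e_\alpha\}$ is an orthonormal basis.

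Finally I would assemble the kernel. Substituting the normalized basis into the series formula gives
\begin{equation*}
K(z,w)=\sum_{\alpha\in\mathbb{N}^n}\frac{z^\alpha\,\overline{w}^\alpha}{t^{|\alpha|}\alpha!}=\prod_{j=1}^{n}\sum_{k=0}^{\infty}\frac{(z_j\overline{w_j})^k}{t^k\,k!}=\prod_{j=1}^{n}e^{z_j\overline{w_j}/t}=e^{z\cdot\overline{w}/t},
\end{equation*}
where the factorization uses the identity $\sum_\alpha x^\alpha/\alpha!=\prod_j\sum_k x_j^k/k!$ and the exponential series, and the rearrangement is justified by the absolute convergence guaranteed in the earlier proposition. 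In one variable this collapses to $K(z,w)=\sum_{n\ge 0}(z\overline{w})^n/(t^n n!)=e^{z\overline{w}/t}$, which is exactly the claimed formula.
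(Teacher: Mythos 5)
Your proposal follows essentially the same route as the paper's proof: show the monomials form an orthogonal system, compute $\|z^\alpha\|^2 = t^{|\alpha|}\alpha!$, prove completeness, and sum the kernel series $\sum_\alpha e_\alpha(z)\overline{e_\alpha(w)}$ to obtain $e^{z\cdot\overline{w}/t}$. The only differences are refinements of the same argument: you work in $n$ variables where the paper restricts to $n=1$, and your completeness step (integrating over $|z|\le R$ and letting $R\to\infty$ with dominated convergence) rigorously justifies the sum--integral interchange that the paper carries out over the unbounded domain without comment.
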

	\begin{proof}
		We provide the calculations for the case $n=1$. Given  $n \neq m$, we have
		\begin{align*}
		<z^n, z^m> = \int_{\mathbb{C}} \overline{z^n} z^m \alpha_t(z) dz &= \int_{0}^{\infty}\int_{0}^{2\pi}r^n r^{-in\theta} r^m e^{im\theta}(\frac{e^{-\frac{r^2}{t}}}{\pi t})r d\theta dr\\
		&=\int_{0}^{\infty} r^{n+m+1}(\frac{-e^\frac{r^2}{t}}{\pi t}) \int_{0}^{2 \pi} e^{i(m-n)\theta} d\theta dr = 0.
		\end{align*}
		
		Let $F(z)= \sum_{n=0}^{\infty} c_n z^n$ with $<z^m, F> =0$ for all $m\in\mathbb N$ then 
		\begin{align*}
		<z^m, F>&= \int_{0}^{\infty} \int_{0}^{2\pi} r^m e^{-im\theta}F(re^{i\theta})\frac{1}{\pi t}e^{-\frac{r^2}{t}}r d\theta dr\\
		&=\int_{0}^{\infty}\int_{0}^{2\pi} \sum_{n,m}^{\infty}\frac{1}{\pi t} r^{m+n+1} e^{-\frac{r^2}{t}} c_n e^{i(n-m)\theta} d\theta dr\\
		& =\int_{0}^{\infty}\frac{c_m}{\pi t}(2\pi) r^{2m+1}e^{-\frac{r^2}{t}} dr
		\end{align*}
		This shows that $F\equiv0$. So $\{z^m\}$ is an orthogonal basis of $HL^2(\mathbb{C}, \alpha_t)$. Moreover,
		for $n = 0$, we have
		\begin{align*}
		\|z^n\|^2 &= \int_{0}^{\infty}\int_{0}^{2\pi} 1(\frac{1}{\pi t}) e^{-\frac{r^2}{t}}r d\theta dr\\
		&=\lim_{x \to \infty} \int_{0}^{x} 2r e^{-\frac{r^2}{t}} dr\\
		&= \lim_{x \to \infty}-e^{-\frac{r^2}{t}}]^x_0 = \lim_{x \to \infty} -e^{\frac{x^2}{t}} +e^0 = 1.
		\end{align*}
		 Furthermore, for $n>1$,  we obtain 
		 \begin{align*}
		 \|z^n\|^2 &=\int_{0}^{\infty} \int_{0}^{2\pi} r^n e^{-in\theta} r^n e^{in\theta }(\frac{1}{\pi t}) e^{-\frac{r^2}{t}} r d\theta dr = \frac{2}{t} \int_{0}^{\infty} r^{2n} e^{-\frac{r^2}{t}} r dr\\
		 &= \frac{2}{t}[-\frac{t}{2} r^{2n} e^{-\frac{r^2}{t}}]^\infty_0 + tn \int_{0}^{\infty} r^{2n-1} e^{-\frac{r^2}{t}} dr\\
		 &= 0+ tn (\frac{2}{t}\int_{0}^{\infty}r^{2(n-1)+1} e^{-\frac{r^2}{t}}dr)\\
		 & = tn \|z^{n-1}\|^2.
		 \end{align*} Therefore by induction we obtain $\|z^n\|^= n! t^n$ i.e.  $\{\frac{z^n}{\sqrt{n! t^n}}\} $ is an orthonormal basis. Hence the reproducing kernel is given by $$K(z, w)=\sum_{n=0}^{\infty} \frac{z^n}{\sqrt{n! t^n}} \frac{\overline{w^n}}{\sqrt{n!t^n}} = \sum_{n=0}^{\infty}\frac{1}{n!}(\frac{z\overline{w}}{t})^n = e^{\frac{z\overline{w}}{t}}.$$		
	\end{proof}
	
\chapter{$\Psi^\star$-algebra and the Bergman projection}
This chapter is devoted to solve the problem considered in the thesis. The obtained results are new contribution in the analysis of Toeplitz operators and $\Psi^\star$-algebras.
\section{Motivation and the main problem}
\hskip1cm The characterization of the algebra of pseudodifferential operators $\Psi^{0}_{\rho,\delta}(\Omega)$, ($0\leq \rho \leq \delta \leq 1$ and $\delta<1)$, was given by the method of Beals (cf. \cite{be}) and Meyer(cf. \cite{Me}). Instead of defining the H\"{o}rmander classes $\Psi^{0}_{\rho,\delta}$ as an operators on the symbol classes $S_{\rho,\delta}$, one can describe $\Psi^{0}_{\rho,\delta}$ only by commutator method(cf. \cite{be,Ue}) Roughly speaking:
\begin{equation*}
\Psi^{0}_{\rho,\delta}:=\{a\in \mathcal{L}(H^{0})\mid\ ad(M)^{\alpha}ad(\partial)^{\alpha}(a)\in \bigcap_{s \in \mathbb{R}}\mathcal{L}(H^{s-\rho |\alpha|+\delta|\beta|},H^{s}),\alpha,\beta\in \mathbb{N}^{n} \},
\end{equation*}\par
where $M$ is the multiplication by the coordinates, and $H^{s}$ are the sobolev spaces. It was shown that $\Psi^{0}_{\rho,\delta}\subset \mathcal{L}(L^{2}(\mathbb{R}^{n}))$ is spectral invariant Fr\'{e}chet  algebra.  As a generalization of this case, Gramsch introduced the notion of $\Psi_{0}$ and $\Psi^{\star}$ algebras in the abstract setting (cf \cite{gr1,gr2}).\\
Subsequent to the work of Gramsch, many other works were done in this field (cf\cite{br,co,ka,sh1,sh2}). It should be remarked here that due to the work of Rickard and Waelbroek, every $\Psi^{\star}$ algebra in $\mathcal{L}(H)$, where $H$ is a Hilbert space, contains its holomorphic functional calculus (cf. \cite{la}) in the sense of Taylor(cf \cite{TA}).\par

Different methods for generalizing and $\Psi^{\star}$ algebras were studied in different papers. For example the method using the unitary group actions was studied in \cite{br} and \cite{TA}. Another method called the commutator method, can be found in (cf. \cite{gr1,gr3}). More precisely, starting from a finite family $\mathcal{V}$ of densely defined closed operators on $H$, and $a\ \in\ \mathcal{L}(H)$ we require that all the iterated commutators
\begin{equation*}
[[a,V_{1}],V_{2},\cdots],\ \ V_{j}\in \mathcal{V}
\end{equation*}
are well defined on a suitable dense subset of H and they admit an extension in $\mathcal{L}(H)$ (cf section 2). Using this method, we are interested in generating $\Psi_{0}\ and\ \Psi^{\star}$ algebras in $\mathcal{L}(L^{2}(B_{n}))$ containing the Bergman projection $P$ on the unit open ball $B_n \subset \mathbb{C}^{n}$ by using a family of linear vector fields. Linear vector field $X$ is defined by a matrix $A\in M_{2n}(\mathbb{C})$ given by 
\begin{equation*}
X=\sum_{i,j}^{2n}a_{i,j}x_{j}\frac{\partial}{\partial x_{i}}
\end{equation*}\par
where $(x_{1},\cdots,x_{2n})$ are the real coordinates of $\mathbb R^{2n}\approx\mathbb{C}^{n}$. And so for constructing new $\ \Psi^{\star}$ algebras containing $P$, a natural question arises:

\noindent
 What properties should $A$ have so that the commutator $[X,P]$ admits a continuous extension in $\mathcal{L}(L^{2}(B_{n}))$?
 
 \noindent
To answer this question, we write $X$ in the complex coordinates $(z_{1},\overline{z_{1}},\cdots,z_{n},\overline{z_{n}})$  and we define the property $\Im$ as a relation between the entries of $A$.We need to find a dense subset in $L^{2}(B_{n})$ which is invariant under $P$. We prove that the set of polynomials with complex coefficients is invariant under $P$.\\
After this step, we prove that a necessary condition for $[X,P]$ to admit a continuous extension is that the matrix $A$ should satisfies the property $\Im$. A nice result we then prove, any vector field $X$ with this property commutates with $P$. In fact, we prove that they commute on the set of smooth function with compact support. And by this result, its obviously to see that the higher commutators $[X,[X,P],\cdots]$ also vanishes on $L^{2}(B_{n})$ and therefore we obtain a $\Psi^\star$ algebras from any finite family of vector fields with this property.\par
In (cf), the author proved that the commutator of the Segal-Bergman projection $P_{1}$ (cf. Theorem \ref{th56}) with vector fields having bounded coefficients admits a continuous extension on $L^{2}(\mathbb{C}^{n})$ and $\Psi^\star$-algebras were constructed. For example $[P_{1},\frac{\partial}{\partial z_{l}}]=0$ and $[P_1,\frac{\partial}{\partial\overline{z_{l}}}]=P_1\frac{\partial}{\partial\overline{z_{l}}}$ admits a continuous extension.
\section{The commutator method for $\Psi^\star$-algebra}

The method we follow for constructing  a $\Psi^\star$-algebra containing the Bergman projection on the unit ball is known as the commutator method and is due to  Gramsch. We restrict our work to the operator algebra $\mathcal{B}=\mathcal{L}(H)$ where $(H,\|.\|_{H})$ is a Hilbert space (in our case $H=L^2(\mathbb B_n)$), and we sketch the method for constructing new  $\Psi^{\star}$-algebras.\\
Let $\mathcal{V}$ be a finite set of closed densely defined operators. For each $A \in \mathcal{V}$, $A:H\supset D(A) \longmapsto H$ we define:
\begin{enumerate}
	\item  $\mathcal{T}(A):=\{a\in \mathcal{F}\mid\ a(D(A))\subset D(A) \}$.
	\item $\mathcal{B}(A):=\{a \in \mathcal{T}(A)\mid\ ad[A]a:=Aa-aA:H \supset D(A) \mapsto H\ extends\ to\ \partial_{A}(a)\in \mathcal{F} \}$.
\end{enumerate}
It is not so hard to prove that for each $A\in \mathcal{V}$, the operator $\partial_{A}:\mathcal{F}\supset\mathcal{B}(A)\longmapsto\mathcal{F}$ is a closed derivation between algebras. Moreover, if $A$ is symmetric and we define
\begin{equation*}
\mathcal{B}^{\star}(A):=\{a\in \mathcal{B}(A)\mid a^{\star}\in \mathcal{B}(A) \}
\end{equation*}
then $\partial_{A}:\mathcal{B}(A) \longmapsto \mathcal{F}$ is closed $\star-derivation$ between symmetric algebras. A method generating new $\Psi_{0}\ and\ \Psi^{\star}$-algebras in $\mathcal{F}$, from a given finite family of closed derivations and $\star-deviation$ was discussed by Gramsch in cf. Applying those constructions to our case we define a decreasing sequence of subalgebras in $\mathcal{F}$:
\begin{equation*}
\Psi_{0}^{\mathcal{V}}[\mathcal{F}]:=\mathcal{F}\supset\Psi_{1}^{\mathcal{V}}[\mathcal{F}]:=\cap_{\mathcal{A}\in \mathcal{V}}\mathcal{B}(\mathcal{A})\supset\cdots\supset\Psi_{k}^{\mathcal{V}}[\mathcal{F}]\supset\cdots\supset\Psi_{\infty}^{\mathcal{V}}[\mathcal{F}]
\end{equation*}
where
\begin{enumerate}
	\item For $k\geq 2,\Psi_{k}^{\mathcal{V}}[\mathcal{F}]:=\{a\in \Psi_{k-1}^{\mathcal{V}}[\mathcal{F}];\ \delta_{\mathcal{A}}(a)\in \Psi_{k-1}^{\mathcal{V}}[\mathcal{F}]\ \forall \mathcal{A} \in \mathcal{V} \}$\\
	\item $\Psi_{\infty}^{\mathcal{V}}[\mathcal{F}]:=\cap_{k \in \mathbb{N}}\Psi_{k}^{\mathcal{V}}[\mathcal{F}]$
\end{enumerate}
Now, we endow each $\Psi_{k}^{\mathcal{V}}[\mathcal{F}]$, $k\geq 1$, with the family of submultiplicative semi-norms $\{q_{k,j} \}_{j\in\mathbb{N}}$:
\begin{equation*}
q_{k,j}(.):=q_{k-1,j}(.)+\sum_{\mathcal{A}\in\mathcal{V}}q_{k-1,j}(\delta_{\mathcal{A}}.)
\end{equation*}
As a result to the above construction, we obtain[ref]:
\begin{proposition}\label{ppww}
	For each fixed k, $(\Psi_{k}^{\mathcal{V}}[\mathcal{F}],\{q_{k,j}\}_{j\in\mathbb{N}})\hookrightarrow \mathcal{F}$ is continuously embedded submultiplicative Fr\'{e}chet  algebra. And $(\Psi_{\infty}^{\mathcal{V}}[\mathcal{F}],\{q_{k,j}\}_{k,j\in\mathbb{N}})$ is a submultiplicative $\Psi_{0}$ algebra in $\mathcal{L}(H)$. Moreover, if $\mathcal{A}\in\mathcal{V}$ is symmetric then replacing $\mathcal{B}(\mathcal{A})\ by\ \mathcal{B}^{\star}(\mathcal{A})$, gives that $\Psi_{\infty}^{\mathcal{V}}[\mathcal{F}]$ is a submultiplicative $\Psi^{\star}$ algebra in $\mathcal{L}(H)$.
\end{proposition}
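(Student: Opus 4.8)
The plan is to prove the three assertions in turn, handling the Fr\'{e}chet-algebra structure of each $\Psi_{k}^{\mathcal{V}}[\mathcal{F}]$ by induction on $k$ and isolating the spectral invariance of $\Psi_{\infty}^{\mathcal{V}}[\mathcal{F}]$ as the genuinely hard point. Throughout I write $\delta_{\mathcal{A}}$ for the closed derivation attached to $\mathcal{A}\in\mathcal{V}$ and take $q_{0,j}=\|\cdot\|_{op}$, so that $q_{k,j}\ge\|\cdot\|_{op}$ for every $k,j$; the base case is $\Psi_{0}^{\mathcal{V}}[\mathcal{F}]=\mathcal{F}=\mathcal{L}(H)$, the ambient $C^{\star}$-algebra. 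The first step is submultiplicativity of each $q_{k,j}$, whose engine is the Leibniz identity $\delta_{\mathcal{A}}(ab)=\delta_{\mathcal{A}}(a)b+a\delta_{\mathcal{A}}(b)$. Assuming $q_{k-1,j}$ submultiplicative, for $a,b\in\Psi_{k}^{\mathcal{V}}[\mathcal{F}]$ one expands $q_{k,j}(ab)=q_{k-1,j}(ab)+\sum_{\mathcal{A}}q_{k-1,j}(\delta_{\mathcal{A}}(a)b+a\delta_{\mathcal{A}}(b))$, bounds each summand by the product of $q_{k-1,j}$ of its factors, and then adjoins the nonnegative cross term $\big(\sum_{\mathcal{A}}q_{k-1,j}(\delta_{\mathcal{A}}a)\big)\big(\sum_{\mathcal{A}}q_{k-1,j}(\delta_{\mathcal{A}}b)\big)$ to recognise $q_{k,j}(a)\,q_{k,j}(b)$; this yields $q_{k,j}(ab)\le q_{k,j}(a)\,q_{k,j}(b)$. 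Since $q_{k,j}\ge\|\cdot\|_{op}$, the family separates points, so the topology is Hausdorff, and the inclusion $\Psi_{k}^{\mathcal{V}}[\mathcal{F}]\hookrightarrow\mathcal{F}$ is continuous by the embedding criterion following Definition \ref{d4}.

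The only nonelementary structural point is completeness, and here I would use essentially that each $\delta_{\mathcal{A}}:\mathcal{F}\supset\mathcal{B}(\mathcal{A})\to\mathcal{F}$ is closed in the operator norm. Given a $\{q_{k,j}\}_{j}$-Cauchy sequence $(a_{n})$ in $\Psi_{k}^{\mathcal{V}}[\mathcal{F}]$, the inequalities $q_{k,j}\ge q_{k-1,j}$ and $q_{k,j}(\cdot)\ge q_{k-1,j}(\delta_{\mathcal{A}}\cdot)$ show that $(a_{n})$ and each $(\delta_{\mathcal{A}}a_{n})$ are Cauchy in $\Psi_{k-1}^{\mathcal{V}}[\mathcal{F}]$; by the induction hypothesis they converge there, hence a fortiori in $\|\cdot\|_{op}$, say $a_{n}\to a$ and $\delta_{\mathcal{A}}a_{n}\to b_{\mathcal{A}}$. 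Closedness of $\delta_{\mathcal{A}}$ then forces $a\in\mathcal{B}(\mathcal{A})$ and $\delta_{\mathcal{A}}a=b_{\mathcal{A}}\in\Psi_{k-1}^{\mathcal{V}}[\mathcal{F}]$ for every $\mathcal{A}$, so $a\in\Psi_{k}^{\mathcal{V}}[\mathcal{F}]$ and $a_{n}\to a$ in the $\{q_{k,j}\}_{j}$-topology. Thus each $\Psi_{k}^{\mathcal{V}}[\mathcal{F}]$ is a continuously embedded submultiplicative Fr\'{e}chet algebra, and $\Psi_{\infty}^{\mathcal{V}}[\mathcal{F}]=\bigcap_{k}\Psi_{k}^{\mathcal{V}}[\mathcal{F}]$, carrying the projective-limit topology of the full family $\{q_{k,j}\}_{k,j}$, inherits these properties as a countable projective limit of Fr\'{e}chet algebras.

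It remains to verify that $\Psi_{\infty}^{\mathcal{V}}[\mathcal{F}]$ is spectrally invariant in $\mathcal{F}=\mathcal{L}(H)$ (condition (1) of Definition \ref{d7}), and this is the main obstacle. Given $a\in\Psi_{\infty}^{\mathcal{V}}[\mathcal{F}]$ with $b:=a^{-1}\in\mathcal{L}(H)$, applying $\delta_{\mathcal{A}}$ to $ab=e$ and using $\delta_{\mathcal{A}}(e)=0$ yields the quotient rule $\delta_{\mathcal{A}}(b)=-b\,\delta_{\mathcal{A}}(a)\,b$; this exhibits $\delta_{\mathcal{A}}(b)$ as a product of elements of $\Psi_{\infty}^{\mathcal{V}}[\mathcal{F}]$ once one knows $b\in\mathcal{B}(\mathcal{A})$, and an induction on the order of differentiation then bootstraps $b$ into every $\Psi_{k}^{\mathcal{V}}[\mathcal{F}]$, hence into $\Psi_{\infty}^{\mathcal{V}}[\mathcal{F}]$. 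The delicate base step is precisely $b\in\bigcap_{\mathcal{A}}\mathcal{B}(\mathcal{A})$, i.e. inverse closedness of the domain of a single closed derivation; this is exactly where the closedness of $\delta_{\mathcal{A}}$ is used in an essential (nonalgebraic) way, and it is the content of Gramsch's theorem in \cite{gr1,gr3}, which I would invoke here. This settles the $\Psi_{0}$-algebra claim.

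For the symmetric case one replaces $\mathcal{B}(\mathcal{A})$ by $\mathcal{B}^{\star}(\mathcal{A})$. When $\mathcal{A}$ is symmetric, a direct computation on $D(\mathcal{A})$ using $\langle\mathcal{A}u,v\rangle=\langle u,\mathcal{A}v\rangle$ gives $\langle\delta_{\mathcal{A}}(a)x,y\rangle=-\langle x,\delta_{\mathcal{A}}(a^{\star})y\rangle$, that is $\delta_{\mathcal{A}}(a^{\star})=-\delta_{\mathcal{A}}(a)^{\star}$; hence $\delta_{\mathcal{A}}$ is a $\star$-derivation and $\mathcal{B}^{\star}(\mathcal{A})$ is closed under involution, so $\Psi_{\infty}^{\mathcal{V}}[\mathcal{F}]$ is symmetric (condition (3) of Definition \ref{d7}). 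Combined with the continuous embedding (condition (2)) and the spectral invariance established above, all three conditions of Definition \ref{d7} hold, and $\Psi_{\infty}^{\mathcal{V}}[\mathcal{F}]$ is a submultiplicative $\Psi^{\star}$-algebra in $\mathcal{L}(H)$. The hard part is entirely concentrated in the inverse-closedness of a single closed derivation domain; the Fr\'{e}chet, submultiplicative, and symmetric structure are all obtained by the routine Leibniz-and-closedness bookkeeping sketched above.
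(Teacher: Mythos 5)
There is no proof in the paper to compare against: Proposition \ref{ppww} is stated as a quoted consequence of Gramsch's construction (note the dangling ``[ref]'' in the text), so your write-up supplies details the paper omits entirely, and it does so correctly. The Leibniz identity $\delta_{\mathcal{A}}(ab)=\delta_{\mathcal{A}}(a)b+a\delta_{\mathcal{A}}(b)$, valid on $D(\mathcal{A})$ because elements of $\mathcal{B}(\mathcal{A})$ preserve $D(\mathcal{A})$ and then extended by continuity, gives both the algebra property of each $\Psi_{k}^{\mathcal{V}}[\mathcal{F}]$ and the submultiplicativity of $q_{k,j}$ via your cross-term estimate; completeness follows by your induction from the closedness of $\partial_{\mathcal{A}}$ on $\mathcal{B}(\mathcal{A})$, which the paper asserts just before the proposition; and the identity $\delta_{\mathcal{A}}(a^{-1})=-a^{-1}\delta_{\mathcal{A}}(a)a^{-1}$ correctly bootstraps inverse-closedness from $\Psi_{1}^{\mathcal{V}}[\mathcal{F}]=\bigcap_{\mathcal{A}\in\mathcal{V}}\mathcal{B}(\mathcal{A})$ up to $\Psi_{\infty}^{\mathcal{V}}[\mathcal{F}]$. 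You are also right that the entire difficulty is concentrated in the base step, namely that $a^{-1}\in\mathcal{L}(H)$ lies in $\mathcal{B}(\mathcal{A})$ for a single closed derivation: this cannot come from the algebraic identity (which presupposes $a^{-1}\in\mathcal{B}(\mathcal{A})$), it uses the closedness of $\mathcal{A}$ in an essential way, and quoting \cite{gr1,gr3} for it is exactly the level of rigor the paper itself adopts for the whole proposition. Two points worth tightening if you expand this: state explicitly that $\Psi_{k-1}^{\mathcal{V}}[\mathcal{F}]$ is closed under multiplication before estimating $q_{k-1,j}\bigl(\delta_{\mathcal{A}}(a)b+a\delta_{\mathcal{A}}(b)\bigr)$, since that closure is part of the same induction; and note that $\mathcal{B}^{\star}(\mathcal{A})$ is closed under the involution by its very definition, so what your formula $\delta_{\mathcal{A}}(a^{\star})=-\delta_{\mathcal{A}}(a)^{\star}$ actually buys is continuity of the involution with respect to the (symmetrized) seminorms $q_{k,j}$, which is what upgrades the merely symmetric set to a $\Psi^{\star}$-algebra in the sense of Definition \ref{d7}.
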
\par
Now we define the higher order commutators with a finite system of operators, to get a sufficient condition for $a\in\mathcal{F}$ to belong to the algebra $\Psi_{k}^{\mathcal{V}}[\mathcal{F}]$:
\begin{definition}
	Let M be linear subspace of H, L(M) denotes the set of linear operators on M. Consider $B\in L(M)$ and a finite system:
	\begin{equation*}
	\mathcal{A}_{k}=[A_{1},\cdots,A_{k}]\ where\ A_{j}\in L(M),
	\end{equation*}
	We define inductively:
	\begin{equation*}
	ad[A_{1}](B)=[A,B],\ \ \ \ ad[a_{1},\cdots,A_{j}](B)=ad[A_{j}](ad[A_{1},\cdots,A_{j-1}](B))
	\end{equation*}
	Then $ad[A_{k}](B)$ is called the commutator with the system $\mathcal{A}_{k}$.
\end{definition}\par
Let $A:H\supset D \longmapsto H$ be a closable densely defined operator on $H$. And let $A:H\supset D(A)\longmapsto H$ be its minimal closed extension in $H$. That is $D$ is dense in $D(A)$ with respect to the graph norm $(\|.\|_{gr}:=\|.\|_{H}+\|A.\|_{H})$. Moreover, suppose there is $a\in \mathcal{L}(H)$, such that $a(D)\subset D(A)$ then the following lemma holds:
\begin{lemma}
	Suppose that the commutator $[A,a]:D\longmapsto H$ admits a continuous extension $\delta_{A}(a)\in \mathcal{L}(H)$. Then
	\begin{enumerate}
		\item D(A) is invariant under a, i.e. $a(D(A))\subset D(A)$.
		\item $\delta_{A}(a)$ is the extension of the well defined operator: $[A,a]:D(A)\longmapsto H$
	\end{enumerate}
       \begin{proof}
	Let $x\in D(A)$, then there is a sequence $(y_{n})\in D$ such that $y_{n}\longmapsto x$ and $ay_{n}\longmapsto Ax$ with respect to $\|.\|_{H}$. Since $a\in \mathcal{L}(H)$, then $ay_{n}\mapsto ax(\star)$ and $aAy_{n}\mapsto aAx$, then
	\begin{equation*}
	Aay_{n}=\delta_{A}(a)y_{n}+aAy_{n}
	\end{equation*}
	converges in H. Since A is closed on D(A), and using $(\star)$ we get $ax\in D(A)$. Hence $[A,a]:D(A)\longmapsto H$ is well defined, and
	\begin{equation*}
	[A,a](x)=\lim_{n\to\infty}[A,a]y_{n}=\lim_{n\to\infty}\delta_{A}(a)y_{n}=\delta_{A}(a)(x).
	\end{equation*}
       \end{proof}
\end{lemma}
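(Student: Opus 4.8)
The plan is to fix an arbitrary $x \in D(A)$, approximate it by elements of $D$ on which the commutator is explicitly known, and then transport the commutator identity through a limit using the closedness of $A$. Since $D$ is dense in $D(A)$ with respect to the graph norm $\|.\|_{gr} = \|.\|_{H} + \|A.\|_{H}$, I would first choose a sequence $(y_{n}) \subset D$ with $y_{n} \to x$ and $A y_{n} \to A x$ in $\|.\|_{H}$. This graph-norm density is the structural hypothesis that makes the argument run, since it delivers the convergence of both $y_{n}$ and $A y_{n}$ simultaneously.

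Next I would exploit the two boundedness assumptions. Because $a \in \mathcal{L}(H)$, the sequence $a y_{n}$ converges to $a x$ and $a A y_{n}$ converges to $a A x$; because $\delta_{A}(a) \in \mathcal{L}(H)$ is the continuous extension of $[A,a]$ from $D$, the sequence $\delta_{A}(a) y_{n}$ converges to $\delta_{A}(a) x$. On the dense set $D$ the commutator is literally $[A,a] y_{n} = A a y_{n} - a A y_{n}$, which I would rearrange as
\begin{equation*}
A a y_{n} = \delta_{A}(a) y_{n} + a A y_{n}.
\end{equation*}
The right-hand side therefore converges in $H$ to $\delta_{A}(a) x + a A x$.

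At this point the key step is to invoke the closedness of $A$: I have $a y_{n} \to a x$ and $A(a y_{n}) \to \delta_{A}(a) x + a A x$, so the closed graph of $A$ forces $a x \in D(A)$ together with the identity $A a x = \delta_{A}(a) x + a A x$. This proves the invariance statement (1). For statement (2), rearranging the same identity yields $[A,a] x = A a x - a A x = \delta_{A}(a) x$ for every $x \in D(A)$, which is precisely the assertion that $\delta_{A}(a)$ restricts to the now well-defined operator $[A,a] \colon D(A) \to H$.

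The main obstacle is conceptual rather than computational: the hypothesis $a(D) \subset D(A)$ only guarantees a priori that each $a y_{n}$ lies in $D(A)$, and it is the \emph{closedness} of $A$ on $D(A)$ — not any continuity of $A$ itself — that upgrades the convergence $a y_{n} \to a x$ to genuine membership $a x \in D(A)$. Beyond the triangle inequality and the operator-norm bounds for $a$ and $\delta_{A}(a)$, no estimate is required.
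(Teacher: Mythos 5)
Your proof is correct and follows essentially the same route as the paper: graph-norm approximation of $x \in D(A)$ by a sequence in $D$, boundedness of $a$ and $\delta_{A}(a)$ to pass to limits, the rearranged identity $Aay_{n} = \delta_{A}(a)y_{n} + aAy_{n}$, and closedness of $A$ to conclude $ax \in D(A)$ together with $[A,a]x = \delta_{A}(a)x$. If anything, your write-up is cleaner than the paper's, which contains a typo ($ay_{n} \to Ax$ where $Ay_{n} \to Ax$ is meant) that you correctly avoid.
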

\begin{proposition}\label{plo}
	Let $k \in \mathbb{N} \bigcup\{{\infty}\}$,$a\in\mathcal{F}$, and $\mathcal{V}$ be a finite set of closed densely defined operators. Such that there is $D\subset H$ dense, with $D(A)=(\overline{D},\|.\|_{gr}:=\|.\|_{H}+\|A.\|_{H}$ for all $A\in \mathcal{V}$. Moreover, suppose the following property $(E_k)$ holds:\\
	$(E_k): D$ is invariant under $a$, and under all $A \in \mathcal{V}$. And suppose for any system $\mathcal{A}$ in\\
	$$S_k(\mathcal{V}):= \{[A_1,\ldots,A_j], A_j \in \mathcal{V}, j \leq k\}$$\\
	The commutators $ad[\mathcal{A}](a): D \longrightarrow H$, admits a continuous extension $\delta_\mathcal{A}(a)\in \mathcal{F}$.\\
	Then $a \in \Psi^\mathcal{V}_k[\mathcal{F}]$, and $\delta_\mathcal{A}(a)$ is the bounded extension of $ad[\mathcal{A}](a): H \supset D(A) \longrightarrow H$ for all $A \in \mathcal{V}$.\\
	
\end{proposition}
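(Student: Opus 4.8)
The plan is to prove the statement by induction on $k$, with the preceding lemma serving as the engine that upgrades the commutator hypotheses from the dense core $D$ to the full graph-closure domain $D(A)$. For the base case $k=1$ I would fix $A \in \mathcal{V}$ and observe that $(E_1)$ supplies exactly the hypotheses of that lemma: since $D$ is invariant under $a$ we have $a(D) \subset D \subset D(A)$, and the commutator $ad[A](a) = [A,a] : D \longrightarrow H$ extends to $\delta_A(a) \in \mathcal{F}$. The lemma then yields $a(D(A)) \subset D(A)$, so $a \in \mathcal{T}(A)$, and identifies $\delta_A(a)$ as the bounded extension of $[A,a]$ on all of $D(A)$. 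Hence $a \in \mathcal{B}(A)$, and since $A \in \mathcal{V}$ was arbitrary, $a \in \bigcap_{A \in \mathcal{V}} \mathcal{B}(A) = \Psi_1^{\mathcal{V}}[\mathcal{F}]$.

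For the inductive step I assume the proposition at level $k-1$ and, given $a$ satisfying $(E_k)$, note first that $S_{k-1}(\mathcal{V}) \subset S_k(\mathcal{V})$ forces $(E_k) \Rightarrow (E_{k-1})$, so $a \in \Psi_{k-1}^{\mathcal{V}}[\mathcal{F}]$. It then remains to show $\delta_A(a) \in \Psi_{k-1}^{\mathcal{V}}[\mathcal{F}]$ for each $A \in \mathcal{V}$, which I would obtain by applying the induction hypothesis to $b := \delta_A(a)$. The verification of $(E_{k-1})$ for $b$ rests on one bookkeeping identity: since $D$ is invariant under $a$ and under every element of $\mathcal{V}$, it is invariant under $[A,a]$, and because $\delta_A(a)$ agrees with $[A,a]$ on $D$ one gets $b(D) \subset D$; moreover, for any system $\mathcal{A}' = [A_1,\ldots,A_j] \in S_{k-1}(\mathcal{V})$ one has, on $D$,
\begin{equation*}
ad[\mathcal{A}'](b)\big|_{D} = ad[A_j]\cdots ad[A_1]\,ad[A](a)\big|_{D} = ad[\widetilde{\mathcal{A}}](a)\big|_{D},
\end{equation*}
where $\widetilde{\mathcal{A}} = [A,A_1,\ldots,A_j]$ has length $j+1 \leq k$. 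Thus $\widetilde{\mathcal{A}} \in S_k(\mathcal{V})$, and $(E_k)$ guarantees that $ad[\widetilde{\mathcal{A}}](a) : D \longrightarrow H$ extends to $\delta_{\widetilde{\mathcal{A}}}(a) \in \mathcal{F}$. This is precisely property $(E_{k-1})$ for $b$, so the induction hypothesis gives $b = \delta_A(a) \in \Psi_{k-1}^{\mathcal{V}}[\mathcal{F}]$, and unwinding the definition yields $a \in \Psi_k^{\mathcal{V}}[\mathcal{F}]$.

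The case $k = \infty$ then follows immediately, since $(E_\infty)$ implies $(E_k)$ for every finite $k$, whence $a \in \bigcap_{k \in \mathbb{N}} \Psi_k^{\mathcal{V}}[\mathcal{F}] = \Psi_\infty^{\mathcal{V}}[\mathcal{F}]$; and the final assertion that each $\delta_{\mathcal{A}}(a)$ extends the commutator already on $D(A)$ (and not merely on the core $D$) is delivered at every stage by part~(2) of the lemma. I expect the main obstacle to be exactly the domain issue the lemma resolves: the operators $\delta_{\mathcal{A}}(a)$ are defined a priori only through their action on $D$, so one must be careful that the invariance of $D$ propagates through every iterated commutator and that the closed-derivation structure permits re-feeding $\delta_A(a)$ as a legitimate input to the next round of the construction. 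Once the identity $ad[\mathcal{A}'](\delta_A(a)) = ad[\widetilde{\mathcal{A}}](a)$ on $D$ is secured, the remainder is a clean bookkeeping argument against the recursive definition of $\Psi_k^{\mathcal{V}}[\mathcal{F}]$.
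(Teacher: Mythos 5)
Your proof is correct and follows essentially the same route as the paper's: induction on $k$ with the preceding lemma handling the base case, and the identity $ad[\mathcal{A}_j,A](a)=ad[\mathcal{A}_j](\delta_A(a))$ on $D$ driving the inductive step by feeding $\delta_A(a)$ back into the induction hypothesis. Your write-up is in fact more careful than the paper's, since you explicitly verify that $\delta_A(a)$ inherits property $(E_{k-1})$ (invariance of $D$ and extendability of its iterated commutators) and you treat the $k=\infty$ case, both of which the paper leaves implicit.
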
\par
\begin{proof}
	The previous Lemma proves the proposition for $k = 1$. Now assume that we proved $a \in \Psi ^\mathcal{V}_{k-1}[\mathcal{F}]$, and consider $A \in \mathcal{V}$. Then by our assumption $ad[A](a)$ admits a continuous extension $\delta_A (a) \in \mathcal{F}$. However for any system $\mathcal{A}_j$, $j\leq k-1$, we have\\
	$$ad[\mathcal{A}_j,A](a)= ad[\mathcal{A}_j](\delta_A(a))$$
	hence by induction $\delta_A(a) \in \Psi ^\mathcal{V} _{k-1}$, i.e. $ a \in \Psi ^ \mathcal{V} _k [\mathcal{F}]$.
\end{proof}

\section[Continuity results: case of constant coefficients]{Continuity results for derivations with constant coefficients}
\hskip1cm This section is devoted to study the extension by continuity for the commutator $[X,P]$ whenever $X$ is formed of partial derivatives with respect to $z_i$ or with respect to $\overline{z_i}$.
 The motivation started by the following results we obtained: let $Y=\frac{\partial}{\partial \overline{z_{i}}}$ then the commutator $[P,Y]$ is not continuous. Before proving our result we need to
test the behavior of $P$ on the monomials.
\begin{proposition}\label{p3}
	Let $\alpha,\beta\in\mathbb N^n$, then the Bergman projection of $z^{\alpha}\overline{z}^{\beta}$, $P(z^{\alpha}\overline{z}^{\beta})$ is given by
	\begin{equation*}
	\begin{cases}
	\dfrac{\alpha!}{(n+|\alpha|)!}\dfrac{[|\alpha-\beta|+n]!}{(\alpha-\beta)!}z^{\alpha-\beta},\ \ \alpha \geq \beta\\
	0 \hskip6cm else
	\end{cases}
	\end{equation*}
\end{proposition}
\begin{proof}
	We start by regarding the following series representation. Given  $\lambda$ non negative integer, we have 
	\begin{equation*}
	(1+x)^{-\lambda}=\sum_{k=0}^{\infty}\dfrac{\Gamma(k+\lambda)}{k!\Gamma( \lambda)}x^{k},\ \ \ for |x|< 1 	
	\end{equation*}
Hence,	\begin{align*}
 \dfrac{1}{(1-z.\overline{w})^{n+1}}&=\sum_{k=0}^{\infty}\dfrac{\Gamma(k+n+1)}{k!\Gamma(n+1)}(z.\overline{w})^{k}\\
	&=\sum_{k=0}^{\infty}\dfrac{(k+n)!}{k!n!}(z.\overline{w})^{k}\\
	&=\sum_{k=0}^{\infty}
	\dfrac{(k+n)!}{k!n!}\sum_{|\gamma|=k} \dfrac{k!}{\gamma!}z^{\gamma} \overline{w}^{\gamma}
	\end{align*}
	Now using the fact:$$\int_{B_n} z^{\alpha}\overline{z}^{\beta}d\mu(z)=\begin{cases}
	\dfrac{n!\alpha!}{(n+|\alpha|)!}, \hskip2cm \alpha=\beta\\
	0,\hskip4cm else,
	\end{cases}$$
	we obtain 
	\begin{align*}
	P(z^{\alpha}\overline{z}^{\beta})&	
	=\int_{B_n}\dfrac{w^{\alpha}w^{-\beta}}{(1-z.\overline{w})^{n+1}}d\mu(w)\\
	&=\sum_{k=0}^{\infty}\dfrac{(k+n)!}{k!n!}\sum_{|\gamma|=k}\dfrac{k!}{\gamma!}z^{\gamma}\int_{B_n} w^{\alpha}\overline{w}^{\beta}\overline{w}^{\gamma}d\mu(w).
	\end{align*}
	Choosing $ \beta+\gamma=\alpha$ we obtain $k=|\alpha-\beta|$. Therefore,
	\begin{align*}
	P(z^{\alpha}\overline{z}^{\beta})&=\dfrac{(|\alpha-\beta|+n)!}{|\alpha-\beta|!n!}\dfrac{|\alpha-\beta|!}{(\alpha-\beta)!}\int_{B_n}w^{\alpha}\overline{w}^{\alpha}z^{\alpha-\beta}d\mu(w)\\
	&=\dfrac{(|\alpha-\beta|+n)!}{n!(\alpha-\beta)!}\dfrac{n!\alpha!}{(n+|\alpha|)!}z^{\alpha-\beta}\\
	&=\dfrac{(|\alpha-\beta|+n)!}{(\alpha-\beta)!}\dfrac{\alpha!}{(n+|\alpha|)!}z^{\alpha-\beta},
	\end{align*}
\end{proof}
\begin{remark}
	By the above calculations, we have:
	\begin{equation*}
	\|z^{\alpha}\overline{z}^{\beta}\|^{2}=\frac{n!(\alpha+\beta)!}{(n+|\alpha+\beta|)!}
	\end{equation*}
\end{remark}
\begin{remark}
	Let $D$ denotes the set of all polynomials with complex coefficient, then $D$ is dense, closed under linear vector fields, and by the above proposition it is also closed under the Bergman projection.
\end{remark}
Below is our first result concerning the continuity of the commutator of a derivation and the Bergman projection on the unit ball of $\mathbb C^n$.
\begin{theorem}\label{lqu}
	For all $i=1,\cdots,n$ the commutator $[\frac{\partial}{\partial z_{i}},P]$ is does not admit a continuous extension to $L^{2}(B_{n})$.
\end{theorem}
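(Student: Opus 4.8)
The plan is to prove unboundedness directly, by exhibiting an explicit sequence of monomials on which the (densely defined) commutator grows without bound relative to the $L^2$-norm. Since the space $D$ of polynomials is dense and invariant under both $\partial/\partial z_i$ and $P$ (by Proposition \ref{p3} and the subsequent remark), any continuous extension would have to satisfy a uniform estimate $\|[\partial_{z_i},P]f\|\le C\|f\|$ on $D$; I will violate this on the one-variable monomials $f_{a,b}:=z_i^{a}\overline{z_i}^{\,b}$.

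First I would compute $[\partial_{z_i},P]f_{a,b}=\partial_{z_i}(Pf_{a,b})-P(\partial_{z_i}f_{a,b})$ explicitly. Using Proposition \ref{p3}, $Pf_{a,b}$ is a multiple of $z_i^{a-b}$ while $P(\partial_{z_i}f_{a,b})=a\,P(z_i^{a-1}\overline{z_i}^{\,b})$ is a multiple of $z_i^{a-1-b}$; differentiating the first term lowers its degree by one, so for $a-b\ge 1$ both contributions are scalar multiples of the single monomial $z_i^{a-b-1}$. A direct cancellation, organised around the parameter $c:=a-b$ and the elementary identity $\tfrac{c+n}{n+a}-1=\tfrac{-b}{n+a}$, collapses the two terms into the closed form
\[
[\partial_{z_i},P]f_{a,b}=-\,\frac{a!\,b\,(a-b+n-1)!}{(a-b-1)!\,(n+a)!}\,z_i^{a-b-1}.
\]

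Next I would pass to norms using the identity $\|z^{\alpha}\overline{z}^{\beta}\|^2=\tfrac{n!(\alpha+\beta)!}{(n+|\alpha+\beta|)!}$ recorded after Proposition \ref{p3}. Writing $R_{a,b}:=\|[\partial_{z_i},P]f_{a,b}\|/\|f_{a,b}\|$, a routine simplification gives
\[
R_{a,b}^2=\frac{(a!)^2\,b^2\,(a-b+n-1)!\,(n+a+b)!}{(a-b-1)!\,(n+a)!^2\,(a+b)!}.
\]
The key analytic step is the asymptotic evaluation as $a,b\to\infty$. Grouping the four factorial ratios, $\tfrac{(a!)^2}{(n+a)!^2}\sim a^{-2n}$, $\tfrac{(a-b+n-1)!}{(a-b-1)!}\sim c^{\,n}$ and $\tfrac{(n+a+b)!}{(a+b)!}\sim(a+b)^n$, and using $(a-b)(a+b)=a^2-b^2$, one obtains $R_{a,b}^2\sim b^2\bigl(1-b^2/a^2\bigr)^n$. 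Choosing $b=\lfloor a/2\rfloor$ (indeed any fixed ratio $b/a=\rho\in(0,1)$) keeps the factor $(1-\rho^2)^n$ a positive constant while $b^2\to\infty$, so $R_{a,b}\to\infty$; after normalising the $f_{a,b}$ to unit norm this forbids any bounded extension.

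I expect the genuine obstacle to lie in the choice of scaling in this last step rather than in the algebra: naive test sequences such as $b=1$ or $a=b+1$ give $R_{a,b}^2\sim\mathrm{const}$, or even $R_{a,b}^2\to 0$ once $n\ge 2$, because letting $b/a\to 1$ sends $(1-b^2/a^2)^n\to 0$ fast enough to swallow the gain from $b^2$. The decisive point is that the ratio $b/a$ must be held strictly inside $(0,1)$, balancing the growth $b^2$ against the decay $(1-b^2/a^2)^n$; with that balance the blow-up is uniform across all dimensions $n$.
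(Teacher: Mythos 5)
Your proof is correct and follows essentially the same route as the paper's: you test the commutator on the same monomials $z_i^{a}\overline{z_i}^{\,b}$, obtain the identical closed form and norm ratio, and your choice $b=\lfloor a/2\rfloor$ is exactly the paper's specialization $p=2m$, $q=m$, with the same asymptotic conclusion $R_{a,b}^2\sim b^2(1-b^2/a^2)^n\to\infty$. Your added observation that any fixed ratio $b/a\in(0,1)$ works (while $b/a\to 1$ or $b$ bounded fails) is a nice clarification of why that scaling is the right one, but it is a refinement of the paper's argument rather than a different proof.
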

	\begin{proof}
		Consider $z_{i}^{p}\overline{z_{i}}^{q}$ with $p>q+1$, then by Proposition \ref{p3} we obtain
		\begin{equation*}
		P(\frac{\partial}{\partial z_{i}}(z_{i}^{p}\overline{z_{i}}^{q}))=pPz_{i}^{p-1}\overline{z_{i}}^{q}=\frac{p![p-q-1+n]!}{(n+p-1)!(p-q-1)!}z_{i}^{p-q-1}
		\end{equation*}
		By another application of Proposition \ref{p3} we get
		\begin{equation*}
		\frac{\partial}{\partial z_{i}}(P(z_{i}^{p}\overline{z_{i}}^{q}))=	\frac{\partial}{\partial z_{i}}(\frac{p![p-q+n]!}{(n+p)!(p-q)!}z_{i}^{p-q})=\frac{p![p-q+n]!}{(n+p)!(p-q-1)!}z_{i}^{p-q-1}
		\end{equation*}
		Therefore
		\begin{align*}
		[\frac{\partial}{\partial z_{i}},P](z_{i}^{p}\overline{z_{i}}^{q})&=p!\{\frac{[p-q+n]!-(n+p)[p-q-1+n]!}{(n+p)!(p-q-1)!} \}z_{i}^{p-q-1}\\
		&=-p!q\frac{[p-q-1+n]!}{(n+p)!(p-q-1)!}z_{i}^{p-q-1}
		\end{align*}
		Now suppose  that the commutator is continuous, then there is a real positive number $c$ such that
		\begin{equation*}
		\|	[\frac{\partial}{\partial z_{i}},P]f\|^{2}\leq c\|f\|^{2}  \mbox{ for all } f\in L^{2}.
		\end{equation*}
	This is equivalent to say that	\begin{equation*}	\|[\frac{\partial}{\partial z_{i}},P](z_{i}^{p}\overline{z_{i}}^{q})\|^{2}\leq c\|(z_{i}^{p}\overline{z_{i}}^{q})\|^{2}.
		\end{equation*}
	By the above calculations we get	\begin{equation*}
		\{p!q\frac{[p-q-1+n]!}{(n+p)!(p-q-1)!} \}^{2}\|z_{i}^{p-q-1}\|^{2}\leq c\|(z_{i}^{p}\overline{z_{i}}^{q})\|^{2}
		\end{equation*}
Hence,
		\begin{equation*}
\{p!q\frac{[p-q-1+n]!}{(n+p)!(p-q-1)!} \}^{2}\frac{n!(p-q-1)!}{(n+p-q-1)!}\leq c\frac{n!(p+q)!}{(n+p+q)!}
		\end{equation*}
		so that
		\begin{equation*}
	q^{2}\frac{p!p!(p-q-1+n)!}{(n+p)!(n+p)!(p-q-1)!}\frac{(n+p+q)!}{(p+q)!}\leq c.
		\end{equation*}
		Let $p=2m$ and $q=m$ then:
		\begin{equation*}
		m^{2}\frac{(2m)!(2m)!(m-1+n)!}{(n+2m)!(n+2m)!(m-1)!}\frac{(3m+n)!}{(3m)!}\leq c
		\end{equation*}
		i.e.
		\begin{equation*}
		m^{2}\frac{(m-1+n)...m}{(2m+n)...(2m+1)(2m+n)...(2m+1)}(3m+n)...(3m+1)\leq c.
		\end{equation*}
		We obtain a contradiction by taking $m$ big enough.
	\end{proof}

As a consequence of the above result, we obtain
\begin{corollary}\label{lquu}
	Let $X=\sum_{i=1}^{n}a_{i}\frac{\partial}{\partial z_{i}}$ where $a_{i}$ are constants not all zeros, then $[X,P]$ does not admit a continuous extension to $L^{2}(B_{n})$.
\end{corollary}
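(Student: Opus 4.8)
The plan is to exploit the linearity of the commutator in $X$ together with the fact that the obstruction already isolated in Theorem \ref{lqu} survives in any nontrivial linear combination. Since the coefficients $a_i$ are not all zero, I would fix an index $i$ with $a_i\neq 0$ and test the operator $[X,P]$ on the one–variable monomials $f=z_i^{p}\overline{z_i}^{q}$ that were used in the proof of Theorem \ref{lqu}. On the dense set $D$ of polynomials with complex coefficients one has $[X,P]=\sum_{j=1}^{n}a_j[\frac{\partial}{\partial z_j},P]$, so it suffices to understand how each piece acts on this test family and to show that on such $f$ only the single term $a_i\frac{\partial}{\partial z_i}$ of $X$ contributes.

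The crucial observation is that both $f=z_i^{p}\overline{z_i}^{q}$ and its Bergman projection depend only on the variables $z_i,\overline{z_i}$: by Proposition \ref{p3}, for $p\geq q$ the projection $P(z_i^{p}\overline{z_i}^{q})$ is a scalar multiple of $z_i^{p-q}$, a monomial in $z_i$ alone. Consequently $\frac{\partial}{\partial z_j}$ annihilates both $f$ and $Pf$ whenever $j\neq i$, since neither contains the variable $z_j$. Thus for every $j\neq i$ the term $a_j[\frac{\partial}{\partial z_j},P]f=a_j\big(\frac{\partial}{\partial z_j}Pf-P\frac{\partial}{\partial z_j}f\big)$ vanishes identically, and we are left with $[X,P]f=a_i[\frac{\partial}{\partial z_i},P]f$.

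Putting these together gives $\|[X,P]f\|^{2}=|a_i|^{2}\,\|[\frac{\partial}{\partial z_i},P]f\|^{2}$, so the explicit computation carried out in the proof of Theorem \ref{lqu}, specialized to $p=2m$ and $q=m$, produces a quotient $\|[X,P]f\|^{2}/\|f\|^{2}$ that is $|a_i|^{2}$ times a quantity tending to infinity with $m$. Because $a_i\neq 0$ this still diverges, contradicting any inequality of the form $\|[X,P]f\|^{2}\leq c\,\|f\|^{2}$ for all $f\in L^{2}(B_n)$. Hence $[X,P]$ admits no continuous extension to $L^{2}(B_n)$.

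The argument is a reduction rather than a fresh estimate, so I do not expect a serious analytic obstacle; the only point requiring care is verifying that the potentially dangerous cross–terms genuinely cancel. This hinges entirely on the fact, guaranteed by Proposition \ref{p3}, that the Bergman projection keeps a monomial in a single pair $(z_i,\overline{z_i})$ inside the holomorphic functions of that same variable $z_i$, so that no mixing with the other coordinate directions occurs and no cancellation among the $a_j$ can rescue boundedness.
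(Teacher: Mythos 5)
Your proposal is correct and follows essentially the same route as the paper: both arguments pick an index with nonzero coefficient, test $[X,P]$ on the single-variable monomials $z_i^{2m}\overline{z_i}^{m}$, observe that the cross-terms $[\frac{\partial}{\partial z_j},P]$ for $j\neq i$ annihilate these test functions, and then invoke the divergence established in Theorem \ref{lqu}. Your justification of the cross-term vanishing (via Proposition \ref{p3}, since $Pf$ is a monomial in $z_i$ alone) is slightly more explicit than the paper's, but it is the same argument.
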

	\begin{proof}
		Without loss of generality, suppose $a_{j}\neq 0$, and suppose there exist $c>0$ such that
		\begin{equation*}
		\|\sum_{i=1}^{n}a_{i}[\frac{\partial}{\partial z_{i}},P]f\|^{2}\leq c\|f\|^{2} \mbox{ for all } f\in L^{2}.
		\end{equation*}
		Consider the functions $(z_{j}^{2m}\overline{z_{j}}^{m})$, then $[\frac{\partial}{\partial z_{i}},P](z_{j}^{2m}\overline{z_{j}}^{m})=0$  for all $i\neq j$ so that
		\begin{equation*}
	 \|\sum_{i=1}^{n}a_{i}[\frac{\partial}{\partial z_{i}},P](z_{j}^{2m}\overline{z_{j}}^{m})\|^{2}=a_{j}^{2}\|[\frac{\partial}{\partial z_{i}},P](z_{j}^{2m}\overline{z_{j}}^{m})\|^{2}\leq c\|z_{j}^{2m}\overline{z_{j}}^{m}\|^{2}
		\end{equation*}
		which is not true by the previous theorem.
	\end{proof}\\
Here is our second result for the criteria of the continuity of the commutator.
\begin{theorem}\label{lquuu}
	For all $i=1,\cdots,n$ the commutator $[P,\frac{\partial}{\partial\overline{z_{i}}}]$ is does not admit a continuous extension to $L^{2}(B_{n})$.
\end{theorem}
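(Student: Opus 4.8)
The plan is to follow verbatim the strategy of the proof of Theorem \ref{lqu}: assume a bounded extension exists, test it against the single-variable monomials $z_i^p\overline{z_i}^q$, and exhibit a family whose images blow up relative to their norms. The pleasant feature of the anti-holomorphic case is that one of the two terms of the commutator vanishes identically, so the computation is actually shorter than in Theorem \ref{lqu}.

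First I would restrict attention to the dense subspace $D$ of polynomials, which by the remark following Proposition \ref{p3} is invariant under both $P$ and $\frac{\partial}{\partial\overline{z_i}}$, so that $[P,\frac{\partial}{\partial\overline{z_i}}]$ is genuinely defined on $D$. On a monomial $z_i^p\overline{z_i}^q$ with $p>q$ I would split $[P,\frac{\partial}{\partial\overline{z_i}}]=P\frac{\partial}{\partial\overline{z_i}}-\frac{\partial}{\partial\overline{z_i}}P$ and treat the two pieces separately. For the first piece, $\frac{\partial}{\partial\overline{z_i}}(z_i^p\overline{z_i}^q)=q\,z_i^p\overline{z_i}^{q-1}$, and Proposition \ref{p3} yields
\[
P\Big(\tfrac{\partial}{\partial\overline{z_i}}(z_i^p\overline{z_i}^q)\Big)=q\,\frac{p!}{(n+p)!}\frac{[p-q+1+n]!}{(p-q+1)!}\,z_i^{p-q+1}.
\]
For the second piece, Proposition \ref{p3} shows $P(z_i^p\overline{z_i}^q)$ is a holomorphic monomial in $z_i$, so $\frac{\partial}{\partial\overline{z_i}}P(z_i^p\overline{z_i}^q)=0$. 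Hence
\[
[P,\tfrac{\partial}{\partial\overline{z_i}}](z_i^p\overline{z_i}^q)=q\,\frac{p!}{(n+p)!}\frac{[p-q+1+n]!}{(p-q+1)!}\,z_i^{p-q+1}.
\]

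Next I would insert the norm formula from the remark above, namely $\|z_i^{p-q+1}\|^2=\frac{n!\,(p-q+1)!}{(n+p-q+1)!}$ and $\|z_i^p\overline{z_i}^q\|^2=\frac{n!\,(p+q)!}{(n+p+q)!}$, into the hypothetical estimate $\|[P,\frac{\partial}{\partial\overline{z_i}}]f\|^2\le c\|f\|^2$. After cancelling the common $n!$ and using $[p-q+1+n]!^2/(n+p-q+1)!=[p-q+1+n]!$, the continuity inequality reduces to
\[
q^2\,\frac{(p!)^2}{((n+p)!)^2}\,\frac{[p-q+1+n]!}{(p-q+1)!}\,\frac{(n+p+q)!}{(p+q)!}\le c.
\]
Specializing to $p=2m$, $q=m$ and writing each ratio of factorials as a product of $n$ consecutive integers, the left-hand side is asymptotic to a constant multiple of $m^2\,(3/4)^n$, which tends to $\infty$ as $m\to\infty$, contradicting boundedness by $c$. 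The only genuine work is the factorial bookkeeping in this last step: one must verify that the dominant power of $m$ is exactly $m^2$, the decaying factor $((n+p)!)^{-2}$ being precisely balanced by the two growing numerators, so that the spurious $n$-dependent constant $(3/4)^n$ cannot restore boundedness.
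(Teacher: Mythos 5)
Your proposal is correct and follows essentially the same route as the paper's own proof: the same observation that $\frac{\partial}{\partial\overline{z_i}}P=0$ reduces the commutator to $P\frac{\partial}{\partial\overline{z_i}}$, the same test monomials $z_i^{p}\overline{z_i}^{q}$ with $p=2m$, $q=m$ evaluated via Proposition \ref{p3}, and the same factorial estimate forcing the left-hand side to grow like $m^{2}$ times an $m$-independent constant. Your bookkeeping of the constant $(3/4)^{n}$ is in fact slightly more careful than the paper's shorthand $m^{2}\frac{m^{2n}}{m^{2n}}\leq c$, but the argument is identical in substance.
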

	\begin{proof}
		Since $\frac{\partial}{\partial\overline{z_{i}}}P=0$, then
		\begin{equation*}
		[P,\frac{\partial}{\partial\overline{z_{i}}}]=P\frac{\partial}{\partial\overline{z_{i}}}
		\end{equation*}
		Suppose that the commutator is continuous then there is $c>0$ such that
		\begin{equation*}
		\|P\frac{\partial}{\partial\overline{z_{i}f}}\|^{2}\leq c\|f\|^{2}\ \forall\ f\in L^{2}.
		\end{equation*}
		For $p>q$, we have
		\begin{equation*}
		P\frac{\partial}{\partial\overline{z_{i}}}(z_{i}^{p}\overline{z}_{i}^{q})=qP(z_{i}^{p}\overline{z}_{i}^{q-1})=q\frac{p![p-q+1+n]!}{(n+p)!(p-q+1)!}z_{i}^{p-q+1}
		\end{equation*}
		so that	
		\begin{equation*}
\{q\frac{p![p-q+1+n]!}{(n+p)!(p-q+1)!} \}^{2} \|z_{i}^{p-q+1}\|^{2}\leq c\|(z_{i}^{p}\overline{z}_{i}^{q})\|^{2}
		\end{equation*}
	i.e.	\begin{equation*}
		\{q\frac{p![p-q+1+n]!}{(n+p)!(p-q+1)!} \}^{2}\frac{n![p-q+1]!}{(p-q+1+n)!}\leq c\frac{n![p+q]!}{(n+p+q)!}
		\end{equation*}
		and
		\begin{equation*}
		 q^{2}\frac{p!p![p-q+1+n]!}{(n+p)!(n+p)!(p-q+1)!}\frac{(n+p+q)!}{[p+q]!}\leq c
		\end{equation*}
		Taking $p=2m=2q$ we obtain
		\begin{equation*}
		m^{2}\frac{(2m)!(2m)!(m+1+n)!}{(n+2m)!(n+2m)!(m+1)!}\frac{(n+3m)!}{(3m)!}\leq c
		\end{equation*}
		i.e.
		\begin{equation*}
		m^{2}\frac{(m+n+1)...(m+2)}{(2m+n)..(2m+1)(2m+n)..(2m+1)}(3m+n)..(3m+1)\leq c.
		\end{equation*}
		This is equivalent to say that whenever $m$ is large enough we have
		\begin{equation*}
		m^{2}\frac{m^{2n}}{m^{2n}}\leq c.
		\end{equation*}
	\end{proof}\\
Using the above theorem and following a similar technique to that used in Corollary \ref{c2}, we obtain:
\begin{corollary}\label{c2}
	Let $Y=\sum_{i=1}^{n}a_{i}\frac{\partial}{\partial \overline{z_{i}}}$, where $a_{i}$ are constants not all zeros, then $[Y,P]$ is not continuous on $L^{2}(B_{n})$.
\end{corollary}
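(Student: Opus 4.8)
The plan is to reduce this multi-variable statement to the one-variable Theorem \ref{lquuu}, in exact parallel to the way Corollary \ref{lquu} was obtained from Theorem \ref{lqu}. Since the coefficients are not all zero, I would first fix an index $j$ with $a_j\neq 0$ and argue by contradiction: suppose $[Y,P]$ admits a continuous extension, so that there is $c>0$ with $\|[Y,P]f\|^2\leq c\,\|f\|^2$ for every polynomial $f$, these being dense in $L^2(B_n)$ and invariant under $P$ by the remark following Proposition \ref{p3}.

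The key observation is that $Y$ already annihilates the range of $P$: since $P$ maps into the holomorphic functions $\mathbb H^2(B_n)$ and each $\partial/\partial\overline{z_i}$ kills holomorphic functions, one has $YP=0$, hence $[Y,P]=-PY$ and $PY=\sum_{i=1}^n a_i\,[P,\partial/\partial\overline{z_i}]$. I would then test on the single-variable monomials $f_m:=z_j^{2m}\overline{z_j}^{\,m}$. For $i\neq j$ the function $f_m$ does not involve $\overline{z_i}$, so $\partial f_m/\partial\overline{z_i}=0$ and the corresponding term drops out; only the $i=j$ contribution survives, giving
\[
[Y,P]f_m=-a_j\Big[P,\tfrac{\partial}{\partial\overline{z_j}}\Big]f_m .
\]

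Taking norms, the assumed bound forces $|a_j|^2\big\|[P,\partial/\partial\overline{z_j}]f_m\big\|^2\leq c\,\|f_m\|^2$ for all $m$. But the explicit computation in the proof of Theorem \ref{lquuu} (carried out precisely for $p=2m$, $q=m$) shows that the ratio $\|[P,\partial/\partial\overline{z_j}]f_m\|^2/\|f_m\|^2$ behaves like $m^2$ and is therefore unbounded. Choosing $m$ large enough contradicts the displayed inequality, which proves that $[Y,P]$ has no continuous extension.

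I do not anticipate a genuine obstacle: the estimate is inherited verbatim from Theorem \ref{lquuu}, and the only point demanding care is the collapse of the sum to a single term. This rests on two simple facts --- that $\partial/\partial\overline{z_i}$ annihilates any monomial not containing $\overline{z_i}$, and that the entire range of $P$ is holomorphic and hence killed by every $\partial/\partial\overline{z_i}$ --- both of which are already used implicitly in Theorem \ref{lquuu}.
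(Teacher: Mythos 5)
Your proof is correct and follows essentially the same route as the paper: the paper disposes of this corollary exactly by the technique of Corollary \ref{lquu}, namely fixing $j$ with $a_j\neq 0$, testing on the monomials $z_j^{2m}\overline{z_j}^{\,m}$ so that every term with $i\neq j$ vanishes, and then invoking the unbounded ratio computed (for $p=2m$, $q=m$) in Theorem \ref{lquuu}. Your preliminary reduction $YP=0$, hence $[Y,P]=-PY=-\sum_i a_i[P,\partial/\partial\overline{z_i}]$, is the same observation the paper uses inside Theorem \ref{lquuu}, so the two arguments coincide in substance.
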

\section[Continuity results: case of linear vector fields]{Continuity results for the commutator defined by linear vector fields}
In the sequel, let $X=(Ax)^{t}\frac{d}{dx}$ be a  vector field where $A\in M_{2n}(\mathbb R)$ is a given matrix is a skew-symmetric matrix. We study the problem of extension by continuity for  $[X,P]$ to $L^2(B_n)$.

\begin{proposition}
Let $A\in M_{2n}(\mathbb R)$ and $X$ be the vector field defined by $X=(Ax)^{t}\frac{d}{dx}$. Denote by   $S^{2n-1}$ the unit sphere in $\mathbb{C}^n$. Then $X\in\mathfrak{X}(S^{2n-1})$ if and only if $A$ ia an antisymmetric matrix.
\end{proposition}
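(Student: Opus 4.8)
The plan is to realize the unit sphere as a regular level set and reduce tangency to the vanishing of a quadratic form. Write $\rho(x)=|x|^2=\sum_{k=1}^{2n}x_k^2$, so that $S^{2n-1}=\rho^{-1}(1)$ and $d\rho_x=2\sum_k x_k\,dx_k$ is nonzero on the sphere; hence $S^{2n-1}$ is a regular hypersurface whose tangent space at $x$ is $\ker d\rho_x$. A vector field $X$ on $\mathbb{R}^{2n}$ restricts to a field tangent to $S^{2n-1}$, i.e. $X\in\mathfrak{X}(S^{2n-1})$, precisely when $X\rho$ vanishes along $S^{2n-1}$. First I would compute this derivative for $X=(Ax)^{t}\frac{d}{dx}=\sum_{i}(Ax)_i\,\partial_{x_i}$: one finds $X\rho=\sum_{i}(Ax)_i\,\partial_{x_i}\rho=2\sum_{i}(Ax)_i x_i=2\,x^{t}Ax$.

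Thus the tangency condition becomes $x^{t}Ax=0$ for every $x$ with $|x|=1$. Since $x\mapsto x^{t}Ax$ is homogeneous of degree two, this is equivalent to $x^{t}Ax=0$ for all $x\in\mathbb{R}^{2n}$ (replace $x$ by $x/|x|$ and rescale). Next I would isolate the symmetric part: because $x^{t}Ax$ is a scalar it equals its own transpose $x^{t}A^{t}x$, so $x^{t}Ax=x^{t}Sx$ with $S:=\tfrac12(A+A^{t})$ symmetric. Hence the quadratic form sees only $S$.

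To finish, I would polarize: if $x^{t}Sx\equiv0$ then for all $x,y$ we get $0=(x+y)^{t}S(x+y)-x^{t}Sx-y^{t}Sy=2\,x^{t}Sy$, which forces $S=0$, i.e. $A^{t}=-A$. This gives the forward implication. Conversely, if $A$ is antisymmetric then $x^{t}Ax=x^{t}A^{t}x=-x^{t}Ax$ yields $x^{t}Ax\equiv0$, so $X\rho\equiv0$ and $X$ is tangent to $S^{2n-1}$. No step presents a genuine obstacle; the only points deserving care are the homogeneity argument that lifts the vanishing from the sphere to all of $\mathbb{R}^{2n}$, and the observation that the quadratic form detects exactly the symmetric part of $A$, so that its vanishing is equivalent to skew-symmetry of $A$ rather than to a weaker condition.
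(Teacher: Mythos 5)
Your proof is correct and takes essentially the same route as the paper: reduce tangency to the vanishing of the quadratic form $x^{t}Ax$ on the sphere, then polarize $(x+y)^{t}A(x+y)$ to conclude $A+A^{t}=0$, with the converse following from antisymmetry of the form. If anything, your version is more careful than the paper's, which asserts the tangency criterion $\langle x,Ax\rangle=0$ without the level-set derivation and applies the quadratic identity to $x+y$ (not a unit vector) without making explicit the homogeneity argument that you spell out.
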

	\begin{proof}
		First note that, for a given $x\in S^{2n-1}$ we know that $Ax\in T_x S^{2n-1}$ if and only if $<x, Ax> =0$. This shows the sufficient condition. For the necessary condition,  consider arbitrary vectors $x,y \in S^{2n-1}$ then
		\begin{align*}
<x+y, A(x+y)>&= <x,A(x+y)> +<y,A(x+y)>\\
&=<x,Ax>+<x,Ay>+<Ax,y>+<Ay,y>\\
&=<x,Ay>+<Ax,y>\\
&=<x,(A+A^t)y>=0.
		\end{align*}
This shows thatsatisfying $A+A^t=0$ i.e. $A$ is an antisymmetric matrix.
	\end{proof}
\begin{remark}
	Let $X=(Ax)^{t}\frac{d}{dx} \in\mathfrak{X}(S^{2n-1})$ then  the one parameter group $e^{As}$ satisfies
	\begin{enumerate}
		\item $det (e^{As}) = e^{trace As} =e^0 =1$
		\item $e^{As}$ is a unitary matrix
		\item Given $f\in C^\infty(S^{2n-1})$ we have $X(f)= \frac{d}{ds} (f(e^{As}.))_{s=0}$
	\end{enumerate}
\end{remark}
Below is our result concerning the commuting property between linear vector fields and the Bergman Projection on the unit ball.
\begin{theorem}\label{ttt111}
	Let  $X=(Ax)^{t}\frac{d}{dx}\in\mathfrak{X}(S^{2n-1})$ and let $P$ be the Bergman projection on the unit ball of $\mathbb C^n$. Then the commutator $[X,P]$ vanishes on $L^{2}(B_{n})$. In particular, $[X,P]$ is continuous on $L^{2}(B_{n})$.
\end{theorem}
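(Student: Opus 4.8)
The plan is to realize $X$ as the infinitesimal generator of the unitary one–parameter group induced by the flow $e^{As}$ and to show that this whole group commutes with $P$; differentiating at $s=0$ then yields $[X,P]=0$. Concretely, for $s\in\mathbb{R}$ I would set $U_sf=f\circ e^{As}$. By the preceding Remark, $e^{As}$ is unitary with $\det e^{As}=1$, so it maps $B_n$ onto $B_n$ and preserves $\mu$; hence each $U_s$ is a unitary operator on $L^2(B_n)$, the family $(U_s)_{s\in\mathbb{R}}$ is a strongly continuous group, and $Xf=\frac{d}{ds}(U_sf)\big|_{s=0}$ on a suitable dense core, for instance the polynomials $D$, which are invariant under both $X$ and $P$.

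First I would prove the group identity $PU_s=U_sP$ for every $s$. Using the integral representation (\ref{eq67}) and the substitution $w\mapsto e^{As}w$, which leaves $B_n$ and $d\mu$ invariant, one computes
\begin{equation*}
(PU_sf)(z)=\int_{B_n}\frac{f(e^{As}w)}{(1-z\cdot\overline{w})^{n+1}}\,d\mu(w)=\int_{B_n}\frac{f(w)}{(1-z\cdot\overline{e^{-As}w})^{n+1}}\,d\mu(w),
\end{equation*}
while $(U_sPf)(z)=(Pf)(e^{As}z)=\int_{B_n}(1-(e^{As}z)\cdot\overline{w})^{-(n+1)}f(w)\,d\mu(w)$. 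Thus $PU_s=U_sP$ reduces to the invariance of the reproducing kernel under the simultaneous action,
\begin{equation*}
k(e^{As}z,e^{As}w)=k(z,w)\qquad\text{for all }z,w\in B_n,
\end{equation*}
which in turn is exactly the statement that $e^{As}$ preserves the Hermitian pairing $z\cdot\overline{w}$. This is where the hypothesis enters: the unitarity of $e^{As}$ recorded in the Remark gives $(e^{As}z)\cdot\overline{(e^{As}w)}=z\cdot\overline{w}$, hence the kernel identity.

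Having established $PU_s=U_sP$, I would differentiate in $s$ at $s=0$ on the dense core $D$ and use $X=\frac{d}{ds}U_s|_{s=0}$ to conclude $[X,P]=0$ on $D$; since $D$ is dense this forces $[X,P]$ to be the zero operator on all of $L^2(B_n)$, so in particular it is continuous. An equivalent, more hands-on route that avoids differentiating the group is to work on $C_c^\infty(B_n)$ directly: integrating by parts in $PXf$ and using $\operatorname{div}X=\operatorname{tr}A=0$ (the boundary terms vanishing by compact support) gives $PXf(z)=-\int_{B_n}(X_wk(z,w))f(w)\,d\mu(w)$, so that
\begin{equation*}
[X,P]f(z)=\int_{B_n}\big(X_z+X_w\big)k(z,w)\,f(w)\,d\mu(w)=\int_{B_n}\tfrac{d}{ds}\Big|_{s=0}k(e^{As}z,e^{As}w)\,f(w)\,d\mu(w),
\end{equation*}
which vanishes by the same kernel invariance.

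The hard part will be the kernel identity $k(e^{As}z,e^{As}w)=k(z,w)$: everything hinges on $e^{As}$ preserving the complex pairing $z\cdot\overline{w}$ and not merely the real Euclidean structure, so the delicate point is to extract this from the antisymmetry of $A$ (via the unitarity asserted in the Remark) rather than from mere orthogonality. Secondary technical points I would need to check are the justification for differentiating under the integral sign and for the integration by parts (both handled by restricting to $D$ or to $C_c^\infty(B_n)$), and the verification that $X$ is genuinely the generator of $(U_s)$ on that core, so that passing from the group identity to the commutator identity is legitimate.
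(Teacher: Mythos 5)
Your proposal is essentially the paper's own proof: the paper likewise sets $g_s=f\circ e^{As}$, uses the measure-preserving change of variables in the kernel integral to obtain $P(f\circ e^{As})=(Pf)\circ e^{As}$, and differentiates at $s=0$; your integration-by-parts variant is just an infinitesimal rephrasing of the same computation. However, the step you flag as ``the hard part'' is a genuine gap, and it cannot be closed from the stated hypothesis, because the kernel identity is in fact false there: antisymmetry of the real matrix $A$ only gives $e^{As}\in SO(2n)$, which preserves the real inner product $\mathrm{Re}(z\cdot\overline{w})$, whereas invariance of $z\cdot\overline{w}$ (hence of the Bergman kernel) additionally requires $e^{As}$ to be $\mathbb{C}$-linear, i.e.\ that $A$ commute with the complex structure $J$ — equivalently that $A$ be skew-Hermitian as an $n\times n$ complex matrix. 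For $n\ge 2$ this is strictly stronger, and the theorem as stated fails: write $z_j=x_j+iy_j$ and take $X=x_2\frac{\partial}{\partial x_1}-x_1\frac{\partial}{\partial x_2}$ (a rotation mixing $\mathrm{Re}\,z_1$ and $\mathrm{Re}\,z_2$; its matrix is antisymmetric, so $X\in\mathfrak{X}(S^{3})$), and $f(z)=z_1$. Then $Pf=f$ and $X(Pf)=x_2=\tfrac12\left(z_2+\overline{z_2}\right)$, while Proposition \ref{p3} gives $P(Xf)=P(x_2)=\tfrac12 z_2$, so $[X,P]f=\tfrac12\overline{z_2}\neq 0$. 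Abstractly: if $e^{As}$ is not $\mathbb{C}$-linear, composition with it destroys holomorphy, so $U_s$ cannot commute with $P$.

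You should also be aware that deferring to the Remark does not repair this, because the same gap sits in the paper itself: the Remark's claim that $e^{As}$ is ``unitary'' does not follow from antisymmetry (only orthogonality does), and the paper's manipulation $\overline{e^{As}u}=e^{-As}\overline{u}$ silently treats $e^{As}$ as a complex unitary matrix, which is exactly the missing hypothesis. Your instinct about where the difficulty lies is therefore precisely right, but the honest conclusion is that the hypothesis must be strengthened: one needs $A$ to commute with $J$ (equivalently $e^{As}\in U(n)$, i.e.\ $X$ generates a one-parameter group of complex unitary rotations). Under that assumption the pairing identity $\left(e^{As}z\right)\cdot\overline{\left(e^{As}w\right)}=z\cdot\overline{w}$ is immediate, and the rest of your argument — the group identity $PU_s=U_sP$, differentiation on the dense core of polynomials, and extension by density — goes through and coincides with the paper's intended proof.
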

	\begin{proof}
		Let $f\in L^{2}(B_{n})$ and assume that $f$ admits a $\mathbb{C}^\infty$ extension for some neighborhood of the closed unit ball. It is then direct that $g_s=f\circ e^{As}\in L^{2}(B_{n})$ and we have
		
		\begin{align*}
		P(g_s)(z)&= \int_{B_{n}} \frac{f(e^{As}w)}{(1-z^t\overline{w})^{n+1}} d\mu (w)= \int_{B_{n}} \frac{f(u)|det(Du)|}{(1-z^t\overline{e^{As}u})^{n+1}} d\mu (u)\\
		&=\int_{B_{n}} \frac{f(u)}{(1-z^te^{-As}\overline{u})^{n+1}} d\mu (u)=\int_{B_{n}}\frac{f(u)}{(1-(e^{As}z)^t\overline{u})^{n+1}} d\mu (u) = (Pf)(e^{As}z),
		\end{align*} 
		 where we used the change of variable $u=e^{-As}w$. Using the above observation, together with the fact that $P$ is bounded we obtain
		   $$X(P(f)) =\frac{d}{ds}[(Pf)(e^{As}.)]_{s=0} =\frac{d}{ds}[P(g_s))]_{s=0} = P\frac{dg_s}{ds}_{\mid s=0}= P(X(f)).$$
	\end{proof}

\section{Conclusion and open problems}
We summarize the results obtained in the thesis and compare to the case of Segal-Bargmann space.

\begin{enumerate}
	
	\item Following Lemma 2.2.1  in \cite{br}, the author proved that the Bergman projection $P_1$ from $L^2(\mathbb C^n, d\mu)$ to the Segal Bargmann space commutes with $\frac{\partial}{\partial z_{i}}$. However, following Theorem \ref{lqu} and Corollary \ref{lquu}  we showed that $[\frac{\partial}{\partial z_{i}},P]$  is not zero and does not admit a continuous extension to $L^2(\mathbb B_n)$.
	\item  In the above mentioned lemma the author also proved that $[P_1,\frac{\partial}{\partial\overline{z_{i}}}]$ admits  a continuous extension to $L^2(\mathbb C^n, d\mu)$. Following Theorem \ref{lquuu} and Corollary \ref{c2} we showed that $\sum_ka_k[P,\frac{\partial}{\partial\overline{z_{k}}}]$, (not all the constants are zeros), can not be extended by continuity to $L^2(\mathbb B_n)$.\\
	\item Based on the above observation for the continuity of the commutator with respect to the Wirintger derivatives for the case of the Segal-Bargmann space, W. Bauer constructed a collection of vector fields so that the iterated commutators with $P_1$ admits a continuous extension to $L^2(\mathbb C^n, d\mu)$. The main point was to test the behavior of high order commutators and to check the continuity. In Theorem \ref{ttt111} we showed that every linear vector field tangent to the unit sphere commutes with the Bergman projection on the unit ball.  This ensures that the higher commutator with $P$ vanishes.\\
	\item Following the above mentioned results, the construction of a $\Psi^\star$ is then ensured by an application of Proposition \ref{ppww} i.e. the commutator method in sense of Gramsch. In the case of the Segal-Bargmann this results a $\Psi^\star$-algebra in $\mathcal L(L^2(\mathbb C^n, d\mu))$ containing $P_1$. In our case, we obtained a $\Psi^\star$-algebra in $\mathcal L(L^2(\mathbb B_n))$ containing $P$ (cf. Proposition \ref{plo}) .
\end{enumerate}
Motivated  by our results, we provide below some open problems.
\begin{enumerate}
	\item Following the work of Vasilevski, we know that the commuting problem of Toeplitz operator on $\mathbb B_n$ is equivalent to another problem on the geometry of the unit sphere (for details cf. \cite{grudsky0}). In our case we think that the analysis problem for the extension is also related to the geometric structure of the sphere.
	\item We did not apply the results of the construction of $\Psi^\star$-algebra neither for the embedding nor for the spectral invariance. However, we think that this gives rise to control and provide some sharp estimation for the norm of the Bergman projection on Sobolev spaces.
\end{enumerate}


\begin{thebibliography}{20}\small{ 
		\bibitem{all}G. Allan, Introduction to Banach Spaces and Algebras, \emph{Oxford University Press}, Issue 20 of Oxford graduate texts in mathematics , 2015.
		\bibitem{B} W. Bauer, Local Properties of the Segal-Bargmann projection and $\Psi_{0}$ -algebras, \emph{the Journal of the ISAAC conference}, Catania, (2005).
			\bibitem{br}W. Bauer, Toeplitz operators on finite and infinite dimensional spaces with associated
		$\Psi^\star$ $- Fr\acute{e}chet$ algebras, Ph. D. thesis, \emph{Shaker Verlag Aachen}, 2006.
				\bibitem{bauer}W. Bauer. Toeplitz $\Psi^\ast $-algebras via unitary group representations,
		\emph{Analytic Function Spaces and Their Operators}, 1519  (2006) pp. 1-20.
	\bibitem{be} R. Beals , Characterization of pseudodifferential operators and applications, \emph{Duke
			Math. J.} 44 (1977) pp. 45-57.
		\bibitem{bon}P. Bongaarts. Quantum Theory, A Mathematical Approach, \emph{Springer International Publishing,}  2015.
		\bibitem{Bon}J.-M. Bony, J.-Y. Chemin, Espaces fonctionnels associes au calcul de Weyl-H\"{o}rmander, \emph{Bull. Soc. Math. France} 122 (1994) pp. 77-118.
	\bibitem{br}O. Bratelli, Derivations, dissipations and group actions on $C^\star$-algebras,  \emph{vol.1229 of Lecture Notes in Math}, Springer-Verlag, Berlin-Heidelberg-New York, 1986.
		\bibitem{cal}A. Calderon, R. Vaillancourt. On the boundedness of pseudo-differential operators,  \emph{pseudo-differential operators}, J. Math. Soc. Japan 23 (1971) pp. 374-378.
		\bibitem{Me} R. Coifman, Y. Meyer, Au del\'{a} des operateurs pseudo-differentiels, \emph{Société Mathématique de France}, 1978.
			\bibitem{john}B. J. Conway, A Course in Functional Analysis, \emph{ Graduate Texts in Mathematics} 96, Springer-Verlag New York, 1985.
	\bibitem{co}H.O. Cordes, On some $C^\star$-algebras and Fr\'{e}chet-$\star$-algebras of pseudodifferential operators, \emph{vol. 43 of Proc. Symp. in Pure Math. Pseudodifferential operators}, (1985) pp. 79-104.
			\bibitem{dijk}G. Dijk, Distribution Theory: Convolution, Fourier Transform, and Laplace Transform, \emph{Walter de Gruyter}, 2013.
		\bibitem{kin}A. Elkinani, M. Oudadess, Distribution Theory and Applications,	\emph{Series on concrete and applicable mathematics},  World Scientific, 2010.
		\bibitem{ru} F. Forelli,W. Rudin, Projection on Spaces of Holomorphic Functions in Balls, \emph{Indiana University Math.}, Vol 24, No. 6 1974..
			\bibitem{fors}B. Forster, Four Short Courses on Harmonic Analysis, \emph{Applied and Numerical Harmonic Analysis}, 2010.
	\bibitem{gr1}B. Gramsch, Relative Inversion in der St\"orungstheorie von Operatoren und     $\Psi$-Algebren, \emph{Math. Annalen, 269} (1984)pp. 27-71.
		\bibitem{ka}B. Gramsch, W. Kaballo, Decompositions of meromorphic Fredholm resolvents	and  $\Psi^\star$-algebras, \emph{Int. Equ. and Op. Theory} 12, (1989) pp. 23-41.
		\bibitem{gr3}B. Gramsch, J. Ueberberg, K. Wagner, Spectral invariance and submultiplicativity for Fr\'{e}chet algebras with applications to pseudo-differential operators and $\Psi^\star$-quantization,  Operator Theory:\emph{ Advances and Applications} , vol. 57 (1992) pp. 71-98.
		\bibitem{grudsky0} S. Grudsky, R. Quiroga-Barranco and N. Vasilevski, \textit{Commutative $C^{\ast}$-algebras of Toeplitz operators and quantization on the unit disc.} {J. Funct. Anal.} \textbf{234} (2006), pp. 1-44.
		\bibitem{hor1}J. Horvath, Topological Vector Spaces and Distributions, \emph{Dover Books on Mathematics}, Dover Publications, 2012.
				\bibitem{hor}L. H\"omander, Pseudo-differential operators and hypoelliptic equations, \emph{Proc. Sympos. Pure Math. vol 10} (1967) pp. 138-183.
		\bibitem{ka}E. Kaniuth, A Course in Commutative Banach Algebras, \emph{Graduate Texts in Mathematics} 246, Springer-Verlag New York, 2009. 
	\bibitem{la}R. Lauter, Spectral invariance and the holomorphic functional calculus of J.L.
		Taylor in $\Psi^\star$ -algebras, \emph{J. Operator Theory 32 No. 2}, (1994) pp. 311-329.
		\bibitem{TA}R. Lauter, Holomorphic functional calculus in several variables and $\Psi^\star$ -algebras of totally characteristic operators on manifolds with boundary, Dissertation, \emph{Shaker-Verlag}, 1996.
			\bibitem{mor}V. Moretti, Spectral theory and quantum mechanics, \emph{ Milan: Springer,} Vol. 64,  2013.
		\bibitem{ger}J. G. Murphy, $C^\star$-Algebras and Operator Theory, \emph{ Academic Press}, 2014.
		\bibitem{ru1} W. Rudin, Function Theory in the unit Ball of $\mathbb{C}^n$, \emph{ Classics in Mathematics}, Springer, 2012.
	\bibitem{sh} E. Schrohe, Boundedness and spectral invariance for standard pseudodifferential operators on anisotropically weighted $L^p$-Sobolev spaces, \emph{ Int. Equations and
			Operator Theory 13}, (1990) pp. 271-284.
		\bibitem{sh1}E. Schrohe, A pseudo-differential calculus for weighted symbols and a Fredholm criterion for boundary value problems on noncompact manifolds,\emph{ Habilitationsschrift Johannes Gutenberg-Universitat Mainz}, 1991-1992.
	\bibitem{sh2}E. Schrohe, Functional calculus and Fredholm criteria for boundary value problems 	on noncompact manifolds, \emph{ Operator Theory Adv. and App.} 57(1992) pp. 255-269.
		\bibitem{sch}L. Schwartz, Analyse: Topologie $g\acute{e}n\acute{e}rale$ et analyse fonctionnelle, \emph{Functional analysis Topology} the University of California, Hermann, 1970.
		\bibitem{si}G.Sinnamon, Schurs lemma and best constants in weighted norm inequalities, \emph{ Univ. of Western Ontario}, 2003.
		\bibitem{sjo}J. Sj\"{o}strand, Wiener type algebras of pseudodifferential operators, \emph{S$\acute{e}$minaire $\acute{E}$quations aux d$\acute{e}$riv$\acute{e}$es partielles}, (1994) pp. 1-19.
	\bibitem{ege}M. E. Taylor. Pseudodifferential Operators, \emph{Princeton Mathematical Series},  Princeton University Press, 2017.
		\bibitem{Ue}J. Ueberberg. Zur Spektralinvarianz von Algebren von Pseudo-Diferential Operatoren, \emph{Manuscripta Math.} (1988) pp. 459-475.
		\bibitem{wael}L. Waelbroek,  Topological Vector Spaces and Algebras, \emph{Lecture Notes in Mathematics} Springer, 2006.
		\bibitem{zhu}K. Zhu, Operator Theory in Function Spaces, \emph{Issue 138 of Mathematical surveys and monographs}, American Mathematical Soc. 2007.
	}
\end{thebibliography}
\end{document}